\documentclass[11pt]{amsart}

\usepackage{amsfonts,hyperref}
\usepackage{graphicx}
\usepackage{xypic}
\usepackage{amsbsy}
\usepackage{amsmath}
\usepackage{amssymb}
\usepackage{amsthm, amscd, cite} 
\usepackage{enumitem}
\usepackage[mathscr]{eucal}
\usepackage[dvipsnames]{xcolor}
\usepackage{longtable}
\usepackage{multirow}
\usepackage{array}

\newcolumntype{P}[1]{>{\centering\arraybackslash}m{#1}}

\pagestyle{plain} 
\setlength{\textwidth}{6.4in} 
\setlength{\textheight}{8.5in} 
\setlength{\hoffset}{-.7in} 
\setlength{\voffset}{-.4in} 
\setlength{\parskip}{0.2em}

\numberwithin{equation}{section}

\newtheorem{theorem}{\bf Theorem}[section]

\newtheorem{lemma}[theorem]{\bf Lemma}
\newtheorem{proposition}[theorem]{\bf Proposition}

\newtheorem{main}{\bf Theorem}

\newtheorem{mtheorem}[main]{Theorem}
\newtheorem{mcorollary}[main]{\bf Corollary}
\newtheorem{mproposition}[main]{\bf Proposition}

\theoremstyle{definition}

\newtheorem{example}[theorem]{\bf Example}
\newtheorem{definition}[theorem]{\bf Definition}
\newtheorem{hypothesis}[theorem]{\bf Hypothesis}
\newtheorem{notation}[theorem]{\bf Notation}

\theoremstyle{remark}
\newtheorem{remark}[theorem]{\bf Remark}

\numberwithin{equation}{section}
\makeatletter
\let\c@theorem\c@equation
\makeatother

\newtheorem*{namedtheorem}{\theoremname}
\newcommand{\theoremname}{testing}

\renewcommand{\leq}{\leqslant}
\renewcommand{\geq}{\geqslant}
\newcommand{\norm}{\trianglelefteq}

\newcommand{\nin}{\notin}

\newcommand{\gen}[1]{\langle #1 \rangle}
\newcommand{\ol}{\overline}

\newcommand{\incl}{\operatorname{incl}}

\newcommand{\FF}{\mathbb{F}}

\renewcommand{\AA}{\mathfrak{A}}
\newcommand{\CC}{\mathfrak{C}}

\newcommand{\C}{\mathcal{C}}
\newcommand{\D}{\mathcal{D}}
\newcommand{\E}{\mathcal{E}}
\newcommand{\F}{\mathcal{F}}
\renewcommand{\H}{\mathcal{H}}
\newcommand{\I}{\mathcal{I}}

\newcommand{\K}{\mathcal{K}}
\renewcommand{\L}{\mathcal{L}}
\newcommand{\M}{\mathcal{M}}

\renewcommand{\O}{\mathcal{O}}
\newcommand{\Q}{\mathcal{Q}}

\newcommand{\X}{\mathcal{X}}

\newcommand{\Z}{\mathcal{Z}}
\renewcommand{\phi}{\varphi}

\newcommand{\id}{\operatorname{id}}

\newcommand{\Hom}{\operatorname{Hom}}
\newcommand{\Mor}{\operatorname{Mor}}

\newcommand{\Ob}{\operatorname{Ob}}

\newcommand{\Spin}{\operatorname{Spin}}
\newcommand{\Sol}{\operatorname{Sol}}

\newcommand{\Aut}{\operatorname{Aut}}
\newcommand{\Out}{\operatorname{Out}}
\newcommand{\Outdiag}{\operatorname{Outdiag}}
\newcommand{\Inn}{\operatorname{Inn}}
\newcommand{\Inndiag}{\operatorname{Inndiag}}

\newcommand{\Syl}{\operatorname{Syl}}

\newcommand{\Comp}{\operatorname{Comp}}
\newcommand{\sub}{\operatorname{sub}}
\newcommand{\larg}{\operatorname{large}}
\newcommand{\Chev}{\operatorname{Chev}}
\newcommand{\Ac}{\operatorname{A}^\circ}

\newcommand{\hyp}{\mathfrak{hyp}}
\newcommand{\foc}{\mathfrak{foc}}

\newcommand{\<}{\langle}
\renewcommand{\>}{\rangle}
\newcommand{\m}{\mathcal}
\newcommand{\ov}{\overline}

\title[Benson-Solomon components]{Fusion systems with Benson-Solomon components}
\author{Ellen Henke}
\address{Institut f{\"u}r Algebra, Fakult{\"a}t Mathematik, Technische Universit{\"a}t Dresden, 01062 Dresden, Germany}
\email{ellen.henke@tu-dresden.de}
\author{Justin Lynd}
\address{Department of Mathematics \\ University of Louisiana at Lafayette \\
Maxim Doucet Hall \\ Lafayette, LA 70504} 
\email{lynd@louisiana.edu}
\thanks{J. L. was partially supported by NSF Grant DMS-1902152.  This project
has received funding from the European Union's Horizon 2020 research and
innovation programme under the Marie Sk{\l}odowska-Curie grant agreement No.
707758.}
\keywords{fusion system, component, Benson-Solomon fusion system, involution centralizer}
\subjclass[2000]{Primary 20D20, Secondary 20D05, 20D06, 20G40, 55R35}
\date{\today}

\begin{document}
\begin{abstract}
The Benson-Solomon systems comprise a one-parameter family of simple exotic
fusion systems at the prime $2$.  The results we prove give significant
additional evidence that these are the only simple exotic $2$-fusion systems,
as conjectured by Solomon.  We consider a saturated fusion system $\F$ having
an involution centralizer with a component $\C$ isomorphic to a Benson-Solomon
fusion system, and we show under rather general hypotheses that $\F$ cannot be
simple. Furthermore, we prove that if $\F$ is almost simple with these
properties, then $\F$ is isomorphic to the next larger Benson-Solomon system
extended by a group of field automorphisms. Our results are situated within
Aschbacher's program to provide a new proof of a major part of the
classification of finite simple groups via fusion systems.  One of the most
important steps in this program is a proof of Walter's Theorem for fusion
systems, and our first result is specifically tailored for use in the proof of
that step. We then apply Walter's Theorem to treat the general Benson-Solomon
component problem under the assumption that each component of an involution
centralizer in $\F$ is on the list of currently known quasisimple $2$-fusion
systems.
\end{abstract}

\maketitle

\section{Introduction}

This paper is situated within Aschbacher's program to classify a large class of
saturated fusion systems at the prime $2$, and then use that result to rework
and simplify the corresponding part of the classification of the finite simple
groups. A saturated fusion system is a category $\F$ whose objects are the
subgroups of a fixed finite $p$-group $S$, and whose morphisms are injective
group homomorphisms between objects such that certain axioms hold. Each finite
group $G$ leads to a saturated fusion system $\F_S(G)$, where $S$ is a Sylow
$p$-subgroup of $G$ and the morphisms are the conjugation maps induced by
elements of $G$. Fusion systems which do not arise in this fashion are called
\emph{exotic}. While exotic fusion systems seem to be relatively plentiful at
odd primes, there is as yet one known family of simple exotic fusion systems at
the prime $2$. These are the \emph{Benson-Solomon fusion systems}
$\F_{\Sol}(q)$ ($q$ an odd prime power) whose existence was foreshadowed in the
work of Solomon \cite{Solomon1974} and Benson \cite{Benson1998c}, and which
were later constructed by Levi--Oliver \cite{LeviOliver2002, LeviOliver2005}
and Aschbacher-Chermak \cite{AschbacherChermak2010}.  Here for any odd prime
power $q$, the underlying $2$-group $S$ of $\F_{\Sol}(q)$ is isomorphic to a
Sylow $2$-subgroup of $\Spin_7(q)$, all involutions in $\F_{\Sol}(q)$ are
conjugate, and the centralizer of an involution is isomorphic to the fusion
system of $\Spin_7(q)$.

It has been conjectured by Solomon that the fusion systems $\F_{\Sol}(q)$ are
indeed the only simple exotic saturated $2$-fusion systems
\cite[Conjecture~57.12]{GuidosBook}.  Some recent evidence for Solomon's
conjecture is provided by a project by Andersen, Oliver, and Ventura, who
carried out a systematic computer search for saturated fusion systems over
small $2$-groups and found that each saturated fusion system over a $2$-group
of order at most $2^9$ is realizable by a finite group. (The smallest
Benson-Solomon system is based on a $2$-group of order $2^{10}$.) Theorems
within Aschbacher's program can be expected to give yet stronger evidence for
Solomon's conjecture, and the results we prove are particularly relevant in
this context.  In order to explain this, we now summarize a bit more of the
background.

The major case distinction in the proof of the classification of finite simple
groups is given by the Dichotomy Theorem of Gorenstein and Walter, which
partitions the finite simple groups of $2$-rank at least $3$ into the groups of
component type and the groups of characteristic $2$-type.  A finite group $G$
is said to be of component type if some involution centralizer modulo core in
$G$ has a component. Here a \emph{component} is a subnormal subgroup which is
quasisimple (i.e. perfect, and simple modulo its center), and the core
$O(C)$ of a finite group $C$ is the largest normal subgroup of $C$ of odd
order. The largest and richest collection of simple groups of component type
are the simple groups of Lie type in odd characteristic.  In the classification
of finite simple groups, one proceeds by induction on the group order. Thus, if
$G$ is a finite group of component type, one assumes that the components of
involution centralizers in $G$ are known, and the objective is then to show
that the simple group itself is known. More precisely, one usually assumes that
a specific quasisimple group $K$ is given as a component in $C_G(t)/O(C_G(t))$
for some involution $t$ of $G$, and then tries to show that $G$ is known. We
refer to such a task as an involution centralizer problem, or a component
problem.  

As several involution centralizer problems in 1960s and 1970s gave rise to
previously unknown sporadic simple groups, this suggests that solving such
problems in fusion systems is a good way to search for new exotic $2$-fusion
systems. Here, we consider an involution centralizer problem in which the
component $\C$ in an involution centralizer of $\F$ is a Benson-Solomon system,
and our main theorems can be viewed as essentially determining the structure of
the ``subnormal closure'' of $\C$ in $\F$. Thus, we provide the treatment of a
problem that has no analogue in the original classification. The results we
prove give additional evidence toward the validity of Solomon's conjecture, or
at least toward the absence of additional exotic systems arising in some direct
fashion from the existence of $\F_{\Sol}(q)$.  

Our work is also an important step in Aschbacher's program.  We refer to the
survey article \cite{AschbacherOliver2016} and the memoir \cite{AschbacherFSCT}
for more details on an outline and first steps of his program. Much of the
background material is also motivated and collected in Section~\ref{S:prelim},
which can serve as a detailed guide to the proof of the main theorems here for
readers not familiar with the classification program. This material has at its
foundation many useful constructions from finite group theory that have been
established in the context of saturated fusion systems. In particular, these
constructions allow one to speak of centralizers of $p$-subgroups, normal
subsystems, simple fusion systems, quasisimple fusion systems, components, and
so on. We refer to the standard reference for those constructions
\cite{AschbacherKessarOliver2011}. 

A saturated $2$-fusion system $\F$ is said to be of \emph{component type} if
some involution centralizer in $\F$ has a component.  Aschbacher defines the
class of $2$-fusion systems of \textit{odd type} as a certain subclass of the
fusion systems of component type.  The fusion systems of odd type are further
partitioned into those of \emph{subintrinsic component type} and those of
\emph{J-component type}. The classification of simple fusion systems of
subintrinsic component type constitutes the first part of the program.  Our
first theorem is tailored for use in the proof of Walter's Theorem
\cite{AschbacherWT}, one of the main steps in the subinstrinsic case.  We then
apply Walter's Theorem to give a treatment of the general Benson-Solomon
component problem in the second main theorem. As a corollary
(Corollary~\ref{C:main}), we show that if $\F$ is almost simple (that is, the
generalized Fitting subsystem $F^*(\F)$ is simple) and $\F$ has an involution
centralizer with a Benson-Solomon component $\C \cong \F_{\Sol}(q)$, then
$F^*(\F) \cong \F_{\Sol}(q^2)$ with the involution inducing an outer
automorphism of $F^*(\F)$ of order $2$. 

To state our main theorems in detail, we introduce now some more notation which
we explain further in Section~\ref{S:prelim}. Fix a saturated fusion system
$\F$ over the $2$-group $S$.  Following Aschbacher, we denote by $\CC(\F)$ the
collection of components of centralizers in $\F$ of involutions in $S$, roughly
speaking. Accordingly, $\F$ is of \emph{component type} if $\CC(\F)$ is
nonempty. The E-balance Theorem in the form of the Pump-Up Lemma
(Section~\ref{SS:pumpup}) allows one to define an ordering on $\CC(\F)$, and
thus obtain the notion of a \emph{maximal} member of $\CC(\F)$.  For $\C \in
\CC(\F)$, we denote by $\I(\C)$ the set of involutions $t$ such that $\C$ is a
component of $C_\F(t)$, roughly speaking, up to replacing $(\C,t)$ by a
suitable conjugate in $\F$.  Finally a member $\C \in \CC(\F)$ is said to be
\emph{subintrinsic} in $\CC(\F)$ if there is $\H \in \CC(\C)$ such that $Z(\H)
\cap \I(\H)$ is not empty. This means in particular that $\H$ itself is in
$\CC(\F)$, as witnessed by some involution in the center of $\H$.

\begin{mtheorem}\label{T:main}
Fix a saturated fusion system $\F$ over a $2$-group $S$ and a quasisimple
subsystem $\C$ of $\F$ over a fully $\F$-normalized subgroup of $S$. Assume
that $\C$ is a subintrinsic, maximal member of $\CC(\F)$ and isomorphic to
a Benson-Solomon system. Then $\C$ is a component of $\F$. 
\end{mtheorem}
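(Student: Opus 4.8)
The plan is to argue by contradiction. Suppose $\C$ is not a component of $\F$. The subnormal closure $\D$ of $\C$ in $\F$ is then a quasisimple subsystem strictly containing $\C$ (after a suitable identification), and by maximality of $\C$ in $\CC(\F)$ together with the Pump-Up Lemma, the only way $\C$ fails to be a component is that $\D$ is a proper subsystem which is itself quasisimple but not subnormal of length one — in particular $\D$ is \emph{not} on our hands as a component, so $\C$ must ``pump up'' properly inside $C_\F(t)$ or inside $\F$ itself. The first step, then, is to pin down the pump-up: using the subintrinsic hypothesis, fix $\L \in \CC(\C)$ and an involution $z \in Z(\L) \cap \I(\L)$, so that $\L$ is a component of $C_\F(z)$. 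Because $\C$ is maximal, the pump-up of $\L$ in $C_\F(z)$ along any involution of $\C$ coincides with $\C$, and the pump-up of $\C$ itself is trivial; this will force the ambient subnormal closure $\D$ to have an involution centralizer with $\C$ as a component and, crucially, to satisfy the \emph{same} Benson-Solomon involution-centralizer pattern that $\F_{\Sol}(q^2)$ does.

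The second step is to analyze $N_\F(\C)$ and the action of $\F$ on $\C$. Since $\C$ is over a fully $\F$-normalized subgroup, $N_\F(\C)$ is saturated, and $\C \norm N_\F(\C)$. The outer automorphism group $\Out(\F_{\Sol}(q))$ is cyclic, generated by a field automorphism, so $\Aut_\F(\C)/\Aut_{\C}(\C)$ embeds into this cyclic group. One then studies $C_\F(\C)$: if the subnormal closure $\D$ of $\C$ properly contains $\C$, I would show that $\D$ has the form of a ``diagonal-free'' pump-up, and then identify $\D$ with $\F_{\Sol}(q^2)$ by recognizing its involution centralizer structure. The recognition step — showing that a quasisimple fusion system $\D$ all of whose involutions are conjugate with centralizer the fusion system of $\Spin_7(q)$, and containing $\C \cong \F_{\Sol}(q)$ as an involution centralizer component of index-$2$-type — must be $\F_{\Sol}(q^2)$, will rely on the classification/recognition theorems for Benson-Solomon systems together with the uniqueness of $\Spin_7$-type fusion systems in characteristic coprime to $2$.

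The third step is to derive the contradiction. If $\D = \F_{\Sol}(q^2)$ sits inside $\F$ as a proper subnormal-but-not-normal subsystem, then some involution $t$ with $\C$ a component of $C_\F(t)$ actually induces a field automorphism on $\D$, placing $\D$ (not $\C$) into $\CC(\F)$ and contradicting the \emph{maximality} of $\C$: indeed $\D \in \CC(\F)$ would properly dominate $\C$ in the ordering on $\CC(\F)$. Alternatively, if no such $t$ exists then $\D$ is already normal in $\F$, and since $\D$ is quasisimple and contains $\C$ subnormally, $\C$ is after all a component of $\F$ — contrary to assumption. Either branch yields the theorem.

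The main obstacle I anticipate is the recognition step in the middle: controlling the pump-up $\D$ of $\C$ precisely enough to conclude $\D \cong \F_{\Sol}(q^2)$ rather than merely some quasisimple system with a Benson-Solomon involution centralizer. This requires (i) ruling out diagonal pump-ups, which should follow from the simplicity and the structure of $\Out(\F_{\Sol}(q))$; (ii) ruling out vertical pump-ups to a larger Benson-Solomon system $\F_{\Sol}(q^{2^k})$ with $k \geq 2$, which should follow from a counting/$2$-rank argument on $S$ and the fact that the relevant involution is \emph{central} in $\L$; and (iii) the positive recognition of $\D$, for which I would invoke the involution-centralizer characterization of the Benson-Solomon systems available in the literature, adapted to the fusion-system setting. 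Handling the interplay between the three nested systems $\L \leq \C \leq \D$ and keeping track of which involutions lie in which $\I(-)$ is where the bookkeeping will be heaviest.
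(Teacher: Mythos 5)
Your proposal takes a route that is genuinely different from the paper's, but it has a gap that I think is fatal: within the logical structure of Aschbacher's program it is essentially circular. Your plan is first to establish the trichotomy ``$\C$ is a component of $\F$, or $\C$ is diagonally embedded in $\D\D^t$, or $\C = C_\D(t)$ for a larger component $\D \cong \F_{\Sol}(q^2)$,'' and then to use subintrinsicness and maximality to kill the last two branches. That trichotomy is precisely Theorem~\ref{T:main2}, and its proof needs two inputs: an E-balance statement placing $\C$ inside a component $\D$ of $\F$, and a recognition theorem identifying $\D$. The recognition step is carried out in Section~\ref{S:general} by invoking Walter's Theorem for fusion systems together with the hypothesis --- present in Theorem~\ref{T:main2} but absent from Theorem~\ref{T:main} --- that every member of $\CC(\F)$ is a known quasisimple $2$-fusion system. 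Since Theorem~\ref{T:main} is itself an ingredient of the proof of Walter's Theorem, you cannot appeal to Walter's Theorem or to any such recognition here. Relatedly, the phrase ``subnormal closure of $\C$ in $\F$'' presupposes that $\C$ lies inside $E(\F)$; in fusion systems (with no B-theorem available) this containment is not automatic, and proving it is exactly the hard content. Finally, your closing contradiction is misplaced: if $\D \cong \F_{\Sol}(q^2)$ is a component of $\F$ with $\C = C_\D(t)$, then $\D$ need not belong to $\CC(\F)$, and this configuration does not contradict maximality (it is a legitimate conclusion of Theorem~\ref{T:main2}). What it does contradict is subintrinsicness: in that configuration $C_\C(z)$ is a component of $C_{C_\F(z)}(t)$ but not of $C_\F(z)$, so $z \notin \I(C_\C(z))$.

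The paper's proof avoids any identification of the ambient system. Section~\ref{S:showStandard} shows that a subintrinsic maximal Benson-Solomon member of $\CC(\F)$ is terminal and then \emph{standard}, so that Aschbacher's machinery yields a tightly embedded centralizer $\Q$ over $Q \leq S$; the splitness of $\F_{\Sol}(q)$ (Lemma~\ref{L:Csplit}) together with Aschbacher's Theorem~8 forces $Q$ to be elementary abelian or of $2$-rank $1$; the elementary abelian and quaternion cases are then eliminated (Sections~\ref{S:elemab} and \ref{S:quaternion}, the latter via the classification of quaternion fusion packets); and in the remaining cyclic case one shows directly that $\C$ cannot be subintrinsic (Lemma~\ref{L:t2central}). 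To repair your argument you would have to replace the balance and recognition steps with local analysis of this kind.
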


As mentioned above, in the logical structure of Aschbacher's classification
program, Theorem~\ref{T:main} is situated within the proof of Walter's Theorem
for fusion systems \cite{AschbacherWT}.  Walter's Theorem in particular implies
that, if a simple saturated $2$-fusion system $\F$ has a member of
$\CC(\F)$ that is the $2$-fusion system of a group of Lie type in odd
characteristic and not too small, then either $\F$ is the fusion system of a
group of Lie type in odd characteristic, or $\F \cong \F_{\Sol}(q)$.  One
assumption of Walter's Theorem is that each member of $\CC(\F)$ is on the list
of currently known quasisimple fusion systems, i.e. either one of the
Benson-Solomon systems or a fusion system of a finite simple group.

A simple saturated $2$-fusion system with an involution centralizer having a
Benson-Solomon component would necessarily be exotic, since involution
centralizers in fusion systems of groups are the fusion systems of involution
centralizers (see also Lemma~\ref{L:ComponentsFSGroups}). Because of the
subintrinsic hypothesis, Theorem~\ref{T:main} does not rule out the possibility
of this happening. However, in Section~\ref{S:general}, we apply Walter's
Theorem for fusion systems to solve the general Benson-Solomon component
problem assuming that all members of $\CC(\F)$ are on the list of known
quasisimple $2$-fusion systems. 

\begin{mtheorem}
\label{T:main2}
Let $\F$ be a saturated fusion system over the $2$-group $S$.  Assume that each
member of $\CC(\F)$ is known and that some fixed member $\C \in \CC(\F)$ is
isomorphic to $\F_{\Sol}(q)$ for some odd prime power $q$. Then for each $t \in
\I(\C)$, there exists a component $\D$ of $\F$ such that one of the following
holds.
\begin{enumerate}
\item $\D = \C$; 
\item $\D \cong \C$, $\D^t \neq \D$, and $\C$ is diagonally embedded in the
direct product $\D\D^t$ with respect to $t$; or
\item $\D \cong \F_{\Sol}(q^2)$, $t \nin \D$, and $\C = C_{\D}(t)$.
\end{enumerate}
\end{mtheorem}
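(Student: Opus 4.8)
The plan is to work with the subnormal closure $\D$ of $\C$ in $\F$, to show that $\D$ is either a single quasisimple component of $\F$ or a direct product of two of them, to identify $\D$ using Walter's Theorem, and to read off the three conclusions from the way $t$ acts on $\D$. First I would fix $t\in\I(\C)$ and, after replacing $(\C,t)$ by a suitable $\F$-conjugate, assume that $\C$ is a component of $C_\F(t)$ over a fully $\F$-normalized subgroup; in particular $t$ centralizes $\C$. By L-balance (E-balance) for fusion systems, $\C$ lies in the layer $E(\F)$, so its subnormal closure $\D$ is a central product $\D_1\cdots\D_n$ of components of $\F$; since $t$ centralizes $\C$ it normalizes $\D$ and permutes $\D_1,\dots,\D_n$ in orbits of length $1$ or $2$, and $\C$ is a component of $C_\D(t)$ because $C_\D(t)$ is subnormal in $C_\F(t)$. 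For an orbit $\{\D_i,\D_i^t\}$ of length two the fixed subsystem $C_{\D_i\D_i^t}(t)$ is a diagonal copy of $\D_i$; combining this with the quasisimplicity of $\C$, I would conclude that $n\le 2$ and that exactly one of the following holds: (a) $n=2$, $t$ interchanges $\D_1,\D_2$, and $\C=C_\D(t)$ is diagonally embedded in the direct product $\D_1\D_2$; (b) $n=1$ and $t$ centralizes $\D:=\D_1$; or (c) $n=1$ and $t$ induces a nontrivial automorphism of $\D:=\D_1$.

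In case (a), $\C\cong\D_1$, so $\D_1\cong\F_{\Sol}(q)$, and $\D_1\ne\D_1^t$; this is conclusion (2) with $\D:=\D_1$. In case (b), $\D_1$ is a component of $C_\F(t)$ containing the component $\C$ of $C_\F(t)$, so $\C=\D_1$ is a component of $\F$, which is conclusion (1) with $\D:=\C$. The main work is in case (c). Here I would pass to $\bar\D:=\D_1/Z(\D_1)$, which is simple, and all of whose members of $\CC(\bar\D)$ are again known since they descend from members of $\CC(\F)$ via the subnormality of $\D_1$ in $\F$. Choosing an involution $z\in\C$, the quasisimple subsystem $C_\C(z)\cong\F_{\Spin_7(q)}$ is a component of $C_{\D_1}(\langle t,z\rangle)$, and the Pump-Up Lemma promotes it to a member of $\CC(\bar\D)$ which — modulo the difficulty discussed below — can be taken to be of Lie type in odd characteristic and not too small. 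Walter's Theorem then forces $\bar\D$ to be the fusion system of a group of Lie type in odd characteristic or else $\bar\D\cong\F_{\Sol}(q')$ for some odd prime power $q'$. The first alternative is impossible, since centralizers of automorphisms of order $2$ in fusion systems of groups of Lie type in odd characteristic (and their central extensions) have no Benson-Solomon component, whereas $C_{\D_1}(t)$ has the component $\C$. Hence $\bar\D\cong\F_{\Sol}(q')$, and, the Benson-Solomon systems admitting no proper central extension in the category of fusion systems, $\D_1\cong\F_{\Sol}(q')$.

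It remains to pin down $q'$ and the action of $t$. If $t$ were inner in $\D_1$ then $C_{\D_1}(t)\cong\F_{\Spin_7(q')}$ would be quasisimple with no Benson-Solomon component, contradicting that $\C$ is a component of it; so $t$ induces the unique involution of the cyclic group $\Out(\F_{\Sol}(q'))$, namely a field automorphism of order $2$. Then $C_{\D_1}(t)\cong\F_{\Sol}(q'^{1/2})$ is quasisimple, hence equals its unique component $\C\cong\F_{\Sol}(q'^{1/2})$, which forces $q'=q^2$. This is conclusion (3) with $\D:=\D_1$.

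The step I expect to be the main obstacle is the production, in case (c), of a member of $\CC(\bar\D)$ of Lie type in odd characteristic and not too small to which Walter's Theorem genuinely applies: a priori the Lie-type subcomponents arising from $\C$ might, inside $\bar\D$, only ever pump up to further Benson-Solomon systems. I expect this to be resolved by a minimal counterexample setup together with Solomon's recognition of the ``intrinsic $\Spin_7(q)$'' involution-centralizer configuration (which forces a Benson-Solomon system among the known quasisimple $2$-fusion systems) and an application of Theorem~\ref{T:main} once a maximal, subintrinsic Benson-Solomon member of $\CC(\bar\D)$ has been reached. The remaining ingredients — the determination of $\Out(\F_{\Sol}(q'))$ and of the fixed subsystem of a field automorphism of a Benson-Solomon system, and the verification of the ``not too small'' hypothesis for the smallest values of $q$ — are comparatively routine.
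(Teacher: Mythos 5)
Your overall architecture coincides with the paper's: reduce via E-balance/pump-up considerations to a single $t$-invariant component $\D$ on which $t$ acts, with $\C\in\Comp(C_{\D\gen{t}}(t))$ (the trivial and diagonal cases giving conclusions (1) and (2) directly); identify $\D$ by Walter's Theorem; kill the Lie-type alternative by tameness plus exoticity of $\F_{\Sol}(q)$; and finish with the determination of $\Out(\F_{\Sol}(q'))$ and of the centralizer of a field automorphism (Proposition~\ref{P:fieldconjugate}). The one genuine gap is exactly the step you flag yourself: verifying the hypothesis of Walter's Theorem, i.e.\ exhibiting a member of $\CC$ lying in $\Chev[\larg]$. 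Your proposed fix --- a minimal counterexample plus ``Solomon's recognition of the intrinsic $\Spin_7(q)$ configuration'' plus an appeal to Theorem~\ref{T:main} --- does not close it: Theorem~\ref{T:main} is an ingredient of the proof of Walter's Theorem, not a tool for verifying its hypothesis, and nothing in your sketch rules out the scenario you worry about, namely that every pump-up of the $\Spin_7$ subcomponent is again a Benson--Solomon system.

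The paper's resolution is concrete and short once seen. One first shows, by transporting $\C$ down a subnormal series, that $\H=C_\C(z)$ is a component of $C_{C_\F(z)}(t)$, so $\H\in\CC(C_\F(z))$ with $t\in\I_{C_\F(z)}(\H)$; all members of $\CC(C_\F(z))$ are known (Lemma~\ref{L:invcentknown}). Pumping up $\H$ inside $C_\F(z)$ yields a component $\M$ of $C_\F(z)$ with $\H=\M$, or $\H$ diagonal in $\M\M^t$, or $\H\in\Comp(C_{\M\gen{t}}(t))$; in the last case Walter's Theorem applied to $C_\F(z)$ with $\H\cong\F_{\Spin}(q)\in\Chev[\larg]$ in the role of $\L$ forces $\M\in\Chev[\larg]$ or $\M\cong\F_{\Sol}(q')$, and the second alternative is impossible because $z\in Z(\H)\leq Z(\M)$ while the Benson--Solomon systems have trivial center. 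This ``$z$ is central in the pump-up'' observation is the missing idea; it hands Walter's Theorem a $\Chev[\larg]$ member of $\CC(\F)$ witnessed by $z$, after which your identification of $\D$ and of $q'=q^2$ goes through. Two smaller points: the paper avoids passing to $\D/Z(\D)$ by taking a minimal counterexample and reducing to $\F=\D\gen{t}$ with $F^*(\F)=\D$ quasisimple; and your throwaway claim that members of $\CC$ descend to the subnormal closure is true but is itself a substantial verification (Lemma~\ref{L:CCM}).
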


The automorphism groups and almost simple extensions of the Benson-Solomon
systems were determined in \cite{HenkeLynd2018}. The outer automorphism group
of $\F_{\Sol}(q)$ is generated by the class of an automorphism uniquely
determined as the restriction of a standard Frobenius automorphism of
$\Spin_7(q)$ to a Sylow $2$-subgroup, and each extension of $\F_{\Sol}(q)$ is
uniquely determined by the induced outer automorphism group. So in the
situation of Theorem~\ref{T:main2}(3), for example in the case in which $\F$ is
almost simple, the extension $\D\gen{t}$ is known and is the expected one.

\begin{mcorollary}\label{C:main}
Let $\F$ be a saturated fusion system over the $2$-group $S$ such that $\D =
F^*(\F)$ is simple. Assume that each member of $\CC(\F)$ is known, and that
some member $\C \in \CC(\F)$ is isomorphic to $\F_{\Sol}(q)$ for some odd prime
power $q$. Then $\D$ is isomorphic to $\F_{\Sol}(q^2)$. Moreover, for each $t
\in \I(\C)$, we have $t \notin \D$, some conjugate of $t$ induces a standard
field automorphism on $\D$, and $C_\D(t) = \C$. 
\end{mcorollary}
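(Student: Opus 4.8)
The plan is to deduce Corollary~\ref{C:main} from Theorem~\ref{T:main2}, together with the determination of $\Out(\F_{\Sol}(q^2))$ and of the almost simple extensions of the Benson--Solomon systems in \cite{HenkeLynd2018}. Since $\C\in\CC(\F)$, the set $\I(\C)$ is nonempty; fix $t\in\I(\C)$. The hypothesis that $\D=F^*(\F)$ is simple has the following standard structural consequences which I would record first: $O_2(\F)$ contributes nothing to $F^*(\F)$, so $F^*(\F)=E(\F)$ is the central product of the components of $\F$; being simple it is the \emph{unique} component of $\F$, it is normal in $\F$, and $C_\F(\D)=Z(\D)$ is the trivial system, so in particular $C_S(\D)=1$. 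Consequently, any component of $\F$ produced by an application of Theorem~\ref{T:main2} must coincide with $\D$.

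Now apply Theorem~\ref{T:main2} to our fixed $t\in\I(\C)$: one of its three conclusions holds for a component of $\F$, which must be $\D$. I would rule out the first two. If conclusion~(1) held, then $\C=\D=F^*(\F)\trianglelefteq\F$; but $t\in\I(\C)$ means (after replacing $(\C,t)$ by a suitable $\F$-conjugate) that $\C$ is a component of $C_\F(t)$, hence that $t$ centralizes $\C$, forcing $t\in C_S(F^*(\F))=1$, a contradiction. (Alternatively, conclusion~(1) would make $\F$ an almost simple system with socle $\F_{\Sol}(q)$, hence by \cite{HenkeLynd2018} the extension of $\F_{\Sol}(q)$ by a group of field automorphisms, and a direct inspection shows no member of $\CC(\F)$ is then a Benson--Solomon system, contrary to $\C\in\CC(\F)$.) If conclusion~(2) held, then $\D$ and $\D^t$ would be two distinct components of $\F$, contradicting uniqueness of the component. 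Therefore conclusion~(3) holds: $\D\cong\F_{\Sol}(q^2)$, $t\notin\D$, and $C_\D(t)=\C$. As this argument applies verbatim to every $t\in\I(\C)$, it establishes all of Corollary~\ref{C:main} except the identification of the automorphism of $\D$ induced by $t$.

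For that remaining point, set $\alpha:=c_t|_\D\in\Aut(\D)$. Since $t^2=1$ we have $\alpha^2=\id$, and $\alpha\neq\id$ because $t\notin\D\supseteq C_S(\D)=1$ is impossible to avoid otherwise; moreover, if $\alpha=c_d|_\D$ with $d$ in the underlying $2$-group of $\D$ then $td^{-1}\in C_S(\D)=1$ gives $t=d\in\D$, contrary to $t\notin\D$. Hence $\alpha$ represents an involution of $\Out(\D)=\Out(\F_{\Sol}(q^2))$. By \cite{HenkeLynd2018} this outer automorphism group is cyclic, generated by the class of a standard (Frobenius-restriction) field automorphism, so it has a unique involution, represented by the standard field automorphism of order $2$ — the one lifting $x\mapsto x^q$ — whose fixed subsystem is isomorphic to $\F_{\Sol}(q)$ (consistent with $C_\D(t)=\C$). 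Thus $\alpha$ is $\Aut(\D)$-conjugate to a standard field automorphism, equivalently some $\F$-conjugate of $t$ induces a standard field automorphism on $\D$, completing the proof. I expect the main obstacle to be the bookkeeping in the elimination of conclusion~(1): making the reduction ``$t$ centralizes $\C$'' fully precise in the presence of the ``up to conjugacy'' clause in the definition of $\I(\C)$, and using the fusion-system analogue of $C_G(F^*(G))\le F^*(G)$, together with extracting from \cite{HenkeLynd2018} exactly the statements about $\Out(\F_{\Sol}(q^2))$ and the fixed subsystem of its order-$2$ field automorphism needed to pin down $\alpha$.
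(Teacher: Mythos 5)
Your overall strategy coincides with the paper's: Corollary~\ref{C:main} is deduced from Theorem~\ref{T:main2} together with Proposition~\ref{P:fieldconjugate}, using simplicity of $F^*(\F)$ to see that the component produced by the theorem must be $\D=F^*(\F)$, that conclusion (2) would yield two distinct components, and that conclusion (3) then gives the result. The elimination of conclusion (1) --- which you correctly identify as the delicate point --- is where your argument does not work as written. From ``$\C^\alpha$ is a component of $C_\F(t^\alpha)$'' you obtain $T\leq C_S(t^\alpha)$, i.e.\ $t^\alpha\in C_S(T)$; this does \emph{not} place $t^\alpha$ in $C_S(F^*(\F))$. What Proposition~\ref{P:CST} together with Lemma~\ref{L:lim1=0} gives is $C_S(T)=C_S(\C)Z(T)=Z(T)=\gen{z}$, so the correct conclusion is $t^\alpha=z\neq 1$, not $t^\alpha=1$, and no contradiction is reached yet. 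The contradiction must come from the further observation that $\C\not\subseteq C_\F(z)$: since all involutions of $T$ are $\C$-conjugate (Lemma~\ref{L:involutionsconjugate}) and $T$ has involutions other than $z$, the system $\C$ contains morphisms not fixing $z$ and hence is not a subsystem of $N_\F(\gen{z})$. Your parenthetical fallback is also false as stated: if $\F$ is a nontrivial field-automorphism extension of $\F_{\Sol}(q)$, then $\CC(\F)$ \emph{does} contain Benson--Solomon systems, e.g.\ $C_{F^*(\F)}(f)\cong\F_{\Sol}(q^{1/2})$ by Proposition~\ref{P:fieldconjugate}(b); the statement you actually need is that the subsystem $F^*(\F)$ itself satisfies $\I(F^*(\F))=\emptyset$, which is exactly what the argument above proves.

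The remainder is sound modulo small repairs. In your $\Out(\D)$ computation the relevant centralizer is $C_S(S_0)=Z(S_0)$ (again by Proposition~\ref{P:CST}), not $C_S(\D)=1$; this still forces $t\in S_0$ and the argument survives. Your route to the field-automorphism statement via cyclicity of $\Out(\F_{\Sol}(q^2))$ is a legitimate alternative, but note it only identifies the \emph{outer} class of $c_t$; to get that an actual conjugate of $t$ induces a standard field automorphism on the nose you should invoke Proposition~\ref{P:fieldconjugate}(a) (all involutions in $D-S_0$ are conjugate, and the standard field automorphism is among them), which is how the paper intends the corollary to be read off.
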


Corollary~\ref{C:main} follows immediately from Theorem~\ref{T:main2} and the
following proposition, which extends the results of \cite{HenkeLynd2018}.  A
more precise statement is found in Proposition~\ref{P:fieldconjugate} as one of
our preliminary results. 

\begin{mproposition}
Let $\F$ be a saturated fusion system over the $2$-group $S$ such that $F^*(\F)
= \F_{\Sol}(q^2)$. Then, writing $S_0$ for Sylow subgroup of $F^*(\F)$, all
involutions in $S-S_0$ are $\F$-conjugate, and there exists an involution $t\in
S-S_0$ such that $C_{F^*(\F)}(t) \cong \F_{\Sol}(q)$. 
\end{mproposition}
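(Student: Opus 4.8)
The plan is to pin down the structure of $\F$ using the classification of the automorphisms and almost simple extensions of the Benson--Solomon systems from \cite{HenkeLynd2018}, and then to deduce both assertions from the way a field automorphism interacts with the Levi--Oliver/Aschbacher--Chermak construction of $\F_{\Sol}(q^2)$ in \cite{LeviOliver2002, LeviOliver2005, AschbacherChermak2010}. Write $\E := \F_{\Sol}(q^2)$ and $q = p^a$. Since $F^*(\F) = \E$ is simple, $\F$ is almost simple, $C_S(\E) = 1$, and $S/S_0 \hookrightarrow \Out(\E)$. By \cite{HenkeLynd2018}, $\Out(\E)$ is cyclic of order $2a$, generated by the class $\bar\psi$ of the standard field automorphism $\psi\colon x\mapsto x^p$; in particular $S/S_0$ is cyclic, and assuming $S\neq S_0$ it contains the unique involution $\tau:=\bar\psi^{a}$ of $\Out(\E)$, which is induced by the Frobenius $x\mapsto x^{q}$ on $\Spin_7(q^2)$. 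Every involution of $S-S_0$ then lies in the preimage $S_1\norm S$ of $\langle\tau\rangle$, with $|S_1:S_0|=2$, and by \cite{HenkeLynd2018} the resulting degree-$2$ extension of $\E$ over $S_1$ is the unique one realizing $\tau$; so it suffices to argue inside that extension, and I assume $S=S_1$.

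First I would establish the existence statement. Fix an involution $t\in S-S_0$ inducing $\tau$ on $\E$; such $t$ exists by \cite{HenkeLynd2018}, or --- at the level of the relevant $2$-local data --- by Lang's theorem applied to $\Spin_7$, whose fixed points under $x\mapsto x^q$ form $\Spin_7(q)$. Then $C_{\F_{\Spin_7(q^2)}}(t)=\F_{\Spin_7(q)}$, and $C_\E(t)$ is a saturated fusion system over $C_{S_0}(t)\in\Syl_2(\Spin_7(q))$ containing $\F_{\Spin_7(q)}$. The morphisms distinguishing $\F_{\Sol}(q^2)$ from $\F_{\Spin_7(q^2)}$ are $t$-invariant and restrict on $C_{S_0}(t)$ to exactly those distinguishing $\F_{\Sol}(q)$ from $\F_{\Spin_7(q)}$, by naturality of the construction in the parameter; hence $C_{F^*(\F)}(t)=C_\E(t)\cong\F_{\Sol}(q)$.

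It remains to prove that every involution $u\in S-S_0$ is $\F$-conjugate to $t$, which I expect to be the main obstacle. Set $G:=\Spin_7(q^2).2$, the extension by the order-$2$ field automorphism, with $S\in\Syl_2(G)$ and $S_0=S\cap\Spin_7(q^2)$; then $\F_0:=\F_S(G)=\langle\F_{\Spin_7(q^2)},t\rangle$ is a subsystem of $\F$. The involutions in the nontrivial coset $G-\Spin_7(q^2)$ fall into at most two $\Spin_7(q^2)$-conjugacy classes: that of $t$, with centralizer $\Spin_7(q)$, and --- if distinct from it --- that of $t\delta$, where $\delta$ is the outer diagonal automorphism of $\Spin_7(q^2)$. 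Since $u$ lies in this coset, $u$ is $\F_0$-conjugate to $t$ or to $t\delta$. But by \cite{HenkeLynd2018}, $\delta$ restricts to an \emph{inner} automorphism of $\E$ (equivalently, $\Out(\E)$ has no diagonal part), so $t$ and $t\delta$ are conjugate in the subsystem $\langle\E,t\rangle$ of $\F$; hence $u\sim_\F t$ in either case. The points that I expect to need genuine care --- all relying on the detailed structure of $\F_{\Sol}(q^2)$ and on \cite{HenkeLynd2018} --- are the exact count of the field-type involution classes in $\Spin_7(q^2).2$, the verification that it is precisely the extra Solomon fusion that identifies them, and (for the second paragraph) the identification of the $t$-fixed part of the Solomon fusion with that of $\Spin_7(q)$.
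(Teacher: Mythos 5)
Your overall strategy is the same as the paper's: reduce to the index-$2$ extension of $\E=\F_{\Sol}(q^2)$ by a field automorphism (using \cite{HenkeLynd2018} to see that $O^2(\F)=\E$ and that the extension is the split one by the standard field automorphism), prove the conjugacy statement by transitivity of $\Inndiag(\Spin_7(q^2))$ on field-type involutions, and identify $C_\E(t)$ with $\F_{\Sol}(q)$ by tracking the extra Solomon fusion through the field automorphism. But the two steps you yourself flag as needing ``genuine care'' are precisely where the real work lies, and as written both have gaps.

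For the conjugacy statement, your argument conflates $\F$-conjugacy of involutions of $S$ with conjugacy of the automorphisms they induce. Knowing that $\delta$ induces something ``inner'' on $\E$ does not by itself conjugate the element $t$ to the element $u$ inside $\F$: you must exhibit an actual element of $\Spin_7(q^2)$ doing the conjugation, and you must separately deal with the fact that two involutions of $S-S_0$ can induce the \emph{same} automorphism of $S_0$ yet differ by the central involution $z$ (i.e.\ $f$ versus $fz$). The paper handles this by an explicit computation in $N_{T}(H_\sigma)$: the element $t$ of order $2^{k+1}$ realizing the diagonal automorphism satisfies $f^{t^2}=fz$ with $t^2\in H_\sigma=\Spin_7(q^2)$, while the other $\Inndiag$-class of outer involutions in the adjoint quotient lifts only to elements of order $4$ in the simply connected group (via $t^g=tu$ with $u^2=z$ and $[f,u]=1$). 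Without this, ``at most two classes, fused because $\Outdiag$ dies in $\Out(\E)$'' does not close the argument.

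For the identification $C_\E(t)\cong\F_{\Sol}(q)$, ``naturality of the construction in the parameter'' at best yields the containment $C_\E(t)\supseteq\F_{\Sol}(q)$, and even that requires a genuine verification: the paper works inside the Aschbacher--Chermak amalgam to show that the generator $c_y$ of the extra Solomon fusion (Lemma~\ref{L:essentials}) commutes with the unique lift of the field automorphism to $G=H*_BK$ (Theorem~\ref{T:uniqueliftsigma}), and uses the generation statement of Lemma~\ref{L:gen}. The reverse containment --- that $C_\E(t)$ is no larger than $\F_{\Sol}(q)$ over the Sylow subgroup $C_{S_0}(t)$ --- is not automatic and is obtained in the paper by Holt's theorem for fusion systems (\cite[Theorem~2.1.9]{AschbacherQFP}), after first computing $C_{C_\E(t)}(z)=\H_{\sigma_1}$ via tameness of $\Spin_7$ and Theorem~\ref{T:reduct}. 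That equality step is entirely absent from your proposal. (Two smaller points: $\Out(\E)$ is a cyclic $2$-group, not cyclic of order $2a$ --- the odd part of the field automorphism group does not survive to the fusion system; and one must check that $t$ is fully centralized before $C_\E(t)$ is even a saturated, well-defined object.)
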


We now give an outline of the paper. Section~\ref{S:prelim} provides the
requisite background material, much of it due to Aschbacher, together with
motivation coming from the group case and some new lemmas needed later on.
The various definitions used in later sections are summarized in a
large table in Section~\ref{SS:prelim-summary}. The proof of
Theorem~\ref{T:main} begins in Section~\ref{S:showStandard}, where we show that
a subintrinsic maximal Benson-Solomon component is necessarily a standard
subsystem in the sense of Section~\ref{SS:standard}. When combined with results
of Aschbacher in \cite{AschbacherFSCT}, this allows the consideration of a
subsystem $\Q$ which plays the role of the centralizer of $\C$, and with a
little more work shows that the Sylow subgroup $Q$ of $\Q$ is either of
$2$-rank $1$ or elementary abelian. Next, in Section~\ref{S:elemab}, we handle
the case in which $Q$ is elementary abelian and prove a lemma regarding the
$2$-rank $1$ case. In Section~\ref{S:quaternion}, we handle the case in which
$Q$ is quaternion using Aschbacher's classification of quaternion fusion
packets \cite{AschbacherQFP}.  Finally, in Section~\ref{S:cyclic} we handle the
cyclic case and complete the proof of Theorem~\ref{T:main}. We then prove
Theorem~\ref{T:main2} in Section~\ref{S:general}. 

\subsection*{Acknowledgements}
We would like to thank Michael Aschbacher for providing us with early copies of
his preprints on the various steps of the program, and for suggesting
Lemma~\ref{L:ComponentsFSGroups} to us. We also express our gratitude to the
referees for comments and suggestions which led to numerous improvements to the
paper.

\section{Preliminaries}\label{S:prelim}

\subsection{Local theory of fusion systems}\label{SS:LocalTheoryFS}
Throughout let $\F$ be a saturated fusion system over a finite $p$-group $S$.
For general background on fusion systems, in particular for the definition of a
saturated fusion system, we refer the reader to
\cite[Chapter~I]{AschbacherKessarOliver2011}.  In addition to the notations
introduced there, we will write $\F^f$ for the set of fully $\F$-normalized
subgroups of $S$. Moreover, we write $\E\leq \F$ to indicate that $\E$ is
a (not necessarily saturated) subsystem of $\F$. Conjugation-like maps will be
written on the right and in the exponent.  In particular, if $\m{E}$ is a
subsystem of $\F$ over $T$ and $\alpha\in\Hom_\F(T,S)$, then $\E^\alpha$
denotes the subsystem of $\F$ over $T^\alpha$ with
$\Hom_{\m{E}^\alpha}(P^\alpha,Q^\alpha)=\{\alpha^{-1}\circ
\phi\circ\alpha\colon \phi\in\Hom_{\m{E}}(P,Q)\}$ for all $P,Q\leq T$. 

\subsubsection{Local subsystems}
We recall that, for any subgroup $X$ of $S$, we have the normalizer and the
centralizer of $X$ defined. The normalizer $N_\F(X)$ is a fusion subsystem of
$\F$ over $N_S(X)$, and the centralizer $C_\F(X)$ is a fusion subsystem of $\F$
over $C_S(X)$. These subsystems are not necessarily saturated, but if $X$ is
fully $\F$-normalized, then $N_\F(X)$ is saturated, and if $X$ is fully
centralized, then $C_\F(X)$ is saturated. Thus, we will often move from a
subgroup of $S$ to a fully $\F$-normalized (and thus fully $\F$-centralized)
conjugate of this subgroup. In this context it will be convenient to use the
following notation, which was introduced by Aschbacher.

\begin{notation}\label{N:AA}
For a subgroup $X \leq S$, denote by $\AA(X)$ or $\AA_\F(X)$ the set of morphisms $\alpha \in
\Hom_{\F}(N_S(X),S)$ such that $X^{\alpha} \in \F^f$.  
\end{notation}

Throughout, we will use often without reference that $\AA(X)$ is non-empty for
every subgroup $X$ of $S$. In fact, the following lemma holds.

\begin{lemma}\label{AAnonempty}
If $X\leq S$ and $Y\in X^\F\cap \F^f$, then there exists $\alpha\in \AA(X)$ with $X^{\alpha}=Y$.
\end{lemma}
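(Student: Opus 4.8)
The plan is to deduce this directly from the saturation axioms in the form given in \cite[Section~I.2]{AschbacherKessarOliver2011}, using that a fully $\F$-normalized subgroup is both fully automized and, being fully $\F$-centralized, receptive.

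First I would choose any isomorphism $\varphi_0 \in \Hom_\F(X,Y)$; this exists since $Y \in X^\F$. Conjugation by $\varphi_0$ carries the $p$-group $\Aut_S(X)$ onto a $p$-subgroup $\Aut_S(X)^{\varphi_0}$ of $\Aut_\F(Y)$. Since $Y$ is fully $\F$-normalized it is fully automized, so $\Aut_S(Y) \in \Syl_p(\Aut_\F(Y))$, and by Sylow's theorem applied inside $\Aut_\F(Y)$ there is $\chi \in \Aut_\F(Y)$ with $(\Aut_S(X)^{\varphi_0})^{\chi} \leq \Aut_S(Y)$. Put $\varphi := \varphi_0\chi \in \Hom_\F(X,Y)$; then $\Aut_S(X)^{\varphi} \leq \Aut_S(Y)$, which is precisely the statement that the extension subgroup $N_\varphi$ attached to $\varphi$ equals all of $N_S(X)$.

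Next, since $Y$ is fully $\F$-normalized it is also fully $\F$-centralized, hence receptive, so the extension axiom applies to the isomorphism $\varphi \colon X \to Y$: it extends to some $\alpha \in \Hom_\F(N_\varphi, S) = \Hom_\F(N_S(X), S)$ with $\resbar{\alpha}{X} = \varphi$. In particular $X^{\alpha} = X^{\varphi} = Y$, and $Y \in \F^f$ by hypothesis, so $\alpha \in \AA(X)$ by definition, which is what we wanted.

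There is no real obstacle in this argument; the one point that wants care is the identification $N_\varphi = N_S(X)$ from the containment $\Aut_S(X)^{\varphi} \leq \Aut_S(Y)$, together with keeping composition order consistent with the convention in this paper that conjugation-like maps are written on the right. In effect the lemma is a mild restatement of \cite[Lemma~I.2.6]{AschbacherKessarOliver2011} combined with the extension axiom, recorded here for later convenience.
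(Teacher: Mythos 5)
Your argument is correct: it is the standard derivation from saturation, adjusting the initial isomorphism by Sylow's theorem in $\Aut_\F(Y)$ (using that $Y$ is fully automized) so that $N_\varphi=N_S(X)$, and then extending via receptivity. The paper itself gives no argument but simply cites \cite[Lemma~I.2.6(c)]{AschbacherKessarOliver2011}, and what you have written is precisely the proof of that cited lemma, so the two are in substance the same.
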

\begin{proof}
See e.g. \cite[Lemma~I.2.6(c)]{AschbacherKessarOliver2011}. 
\end{proof}

If $x\in S$, then we often write $C_\F(x)$, $N_\F(x)$ and $\AA(x)$ instead of
$C_\F(\<x\>)$, $N_\F(\<x\>)$ and $\AA(\<x\>)$ respectively. Similarly, we call
$x$ fully centralized (fully normalized), if $\<x\>$ is fully centralized
(fully normalized respectively). If $x$ is an involution, then the reader
should note that $C_\F(x)=N_\F(\<x\>)$, and $x$ is fully centralized if and only
if $\<x\>$ is fully normalized. 

\subsubsection{Normal and subnormal subsystems}
Recall that a subgroup $T$ of $S$ is called \emph{strongly closed} in $\F$ if
$P^\phi\leq T$ for every subgroup $P\leq T$ and every $\phi\in\Hom_\F(P,S)$.
The following elementary lemma will be useful later on.

\begin{lemma}\label{GetStronglyClosedinNormalizer}
Let $T$ be strongly closed in $\F$ and suppose we are given two $\F$-conjugate
subgroups $U$ and $U'$ of $S$. If $T\leq N_S(U)$ and $U'$ is fully normalized,
then $T\leq N_S(U')$. 
\end{lemma}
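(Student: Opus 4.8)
The plan is to replace the abstract $\F$-conjugacy between $U$ and $U'$ by a single morphism defined on all of $N_S(U)$, using Lemma~\ref{AAnonempty}, and then to use strong closure of $T$ to control where that morphism sends $T$.

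First I would invoke Lemma~\ref{AAnonempty}: since $U'$ lies in $U^\F \cap \F^f$, there is a morphism $\alpha \in \AA(U)$ with $U^\alpha = U'$. In particular $\alpha \in \Hom_\F(N_S(U), S)$ is an injective group homomorphism whose domain contains both $U$ and the given subgroup $T$. Because $T$ is strongly closed in $\F$ and $T \leq N_S(U) = \operatorname{dom}(\alpha)$, applying strong closure to $T$ together with $\alpha|_T$ gives $T^\alpha \leq T$, and since $\alpha$ is injective and $T$ is finite, in fact $T^\alpha = T$.

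It then remains to check that $T^\alpha$ normalizes $U'$. Fixing $t \in T \leq N_S(U)$, I would use that $\alpha$ is a homomorphism on $N_S(U)$ and that both $U$ and $t$ lie in $N_S(U)$, so conjugation is respected: $(U^t)^\alpha = (U^\alpha)^{t^\alpha}$. Since $t \in N_S(U)$ we have $U^t = U$, so the left-hand side equals $U^\alpha = U'$, while the right-hand side equals $(U')^{t^\alpha}$; hence $t^\alpha \in N_S(U')$. Letting $t$ range over $T$ yields $T = T^\alpha \leq N_S(U')$, which is the claim.

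I do not anticipate a genuine obstacle: the argument is a direct combination of Lemma~\ref{AAnonempty} with the definition of strong closure. The only point deserving a moment's care is the identity $(U^t)^\alpha = (U^\alpha)^{t^\alpha}$, which holds precisely because $\alpha$ restricts to a group homomorphism on the subgroup $N_S(U)$ that contains both $U$ and $t$.
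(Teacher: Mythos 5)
Your proof is correct and follows essentially the same route as the paper: the paper likewise picks $\alpha \in \AA(U)$ with $U^\alpha = U'$ via Lemma~\ref{AAnonempty} and concludes $T = T^\alpha \leq N_S(U)^\alpha \leq N_S(U')$, where your step with $(U^t)^\alpha = (U^\alpha)^{t^\alpha}$ is just the spelled-out justification of the containment $N_S(U)^\alpha \leq N_S(U')$. No issues.
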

\begin{proof}
By Lemma~\ref{AAnonempty}, there
exists $\alpha\in\AA(U)$ such that $U^\alpha=U'$. Then, as $T$ is strongly
closed, $T=T^\alpha\leq N_S(U)^\alpha\leq N_S(U')$ and this proves the assertion.
\end{proof}

A subsystem $\m{E}$ of $\F$ over $T\leq S$ is called \emph{normal} in $\F$ if
$\m{E}$ is saturated, $T$ is strongly closed, $\m{E}^\alpha=\m{E}$ for every
$\alpha\in\Aut_\F(T)$, the Frattini condition holds, and a certain technical
extra property is fulfilled (see
\cite[Definition~I.6.1]{AschbacherKessarOliver2011}). Here the Frattini
condition says that, for every $P\leq T$ and every $\phi\in\Hom_\F(P,T)$, there
are $\phi_0\in\Hom_{\m{E}}(P,T)$ and $\alpha\in\Aut_\F(T)$ such that
$\phi=\phi_0\circ\alpha$. 

Particularly important cases of normal subsystems include the (unique) smallest
normal subsystem of $\F$ over $S$, which is denoted by $O^{p^\prime}(\F)$ (cf.
\cite[Theorem~I.7.7]{AschbacherKessarOliver2011}), and the normal subsystem
$O^p(\F)$ of $\F$ (cf.  \cite[Theorem~I.7.4]{AschbacherKessarOliver2011}) over
the hyperfocal subgroup $\hyp(\F)$ of $S$. This last theorem shows additionally
that there is a one-to-one correspondence between the subgroups $T$ of $S$
containing $\hyp(\F)$ and the saturated fusion systems $\E$ of $\F$ of
$p$-\emph{power index} in $\F$. Here, a subsystem $\E$ over $T$ is said to
be of $p$-power index if $T \geq \hyp(\F)$ and $\Aut_{\E}(P) \geq
O^p(\Aut_{\F}(P))$ for each $P \leq T$. 

Once normal subsystems are defined, there is then a natural definition of a
subnormal subsystem by transitive extension. We will need the following
lemma.

\begin{lemma}\label{L:Subnormalfn}
If $\E$ is a subnormal subsystem of $\F$ over $T$, then every fully
$\F$-normalized subgroup of $T$ is also fully $\E$-normalized.
\end{lemma}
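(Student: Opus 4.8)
\emph{Proposal.} The plan is to reduce to the case where $\E$ is normal in $\F$, and then to exploit that the underlying group of a normal subsystem is strongly closed. Choose a subnormal series $\E = \E_0 \norm \E_1 \norm \cdots \norm \E_n = \F$, where each $\E_i$ is a (saturated) subsystem over a subgroup $T_i$ with $T = T_0 \leq T_1 \leq \cdots \leq T_n = S$, and argue by induction on $n$. The case $n = 0$ is trivial. For the inductive step it suffices to establish the ``normal case'': that every fully $\F$-normalized subgroup of $T_1$ is fully $\E_1$-normalized. Indeed, given a fully $\F$-normalized $P \leq T$, we have $P \leq T \leq T_1$, so $P$ is fully $\E_1$-normalized by the normal case, and then fully $\E$-normalized by the inductive hypothesis applied to the series $\E = \E_0 \norm \cdots \norm \E_1$ of length $n-1$ inside $\E_1$. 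So the entire content is the normal case.

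For the normal case, assume $\E \norm \F$ is over $T$ and $P \leq T$ is fully $\F$-normalized; I must show $|N_T(P)| \geq |N_T(Q)|$ for every $Q \in P^\E$. Fix such a $Q$. Since $\E$ is a subsystem of $\F$, we have $Q \in P^\F$, hence $P \in Q^\F \cap \F^f$, and Lemma~\ref{AAnonempty} yields $\alpha \in \AA(Q)$ with $Q^\alpha = P$. In particular $\alpha \in \Hom_\F(N_S(Q), S)$ is injective with $N_S(Q)^\alpha \leq N_S(P)$. Now use that $T$ is strongly closed in $\F$, which is part of the definition of a normal subsystem: the subgroup $N_T(Q) = N_S(Q) \cap T$ lies in the domain of $\alpha$ and is contained in $T$, so $N_T(Q)^\alpha \leq T$; combined with $N_T(Q)^\alpha \leq N_S(Q)^\alpha \leq N_S(P)$ this gives $N_T(Q)^\alpha \leq N_S(P) \cap T = N_T(P)$. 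Since $\alpha$ is injective, $|N_T(Q)| = |N_T(Q)^\alpha| \leq |N_T(P)|$. As $Q \in P^\E$ was arbitrary, $P$ is fully $\E$-normalized.

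I do not expect a serious obstacle: once the statement is set up correctly it is essentially a one-line consequence of Lemma~\ref{AAnonempty} and strong closure. The only point requiring a little care is the organization of the reduction, so that one only ever invokes strong closure of the bottom term of a \emph{single} normality relation $\E_i \norm \E_{i+1}$, rather than attempting to prove directly that $T$ is strongly closed along the whole subnormal series (which would require an auxiliary transitivity argument).
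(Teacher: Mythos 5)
Your proof is correct, and the overall architecture (induction on the length of a subnormal series, reducing everything to the case of a single normal inclusion) is exactly the paper's. The difference is in the normal case: the paper simply cites \cite[Lemma~3.4.5]{AschbacherNormal}, whereas you supply a self-contained argument. Your argument for the normal case is sound and is essentially the same two-line technique used in Lemma~\ref{GetStronglyClosedinNormalizer}: pick $\alpha\in\AA(Q)$ with $Q^\alpha=P$ via Lemma~\ref{AAnonempty}, note $N_T(Q)^\alpha\leq N_S(P)$ automatically and $N_T(Q)^\alpha\leq T$ by strong closure, and conclude $|N_T(Q)|\leq|N_T(P)|$. Since $P^\E\subseteq P^\F$, this handles every $\E$-conjugate of $P$, which is all that fully $\E$-normalized requires. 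So your version buys independence from the external reference at essentially no cost.

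One cosmetic slip worth fixing: in the inductive step your indices don't match your series. With $\E=\E_0\norm\cdots\norm\E_n=\F$, the single normality relation involving $\F$ is $\E_{n-1}\norm\F$, not $\E_1\norm\F$; and the series of length $n-1$ to which the inductive hypothesis applies is $\E_0\norm\cdots\norm\E_{n-1}$ inside $\E_{n-1}$. (Alternatively you may peel off the bottom, applying the inductive hypothesis to $\E_1\norm\cdots\norm\F$ first and the normal case to $\E_0\norm\E_1$ second, but then the order of the two appeals in your text must be reversed.) Either way the induction goes through because each $\E_i$ is saturated, so the lemma's hypotheses are available at every stage; this is purely a matter of relabeling, not a gap.
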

\begin{proof}
In the case that $\E$ is normal in $\F$, this is
\cite[Lemma~3.4.5]{AschbacherNormal}. The general case follows by induction on
the length of a subnormal series for $\E$ in $\F$.
\end{proof}

\subsubsection{Normal and subnormal closures}\label{SSS:subnormalclosure}

In Section~\ref{S:quaternion}, we will need to work with fusion system
analogues of normal and subnormal closures in groups. By \cite[Theorem~1]{AschbacherGeneralized}, given normal subsystems $\E_i$ over
$T_i$ for $i=1,2$, one can define a unique normal subsystem
$\E_1\wedge\E_2$ of $\F$ over $T_1\cap T_2$ which is normal in $\E_1$ and
$\E_2$. By iteration of this process, given normal subsystems
$\E_1,\dots,\E_r$, there is a subsystem $\bigwedge_{i=1}^r\E_i$ of $\F$
which plays the same role as the intersection of normal subgroups plays in the
theory of groups. For any subgroup $Q\leq S$, the \emph{normal closure}
of $Q$ in $\F$ is defined as \[\bigwedge_{\E\unlhd\F,Q\subseteq\E}\E.\] 
Set further $\sub_0(\F,Q) = \F$, and for each $i \geq 0$, define
$\sub_{i+1}(\F,Q)$ to be the normal closure of $Q$ in $\sub_i(\F,Q)$.  Then
$\sub_{i+1}(\F,Q) \norm \sub_i(\F,Q)$ for each $i \geq 0$. Since $\F$ is
finite, the series is eventually stationary. The \emph{subnormal closure}
$\F^{\circ}$ of $Q$ is defined to be the terminal member of this series.

\subsubsection{Product systems}\label{SSS:Products}
We have seen $O^p(\F)$ as a particularly important example of a normal
subsystem. Conversely, given a normal subsystem $\E$ of $\F$ over $T$ and a
subgroup $P$ of $S$, one may construct the product system $\E P$ of $\F$, which
contains $\E$ as a normal subsystem of $p$-power index. This is a saturated
subsystem of $\F$, and furthermore it is the unique saturated subsystem $\D$ of
$\F$ over $TP$ such that $O^p(\D) = O^p(\E)$.  See
\cite[Section~8]{AschbacherGeneralized}, and also \cite{Henke2013} for a
simplified construction of $\E P$.  

Note however that the uniqueness of the construction of the product depends on
the ambient system $\F$ in which it is defined, as is seen in the following
example.
\begin{example}[{\!\!\cite[Example~7.4]{Henke2013}}] 
\label{E:EP}
Let $S$ be a $2$-dimensional vector space over $\FF_q$ with $q \geq 3$ a prime
power, let $U$ be a one-dimensional subspace of $S$, and let $W_1 \neq W_2$ be
two complements to $U$ in $S$. Let $1 \neq \lambda \in \FF_q^\times$, and
define $\alpha_i \in GL(S)$ to be the transformation which acts as
multiplication by $\lambda$ on $U$ and which is the identity on $W_i$ ($i =
1,2$). Then $\alpha_1|_U = \alpha_2|_U$. So if we set $\F_i = \F_S(S \rtimes
\gen{\alpha_i})$, then $O^p(\F_1) = \F_U(U\rtimes \gen{\alpha_1|_U}) =
\F_U(U\rtimes \gen{\alpha_2|_U}) = O^p(\F_2)$. Let $\E$ be this subsystem and
set $P = W_1$.  Then $P$ is central in $S$, so fully $\F_i$-normalized, and
$(\E P)_{\F_1} = \F_1 \neq \F_2 = (\E P)_{\F_2}$. 
\end{example}

The following lemma about factorization of morphisms in product systems will be
needed later in Lemma~\ref{L:ConjugateDDt}.
\begin{lemma}\label{L:ProductFactorize}
Let $\E$ be a normal subsystem of $\F$ over $T$. Let $P\leq S$, $X\leq
TP$, and $\phi\in\Hom_{\E S}(X,S)$. Then $\phi=\psi c_s$ for some
$\psi\in\Hom_{\E P}(X,TP)$ and some $s\in S$.  
\end{lemma}
\begin{proof}
We will use the definition of the product given in \cite{Henke2013}. By this
definition, $\E P=\E (TP)$. Hence, to ease notation we may assume $T\leq P$.
Setting 
\[
\Ac(Q):=\<\alpha \in \Aut_\F(Q) \colon \alpha\mbox{ of $p'$
order}, [Q,\alpha]\leq Q\cap T,\mbox{ and }\alpha|_{Q\cap T}\in
\Aut_{\E}(Q\cap T)\>
\] 
for every $Q\leq S$, the definition of the product says that 
\[
\E S=\<\Ac(Q)\colon Q\leq S,\;Q\cap T\in\E^c\>_S \quad \mbox{and} \quad \E
P=\<\Ac(Q)\colon Q\leq P,\;Q\cap T\in \E^c\>_P.\]
Note that $\Ac(Q)^{c_s}=\Ac(Q^s)$ and $Q^s\cap T\in\E^c$ for
every $Q\leq S$ with $Q\cap T\in\E^c$ and every $s \in S$.  Hence,
$\phi$ is of the form $\phi=\psi c_s$ for some $s\in S$ and some morphism
$\psi\in\Hom_{\E S}(X,S)$ such that $\psi$ decomposes as a composition of
restrictions of automorphisms in $\Ac(Q)$ for various $Q \leq S$ with
$Q\cap T\in\E^c$. (That is, all conjugation homomorphisms induced by elements
of $S$ appearing in a decomposition of $\phi$ as a morphism in $\E S$ can be
moved to the end.) Write $\psi=(\alpha_1|_{X_0})(\alpha_2|_{X_1})\cdots
(\alpha_k|_{X_{k-1}})$ where $X=X_0,\dots,X_k$ and $Q_1,\dots,Q_k$ are
subgroups of $S$, and $\alpha_i\in \Ac(Q_i)$ is such that
$\<X_{i-1},X_i\>\leq Q_i$ and $X_{i-1}^{\alpha_i}=X_i$. By definition of
$\Ac(Q_i)$, we have $[Q_i,\beta]\leq Q_i\cap T\leq Q_i\cap P$ for each
$\beta \in \Ac(Q_i)$, and hence $\alpha_i|_{Q_i\cap P}\in \Ac(Q_i\cap
P)$. In particular, as $X_0=X\leq P$ by assumption, we have $X_i\leq P$ for
$i=0,1,\dots,k$. Hence, $\psi\colon X\rightarrow P$ is a morphism in $\E P$. 
\end{proof}

\subsubsection{Normalizers of $p$-subgroups in normal subsystems}
We will make use of the following definition and two lemmas concerning
local subsystems in product systems later in Lemma~\ref{L:CCM}. 

\begin{definition}\label{D:NEP}
Let $\E$ be a
normal subsystem of $\F$ over $T\leq S$. Then for any subgroup $P\leq S$ such
that $P\in (\E P)^f$, we define $N_\E(P)$ to be the unique normal subsystem of
$N_{\E P}(P)$ over $N_T(P)$ of $p$-power index. 
\end{definition}

Notice that the above definition makes sense. For if $P\in (\E P)^f$, $N_{\E
P}(P)$ is saturated. Moreover, $\hyp(N_{\E P}(P))\leq \hyp(\E P)\leq T$ and
thus $\hyp(N_{\E P}(P))\leq N_T(P)$ with $N_T(P)$ is strongly closed in $N_{\E
P}(P)$. So by \cite[Theorem~I.7.4]{AschbacherKessarOliver2011}, there exists a
unique normal subsystem of $N_{\E P}(P)$ over $N_T(P)$ of $p$-power index.

As before, the definition of $N_\E(P)$ depends on the fusion system $\F$,
since $\E P=(\E P)_\F$ depends on $\F$.  For instance, in
Example~\ref{E:EP}, $P = W_1$ is normal in $\F_1$ but not in $\F_2$.  So
$N_{\E}(P)_{\F_1}$ is the unique normal subsystem of $\F_1$ over $N_{U}(P) = U$
of index a power of $p$, namely $\E$. But $N_{\F_2}(P) = \F_S(S)$, so
$N_{\E}(P)_{\F_2}$ is the unique normal subsystem of $\F_S(S)$ over $U$ of
$p$-power index, namely $\F_U(U)$. Hence, $N_{\E}(P)_{\F_1}$ and
$N_{\E}(P)_{\F_2}$ are not equal, and indeed are not even isomorphic to each
other.

We write $N_\E(P)_\F$ for $N_\E(P)$ if we want to make clear that we formed
$N_\E(P)$ inside of $\F$. If $p=2$ and $t$ is an involution, then we write
$C_\E(t)=C_\E(t)_\F$ for $N_\E(\<t\>)$. 

\begin{lemma}\label{L:NEP1}
Let $\E$ be a normal subsystem of $\F$ over $T\leq S$. If $P\in\F^f$, then
$P\in (\E P)^f$ and $N_\E(P)$ is normal in $N_\F(P)$.  
\end{lemma}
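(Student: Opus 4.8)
The statement has two parts: first $P\in(\E P)^f$, which is what makes $N_\E(P)$ well defined via Definition~\ref{D:NEP}, and then $N_\E(P)\norm N_\F(P)$.

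For the first part, the plan is to compare orders of normalizers in $TP$. Since $\E\norm\F$ the subgroup $T$ is strongly closed and hence normal in $S$, so $TP$ is a subgroup of $S$ and $\E P$ is a saturated subsystem over $TP$. Let $Q\in P^{\E P}$; then $Q\le TP$, and since $\E P\le\F$ there is an $\F$-isomorphism $P\to Q$, which restricts to an isomorphism $T\cap P\to T\cap Q$ because $T$ is strongly closed. Hence $|T\cap Q|=|T\cap P|$, and comparing orders gives $TQ=TP$. A direct computation, using $T\norm TP$ and $TQ=TP$, then yields $N_{TP}(Q)=N_T(Q)Q$ with $N_T(Q)\cap Q=T\cap Q$, and likewise $N_{TP}(P)=N_T(P)P$ with $N_T(P)\cap P=T\cap P$. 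Finally, since $P\in\F^f$, Lemma~\ref{AAnonempty} gives $\alpha\in\AA(Q)$ with $Q^\alpha=P$; then $N_S(Q)^\alpha\le N_S(P)$, and strong closure of $T$ forces $N_T(Q)^\alpha\le T\cap N_S(P)=N_T(P)$, so $|N_T(Q)|\le|N_T(P)|$. Putting these together (and using $|Q|=|P|$) gives $|N_{TP}(Q)|\le|N_{TP}(P)|$, and as $Q$ was an arbitrary $\E P$-conjugate of $P$, we conclude $P\in(\E P)^f$.

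For the second part, by the first part $N_{\E P}(P)$ is saturated over $N_{TP}(P)=N_T(P)P$, and by Definition~\ref{D:NEP} together with \cite[Theorem~I.7.4]{AschbacherKessarOliver2011}, $N_\E(P)$ is a saturated subsystem over $N_T(P)$ that is normal of $p$-power index in $N_{\E P}(P)$. Write $\D=N_\F(P)$, saturated over $N_S(P)$. To see that $N_\E(P)\norm\D$ I would check the conditions of \cite[Definition~I.6.1]{AschbacherKessarOliver2011}: $N_\E(P)$ is saturated, and $N_T(P)=T\cap N_S(P)$ is strongly closed in $\D$ because every morphism of $\D$ is a morphism of $\F$ and $T$ is strongly closed in $\F$. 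The remaining conditions — $\Aut_\D(N_T(P))$-invariance, the Frattini condition, and the extra technical condition — should be obtained by transporting the corresponding properties of $\E\norm\F$ through the product construction, combined with the uniqueness in \cite[Theorem~I.7.4]{AschbacherKessarOliver2011} of the $p$-power-index subsystem of $N_{\E P}(P)$ over the strongly closed subgroup $N_T(P)$. Alternatively, given that $P\in(\E P)^f$, the statement may be read off from Aschbacher's results on products and on normalizers of normal subsystems in \cite{AschbacherGeneralized}.

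The main obstacle is the transport step just mentioned. One cannot simply restrict the automorphisms of $T$ that witness normality of $\E$ in $\F$, since $T$ need not lie in $N_S(P)$; the argument must be organized inside $N_{\E P}(P)$ and $N_\F(P)$ simultaneously, using $P\in(\E P)^f$ to control $N_{\E P}(P)$ and the uniqueness of the $p$-power-index subsystem over a prescribed strongly closed subgroup to identify $N_\E(P)$ intrinsically and hence to show it is preserved by the relevant maps.
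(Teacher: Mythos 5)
Your argument for the first assertion, $P\in(\E P)^f$, is correct and is essentially the paper's argument: both proofs compare $|N_{TP}(Q)|$ with $|N_{TP}(P)|$ for an $\E P$-conjugate $Q$ of $P$ by choosing $\alpha\in\AA(Q)$ with $Q^\alpha=P$ and using strong closure of $T$ to get $N_T(Q)^\alpha\leq N_T(P)$. (The paper compares against a single fully $\E P$-normalized conjugate $Q$, while you run the estimate for arbitrary $Q$; either works, and your derivation of $TQ=TP$ is a legitimate substitute for the paper's appeal to the construction of $\E P$.)

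The second assertion is where your proposal has a genuine gap. You correctly identify that one cannot simply restrict the data witnessing $\E\norm\F$, since morphisms of $N_\F(P)$ on subgroups of $N_T(P)$ need not extend to morphisms of $\F$ defined on $T$; but you then defer the invariance condition, the Frattini condition, and the extra technical condition (N1) to an unexecuted ``transport step,'' which you yourself flag as the main obstacle. That step is precisely the content of the lemma, and no mechanism is given for carrying it out: the uniqueness of the $p$-power-index subsystem over $N_T(P)$ identifies $N_\E(P)$ \emph{inside} $N_{\E P}(P)$, but by itself it says nothing about how morphisms of $N_\F(P)$ that do not lie in $N_{\E P}(P)$ interact with $N_\E(P)$. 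The paper avoids a direct verification entirely: it uses \cite[Theorem~1]{Henke2013} to see that $O^p(N_{\E P}(P))N_T(P)$ is the unique saturated subsystem $\D$ of $N_{\E P}(P)$ with $O^p(\D)=O^p(N_{\E P}(P))$, observes from the construction in \cite[Theorem~I.7.4]{AschbacherKessarOliver2011} that $O^p(N_\E(P))=O^p(N_{\E P}(P))$, and concludes that $N_\E(P)$ coincides with the subsystem $O^p(N_{\E P}(P))N_T(P)$ constructed by Aschbacher in \cite[8.24]{AschbacherGeneralized}, where normality in $N_\F(P)$ is already proved. Your closing sentence gestures in this direction (``the statement may be read off from Aschbacher's results\dots''), but without the explicit identification via $O^p(N_\E(P))=O^p(N_{\E P}(P))$ and the uniqueness statement, the normality claim remains unproved.
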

\begin{proof}
Let $P\in\F^f$ and fix and $\E P$-conjugate $Q$ of $P$ with $Q\in(\E P)^f$. By
construction of $\E P$, we have $TP=TQ$. Let $\alpha\in\AA(Q)$ with
$Q^\alpha=P$. As $T$ is strongly closed, we have $N_T(Q)^\alpha\leq N_T(P)$. So
$N_{TP}(Q)^\alpha=N_{TQ}(Q)^\alpha=(N_T(Q)Q)^\alpha\leq N_T(P)P=N_{TP}(P)$.
Hence, $|N_{TP}(Q)|\leq |N_{TP}(P)|$. As $Q$ is fully $\E P$-normalized, it
follows that $P$ is fully $\E P$-normalized.

By \cite[Theorem~1]{Henke2013}, $O^p(N_{\E P}(P))N_T(P)$ is the unique
saturated subsystem $\D$ of $N_{\E P}(P)$ over $N_T(P)$ with $O^p(\D)=O^p(N_{\E P}(P))$.
Looking at the construction of normal subsystems of $p$-power index given in
\cite[Theorem~I.7.4]{AschbacherKessarOliver2011}, one observes that
$O^p(N_\E(P))=O^p(N_{\E P}(P))$. Thus, $O^p(N_{\E P}(P))N_T(P)$ equals
$N_\E(P)$. Hence, if $P\in\F^f$, our notation is consistent with the one
introduced by Aschbacher in \cite[8.24]{AschbacherGeneralized}, where it is
proved that $N_\E(P)$ is normal in $N_\F(P)$. 
\end{proof}

\begin{lemma}\label{L:NEP2}
Let $\E$ be a normal subsystem of $\F$ over $T\leq S$. Fix $P\leq S$ such that $P\in (\E
P)^f$, and let $\phi\in\Hom_\F(N_T(P)P,S)$. Then $P^\phi\in (\E P^\phi)^f$,
$N_T(P)^\phi=N_T(P^\phi)$ and $\phi|_{N_T(P)}$ induces an isomorphism from
$N_\E(P)$ to $N_\E(P^\phi)$.
\end{lemma}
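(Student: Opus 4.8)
The plan is to reduce everything to the already-established statement of Lemma \ref{L:NEP1} by choosing a suitable ``straightening'' of $\phi$. First I would fix a fully $\F$-normalized $\E P$-conjugate — equivalently, by the construction of $\E P$, an $\F$-conjugate — of $P^\phi$, say $R\in\F^f$ with $R\in (\E R)^f$ (using that $P^\phi\in R^\F$ and Lemma~\ref{L:NEP1} to get both). Pick $\alpha\in\AA(P^\phi)$ with $(P^\phi)^\alpha=R$; since $T$ is strongly closed, $\alpha$ carries $N_T(P^\phi)$ into $N_T(R)$, and in fact $N_T(P^\phi)^\alpha = N_T(R)$ by the usual order comparison (the reverse inclusion coming from $\alpha^{-1}$ applied to a fully normalized representative, as in the proof of Lemma~\ref{L:NEP1}). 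The composite $\psi := \phi\alpha$ then maps $N_T(P)P$ into $S$ and sends $P$ to $R\in\F^f$.

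Next I would verify the three assertions for $R$ in place of $P^\phi$, i.e.\ that $P^\phi$ — or rather $R$ — lies in $(\E R)^f$, that $N_T(P)^\psi = N_T(R)$, and that $\psi|_{N_T(P)}$ induces an isomorphism $N_\E(P)\to N_\E(R)$. Membership in $(\E R)^f$ is immediate by the choice of $R$. For the identification of normalizers, strong closure of $T$ gives $N_T(P)^\psi\leq N_T(R)$, and comparing orders via $\psi^{-1}$ (again using $P\in(\E P)^f$, hence $P$ fully $\E P$-normalized by Lemma~\ref{L:NEP1} applied in $\E P$, and $R$ fully $\E R$-normalized) forces equality; this is exactly the computation already carried out in Lemma~\ref{L:NEP1}. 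For the isomorphism statement, I would use that $\psi$ restricts to an isomorphism of the saturated systems $N_{\E P}(P)\to N_{\E R}(R)$ (a morphism of fusion systems carries a normalizer subsystem to the corresponding normalizer subsystem), hence carries $O^p(N_{\E P}(P))$ to $O^p(N_{\E R}(R))$ and $N_T(P)$ to $N_T(R)$; since $N_\E(P) = O^p(N_{\E P}(P))N_T(P)$ by the identification in the proof of Lemma~\ref{L:NEP1}, the image is $O^p(N_{\E R}(R))N_T(R) = N_\E(R)$.

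Finally I would transfer the conclusions back from $R$ to $P^\phi$ using $\alpha$. Since $\alpha\in\AA(P^\phi)$ with $(P^\phi)^\alpha = R$ and $R\in(\E R)^f$, the same kind of order argument (or a direct appeal to the fact that $\E P^\phi$ and $\E R$ agree on the relevant conjugates, with $TP^\phi = TR$) yields $P^\phi\in(\E P^\phi)^f$ and $N_T(P^\phi)^\alpha = N_T(R)$; then $\alpha|_{N_T(P^\phi)}$ induces an isomorphism $N_\E(P^\phi)\to N_\E(R)$ by the case of the lemma already proved for the fully normalized target, i.e.\ by Lemma~\ref{L:NEP1}-type reasoning applied to $\alpha$. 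Composing, $\psi|_{N_T(P)} = (\phi|_{N_T(P)})(\alpha|_{N_T(P^\phi)})$ induces $N_\E(P)\to N_\E(R)$, and cancelling the isomorphism induced by $\alpha$ shows $\phi|_{N_T(P)}$ induces $N_\E(P)\to N_\E(P^\phi)$ and that $N_T(P)^\phi = N_T(P^\phi)$.

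The main obstacle I anticipate is the bookkeeping around which systems are ``fully normalized'' relative to which ambient system: one must consistently distinguish $\F^f$, $(\E P)^f$, and $(\E R)^f$, and be careful that $\E P$ depends on $\F$ (as the paper emphasizes) so that $\E P^\phi$ and $\E R$ need not literally coincide — only $TP^\phi = TR$ and the equality of their $O^p$'s is guaranteed. The cleanest route is probably to avoid ever comparing $\E P^\phi$ with $\E R$ directly and instead only ever invoke Lemma~\ref{L:NEP1} (for the two maps $\psi$ with fully normalized target $R$, and $\alpha$ with fully normalized target $R$), extracting the statement for $\phi$ purely formally by composition.
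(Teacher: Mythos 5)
Your reduction to a fully $\F$-normalized target $R$ and the transfer back along $\alpha\in\AA(P^\phi)$ is fine bookkeeping, but it does not touch the actual difficulty, and the step where you dispose of that difficulty contains a genuine gap. You assert that $\psi$ (and later $\alpha$) ``restricts to an isomorphism of the saturated systems $N_{\E P}(P)\to N_{\E R}(R)$'' because ``a morphism of fusion systems carries a normalizer subsystem to the corresponding normalizer subsystem.'' That principle is valid for normalizers taken inside one fixed ambient system (an $\F$-morphism defined on $N_S(P)$ with fully normalized target induces $N_\F(P)\to N_\F(P^\psi)$), but here the two normalizers live inside two \emph{different} product subsystems $\E P$ and $\E R$, each of which depends on the subgroup being normalized and on the ambient $\F$. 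Whether conjugation by an arbitrary $\F$-morphism carries morphisms of $\E P$ to morphisms of $\E R$ is precisely the content of the lemma; it cannot be quoted as a known fact, and Lemma~\ref{L:NEP1} contains no statement about morphisms inducing isomorphisms, so there is no ``Lemma~\ref{L:NEP1}-type reasoning applied to $\alpha$'' to fall back on --- $\alpha$ is just as general an $\F$-morphism as $\phi$, so your transfer step faces the identical problem.

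Two smaller points compound this. First, you choose $R\in\F^f$ as an ``$\F$-conjugate --- equivalently $\E P$-conjugate'' of $P^\phi$ and claim $TP^\phi=TR$ is guaranteed; but $\F$-conjugacy is strictly weaker than $\E P^\phi$-conjugacy, and the identity $TP^\phi=TR$ (used in the proof of Lemma~\ref{L:NEP1}) holds only for $\E P^\phi$-conjugates, since morphisms of the product act trivially on $TP^\phi/T$. Second, the way to repair the whole argument is the route the paper takes: invoke the generation $\F=\langle \E S, N_\F(T)\rangle$ from \cite[1.3.2]{AschbacherFSCT} to reduce to the two cases where $\phi$ is a morphism of $N_\F(T)$ (there $\phi$ extends to a map of $TP$ normalizing $T$, which by the construction of the product induces an isomorphism $\E P\to\E P^\phi$) or a morphism of $\E P$ itself (there $TP=TP^\phi$ and $\E P=\E P^\phi$, and everything follows from $P\in(\E P)^f$). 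Without some such decomposition, the central compatibility of $\phi$ with the $P$-dependent subsystems $\E P$ and $\E P^\phi$ remains unproved.
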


\begin{proof}
It is sufficient to show that $(N_T(P)P)^\phi=N_T(P^\phi)P^\phi$ and $N_{\E
P}(P)^\phi=N_{\E P^\phi}(P^\phi)$. For if this is true then, as $T$ is strongly
closed, $N_T(P)^\phi=N_T(P^\phi)$. So, since $\phi$ induces an isomorphism from
$N_{\E P}(P)$ to $N_{\E P^\phi}(P^\phi)$, it will take the unique normal
subsystem of $N_{\E P}(P)$ over $N_T(P)$ of $p$-power index to the unique
normal subsystem of $N_{\E P^\phi}(P^\phi)$ over $N_T(P^\phi)$ of $p$-power
index.

By \cite[1.3.2]{AschbacherFSCT}, we have $\F=\<\E S,N_\F(T)\>$. Hence, it is
sufficient to prove the claim in the case that $\phi$ is a morphism in
$N_\F(T)$ or a morphism in $\E S$. 

If $\phi$ is a morphism in $N_\F(T)$, then $\phi$ extends to
$\alpha\in\Hom_\F(TP,TP^\phi)$ with $T^\alpha=T$. Notice that
$P^\alpha=P^\phi$, and $\alpha\colon TP\rightarrow TP^\phi$ is an isomorphism
of groups, which induces by the construction of $\E P$ and $\E P^\phi$ in
\cite{Henke2013} an isomorphism from $\E P$ to $\E P^\phi$. So
$P^\phi=P^\alpha\in (\E P^\phi)^f$ and $\phi=\alpha|_{N_T(P)P}$ induces an
isomorphism from $N_{\E P}(P)$ to $N_{\E P^\phi}(P^\phi)$. Hence, the assertion
holds if $\phi$ is a morphism in $N_\F(T)$.

Assume now that $\phi$ is a morphism in $\E S$. By the construction of $\E S$
and $\E P$ in \cite{Henke2013}, $\phi$ is the composition of a morphism in $\E
P$ and a morphism in $\F_S(S)$. As $\F_S(S)\leq N_\F(T)$ and the assertion
holds if $\phi$ is a morphism in $N_\F(T)$, we may thus assume without loss of
generality that $\phi$ is a morphism in $\E P$. However, then $TP=TP^\phi$, $\E
P=\E P^\phi$ and it follows from $P\in (\E P)^f$ that
$(N_T(P)P)^\phi=N_{TP}(P)^\phi=N_{T P^\phi}(P^\phi)=N_T(P^\phi)P^\phi$. So
$\phi$ induces an isomorphism from $N_{\E P}(P)$ to $N_{\E P}(P^\phi)=N_{\E
P^\phi}(P^\phi)$. So the assertion holds if $\phi$ is a morphism in $\E P$ and
thus also if $\phi$ is a morphism in $\E S$. As argued above, this shows that
the lemma holds.
\end{proof}

\subsection{Automorphisms and extensions of fusion and linking systems}\label{SS:aut}
At several later points, we will need to construct various extensions
of fusion systems and to determine the structure of extensions where they
arise. For example, if $\F$ is a saturated fusion system over $S$ and $\E$ is a
normal subsystem of $\F$, then we want to be able to construct certain
subsystems of $\F$ containing $\E$ and determine their structure from the
structure of $\E$.  In the category of groups, this is a relatively painless
process when the normal subgroup is quasisimple. However, in fusion systems
there are technical difficulties that necessitate in many cases the
consideration of linking systems associated to $\F$ and $\E$. 

We refer to \cite[Section III.4]{AschbacherKessarOliver2011} or \cite{AOV2012}
for the definition of an abstract linking system as used here, and for more
details on automorphisms of fusion and linking systems.  Fix a linking system
$\L$ for $\F$ with object set $\Delta$ and structural functors $\delta$ and
$\pi$, which we write on the left of their arguments. The group of
automorphisms of $\F$ is defined by
\[
\Aut(\F) = \{\alpha \in \Aut(S) \mid \F^\alpha = \F\}. 
\]
Then $\Aut_\F(S)$ is normal in $\Aut(\F)$, and the quotient
$\Aut(\F)/\Aut_{\F}(S)$ is denoted $\Out(\F)$.  

An automorphism of $\L$ is an equivalence $\alpha \colon \L \to \L$ that is
both \emph{isotypical} and \emph{sends inclusions to inclusions}. Since we do
not use those conditions explicitly, we refer to
\cite[Section~III.4]{AschbacherKessarOliver2011} for their precise meanings.
Each automorphism of $\L$ is indeed an automorphism of the category $\L$, not
merely a self-equivalence. We shall write $\Aut(\L)$ for the group of
automorphisms of $\L$. There is always a \emph{conjugation map}
\[
c\colon \Aut_\L(S) \longrightarrow \Aut(\L)
\]
which sends an element $\gamma \in \Aut_\L(S)$ to the functor $c_\gamma \in
\Aut(\L)$ defined on objects by $P \mapsto P^\gamma := P^{\pi(\gamma)}$. For a
morphism $\phi \in \Mor_{\L}(P,R)$, the map $c$ sends $\phi$ to $\phi^\gamma$,
namely the morphism
\[
\phi^\gamma := \gamma^{-1}|_{P^\gamma, P} \circ \phi \circ \gamma|_{R,R^\gamma} \in \Mor_\L(P^\gamma, R^\gamma),
\]
where, for example, $\gamma|_{R, R^\gamma}$ is the \emph{restriction} of
$\gamma$, uniquely determined by the condition that $\delta_{R,S}(1) \circ
\gamma = \gamma|_{R,R^\gamma} \circ \delta_{R^\gamma,S}(1)$ in $\L$. The image
of $c$ in $\Aut(\L)$ is a normal subgroup of $\Aut(\L)$, and 
\[
\Out(\L) := \Aut(\L)/\{c_\gamma \mid \gamma \in \Aut_\L(S)\}
\]
is the group of outer automorphisms of $\L$.

There are natural maps $\tilde{\mu}\colon \colon \Aut(\L) \to \Aut(\F)$ and
$\mu\colon \Out(\L) \to \Out(\F)$ which, at least when $\Delta = \F^c$, fit
into a commutative diagram
\begin{equation}\label{E:diagram}
\begin{gathered}
\xymatrix{
    &     1 \ar[d]      &      1  \ar[d]    &      1 \ar[d] &   \\
  Z(\F) \ar@{=}[d] \ar[r]^{\incl} &  Z(S) \ar[r] \ar[d]^{\delta_S} & \widehat{Z}^1(\O(\F^c), \Z_\F) \ar[d]^{\widetilde{\lambda}} \ar[r] & \varprojlim{\!}^1(\Z_\F) \ar[r] \ar[d]^{\lambda} & 1 \\
  Z(\F) \ar[r] & \Aut_\L(S)  \ar[r]^c \ar[d]^{\pi_S}  &  \Aut(\L) \ar[r] \ar[d]^{\widetilde{\mu}} &  \Out(\L) \ar[r] \ar[d]^{\mu} & 1 \\
1 \ar[r] & \Aut_\F(S)  \ar[r] \ar[d]  &  \Aut(\F) \ar[r] \ar[d]  & \Out(\F)  \ar[r]\ar[d] & 1\\
    &   1  & 1 & 1 &  \\ 
}
\end{gathered}
\end{equation}
with all rows and columns exact. Here, $\widehat{Z}^i(\O(\F^c),\Z_\F)$ denotes
a certain subgroup of the group of normalized cocycles of the center functor
defined on the orbit category of $\F$-centric subgroups
\cite[pp.186,174]{AschbacherKessarOliver2011}, and $\varprojlim^{i}\Z_\F$
denotes the corresponding cohomology group. The diagram is an updated version
of the one appearing in \cite[p.186]{AschbacherKessarOliver2011}. The last two
columns were not known to be exact until Chermak's proof of the uniqueness of
centric linking systems \cite{Chermak2013}. For example, by
\cite[Proposition~III.5.12]{AschbacherKessarOliver2011} the cokernel of the map
$\mu$ injects into $\lim^2 \Z_\F$, which is zero by
\cite[Theorem~3.4]{Oliver2013} or \cite[Theorem~1]{GlaubermanLynd2016}. Then
using a diagram chase like that given in a five lemma for groups, one sees that
the penultimate column is also exact.

\begin{lemma}
\label{L:fusiontriv}
Let $\F$ be a saturated fusion system over $S$ with associated centric linking
system $\L$, and suppose that $\mu\colon \Out(\L) \to \Out(\F)$ is injective.
Then $\ker(\tilde{\mu}) = \{c_{\delta_S(z)} \mid z \in Z(S)\}$ consists of
automorphisms of $\L$ induced by conjugation by elements of $Z(S)$. 
\end{lemma}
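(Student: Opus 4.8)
The plan is to read the conclusion off the exact commutative diagram~\eqref{E:diagram}, which is available here since $\L$ is the centric linking system of $\F$, so that $\Delta=\F^c$. Write $j\colon Z(S)\to\widehat{Z}^1(\O(\F^c),\Z_\F)$ for the horizontal map in the top row of~\eqref{E:diagram}. The crux is the chain of identities
\begin{align*}
\ker(\widetilde{\mu})&=\operatorname{im}(\widetilde{\lambda})=\widetilde{\lambda}\bigl(\operatorname{im}(j)\bigr)=\operatorname{im}(\widetilde{\lambda}\circ j)\\
&=\operatorname{im}\bigl(c\circ\delta_S|_{Z(S)}\bigr)=\{c_{\delta_S(z)}\mid z\in Z(S)\},
\end{align*}
which is valid as soon as $j$ is known to be surjective. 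Here the first equality is exactness of the middle column of~\eqref{E:diagram} at $\Aut(\L)$; the second uses that $j$ is surjective, so that $\operatorname{im}(j)$ is the whole domain of $\widetilde{\lambda}$; the fourth is commutativity of the square of~\eqref{E:diagram} with corners $Z(S)$, $\widehat{Z}^1(\O(\F^c),\Z_\F)$, $\Aut_\L(S)$, $\Aut(\L)$, that is, $c\circ\delta_S=\widetilde{\lambda}\circ j$ on $Z(S)$; and the last is just the definition of $c_\gamma$ for $\gamma\in\Aut_\L(S)$.

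It thus remains to deduce from the hypothesis that $j$ is surjective. By exactness of the rightmost column of~\eqref{E:diagram}, the map $\lambda\colon\varprojlim^1(\Z_\F)\to\Out(\L)$ is injective with image $\ker(\mu)$. Since $\mu$ is assumed injective, $\ker(\mu)=1$, and injectivity of $\lambda$ then forces $\varprojlim^1(\Z_\F)=1$. By exactness of the top row of~\eqref{E:diagram}, $\operatorname{im}(j)$ equals the kernel of the next map $\widehat{Z}^1(\O(\F^c),\Z_\F)\to\varprojlim^1(\Z_\F)$; as that target is now trivial, this kernel is all of $\widehat{Z}^1(\O(\F^c),\Z_\F)$, so $j$ is surjective, as needed.

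Finally, the closing sentence of the statement merely unwinds the definition of the conjugation functors: for $z\in Z(S)$ one has $\pi_S(\delta_S(z))=c_z=\id_S$ since $z$ is central, so $c_{\delta_S(z)}$ fixes every object of $\L$ and acts on morphism sets by conjugation by $\delta_S(z)$, i.e.\ it is induced by conjugation by $z$. I do not expect a genuine obstacle here; the only points requiring care are the correct identification of the commuting square of~\eqref{E:diagram} furnishing $c\circ\delta_S=\widetilde{\lambda}\circ j$, and the citation for exactness of~\eqref{E:diagram} in the centric case, for instance \cite[Section~III.5]{AschbacherKessarOliver2011}.
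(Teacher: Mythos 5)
Your proof is correct and follows essentially the same route as the paper's: injectivity of $\mu$ kills $\varprojlim^1(\Z_\F)$ via the exact right-hand column, exactness of the top row then makes $Z(S)\to\widehat{Z}^1(\O(\F^c),\Z_\F)$ surjective, and the commuting square together with exactness of the column through $\Aut(\L)$ identifies $\ker(\tilde\mu)$ with $\{c_{\delta_S(z)}\mid z\in Z(S)\}$. You have simply spelled out the diagram chase that the paper leaves implicit.
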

\begin{proof}
By assumption on $\mu$, we see from \eqref{E:diagram} that $\lim^1(\Z_\F) = 0$
by the exactness of the third column.  The assertion follows from exactness
of the top row \eqref{E:diagram}, together with commutativity of the square
containing $Z(S)$ and $\Aut(\L)$.
\end{proof}

In the situation where $\F$ is realized by a finite group $G$ with Sylow
subgroup $S$, there are maps which compare certain automorphism groups of $G$
with the automorphism groups of $\L$ and $\F$.  For example, there is a group
homomorphism $\tilde{\kappa}_G\colon \Aut(G,S) \to \Aut(\L)$, where $\Aut(G,S)$
is the subgroup of $\Aut(G)$ consisting of those automorphisms which normalize
$S$.  Then $\tilde{\kappa}_G$ sends the image of $N_G(S)$ to
$\operatorname{Im}(c) \leq \Aut(\L)$, and so induces a homomorphism $\kappa
\colon \Out(G) \to \Out(\L)$. 

\begin{definition}\label{D:tame}
A finite group $G$ with Sylow subgroup $S$ is said to \emph{tamely realize}
$\F$ if $\F \cong \F_S(G)$ and the map $\kappa \colon \Out(G) \to \Out(\L)$ is
split surjective. The fusion system $\F$ is said to be \emph{tame} if it is
tamely realized by some finite group. 
\end{definition}

From work of Andersen-Oliver-Ventura and Broto-M{\o}ller-Oliver, the fusion
systems of all finite simple groups at all primes are now known to be tamely
realized by some finite group \cite[Section 3.3]{AschbacherOliver2016}. To give
one example of the importance of tameness for getting a hold of extensions of
fusion systems of finite quasisimple groups, we mention the following result of
Oliver that will be useful later.

\begin{theorem}
\label{T:reduct}
Let $\F$ be a saturated fusion system over the finite $p$-group $S$ and let
$\E$ be a normal subsystem over the subgroup $T \leq S$. Assume that $F^*(\F) =
O_p(\F)\E$ with $\E$ quasisimple and that $\E$ is tamely realized by the finite
group $H$. Then $\F$ is tamely realized by a finite group $G$ such that $F^*(G)
= O_p(G)H$. 
\end{theorem}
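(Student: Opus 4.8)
The plan is to construct the group $G$ explicitly out of $H$: first adjoin a group of automorphisms realizing the outer action of $\F$ on $\E$, then adjoin $O_p(\F)$, and finally read off tameness of $G$ from the construction together with the tameness of $\E$. For the setup, I would fix centric linking systems $\L$ for $\F$ and $\L_\E$ for $\E$ (they exist and are unique by Chermak--Oliver), arranged so that $\L_\E$ is a normal subsystem of $\L$; then $\Aut(\L)$ acts on $\L_\E$, yielding a homomorphism $\bar r\colon\Out(\L)\to\Out(\L_\E)$. The force of the hypothesis $F^*(\F)=O_p(\F)\E$ with $\E$ quasisimple is that $\F$, and hence $\L$, is already determined by the three pieces $\E$, $O_p(\F)$, and the action recorded by $\bar r$: indeed $\F=\langle\E S,N_\F(T)\rangle$ by \cite[1.3.2]{AschbacherFSCT}, and a morphism of $N_\F(T)$ that does not already lie in $\E S$ acts on $\E$ through $\Out(\L_\E)$.

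Next I would use tameness of $\E$: by hypothesis $\kappa_H\colon\Out(H)\to\Out(\L_\E)$ is split surjective, and I fix a splitting $\sigma$. I would then transport the action recorded by $\bar r$ onto $H$: its image in $\Out(\L_\E)$ lifts through $\sigma$ to a subgroup of $\Out(H)$, hence (pulling back along $\Aut(H,S_H)\to\Out(H)$, with $S_H$ a Sylow $p$-subgroup of $H$) to a subgroup $A\leq\Aut(H,S_H)$ containing $\Inn(H)$. The extension of $H$ by $A/\Inn(H)$ that I want exists because the parallel extension of fusion systems — the relevant part of $N_\F(T)$ — exists, so the corresponding obstruction class vanishes. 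Forming that extension, and then the extension by $O_p(\F)$ prescribed by the $O_p(\F)$-part of $\F$ (possibly non-split), produces a finite group $G$ with Sylow $p$-subgroup $S$, with $O_p(G)$ realizing $O_p(\F)$, and with $F^*(G)=O_p(G)H$; the choices are made so that the $\F$-fusion of both generating pieces $\E S$ and $N_\F(T)$ is reproduced, so that $\F_S(G)\cong\F$.

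It remains to check that $G$ is tame, i.e.\ that $\kappa_G\colon\Out(G)\to\Out(\L)$ is split surjective. Given $[\alpha]\in\Out(\L)$, its image $\bar r([\alpha])\in\Out(\L_\E)$ is realized by $\kappa_H(\sigma(\bar r([\alpha])))$; by the construction of $G$ the class $\sigma(\bar r([\alpha]))$ lifts to an outer automorphism of $G$ which maps to $[\alpha]$ under $\kappa_G$. After correcting by the subgroup of $\ker(\tilde\mu)$ identified in Lemma~\ref{L:fusiontriv} — which is under control here because $\mu$ is injective, the relevant $\varprojlim^1$ of the center functor vanishing for the quasisimple $\E$ and hence for $\F$ — this assignment defines a homomorphism $\Out(\L)\to\Out(G)$ splitting $\kappa_G$.

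The main obstacle is the middle step: guaranteeing that the extension of $H$, first by the group $A/\Inn(H)$ of outer automorphisms and then by $O_p(\F)$, exists \emph{with exactly the prescribed fusion}, and — more delicately — that the splitting $\sigma$ of $\kappa_H$ promotes to a splitting of $\kappa_G$. Both reduce to the vanishing of certain obstruction classes (in group cohomology $H^2$, respectively in $\varprojlim^1$ of a center functor over an orbit category), and this is precisely where one needs $\E$ quasisimple (so that its linking system is unique and its center functor is cohomologically tame) and $F^*(\F)=O_p(\F)\E$ (so that there is nothing to realize beyond $\E$, $O_p(\F)$, and the outer action).
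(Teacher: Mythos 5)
You should first be aware that the paper does not prove Theorem~\ref{T:reduct}: its ``proof'' is a citation to Corollary~2.5 of \cite{OliverReductions}, which in turn rests on the extension machinery of Andersen--Oliver--Ventura and Broto--M{\o}ller--Oliver. Your sketch follows the same broad strategy as that machinery (build $G$ from $H$ by adjoining the outer action and then $O_p(\F)$, and promote the splitting of $\kappa_H$ to one of $\kappa_G$), so as a roadmap it is pointed in the right direction. But the two places where you write that ``the corresponding obstruction class vanishes'' are precisely where the entire content of the cited result lies, and as written both steps contain genuine gaps rather than fillable routine details.

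Concretely: (i) You assert the group extension of $H$ realizing $N_\F(T)$ exists ``because the parallel extension of fusion systems exists.'' That inference is not available. The obstruction to realizing a prescribed outer action on $H$ by a group extension lives in $H^3$ of the outer quotient with coefficients in $Z(H)$ (with the extensions, when they exist, forming an $H^2$-torsor), whereas the existence of the linking-system extension is governed by a different obstruction theory over an orbit category; comparing the two, and separating the $p$-part of the extension (involving $O_p(\F)$ and $S/TO_p(\F)$) from the prime-to-$p$ part, is exactly the technical heart of \cite[Theorem~2.20]{AOV2012} and of Oliver's argument. Note also that $Z(H)$ need not be a $p$-group, so ``prescribed fusion'' does not pin down the group extension. (ii) In the last step, your candidate splitting sends $[\alpha]\in\Out(\L)$ to a class in $\Out(G)$ that is only guaranteed to agree with $[\alpha]$ after applying $\bar r$, i.e.\ in $\Out(\L_\E)$; the discrepancy therefore lies in $\ker(\bar r)$, not in $\ker(\tilde\mu)$, and showing that $\ker(\bar r)$ is realized by $\Out(G)$ compatibly is a separate substantial argument that you do not address. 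Moreover, your appeal to Lemma~\ref{L:fusiontriv} needs $\mu\colon\Out(\L)\to\Out(\F)$ injective, equivalently $\varprojlim^1(\Z_\F)=0$; this is not a hypothesis of Theorem~\ref{T:reduct} and does not follow from $\E$ being quasisimple --- the vanishing statements available (e.g.\ Lemma~\ref{L:lim1=0}) are proved case by case for specific families, not deduced from quasisimplicity, and even vanishing for $\E$ would not give vanishing for $\F$. As it stands, then, the proposal is a plausible outline of Oliver's proof with its two crucial steps asserted rather than proved; the efficient and correct route is the one the paper takes, namely quoting \cite[Corollary~2.5]{OliverReductions}.
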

\begin{proof}
This is Corollary~2.5 of \cite{OliverReductions}. See also a correction and
a strengthening to the results of \cite{OliverReductions} in
\cite{OliverReductionsCor}.
\end{proof}

\subsection{Components and the generalized Fitting subsystem}\label{SS:Components}
Aschbacher \cite[Chapter~9]{AschbacherGeneralized} introduced components and
the generalized Fitting subsystem $F^*(\F)$ of $\F$. By analogy with the
definition for groups, a component is a subnormal subsystem of $\F$ which is
quasisimple. Here $\F$ is called quasisimple if $O^p(\F)=\F$ and $\F/Z(\F)$ is
simple. By \cite[9.8.2,9.9.1]{AschbacherGeneralized}, the generalized Fitting
subsystem of $\F$ is the central product of $O_p(\F)$ and the components of
$\F$. Moreover, for every set $J$ of component of $\F$, there is a well-defined
subsystem $\Pi_{\C\in J}\C$, which is the central product of the components in
$J$. Writing $E(\F)$ for the central product of all components of $\F$,
$F^*(\F)$ is the central product of $O_p(\F)$ with $E(\F)$. We will use the
following lemma.

\begin{lemma}\label{L:ConjugateComponents}
If $\C$ is a component of $\F$ over $T$ then the following hold:
\begin{itemize}
\item [(a)] $\m{C}$ is normal in $F^*(\F)$.
\item [(b)] If $\gamma\in\Hom_\F(T,S)$, then $\C^\gamma$ is a component of $\F$.
\end{itemize}
\end{lemma}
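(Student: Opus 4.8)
The plan is to derive part~(a) directly from the structure of the generalized Fitting subsystem, and then to deduce part~(b) from part~(a) together with the Frattini condition built into the definition of a normal subsystem.

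For part~(a): by \cite[9.8.2,9.9.1]{AschbacherGeneralized}, $F^*(\F)$ is the central product of $O_p(\F)$ with the components of $\F$, so $\C$ occurs as one of the factors of this central product. I would then simply quote from the development of central products of fusion systems in \cite[Chapter~9]{AschbacherGeneralized} the fact that each factor of such a central product is a normal subsystem of the product; equivalently, one can first observe that $\C$ is normal in $E(\F)$, the central product of all components, and then use that $O_p(\F)$ centralizes $E(\F)$ to upgrade this to normality of $\C$ in $F^*(\F) = O_p(\F)E(\F)$. Either way (a) is a bookkeeping matter in Aschbacher's framework rather than a genuine argument.

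For part~(b): first, $\gamma$ induces an isomorphism of fusion systems $\C \to \C^\gamma$ straight from the definition of $\C^\gamma$, so $\C^\gamma$ is quasisimple because $\C$ is, and it remains only to show $\C^\gamma$ is subnormal in $\F$. Let $S^* \leq S$ be the subgroup over which $F^*(\F)$ is defined; then $S^*$ is strongly closed in $\F$, and by part~(a) we have $\C \norm F^*(\F)$, so $T \leq S^*$ and $T$ is strongly closed in $F^*(\F)$. Since $S^*$ is strongly closed in $\F$, the morphism $\gamma$ sends $T$ into $S^*$, so I may apply the Frattini condition for $F^*(\F) \norm \F$ to $\gamma \in \Hom_\F(T,S^*)$ to obtain a factorization $\gamma = \phi_0 \circ \alpha$ with $\phi_0 \in \Hom_{F^*(\F)}(T,S^*)$ and $\alpha \in \Aut_\F(S^*)$. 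Because $T$ is strongly closed in $F^*(\F)$ and $\phi_0$ is an $F^*(\F)$-morphism out of $T$, one has $T^{\phi_0} \leq T$, hence $T^{\phi_0} = T$ and $\phi_0 \in \Aut_{F^*(\F)}(T)$; normality of $\C$ in $F^*(\F)$ then gives $\C^{\phi_0} = \C$, so that $\C^\gamma = \C^\alpha$. Finally, since $F^*(\F) \norm \F$ we have $F^*(\F)^\alpha = F^*(\F)$, so $\alpha$ induces an automorphism of the fusion system $F^*(\F)$; such an automorphism carries normal subsystems to normal subsystems, whence $\C^\gamma = \C^\alpha$ is normal in $F^*(\F)$ and therefore subnormal in $\F$. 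Being also quasisimple, $\C^\gamma$ is a component of $\F$.

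I expect the only real obstacle in (b) to be keeping the bookkeeping straight: one must use strong closure of $S^*$ precisely to guarantee that $\gamma$ can be viewed as a morphism into $S^*$, so that the Frattini factorization lives entirely over $S^*$, and one must invoke carefully the fact that an automorphism of the ambient system normalizing a normal subsystem restricts to a fusion-system automorphism of that subsystem preserving its normal and subnormal subsystems. Part~(a), by contrast, should reduce to pinpointing the appropriate statements in \cite[Chapter~9]{AschbacherGeneralized}.
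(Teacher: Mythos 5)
Your proposal is correct and follows essentially the same route as the paper's proof: part~(a) by quoting that $F^*(\F)$ is the central product of $O_p(\F)$ with the components and that central factors are normal, and part~(b) by the Frattini factorization $\gamma=\gamma_0\circ\alpha$ for the normal subsystem $F^*(\F)$, with $\C^{\gamma_0}=\C$ by (a) and $\C^\alpha$ normal in $F^*(\F)^\alpha=F^*(\F)$. The extra bookkeeping you supply (strong closure of the Sylow of $F^*(\F)$ forcing $\gamma$ to land in it, and $T^{\phi_0}=T$ so that $\phi_0\in\Aut_{F^*(\F)}(T)$) is exactly what the paper leaves implicit.
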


\begin{proof}
By definition of a component, $\m{C}$ is subnormal and thus saturated. As
mentioned above, by \cite[9.8.2,9.9.1]{AschbacherGeneralized}, $F^*(\F)$
is the central product of $O_p(\F)$ (more precisely $\F_{O_p(\F)}(O_p(\F))$)
and the components of $\F$. It is elementary to check that each of the central
factors in a central product of saturated fusion systems is normal. Hence,
every component of $\F$ is normal in $F^*(\F)$ and (a) holds. 

For the proof of (b) let $S_0\leq S$ such that $F^*(\F)$ is a fusion system
over $S_0$. The Frattini condition (applied to the normal subsystem $F^*(\F)$)
says that we can factorize $\gamma$ as $\gamma=\gamma_0\circ \alpha$ with
$\gamma_0\in\Hom_{F^*(\F)}(T,S_0)$ and $\alpha\in\Aut_\F(S_0)$. By (a),
$\C^{\gamma_0}=\C$ and thus $\C^\gamma=\C^\alpha$.  As $F^*(\F)$ is a normal
subsystem, $\alpha$ induces an automorphism of $F^*(\F)$. Thus, $\C^\alpha$ is
normal in $F^*(\F)$ as $\C$ is normal in $F^*(\F)$. So $C^\gamma=\C^\alpha$ is
subnormal in $\F$. Hence, $\C^\gamma$ is a component of $\F$, since
$\C^\gamma\cong \C$ is quasisimple.
\end{proof}

\begin{lemma}\label{L:DirectProductComponent}
Let $\F$ be a saturated fusion system which is the central product of saturated
subsystems $\F_1,\dots,\F_n$.  If $\C$ is a component of $\F$, then there
exists $i\in\{1,2,\dots,n\}$ such that $\C$ is a component of $\F_i$.
\end{lemma}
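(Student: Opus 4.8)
The plan is to induct on the number $n$ of factors. For $n = 1$ there is nothing to prove, so assume $n \geq 2$ and write $\F = \F_1 * \E$, where $\E = \F_2 * \cdots * \F_n$ is again a central product of saturated subsystems (using associativity of the central product). Two basic features of central products will be used throughout: first, each factor is a normal subsystem of the product, so both $\F_1$ and $\E$ are normal in $\F$; second, on Sylow subgroups one has the \emph{internal} central product $S = S_1(S_2 \cdots S_n)$ with the two factors centralizing one another, whence $C_S(S_1) \cap C_S(S_2 \cdots S_n) = C_S(S) = Z(S)$. Both facts come directly from the construction of the central product in \cite{AschbacherGeneralized}.

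The crucial step is to prove that the component $\C$ is contained in $\F_1$ or in $\E$. Here I would appeal to the fusion-theoretic analogue of the elementary group fact that a component is either contained in a given normal subgroup or centralizes it: for any normal subsystem $\D$ of $\F$, a component $\C$ of $\F$ satisfies $\C \leq \D$ or $\C \leq C_\F(\D)$. In the group setting this is the computation $[\C,\D] \leq \C \cap \D \leq Z(\C)$ followed by the three-subgroups lemma, using that $\C$ is quasisimple; in fusion systems one instead invokes Aschbacher's theory of the generalized Fitting subsystem and of centralizers of normal subsystems from \cite{AschbacherGeneralized} (together with the fact, Lemma~\ref{L:ConjugateComponents}, that $\C$ is normal in $F^*(\F)$, and the central product decomposition of $F^*(\F)$, which forces distinct components to centralize one another). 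Applying this dichotomy with $\D = \F_1$ and again with $\D = \E$, I obtain $\C \leq \F_1$ or $\C \leq \E$ unless $\C \leq C_\F(\F_1) \cap C_\F(\E)$. In that remaining case the Sylow subgroup $T$ of $\C$ lies in $C_S(\F_1) \cap C_S(\E) \leq C_S(S_1) \cap C_S(S_2 \cdots S_n) = Z(S)$, so $T$ is abelian; but the Sylow subgroup of a quasisimple fusion system is non-abelian, a contradiction. Hence $\C \leq \F_1$ or $\C \leq \E$.

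To complete the induction I would use that a component of $\F$ which is contained in a normal subsystem $\D$ of $\F$ is a component of $\D$: it is still quasisimple, and a subnormal subsystem of $\F$ contained in $\D$ is subnormal in $\D$, argued exactly as for groups by intersecting a subnormal series for $\C$ in $\F$ with $\D$. Therefore, if $\C \leq \F_1$ then $\C$ is a component of $\F_1$ and we are done with $i = 1$; if instead $\C \leq \E$, then $\C$ is a component of $\E = \F_2 * \cdots * \F_n$, and the inductive hypothesis produces $i \in \{2, \dots, n\}$ with $\C$ a component of $\F_i$.

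The main obstacle is the dichotomy invoked in the second paragraph: in the group case it is a one-line commutator argument, but in fusion systems there is no literal commutator to manipulate, so one must either locate the statement within Aschbacher's development of $F^*$ and of centralizers of normal subsystems in \cite{AschbacherGeneralized}, or reassemble it from the pieces available there, principally that the components of $F^*(\F)$ are its minimal normal subsystems up to central factors and that they pairwise centralize one another. The remaining ingredients (normality of the factors of a central product, the passage to the internal central product of the underlying $p$-groups, non-abelianness of the Sylow subgroup of a quasisimple system, and descent of subnormality to a normal subsystem) are all routine.
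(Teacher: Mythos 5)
Your proof is correct in outline and runs on the same engine as the paper's: the dichotomy, for a normal subsystem $\D$ of $\F$ and a component $\C$ of $\F$, between $\C$ belonging to $\D$ and $\C$ centralizing it, which is [9.6] of \cite{AschbacherGeneralized}. The paper's proof is more direct, though: it invokes that result in the form ``either $\C$ is a component of $\F_i$ or $T$ centralizes $S_i$'' and applies it to every factor at once, so that if $\C$ is a component of no $\F_i$ then $T$ centralizes $S = S_1\cdots S_n$ and is abelian, contradicting quasisimplicity via [9.1] of the same reference --- no induction on $n$ and no containment-to-component step are needed. The one place where your argument is genuinely shakier than the paper's is the claim that a component of $\F$ contained in a normal subsystem $\D$ is a component of $\D$, which you propose to prove ``exactly as for groups by intersecting a subnormal series for $\C$ in $\F$ with $\D$'': intersections of saturated subsystems are not in general saturated (or even well-behaved fusion systems), so that argument does not transfer literally. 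The statement you need is true and recoverable from Aschbacher's development, but the cleanest fix is simply to use [9.6] in the stronger form the paper does, which outputs ``$\C$ is a component of $\F_i$'' directly and makes both the induction and the containment detour unnecessary.
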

 
\begin{proof}
Assume that $\C$ is a component of $\F$ which, for all $i=1,\dots,n$, is not a
component of $\F_i$. Let $\C$ be a subsystem on $T\leq S$, and let $\F_i$ be a
subsystem on $S_i$ for $i=1,\dots,n$. Since $\F$ is the central product of
$\F_1,\dots,\F_n$, each of the subsystems $\F_1,\dots,\F_n$ is normal in $\F$.
So for each $i=1,\dots,n$, it follows from \cite[9.6]{AschbacherGeneralized}
and the assumption that $\C$ is not a component of $\F_i$ that $T$ centralizes
$S_i$. As $S=\Pi_{i=1}^nS_i$, this yields that $T$ centralizes $S$ and is thus
abelian. Now \cite[9.1]{AschbacherGeneralized} yields a contradiction to $\C$
being quasisimple.
\end{proof}

\subsection{Components of involution centralizers}

Suppose now that $\F$ is a saturated fusion system over a $2$-group $S$. If
$\F$ is of component type, then in analogy to the group theoretical case, one
wants to work with components of involution centralizers (or more generally
with components of normalizers of subgroups of $S$). In fusion systems, the
situation is slightly more complicated than in groups, since only components of
saturated fusion systems are defined. Therefore, we can only consider
components of normalizers of \emph{fully normalized} subgroups. It makes sense
to work also with conjugates of such components. Following Aschbacher
\cite[Section~6]{AschbacherFSCT} we will use the following notation.

\begin{notation}\label{N:XCIC}
If $\C$ is a quasisimple subsystem of $\F$ over $T$, then define the following sets:
\begin{itemize}
\item $\X(\C)$ is the set of subgroups or elements $X$ of $C_S(T)$ such that
$C_\F(X)$ contains $\C$. 
\item $\tilde{\X}(\C)$ is the set of subgroups or elements $X$ of $S$ such that
$\C^\alpha$ is a component of $N_\F(X^{\alpha})$ for some $\alpha \in \AA(X)$. 
\item $\I(\C)$ is the set of involutions in $\tilde{\X}(\C)$.  
\end{itemize}
If we want to stress that these sets depend on $\F$, we write $\X_\F(\C)$,
$\tilde{\X}_\F(\C)$ and $\I_\F(\C)$ respectively. Moreover, we write $\CC(\F)$
for the set of quasisimple subsystems $\C$ of $\F$ such that $\I(\C)$ is
nonempty.
\end{notation}

\begin{lemma}\label{L:CCconjugate}
Let $\C$ be a quasisimple subsystem of $\F$ over $T$ and $X\in\tilde{\X}(\C)$.
Then for any $\phi\in\Hom_\F(\<X,T\>,S)$ the following hold:
\begin{itemize}
\item[(a)] If $X^\phi\in\F^f$, then $\C^\phi$ is a component of $N_\F(X^\phi)$.
\item[(b)] We have $X^\phi\in\tilde{\X}(\C^\phi)$.
\end{itemize}
\end{lemma}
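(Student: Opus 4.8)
The plan is to prove part~(a) first and then deduce part~(b) from it. Throughout, observe that the hypothesis $X\in\tilde{\X}(\C)$ already forces $T\leq N_S(X)$: for $\C^\alpha$ to be a subsystem of $N_\F(X^\alpha)$, the morphism $\alpha\in\AA(X)\subseteq\Hom_\F(N_S(X),S)$ must be defined on $T$. Hence $\<X,T\>\leq N_S(X)$, the same holds after replacing $(X,T)$ by any $\F$-conjugate (so, for instance, $T^\phi\leq N_S(X^\phi)$ and $T^{\alpha}\leq N_S(X^\alpha)$), and all the subsystems and composites of morphisms occurring below are defined.

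For part~(a), use the definition of $\tilde{\X}(\C)$ to fix $\alpha\in\AA(X)$ with $\C^\alpha$ a component of $N_\F(X^\alpha)$, which is saturated since $X^\alpha\in\F^f$. As $X^\alpha$ and $X^\phi$ both lie in $X^\F\cap\F^f$, Lemma~\ref{AAnonempty} yields $\beta\in\AA(X^\alpha)$ with $(X^\alpha)^\beta=X^\phi$; since $X^\alpha$ and $X^\phi$ are fully normalized and $\F$-conjugate we have $|N_S(X^\alpha)|=|N_S(X^\phi)|$, so $\beta$ is an isomorphism $N_S(X^\alpha)\to N_S(X^\phi)$ carrying $X^\alpha$ to $X^\phi$. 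A routine check---conjugating by $\beta$ the extension condition that defines the morphisms of a normalizer subsystem---then shows that $\beta$ induces an isomorphism of fusion systems $N_\F(X^\alpha)\to N_\F(X^\phi)$; since isomorphisms of fusion systems carry components to components, $\C^{\alpha\circ\beta}$ is a component of $N_\F(X^\phi)$.

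It remains to identify $\C^{\alpha\circ\beta}$ with $\C^\phi$ inside $N_\F(X^\phi)$. Write $\delta$ for the restriction of $\alpha\circ\beta$ to $\<X,T\>$ and put $\eta:=\delta^{-1}\circ\phi$, a morphism in $\F$ from $\<X^\phi,T^{\alpha\circ\beta}\>$ to $\<X^\phi,T^\phi\>$ with $\delta\circ\eta=\phi$. Since both $\delta$ and $\phi$ send $X$ to $X^\phi$, the morphism $\eta$ stabilizes $X^\phi$; as $X^\phi$ lies in its domain, this makes $\eta$---and hence its restriction to $T^{\alpha\circ\beta}$---a morphism of $N_\F(X^\phi)$. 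From $\delta\circ\eta=\phi$ we get $\C^\phi=(\C^{\alpha\circ\beta})^{\eta}$, and Lemma~\ref{L:ConjugateComponents}(b), applied inside the saturated fusion system $N_\F(X^\phi)$, shows that $\C^\phi$ is a component of $N_\F(X^\phi)$. This proves~(a).

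For part~(b), choose any $\beta\in\AA(X^\phi)$. Then $\phi\circ\beta\in\Hom_\F(\<X,T\>,S)$ and $X^{\phi\circ\beta}=(X^\phi)^\beta\in\F^f$, so part~(a) applied with $\phi\circ\beta$ in place of $\phi$ gives that $\C^{\phi\circ\beta}=(\C^\phi)^\beta$ is a component of $N_\F((X^\phi)^\beta)$; as $\beta\in\AA(X^\phi)$, this is exactly the statement $X^\phi\in\tilde{\X}(\C^\phi)$. The step requiring the most care is the comparison in the third paragraph: one must keep track of the distinct domains of $\phi$, $\alpha$, and $\beta$ (only $\phi$ lives on $\<X,T\>$, while $\alpha$ and $\beta$ live on the larger normalizers), and one must verify carefully that the comparison morphism $\eta$ genuinely lies in $N_\F(X^\phi)$ so that Lemma~\ref{L:ConjugateComponents}(b) can be invoked there. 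The only other mildly delicate point, used in the second paragraph, is the standard fact that an isomorphism of $2$-groups carrying $X^\alpha$ to $X^\phi$ induces an isomorphism of the associated normalizer fusion systems.
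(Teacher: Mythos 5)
Your proof is correct and follows essentially the same route as the paper's: fix $\alpha\in\AA(X)$ witnessing $X\in\tilde{\X}(\C)$, use Lemma~\ref{AAnonempty} to get $\beta\in\AA(X^\alpha)$ carrying $X^\alpha$ to $X^\phi$ so that $\C^{\alpha\beta}$ is a component of $N_\F(X^\phi)$, then observe that the comparison morphism (your $\eta$, the paper's $\beta^{-1}\alpha^{-1}\phi$) lies in $N_\F(X^\phi)$ and apply Lemma~\ref{L:ConjugateComponents}(b) there; part~(b) is deduced from~(a) exactly as in the paper.
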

\begin{proof}
Assume first $X^\phi\in\F^f$. Let $\alpha\in\AA(X)$ such that $\C^\alpha$ is a
component of $N_\F(X^\alpha)$. By Lemma~\ref{AAnonempty}, there exists
$\beta\in\AA(X^\alpha)$ such that $X^{\alpha\beta}=X^\phi$. Then
$N_S(X^\alpha)^\beta=N_S(X^\phi)$ and $\beta$ induces an isomorphism from
$N_\F(X^\alpha)$ to $N_\F(X^\phi)$. So $\C^{\alpha\beta}$ is a component of
$N_\F(X^\phi)$. As $X^{\alpha\beta}=X^\phi$, the map
$\beta^{-1}\alpha^{-1}\phi$ is a morphism in $N_\F(X^\phi)$. Moreover
$\C^{\alpha\beta}$ is conjugate to $\C^\phi$ under $\beta^{-1}\alpha^{-1}\phi$.
Thus, $\C^\phi$ is a component of $N_\F(X^\phi)$ by
Lemma~\ref{L:ConjugateComponents}. This proves (a). If we drop the assumption
that $X^\phi\in\F^f$ and pick $\alpha\in\AA(X^\phi)$, then applying (a) with
$\phi\alpha$ in place of $\phi$ gives that $(\C^\phi)^\alpha=\C^{\phi\alpha}$
is a component of $N_\F(X^{\phi\alpha})$. This gives (b).  
\end{proof}

\begin{lemma}\label{L:CClocalsubsystem}
Let $\C$ be a quasisimple subsystem of $\F$ over $T$ and let
$X\in\tilde{\X}(\C)$ be a subgroup of $S$. Suppose we are given $Y\in\F^f$
satisfying $[X,Y]\leq X\cap Y$ and $\C\leq N_\F(Y)$. Then
$X\in\tilde{\X}_{N_\F(Y)}(\C)$. In particular, if $X$ has order $2$, then
$\C\in\CC(N_\F(Y))$. 
\end{lemma}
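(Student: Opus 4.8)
The plan is to carry out the entire argument inside the saturated fusion system $\E := N_\F(Y)$, which is saturated because $Y\in\F^f$ and which contains $\C$ by hypothesis, and then to verify directly the condition defining membership of $X$ in $\tilde{\X}_\E(\C)$. A few observations are free: since $[X,Y]\le X\cap Y$ we have $X\le N_S(Y)$ and $Y\le N_S(X)$, so $X$ is a subgroup of the Sylow group $N_S(Y)$ of $\E$; also $T\le N_S(Y)$ because $\C\subseteq\E$, and $T\le N_S(X)$ because $X\in\tilde{\X}(\C)$ (if $\C^\alpha$ is a component of $N_\F(X^\alpha)$ then its support $T^\alpha\le N_S(X^\alpha)$), so $\<X,T\>\le N_S(X)\cap N_S(Y)$ and Lemma~\ref{L:CCconjugate} can be applied to morphisms having this subgroup in their domain. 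I would also note from the outset that $X\in\tilde{\X}(\C)$ forces $\C\subseteq C_\F(X)$: a component of a saturated system centralizes its $O_2$ by the central product structure of $F^*$ \cite[9.8.2,9.9.1]{AschbacherGeneralized}, and $X^\alpha\le O_2(N_\F(X^\alpha))$, so $\C^\alpha\subseteq C_\F(X^\alpha)$, whence $\C\subseteq C_\F(X)$ after transport.

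First I would reduce to the case $X\in\E^f$. Picking $X_1\in X^\E\cap\E^f$ and, via saturation of $\E$ together with Lemma~\ref{AAnonempty} applied inside $\E$, a morphism $\alpha\in\AA_\E(X)$ with $X^\alpha=X_1$, one checks that $\alpha$ fixes $Y$ (since $Y$ lies in the domain $N_S(X)\cap N_S(Y)$ of $\alpha$ and $\alpha$ is a morphism of $N_\F(Y)$), so that $[X_1,Y]\le X_1\cap Y$, $\C^\alpha\subseteq\E$, and $X_1\in\tilde{\X}(\C^\alpha)$ by Lemma~\ref{L:CCconjugate}(b). Thus the triple $(\C^\alpha,X_1,Y)$ again satisfies the hypotheses of the lemma and additionally $X_1\in\E^f$; and since $\alpha\in\AA_\E(X)$, the conclusion ``$\C^\alpha$ is a component of $N_\E(X_1)$'' would witness $X\in\tilde{\X}_\E(\C)$. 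So it suffices to prove, under the lemma's hypotheses plus $X\in\E^f$, that $\C$ is a component of the (now saturated) system $N_\E(X)$.

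To do that, I would identify $N_\E(X)=N_{N_\F(Y)}(X)$ with $N_{N_\F(X)}(Y)$ --- the subsystem over $N_S(X)\cap N_S(Y)$ whose morphisms lie in both $N_\F(X)$ and $N_\F(Y)$ --- a routine iterated-normalizer computation using $X\le N_S(Y)$ and $Y\le N_S(X)$. Then, choosing $\beta\in\AA_\F(X)$ with $X^\beta\in\F^f$, Lemma~\ref{L:CCconjugate}(a) gives that $\C^\beta$ is a component, hence a subnormal quasisimple subsystem, of $\D:=N_\F(X^\beta)$; transporting $\C\subseteq N_\F(Y)$ and $\C\subseteq C_\F(X)$ by $\beta$ shows $\C^\beta\subseteq C_\F(X^\beta)\cap N_\F(Y^\beta)\subseteq N_{N_\F(X^\beta)}(Y^\beta)\subseteq\D$, and since subnormality is inherited by intermediate subsystems (intersect a subnormal series for $\C^\beta$ in $\D$ with this subsystem), $\C^\beta$ is subnormal in $N_{N_\F(X^\beta)}(Y^\beta)$. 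Finally, as $X^\beta\in\F^f$, $\beta$ induces an isomorphism $N_\F(X)\to N_\F(X^\beta)$ carrying $Y\mapsto Y^\beta$ and $\C\mapsto\C^\beta$, restricting to an isomorphism $N_{N_\F(X)}(Y)\to N_{N_\F(X^\beta)}(Y^\beta)$; since isomorphisms preserve subnormality and quasisimplicity, $\C$ is a quasisimple subnormal subsystem of $N_{N_\F(X)}(Y)=N_\E(X)$, hence a component of $N_\E(X)$ as the latter is saturated. The last sentence of the lemma is then immediate: if $|X|=2$ then $X=\<t\>$ for an involution $t$ of $N_S(Y)$ with $t\in\I_{N_\F(Y)}(\C)$, so $\C\in\CC(N_\F(Y))$.

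The hard part, and what dictates the shape of the argument, is the bookkeeping of fully normalized conjugates: there is in general no single conjugate in which $X$ and $Y$ are simultaneously fully $\F$-normalized, which is precisely why one must first move $X$ to be fully $\E$-normalized (so that $N_\E(X)$ is saturated and ``subnormal quasisimple $\Rightarrow$ component'' is available) and only afterwards use a \emph{separate} conjugation $\beta$ into $\F^f$ to realize $\C^\beta$ as a component of a genuine normalizer subsystem, before transporting the conclusion back. A secondary point that should be set up carefully in the fusion-systems framework is the pair of identities $N_{N_\F(Y)}(X)=N_\F(X)\cap N_\F(Y)=N_{N_\F(X)}(Y)$ and the inheritance of subnormality by intermediate subsystems, both of which are routine but are used repeatedly.
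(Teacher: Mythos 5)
Your overall strategy is the same as the paper's (first conjugate $X$ to be fully $N_\F(Y)$-normalized, then conjugate into $\F^f$, apply Lemma~\ref{L:CCconjugate}(a), descend to the iterated normalizer, and transport back), but the two steps you dismiss as ``routine'' are exactly where the real content lies, and as written both have genuine gaps.

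First, the transport step. You claim that, because $X^\beta\in\F^f$, the map $\beta$ ``induces an isomorphism $N_\F(X)\to N_\F(X^\beta)$'' restricting to an isomorphism of the iterated normalizers. This is false in general: after your reduction, $X$ is fully $N_\F(Y)$-normalized but need not be fully $\F$-normalized, so one can have $|N_S(X)|<|N_S(X^\beta)|$ and there is no such isomorphism of the full normalizer systems; nor is it clear that $(N_S(X)\cap N_S(Y))^\beta$ equals $N_S(X^\beta)\cap N_S(Y^\beta)$. What is actually true --- and what the paper invokes --- is Aschbacher's result \cite[2.2.1,\,2.2.2]{AschbacherGeneration}: for $Y\in\F^f$ and $X\in N_\F(Y)^f$ and $\alpha\in\AA(X)$, one has $Y^\alpha\in N_\F(X^\alpha)^f$, the normalizer intersections correspond under $\alpha$, and $\alpha$ induces an isomorphism $N_{N_\F(Y)}(X)\to N_{N_\F(X^\alpha)}(Y^\alpha)$ \emph{with the order of the two subgroups swapped}. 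That order swap is the point: it is what lets one pass between a system that is saturated because $Y\in\F^f$ and $X\in N_\F(Y)^f$ and a system that is saturated because $X^\alpha\in\F^f$ and $Y^\alpha\in N_\F(X^\alpha)^f$. Your proposed identity $N_{N_\F(Y)}(X)=N_{N_\F(X)}(Y)$ with $X\notin\F^f$ sidesteps none of this, since $N_\F(X)$ is then not saturated.

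Second, the descent step. You assert that $\C^\beta$, being subnormal in $N_\F(X^\beta)$ and contained in $N_{N_\F(X^\beta)}(Y^\beta)$, is subnormal there ``by intersecting a subnormal series with this subsystem.'' That is a group-theoretic argument that does not transfer to fusion systems: the intersection of a (sub)normal subsystem with an intermediate saturated subsystem is not in general a saturated fusion system, and normality of such intersections is not available; ``subnormality passes to intermediate subsystems'' is not a theorem in this category. The paper instead cites the specific result \cite[2.2.5.2]{AschbacherFSCT}, which says that a component of $\F$ remains a component of $N_\F(Y)$ for a suitable fully normalized $Y$ --- a nontrivial lemma from Aschbacher's development of normalizers of components, not a formal consequence of subnormality. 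Both gaps are repairable by citing the results the paper cites, but neither is routine, and the second in particular cannot be proved the way you propose.
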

\begin{proof}
Let $\beta\in\AA_{N_\F(Y)}(X)$ so that $X^\beta\in N_\F(Y)^f$. Let
$\alpha\in\AA(X^\beta)$. Then by \cite[2.2.1,2.2.2]{AschbacherGeneration},
we have that $Y^\alpha\in N_\F(X^{\beta\alpha})^f$, $(N_S(Y)\cap
N_S(X^\beta))^\alpha=N_S(Y^\alpha)\cap N_S(X^{\beta\alpha})$, and $\alpha$
induces an isomorphism from $N_{N_\F(Y)}(X^\beta)$ to
$N_{N_\F(X^{\beta\alpha})}(Y^\alpha)$. By Lemma~\ref{L:CCconjugate}(a), we have
that $\C^{\beta\alpha}$ is a component $N_\F(X^{\beta\alpha})$. So by
\cite[2.2.5.2]{AschbacherFSCT}, $\C^{\beta\alpha}$ is a component of
$N_{N_\F(X^{\beta\alpha})}(Y^\alpha)$. As $\alpha$ induces an isomorphism from
$N_{N_\F(Y)}(X^\beta)$ to $N_{N_\F(X^{\beta\alpha})}(Y^\alpha)$, this implies
that $\C^\beta$ is a component of $N_{N_\F(Y)}(X^\beta)$. This proves
$X\in\tilde{\X}_{N_\F(Y)}(\C)$ and the assertion follows.
\end{proof}

\subsection{Pumping up}\label{SS:pumpup}

Crucial in the classification of finite simple groups of component type is the
Pump-Up Lemma, which leads to the definition of a maximal component. As we
explain in more detail in the next subsection, such maximal components have
very nice properties generically, which ultimately allow one to pin down the
group if the structure of a maximal component is known. 

The main purpose of this section is to state the Pump-Up Lemma for fusion
systems. However, to give the reader an intuition, we briefly want to describe
the Pump-Up Lemma for groups. Let $G$ be a finite group. To avoid technical
difficulties which do not play a role in the context of fusion systems, we
assume that none of the $2$-local subgroups of $G$ has a normal subgroup of odd
order. The results we state here are actually true for all almost simple
groups, but to show this one would have to use the $B$-theorem whose proof is
extremely difficult. Avoiding the necessity to prove the B-theorem is one of
the major reasons why it is hoped that working in the category of fusion
systems will lead to a simpler proof of the classification of finite simple
groups. 

Let $t$ be an involution of $G$. If $O(G)=1$, then the $L$-balance theorem of
Gorenstein and Walter gives that $E(C_G(t))\leq E(G)$, where $E(G)$ denotes the
product of the components of $G$. Further analysis shows that a component $C$
of $C_G(t)$ lies in $E(G)$ in a particular way. Namely, either $C$ is a
component of $G$, or there exists a component $D$ of $G$ such that $D=D^t$ and
$C$ is a component of $C_D(t)$, or there exists a component $D$ of $G$ such
that $D\neq D^t$ and $C=\{dd^t\colon d\in D\}$ is the homomorphic image of $D$
under the map $d\mapsto dd^t$. If one applies this property to the centralizer
of a suitable involution $a$ rather than to the whole group $G$, then one
obtains the Pump-Up Lemma. More precisely, consider two commuting involutions
$t$ and $a$ centralized by a quasisimple subgroup $C$ which is a component of
$C_G(t)$, and thus of $C_{C_G(a)}(t)$. The result stated above yields
immediately that one of the following holds: 

\begin{enumerate}
\item $C$ is a component of $C_G(a)$.
\item There exists a component $D$ of $C_G(a)$ such that $D=D^t$ and $C$ is a
component of $C_D(t)$.
\item There exists a component $D$ of $C_G(a)$ such that $D\neq D^t$ and
$C=\{dd^t\colon d\in D\}$ is a homomorphic image of $D$.  
\end{enumerate}
This statement is known as the Pump-Up Lemma. If (2) holds then $D$ is called a
\emph{proper pump-up} of $C$. The component $C$ is called \emph{maximal} if it
has no proper pump-ups.

We now state a similar result for fusion systems, which was formulated by
Aschbacher. Again, the statement is slightly more complicated than the
statement for groups, since we need to pass from an involution $a$ to a fully
centralized conjugate of $a$ for the centralizer to be saturated.

\begin{lemma}[\mbox{\cite[6.1.11]{AschbacherFSCT}}]\label{L:pumpupbasic}
Let $\F$ be a saturated fusion system over a $2$-group $S$ and let $\C$ be a
quasisimple subsystem of $\F$ on $T$. Suppose we are given two commuting
involutions $t,a\in S$ such that $t\in\I(\C)$ and $\gen{t,a}\in\X(\C)$. Fix
$\alpha \in \AA(a)$, and set $\bar{a} = a^\alpha$, $\bar{t} = t^{\alpha}$, and
$\bar{\C} = \C^\alpha$. Then one of the following holds:
\begin{enumerate}
\item (trivial) $\bar{\C}$ is a component of $C_\F(\bar{a})$, so $a \in \I(\C)$,
\item (proper) there is $\zeta \in \Hom_{C_\F(\bar{a})}(C_S(\gen{\bar{a}, \bar{t}}),
C_S(\bar{a}))$ and a $\bar{t}^{\zeta}$-invariant component $\D$ of
$C_\F(\bar{a})$ such that $\bar{\C}^{\zeta}$ is a component of
$C_{\D\gen{\bar{t}^\zeta}}(\bar{t}^{\zeta})$, and we have $\bar{\C}^{\zeta} \neq \D$, 
\item (diagonal) there is a component $\D$ of $C_\F(\bar{a})$ such that $\D
\neq \D^{\bar{t}}$, $\bar{\C} \leq \D_0 := \D\D^{\bar{t}}$, and $\C$ is a homomorphic image of $\D$.  
\end{enumerate}
\end{lemma}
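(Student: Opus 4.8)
The plan is to reduce this to the corresponding statement for finite groups, which has already been recalled in the discussion above (the Pump-Up Lemma as a consequence of $L$-balance), via the mechanism of realizing the relevant pieces of $\F$ by groups. However, since $\F$ may be exotic — indeed in our intended application $\C$ is a Benson-Solomon system — one cannot hope to realize $\F$ itself. The key observation is that the statement only concerns the component $\C$ of $C_\F(t)$ sitting inside $C_\F(a)$, so it is an assertion about the $2$-local subsystem $N_\F(\gen{\bar a})=C_\F(\bar a)$ and its normal subsystem generated by the pump-up of $\bar\C$. Accordingly, the first step is to replace $\F$ by $C_\F(\bar a)$: since $\bar a=a^\alpha$ is fully $\F$-centralized, $C_\F(\bar a)$ is saturated, and $\bar\C=\C^\alpha$ is a component of $C_{C_\F(\bar a)}(\bar t)$ because $\bar t=t^\alpha$ still centralizes $\bar a$ and $\alpha$ carries the hypothesis $t\in\I(\C)$, $\gen{t,a}\in\X(\C)$ over to the corresponding statement for $(\bar\C,\bar t,\bar a)$. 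So we may assume $a\in O_2(\F)$ is central, write $\G=\F$, and the task becomes: given a component $\bar\C$ of $C_\G(\bar t)$, locate it inside $E(\G)$.

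The second and main step is to apply the $L$-balance/subnormal-closure machinery for fusion systems to $\bar\C\le C_\G(\bar t)$. Concretely, one takes the subnormal closure $\D_0$ of $\bar\C$ in $\G$; by the fusion-system analogue of $L$-balance (Aschbacher, and the structure theory of components recalled in Section~\ref{S:prelim}), $\D_0$ is a central product of $\bar t$-conjugate components of $\G$. If $\D_0$ is a single component $\D$ with $\D=\D^{\bar t}$, then $\bar\C$ is a component of $C_{\D\gen{\bar t}}(\bar t)$, and either $\bar\C=\D$ (alternative (1) after conjugating back, noting $\bar t$ centralizes $\bar\C$ gives $a\in\I(\C)$ via $\alpha^{-1}$) or $\bar\C\neq\D$, which is alternative (2) with $\zeta=\id$. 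If $\D_0=\D\D^{\bar t}$ with $\D\neq\D^{\bar t}$, then the diagonal embedding of $\bar\C$ in $\D\D^{\bar t}$ — i.e. $\bar\C$ is the image of $\D$ under $d\mapsto d\,d^{\bar t}$ at the level of the associated linking systems — gives alternative (3) after transport by $\alpha^{-1}$. The possibility that $\D_0$ is a product of more than two components cannot occur because $\bar\C$ is $\bar t$-invariant and $C_{\D_0}(\bar t)$ would then be too small to contain the quasisimple $\bar\C$; this is the kind of elementary counting argument one finds in the group case. The role of the extra morphism $\zeta$ in alternative (2) is precisely to absorb the step of passing to a fully normalized conjugate of $\gen{\bar a,\bar t}$ (equivalently $\gen{\bar t}$) inside $C_\G(\bar a)$ so that the relevant normalizer is saturated; one chooses $\zeta\in\Hom_{C_\F(\bar a)}(C_S(\gen{\bar a,\bar t}),C_S(\bar a))$ with $\gen{\bar t}^\zeta$ fully normalized in $C_\F(\bar a)$, and then $\bar\C^\zeta$ is a component of $C_{\D\gen{\bar t^\zeta}}(\bar t^\zeta)$ by Lemma~\ref{L:CCconjugate}.

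The hardest part will be establishing the fusion-system $L$-balance statement that the subnormal closure $\D_0$ of $\bar\C$ in $\G=C_\F(\bar a)$ is a (central) product of components of $\G$ permuted by $\bar t$, in a form precise enough to read off the trichotomy; in the group case this is a deep theorem, and in fusion systems it rests on Aschbacher's development of components, subnormal subsystems, and the generalized Fitting subsystem in \cite{AschbacherGeneralized,AschbacherFSCT}, together with the need to pass between $\F$, $\C$ and their linking systems to make sense of the "homomorphic image" in the diagonal case. Once that structural input is in place, the remaining work — conjugating the data by $\alpha$ and $\zeta$, checking saturation at each stage, and matching the three cases against (1)--(3) — is routine bookkeeping of the sort already illustrated in Lemmas~\ref{L:CCconjugate} and \ref{L:CClocalsubsystem}. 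Since the statement is quoted from \cite[6.1.11]{AschbacherFSCT}, the actual proof in the paper will presumably just cite that reference; the sketch above is how one would reconstruct it from the group-theoretic prototype.
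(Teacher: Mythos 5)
The paper gives no proof of this lemma beyond the bracketed citation to \cite[6.1.11]{AschbacherFSCT}, exactly as you predicted in your final sentence, so there is no argument in the paper to compare yours against. Your reconstruction — reduce to $C_\F(\bar a)$, apply the fusion-theoretic E-balance/subnormal-closure machinery, and read the trichotomy off the $\gen{\bar t}$-orbit structure on the components (with $\zeta$ absorbing the passage to a fully normalized conjugate of $\gen{\bar t}$) — is the standard route and matches how the cited result is established in Aschbacher's memoir.
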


\begin{definition}\label{D:pumpup}
Let $\F$ be a saturated $2$-fusion system and $\C\in\CC(\F)$.
\begin{itemize}
\item Whenever the hypotheses of Lemma~\ref{L:pumpupbasic} occur, and
$\D$ satisfies (2) of the conclusion, then $\D$ is a \emph{proper pump-up} of
$\C$. 
\item $\C$ is called \emph{maximal} (or a \emph{maximal component}) if it has no proper pump-ups.  
\end{itemize}
\end{definition}

\subsection{Standard components}\label{SS:standard}

We explain now in more detail how maximal components play a role in pinning
down the structure of a finite simple group $G$, and in how far these ideas
carry over to fusion systems. As in the previous subsection, we start by
explaining the basic ideas for groups. For that, assume again that $G$ is a
finite group in which no involution centralizer has a non-trivial normal
subgroup of odd order. 

Write $\CC(G)$ for the set of components of involution centralizers of $G$.
Using the Pump-Up Lemma, one can choose $C\in\CC(G)$ such that every element
$D\in\CC(G)$ which maps homomorphically onto $C$ is maximal. For such $C$,
Aschbacher's component theorem says basically that, with some ``small''
exceptions, either $C$ is a homomorphic image of a component of $G$, or the
following two conditions hold: 
\begin{itemize}
\item[(C1')] $C$ does not commute with any of its conjugates; and
\item[(C2')] if $t$ is an involution centralizing $C$, then $C$ is a component
of $C_G(t)$. 
\end{itemize}

Assuming that (C1') and (C2') hold and $C/Z(C)$ is a ``known'' finite simple
group, the structure of $G$ is determined case by case from the structure of
$C$. The problem of classifying $G$ from the structure of such a subgroup $C$
is usually referred to as a \emph{standard form problem}. The key to solving
such a standard form problem is that properties (C1') and (C2') imply that the
centralizer $C_G(C)$ is a tightly embedded subgroup of $G$ and thus has (by
various theorems in the literature) a very restricted structure if $G$ is
simple. Here a subgroup $K$ of $G$ of even order is called \emph{tightly
embedded} in $G$ if $K\cap K^g$ has odd order for any element $g\in G-N_G(K)$.
A \textit{standard subgroup} of $G$ is a quasisimple subgroup $C$ of $G$ such
that $C$ commutes with none of its conjugates, $K:=C_G(C)$ is tightly embedded
in $G$, and $N_G(C)=N_G(K)$. If $C$ is a component of an involution centralizer
which satisfies properties (C1') and (C2'), then it is straightforward to prove
that $C$ is a standard subgroup. So if $G$ is simple, then with some small
exceptions, Aschbacher's component theorem implies that there exists a standard
subgroup $C$ of $G$. 

We will now explain the theory of standard components of fusion systems, which
Aschbacher \cite{AschbacherFSCT} has developed roughly in analogy to the
situation for groups as far as this seems possible. For the remainder of this
subsection let $\F$ be a saturated fusion system over a $2$-group $S$, and let
$\C$ be a quasisimple subsystem of $\F$ on $T$.  The situation for fusion
systems is significantly more complicated, most importantly since the
definition of a standard component of a group involves a statement about its
centralizer, and the centralizer of $\C$ in $\F$ is currently only defined in
certain special cases. For example, Aschbacher has defined the normalizer and
the centralizer of a component of a fusion system \cite[Sections~2.1 and
2.2]{AschbacherFSCT}. In particular, if $\C$ is a component of $C_\F(t)$ for a
fully centralized involution $t$, then $C_{C_S(t)}(\C)$ is defined inside
$C_\F(t)$. If $\C\in\CC(\F)$, then this allows us to define a subgroup of $S$
which centralizes $\C$, dependent on an involution $t\in\I(\C)$. 

\begin{notation}[cf. (6.1.15) in \cite{AschbacherFSCT}]\label{N:PtalphaQt}
Let $\C$ be a quasisimple subsystem of $\F$ over $T$. 
If $t\in\I(\C)$ and $\alpha \in \AA(t)$, then define
\[
P_{t,\alpha} := C_{C_S(t^{\alpha})}(\C^\alpha) \cap C_S(t)^{\alpha}
\]
and 
\[
Q_t := Q_{t,\alpha} = P_{t,\alpha}^{\alpha^{-1}}
\]
\end{notation}

By \cite[6.6.16.1]{AschbacherFSCT}, $Q_{t,\alpha} \leq C_S(t)$ is independent
of the choice of $\alpha$ and so $Q_t$ is indeed well-defined. With this
definition in place, one can formulate conditions on $\C$ which roughly
correspond to conditions (C1') and (C2'). If $\C\in\CC(\F)$ fulfills such
conditions, then $\C$ is called \textit{terminal}. The precise definition is
given below in Definition~\ref{D:terminal}.

\begin{notation}[cf. (6.1.17) and (6.2.7) in \cite{AschbacherFSCT}]\label{N:DeltaC}
Let $\C$ be a quasisimple subsystem of $\F$ over $T$.
\begin{itemize}
\item $\Delta(\C)$ is the set of $\F$-conjugates $\C_1$ of $\C$ such that,
writing $T_1$ for the Sylow of $\C_1$, we have $T^\#_1 \subseteq \tilde{\X}(\C)$ and
$T^\# \subseteq \tilde{\X}(\C_1)$. 
\item $\rho(\C)$ is the set of pairs $(t^{\phi}, \C^{\phi})$ such that
$t \in \I(\C)$ and $\phi \in \Hom_\F(\gen{t,T},S)$. 
\end{itemize} 
\end{notation}

\begin{definition}[{\cite[Definition~8.1.1]{AschbacherFSCT}}]\label{D:terminal}
A subsystem $\C\in\CC(\F)$ over $T$ is called \emph{terminal} if the following conditions hold:
\begin{itemize}
\item[(C0)] $T \in \F^f$,
\item[(C1)] $\Delta(\C)=\varnothing$, and
\item[(C2)]If $(t_1,\C_1) \in \rho(\C)$, then $Q_{t_1}^\#\subseteq\tilde{\X}(\C_1)$.
\end{itemize}
\end{definition}

In this definition, property (C2) corresponds roughly to property (C2') above.
Moreover, assuming (C2), property (C1) should be thought of as roughly
corresponding to property (C1') above. By Lemma~\ref{L:CCconjugate}(b), for
any $(t_1,\C_1)\in\rho(\C)$, we have $t_1\in\I(\C_1)$  and so $Q_{t_1}$ is
well-defined, i.e. the statement in property (C2) makes sense.

Aschbacher proved a version of his component theorem for fusion systems
\cite[Theorem~8.1.5]{AschbacherFSCT}. Suppose $\C\in\CC(\F)$ is such that every
$\D\in\CC(\F)$ mapping homomorphically onto $\C$ is maximal. The component
theorem for fusion systems states essentially that, with some small exceptions,
either $\C$ is the homomorphic image of a component of $\F$, or $\C$ is
terminal. This statement is similar to the statement of the component theorem
in the group case. However, it is not clear that the centralizer of a terminal
component is defined and ``tightly embedded'' in $\F$. This makes it more
complicated to define standard subsystems. We will work with Aschbacher's
definition of a standard subsystem, which we state next. 

\begin{definition}[\!\!{\cite[Section~9.1]{AschbacherFSCT}}]\label{D:standard}
Let $\C$ be a quasisimple subsystem of $\F$ over $T \in \F^f$. Then $\C$ is 
a \emph{standard subsystem} of $\F$ if the following four conditions are
satisfied:
\begin{itemize}
\item [(S1)] $\tilde{\X}(\C)$ contains a unique maximal (with respect to
inclusion) member $Q$.
\item [(S2)] For each $1\neq X\leq Q$ and $\alpha\in\AA(X)$, we have
$\C^\alpha\unlhd N_\F(X^\alpha)$. 
\item [(S3)] If $1\neq X\leq Q$ and $\beta\in\AA(X)$ with $X^\beta\leq Q$, then
$T^\beta=T$.
\item [(S4)] $\Aut_\F(T)\leq \Aut(\C)$.
\end{itemize}
If $\C$ satisfies conditions (S1),(S2),(S3), then $\C$ is called \textit{nearly
standard}.
\end{definition}

\begin{remark}\label{R:QCentralizer}
In the above definition, the first condition (S1) says essentially that the
centralizer of $\C$ in $S$ is well-defined. Namely, the unique maximal member
$Q$ of $\tilde{\X}(\C)$ should be thought of as this centralizer. Given a
standard subsystem $\C$ of $\F$, this allows Aschbacher
\cite[Definition~9.1.4]{AschbacherFSCT} to define a saturated subsystem $\Q$ of
$\F$ over $Q$ which plays the role of the centralizer of $\C$ in $\F$. More
precisely, $\Q$ centralizes $\C$ in the sense that $\F$ contains a subsystem
which is a central product of $\Q$ and $\C$ (cf.
\cite[9.1.6.1]{AschbacherFSCT}). Also, by \cite[9.1.6.2]{AschbacherFSCT},
$\Q$ is a tightly embedded as defined in the next subsection (cf.
Definition~\ref{D:tightlyEmb}). We will refer to $\Q$ as the \emph{centralizer}
of $\C$ in $\F$.
\end{remark}

In general, it is difficult to get control of $C_S(T)$ when $T$ is the Sylow
subgroup of a member $\C$ of $\CC(\F)$. However, $C_S(T) \leq N_S(Q)$ when $\C$
is standard. This inclusion gives much needed leverage, as is shown in
Lemma~\ref{L:CST} below. Since the method of proof of that lemma is more widely
applicable, we next state and prove a more general result which we feel is of
independent interest. In the proof of Proposition~\ref{P:CST}, we reference a
\emph{normal pair} of linking systems $\L \norm \L_1$ as defined in
\cite[Definition~1.27]{AOV2012}. Also, we take the opportunity to write certain
maps on the left-hand side of their arguments. 

\begin{proposition}\label{P:CST}
Let $\F$ be a saturated fusion system over the $2$-group $S$, and let $\F_1$ be
a saturated subsystem over $S_1 \leq S$. Assume that $\C$ is a perfect normal
subsystem of $\F_1$ over $T \in \F^f$ having associated centric linking system
$\L$ such that
\begin{enumerate}
\item[(i)] $C_S(T) \leq S_1$, and
\item[(ii)] $\mu\colon \Out(\L) \to \Out(\C)$ is injective  (see Section~\ref{SS:aut}).
\end{enumerate}
Then $C_S(T) = C_{S_1}(\C)Z(T)$.
\end{proposition}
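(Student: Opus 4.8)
The inclusion $C_{S_1}(\C)Z(T) \leq C_S(T)$ is clear: $Z(T)$ centralizes $T$, and elements of $C_{S_1}(\C)$ centralize $\C$ and hence $T$. So the content is the reverse inclusion $C_S(T) \leq C_{S_1}(\C)Z(T)$. Fix $x \in C_S(T)$. By hypothesis (i), $x \in S_1$, so conjugation by $x$ induces an automorphism $c_x$ of $S_1$, and since $x$ centralizes $T$ it normalizes the normal subsystem $\C$ of $\F_1$; thus $c_x|_T = \id_T$ and $c_x$ restricts to an automorphism of $\C$ which is the identity on the underlying group $T$. The idea is to lift this to an automorphism of the linking system $\L$ of $\C$, apply hypothesis (ii) via Lemma~\ref{L:fusiontriv}, conclude that the lift is conjugation by an element of $Z(T)$, and then correct $x$ by that element of $Z(T)$ to land in $C_{S_1}(\C)$.

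\textbf{Key steps.} First I would show that conjugation by $x$ induces a well-defined automorphism $\hat c_x$ of the centric linking system $\L$ of $\C$. Because $\C \norm \F_1$, we in fact have a normal pair of linking systems $\L \norm \L_1$ (for suitable choices of centric linking systems), so conjugation by a lift of $x$ in $\L_1$ restricts to an automorphism of $\L$; alternatively, since $x$ centralizes $T$, one checks directly that $c_x$ preserves the linking-system structure (it fixes objects, and acts on morphism sets by the conjugation formula of Section~\ref{SS:aut}), giving $\hat c_x \in \Aut(\L)$ with $\tilde\mu(\hat c_x) = c_x|_\C = \id$ on the fusion system $\C$. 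Thus $\hat c_x \in \ker(\tilde\mu)$. Now invoke hypothesis (ii): $\mu\colon \Out(\L) \to \Out(\C)$ is injective, so by Lemma~\ref{L:fusiontriv}, $\ker(\tilde\mu) = \{c_{\delta_T(z)} \mid z \in Z(T)\}$. Hence there is $z \in Z(T)$ with $\hat c_x = c_{\delta_T(z)}$ in $\Aut(\L)$. The next step is to descend this equality to a statement about $x$ itself: $\hat c_x = c_{\delta_T(z)}$ means that conjugation by $x$ and conjugation by $z$ agree as automorphisms of $\L$, and in particular induce the same automorphism of each $\C$-centric subgroup of $T$, so $xz^{-1}$ centralizes every $\C$-centric subgroup of $T$; since $T$ is generated by its $\C$-centric subgroups (as $T$ itself is $\C$-centric), $xz^{-1}$ centralizes $T$ — but more is needed, namely that $xz^{-1}$ centralizes all of $\C$. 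For that I would argue that $c_{xz^{-1}}$ is the identity automorphism of $\L$, hence acts trivially on all morphism sets $\Mor_\L(P,R)$; translating back through $\pi\colon \L \to \C$, this forces $xz^{-1}$ to centralize $\C$ in the sense that conjugation by it is trivial on $\Aut_\C(P)$ for all centric $P$, and since $x, z \in S_1$ we get $xz^{-1} \in C_{S_1}(\C)$. Therefore $x = (xz^{-1})z \in C_{S_1}(\C)Z(T)$, completing the proof.

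\textbf{The main obstacle.} The delicate point is the passage from "$\hat c_x$ equals $c_{\delta_T(z)}$ as an automorphism of the linking system $\L$" to "$xz^{-1}$ genuinely centralizes the subsystem $\C$ inside $\F_1$" — i.e., the translation between the linking-system language of Lemma~\ref{L:fusiontriv} and the group-theoretic statement $xz^{-1} \in C_{S_1}(\C)$. One must be careful about what "centralizes $\C$" means (trivial action on all $\Aut_\C(P)$, or equivalently the associated conjugation functor being trivial) and check that the element $xz^{-1} \in S_1$ realizing the trivial automorphism of $\L$ does lie in the appropriate centralizer; this is where the hypothesis that $\C$ is \emph{perfect} and normal in $\F_1$ (so that $\C$ has a unique associated centric linking system and the maps in \eqref{E:diagram} behave well) is used, and where one needs $C_S(T) \leq S_1$ to even make sense of $x$ acting on $\F_1$. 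A secondary technical point is justifying that conjugation by $x$ lifts to $\Aut(\L)$ at all — this is exactly what the normal pair $\L \norm \L_1$ machinery from \cite[Definition~1.27]{AOV2012} is for, and I would lean on that rather than reprove it.
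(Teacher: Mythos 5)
Your proposal is correct and follows essentially the same route as the paper's proof: form the product $\C S_1$ (perfectness gives $O^2(\C S_1)=\C$, so \cite[Proposition~1.31(a)]{AOV2012} yields the normal pair $\L \norm \L_1$), push $x \in C_S(T)$ into $\Aut(\L)$ via the lifted conjugation map, note the induced automorphism of $\C$ is trivial, and apply Lemma~\ref{L:fusiontriv} to correct by an element of $Z(T)$. The one step you flag as delicate --- identifying the elements of $S_1$ that act trivially on $\L$ with $C_{S_1}(\C)$ --- is precisely \cite[Theorem~A]{Semeraro2015}, which is what the paper invokes to close the argument.
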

\begin{proof}
By assumption, $\C$ is normal in $\F_1$, so we may form the product system
$\C_1 := \C S_1$ in this normalizer, as in
\cite[Chapter~8]{AschbacherGeneralized} or \cite{Henke2013}.  Then $O^2(\C_1) =
O^2(\C) = \C$ since $\C$ is perfect, so by \cite[Proposition~1.31(a)]{AOV2012},
there is a normal pair of linking systems $\L \norm \L_1$ associated to the
pair $\C \norm \C_1$ in which $\Ob(\L_1) = \{P \leq S_1 \mid P \cap T \in
\C^c\}$. Note that not only is $\L$ is a subcategory of $\L_1$, but the
structural functors $\delta, \pi$ for $\L$ are the restrictions of the functors
for $\L_1$ by definition of an inclusion of linking systems. Because of this,
we write $\delta, \pi$ also for the structural functors for $\L_1$.

Now by the definition of a normal pair of linking systems
\cite[Definition~1.27(iii)]{AOV2012}, the conjugation map $c \colon
\Aut_{\L}(T) \to \Aut(\L)$ lifts to a map $\Aut_{\L_1}(T) \to \Aut(\L)$, which
we also denote by $c$. So the existence of the pair $\L \norm \L_1$ allows one
to define a homomorphism $\nu\colon S_1 \to \Aut(\L)$ given by the
composition $S_1 \xrightarrow{\delta_T} \Aut_{\L_1}(T) \xrightarrow{c}
\Aut(\L)$. This map has kernel
\begin{eqnarray}
\label{E:sem}
\ker(\nu) = C_{S_1}(\C)
\end{eqnarray}
by \cite[Theorem~A]{Semeraro2015}.

We can now prove the assertion. Clearly $C_{S_1}(\C)Z(T) \leq C_S(T)$. For the
reverse inclusion, fix $s \in C_S(T)$. Then $\nu$ is defined on $s$ by (i). The
map $\tilde{\mu}\colon \Aut(\L)\rightarrow \Aut(\C)$ is more precisely defined
by the equation $t^{\tilde{\mu}(\phi)}=\delta_T^{-1}(\delta_T(t)^{\phi_T})$ for
all $\phi\in\Aut(\L)$ and all $t\in T$. Using this for
$\phi=\nu(s)=c_{\delta_T(s)}$, we obtain for all $t\in T$ that
$t^{\tilde{\mu}(\nu(s))}=\delta_T^{-1}(\delta_T(s)^{-1}\circ
\delta_T(t)\circ\delta_T(s))=\delta_T^{-1}(\delta_T(t^s))=t^s=t$, where the
last equality uses $s\in C_S(T)$. The automorphism $\tilde{\mu}(\nu(s)) \in
\Aut(\C)$ is thus trivial. Hence by Lemma~\ref{L:fusiontriv} and assumption on
$\mu$, $\nu(s)=c_{\delta_T(z)}=\nu(z)$ for some $z\in Z(T)$. It follows that
$\nu(sz^{-1})$ is the identity on $\L$.  Hence, $sz^{-1} \in C_{S_1}(\C)$ by
\eqref{E:sem}, so $s \in C_{S_1}(\C)Z(T)$, which completes the proof.
\end{proof}

\begin{lemma}\label{L:CST}
Let $\F$ be a saturated fusion system over the $2$-group $S$. Suppose $\C$ is a
standard subsystem of $\F$ over $T$ with centralizer $\Q$ over $Q$. Let
$\L$ be a centric linking system associated to $\C$. If $\mu\colon \Out(\L) \to
\Out(\C)$ is injective, then $C_S(T) = QZ(T)$.
\end{lemma}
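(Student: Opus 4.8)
The plan is to deduce Lemma~\ref{L:CST} from Proposition~\ref{P:CST} by taking $\F_1$ to be an appropriate saturated subsystem of $\F$ built from the centralizer datum of the standard subsystem $\C$. First I would recall that, since $\C$ is standard with centralizer $\Q$ over $Q$ (Remark~\ref{R:QCentralizer}), the system $\F$ contains a subsystem which is a central product of $\Q$ and $\C$, by \cite[9.1.6.1]{AschbacherFSCT}. I would set $\F_1$ to be this central product $\Q\C$ (or, if needed, the normalizer $N_\F(Q)$, which contains it), so that $\F_1$ is a saturated subsystem over $S_1 := QT$ (or $N_S(Q)$) in which $\C$ is a normal subsystem — normality of $\C$ in $\F_1$ follows because each factor of a central product of saturated systems is normal, as noted in the proof of Lemma~\ref{L:ConjugateComponents}. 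Note also that $\C$ is perfect, being quasisimple.

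Next I would verify the two hypotheses of Proposition~\ref{P:CST} for this choice. Hypothesis (ii) is exactly the hypothesis of the present lemma, namely that $\mu\colon \Out(\L)\to\Out(\C)$ is injective, where $\L$ is a centric linking system for $\C$. For hypothesis (i), I need $C_S(T)\leq S_1$. This is precisely the point flagged in the paragraph preceding Proposition~\ref{P:CST}: when $\C$ is standard one has $C_S(T)\leq N_S(Q)$. More is true and is what I want: since $\C$ is standard, condition (S1) gives that $Q$ is the unique maximal member of $\tilde\X(\C)$, and one checks that $C_S(T)$ normalizes $\C$ (by (S2)/(S4) the automizer acts as automorphisms of $\C$) and hence that $C_S(T)\leq QT = S_1$; I would extract this from Aschbacher's properties of standard subsystems, in particular from the fact that $\tilde\X(\C)$ has $Q$ as its unique maximal element together with $C_S(T)$ centralizing $T$. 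With both hypotheses in hand, Proposition~\ref{P:CST} yields $C_S(T) = C_{S_1}(\C)Z(T)$.

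Finally I would identify $C_{S_1}(\C)$ with $Q$. Since $\F_1 = \Q\C$ is the central product of $\Q$ over $Q$ and $\C$ over $T$, the Sylow group $S_1 = QT$ is a central product of $Q$ and $T$ as groups, so $C_{S_1}(\C) = C_{QT}(T) = Q\,C_T(T)\cap C_{S_1}(\C)$; since $Q$ centralizes $T$ (as $\Q$ centralizes $\C$) we get $Q\leq C_{S_1}(\C)$, and conversely any element of $C_{S_1}(\C)$ centralizes $T$ and lies in $QT$, so writing it as $qx$ with $q\in Q$, $x\in T$ forces $x\in C_T(T)=Z(T)$, whence $C_{S_1}(\C)\leq QZ(T)$. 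Combining, $C_S(T) = C_{S_1}(\C)Z(T) = QZ(T)$, as required. The main obstacle I anticipate is the precise verification that $C_S(T)\leq S_1$ — i.e. that the centralizer-in-$S$ of the Sylow of a standard subsystem lands inside the putative centralizing subsystem's Sylow group; this needs the right combination of the defining properties (S1)–(S4) and Aschbacher's construction of $\Q$, rather than a one-line argument, and it is the place where the standardness hypothesis is genuinely used.
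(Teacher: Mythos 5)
Your overall strategy (deduce the lemma from Proposition~\ref{P:CST}) is the paper's, but your choice of $\F_1$ and the verification of hypothesis (i) contain a genuine gap. You want to take $S_1 = QT$ and claim $C_S(T) \leq QT$, extracted ``from Aschbacher's properties of standard subsystems.'' This claim does not follow from (S1)--(S4), and in fact it is essentially equivalent to the conclusion of the lemma: if $C_S(T) \leq QT$ were known, then since $QT$ is a central product one gets $C_S(T) = C_{QT}(T) = QZ(T)$ immediately, with no need for Proposition~\ref{P:CST} and no use of the hypothesis that $\mu\colon \Out(\L) \to \Out(\C)$ is injective. The injectivity of $\mu$ is precisely what is needed to rule out elements of $C_S(T)$ that normalize $\C$ and induce the trivial automorphism of the fusion system $\C$ without centralizing $\C$ in the linking-system sense (the $\varprojlim^1$ obstruction in diagram~\eqref{E:diagram}); your argument via (S2)/(S4) only shows that such elements act as automorphisms of $\C$, which is far from placing them in $QT$. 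Asserting $C_S(T)\leq QT$ at the outset is therefore circular.

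The correct setup, which your parenthetical remark gestures at, is $\F_1 = N_\F(Q)$ over $S_1 = N_S(Q)$: this is saturated because $Q$ is fully $\F$-normalized by \cite[9.1.1]{AschbacherFSCT}, and $\C \norm \F_1$ by (S2). Hypothesis (i), namely $C_S(T) \leq N_S(Q)$, follows because $\Q$ is normal in $N_\F(T)$ by \cite[Proposition~5]{AschbacherFSCT}, so that $N_S(T)$, and in particular $C_S(T)$, normalizes $Q$. The remaining identification $C_{N_S(Q)}(\C) = Q$ is \emph{not} a consequence of the central product structure of $QT$ (your last paragraph computes $C_{S_1}(\C)$ only for $S_1 = QT$, which is the wrong group); it is a separate nontrivial fact, again from \cite[Proposition~5]{AschbacherFSCT}. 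With those two inputs, Proposition~\ref{P:CST} gives $C_S(T) = C_{N_S(Q)}(\C)Z(T) = QZ(T)$ as desired.
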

\begin{proof}
As $Q$ is fully $\F$-normalized by \cite[9.1.1]{AschbacherFSCT},
$N_{\F}(Q)$ is a saturated fusion system over $N_S(Q)$.  Further,
(S2) says that $\C$ is normal in $N_\F(Q)$.  Finally, by
\cite[Proposition~5]{AschbacherFSCT}, $\Q$ is normal in $N_\F(T)$ and
$C_{N_S(Q)}(\C) = Q$. In particular, $C_S(T) \leq N_S(Q)$. Thus, if $\mu$ is
injective, then $C_S(T) = C_{N_S(Q)}(\C)Z(T) = QZ(T)$ by
Proposition~\ref{P:CST}.
\end{proof}

When considering involution centralizer problems for fusion systems, we
generally need to verify in each individual case that the terminal component we
consider is standard. In contrast with the group case, this is not a
straightforward task.  Indeed, in some cases a terminal component is provably
not standard. However, it develops that many of these technically difficult
configurations are ultimately unnecessary to treat, because they arise for
example in almost simple fusion systems that are not simple, or in wreath
products of a simple system with an involution. Thus, we will usually need
assumptions that go beyond supposing merely that the structure of the terminal
component we consider is known, but also information about the embedding of
that subsystem in the ambient system. Such stronger assumptions can be made if
one classifies, as was proposed by Aschbacher, the simple ``odd systems''
rather than the simple fusion systems of component type (cf.
\cite{AschbacherFSCT}).  The hypothesis in Theorem~\ref{T:main} that $\C$ is
subintrinsic should be seen in this context.

\begin{definition}\label{D:subintrinsic}
Let $\C\in\CC(\F)$. Then $\C$ is said to be \emph{subintrinsic in} $\CC(\F)$ if
there exists $\H\in\CC(\C)$ such that $\I_\F(\H)\cap Z(\H)\neq\varnothing$.
\end{definition}

It follows in a fairly straightforward way from results of Aschbacher that a
subintrinsic Benson-Solomon component $\C$ is terminal.  Rather than use the
component theorem for fusion systems, it is more convenient in our case to show
that $\C$ is terminal using a part of the argument for
\cite[Theorem~7.4.14]{AschbacherFSCT}, which is a major ingredient of the proof
of the component theorem. As suggested above, a nontrivial amount of work is
then required to go on and show that $\C$ is standard; see
Section~\ref{S:showStandard}. 

\subsection{Tightly embedded subsystems and tight split extensions}\label{SS:TightSplit}

Recall from the previous subsection that a subgroup $K$ of a finite group $G$
is called \emph{tightly embedded} if $K$ has even order and $K\cap K^g$ has odd
order for every $g\in G\backslash N_G(K)$. This definition does not translate
well to fusion systems as it is, but there exist suitable reformulations. It
follows from Aschbacher \cite[0.7.1]{AschbacherFSCT} that a subgroup $K$ of
$G$ of even order is tightly embedded if and only if the following two
conditions hold:
\begin{itemize}
\item [(T1')] $K$ is normalized by $N_G(X)$ for every non-trivial $2$-subgroup
$X$ of $K$. 
\item [(T2')] For every involution $x$ of $K$, $x^G\cap K=x^{N_G(K)}$.
\end{itemize}

If $K$ is tightly embedded and $Q$ is a Sylow $2$-subgroup of $K$, then note
furthermore that $N_G(Q)\leq N_G(K)$ and $N_G(K)=KN_G(Q)$ by a Frattini
argument. This leads to a definition of tightly embedded subsystem of saturated
fusion systems at arbitrary primes.

\begin{definition}[cf.\mbox{\cite[Definition~3.1.2]{AschbacherFSCT}}]\label{D:tightlyEmb}
Let $\F$ be a saturated fusion system on a $p$-group $S$, and let $\Q$ be a
saturated subsystem of $\F$ on a fully normalized subgroup $Q$ of $\F$. Then
$\Q$ is \emph{tightly embedded} in $\F$ if it satisfies the following three
conditions:
\begin{enumerate}
\item[(T1)] For each $1 \neq X \in \Q^f$ and each $\alpha \in \AA(X)$, 
\[
O^{p'}(N_\Q(X))^{\alpha} \text{\,\, is normal in \,\,} N_\F(X^\alpha).
\]
\item[(T2)] For each $X \leq Q$ of order $p$, 
\[
X^{\F} \cap Q = X^{\Aut_\F(Q)\Q}
\]
where $X^{\Aut_\F(Q)\Q}:=\{X^{\alpha\phi} \mid  \alpha\in\Aut_\F(Q),\;\phi\in\Hom_\Q(X\alpha,Q)\}$.
\item[(T3)] $\Aut_\F(Q) \leq \Aut(\Q)$. 
\end{enumerate}
\end{definition}

When working with standard subsystems later on, we will need the following
lemma on tightly embedded subsystems.

\begin{lemma}\label{QQ}
Let $\F$ be a saturated fusion system on $S$, and suppose $\Q$ is a tightly
embedded subsystem of $\F$ on an abelian subgroup $Q$ of $S$. Then $\F_Q(Q)$ is
tightly embedded in $\F$.  
\end{lemma}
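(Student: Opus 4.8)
The plan is to verify the three conditions (T1), (T2), (T3) of Definition~\ref{D:tightlyEmb} for the subsystem $\F_Q(Q)$, using that $\Q$ is tightly embedded and that $Q$ is abelian. First note that since $Q$ is abelian, every subgroup of $Q$ is fully $\F$-normalized if and only if it lies in $\F^f$, and $N_Q(X) = Q$ for every $X \leq Q$. For (T3), we must show $\Aut_\F(Q) \leq \Aut(\F_Q(Q))$; but any group automorphism of $Q$ preserves $\F_Q(Q) = \F_S(Q)$ restricted to subgroups of $Q$ automatically, since the morphisms of $\F_Q(Q)$ between subgroups $P, P' \leq Q$ are exactly the maps induced by conjugation in $Q$, which is trivial as $Q$ is abelian — so $\F_Q(Q)$ has only inclusion-induced identity-type morphisms, and any $\alpha \in \Aut(Q)$ fixes this system. (In fact $\Aut_\F(Q) \leq \Aut(\Q) \leq \Aut(\F_Q(Q))$ using (T3) for $\Q$ together with the observation that automorphisms of $\Q$ over $Q$ are automorphisms of $Q$, hence of $\F_Q(Q)$.)

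For (T2), fix $X \leq Q$ of order $p$. We must show $X^\F \cap Q = X^{\Aut_\F(Q)\F_Q(Q)}$. Since $Q$ is abelian, $\Hom_{\F_Q(Q)}(X\alpha, Q)$ contains only the inclusion, so $X^{\Aut_\F(Q)\F_Q(Q)} = X^{\Aut_\F(Q)}$. On the other hand, (T2) for $\Q$ gives $X^\F \cap Q = X^{\Aut_\F(Q)\Q}$, and again because $Q$ is abelian the subsystem $\Q$ over $Q$ has $\Hom_\Q(X\alpha,Q) = \{\incl\}$, so $X^{\Aut_\F(Q)\Q} = X^{\Aut_\F(Q)}$ as well. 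Hence both sides agree. The key point is simply that abelianness collapses the $\Q$-conjugation part of the orbit to nothing, so the condition for $\F_Q(Q)$ reduces to the same statement as the condition for $\Q$.

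The main obstacle — and the step requiring genuine care — is (T1): for each $1 \neq X \in \Q^f$ (equivalently $X \leq Q$ with $X \in \F^f$) and each $\alpha \in \AA(X)$, we need $O^{p'}(N_{\F_Q(Q)}(X))^\alpha \norm N_\F(X^\alpha)$. Now $N_{\F_Q(Q)}(X) = \F_Q(Q)$ since $X \leq Q$ is abelian, and $\F_Q(Q)$ is already a fusion system of a $p$-group, so $O^{p'}(\F_Q(Q)) = \F_Q(Q)$. Thus we must show $\F_{Q^\alpha}(Q^\alpha) \norm N_\F(X^\alpha)$. The strategy is to compare this with what (T1) for $\Q$ gives us: $O^{p'}(N_\Q(X))^\alpha = O^{p'}(\Q)^\alpha$ is normal in $N_\F(X^\alpha)$. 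Since $Q$ is abelian, I expect $O^{p'}(\Q)$ to be a fusion system over $Q$ whose relationship to $\F_Q(Q)$ is controlled: writing $\Q_0 := O^{p'}(\Q)$, one has $\Q_0 \norm \Q$ over $Q$, and $\F_Q(Q) \cin \Q$. The plan is to argue that $\F_Q(Q)$, being the fusion system of the abelian group $Q$ over $Q$ itself, is contained in (indeed equals $O^p$ of, or is the "minimal" subsystem inside) $\Q_0$ in a way that makes normality of $\Q_0$ in $N_\F(X^\alpha)$ transfer to normality of $\F_Q(Q)$ — here using that a normal subsystem over the full group $Q$ which is "as small as possible" (namely $\F_Q(Q)$) inherits normality, perhaps via \cite[Theorem~I.7.7]{AschbacherKessarOliver2011} characterizing $O^{p'}$, or directly by checking the strong closure, $\Aut$-invariance, and Frattini conditions of Definition~I.6.1 using that $Q^\alpha$ is strongly closed in $N_\F(X^\alpha)$ (which follows from (T1) applied to $\Q$, since $\Q_0$ is normal there and sits over $Q^\alpha$). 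I anticipate the bulk of the work is checking that $\F_{Q^\alpha}(Q^\alpha)$ satisfies the axioms for a normal subsystem of $N_\F(X^\alpha)$ given that $Q^\alpha$ is strongly closed there and abelian; abelianness should make the $\Aut_\F(Q^\alpha)$-invariance and the technical extra condition of \cite[Definition~I.6.1]{AschbacherKessarOliver2011} nearly automatic, with the Frattini condition being the one to handle with a short argument.
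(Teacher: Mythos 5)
Your verification of (T3) is fine, but both (T1) and (T2) have problems, and the single observation that repairs both is the one the paper's proof is built on: since $Q$ is abelian, Alperin's fusion theorem implies that every morphism of a saturated fusion system $\E$ over $Q$ extends to an element of $\Aut_\E(Q)$, so $Q$ — and hence the trivial subsystem $\F_Q(Q)$ — is normal in \emph{every} saturated fusion system over $Q$. Consequently $O^{p'}(N_\Q(X)) = \F_Q(Q)$ exactly (it is the smallest normal subsystem over $Q$, and $\F_Q(Q)$ is both normal and contained in every subsystem over $Q$). With that, (T1) for $\F_Q(Q)$ is \emph{literally the same statement} as (T1) for $\Q$: the hypothesis gives $O^{p'}(N_\Q(X))^\alpha = \F_{Q^\alpha}(Q^\alpha) \norm N_\F(X^\alpha)$, which is what you need. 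Your proposed route for (T1) — passing to $\Q_0 = O^{p'}(\Q)$ and then "transferring" normality down to $\F_Q(Q)$ by re-verifying the axioms of a normal subsystem inside $N_\F(X^\alpha)$ — is never carried out (you say "I expect", "the plan is to argue", "I anticipate"), and as outlined it does not work: a subsystem of a normal subsystem need not be normal, and you would in any case first have to identify $\Q_0$ with $\F_Q(Q)$, which is the very point you skip. You also silently replace $N_\Q(X)$ by $\Q$; these are different subsystems in general, though harmlessly so here since both have $O^{p'}$ equal to $\F_Q(Q)$ — but that again requires the observation above.

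For (T2), your claim that "$\Q$ over $Q$ has $\Hom_\Q(X\alpha,Q)=\{\incl\}$" because $Q$ is abelian is false: $\Q$ is some saturated subsystem on $Q$, not necessarily the trivial one (e.g.\ $\F_{C_2\times C_2}(A_4)$ is a saturated fusion system on an abelian $2$-group with nontrivial fusion). The correct argument, and the one the paper gives, is that $Q \norm \Q$ by the observation above, so every morphism of $\Q$ extends to an element of $\Aut_\Q(Q) \leq \Aut_\F(Q)$, whence $X^{\Aut_\F(Q)\Q} = X^{\Aut_\F(Q)} = X^{\Aut_\F(Q)\F_Q(Q)}$. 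Your conclusion for (T2) is therefore true, but the justification as written is wrong. In short: the proposal is missing the one idea (normality of $\F_Q(Q)$ in any saturated system on the abelian group $Q$) that turns the whole lemma into a direct translation of the tight-embedding conditions from $\Q$ to $\F_Q(Q)$.
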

\begin{proof}
As $Q$ is abelian, by Alperin's fusion theorem (cf.
\cite[Theorem~I.3.6]{AschbacherKessarOliver2011}), the following holds:
\begin{itemize}
\item [(*)] The $p$-group $Q$, and thus the subsystem $\F_Q(Q)$, is normal in any saturated fusion system on $Q$.
\end{itemize}

Let $1\neq X\leq Q$ and $\alpha\in\AA(X)$. By (*), we have $Q=N_Q(X)\unlhd
N_\Q(X)$ and thus $Q=O^{p^\prime}(N_\Q(X))$. As $\Q$ is tightly embedded, it
follows $N_Q(X)^\alpha=Q^\alpha=O^{p^\prime}(N_\Q(X))^\alpha\unlhd
N_\F(X^\alpha)$. So (T1) holds for $\F_Q(Q)$.  

Let $X\leq Q$ be of order $p$. Again using (*), we have $Q\unlhd\Q$. So every
morphism in $\Q$ extends to an element of $\Aut_\Q(Q)\leq \Aut_\F(Q)$, and this
implies $X^{\Aut_\F(Q)\Q}=X^{\Aut_\F(Q)}$. Hence, as $\Q$ is tightly embedded,
$X^\F\cap Q=X^{\Aut_\F(Q)\Q}=X^{\Aut_\F(Q)}=X^{\Aut_\F(Q)\F_Q(Q)}$. This shows
that (T2) holds for $\F_Q(Q)$. Clearly (T3) holds for $\F_Q(Q)$.  
\end{proof}

To exploit the existence of standard subsystems, it is useful in many
situations to study certain kinds of extensions involving tightly embedded
subsystems. We summarize the main definitions:

\begin{definition}\label{D:Split}
Let $\F_0$ be a fusion system on a $2$-group $S_0$. 
\begin{itemize}
\item A \emph{split extension} of $\F_0$ is a pair $(\F,U)$, where
\begin{itemize}
\item $\F$ is a saturated fusion system over a $2$-group $S$, 
\item $\F_0$ is normal in $\F$,
\item $O^2(\F) = O^2(\F_0)$, and
\item $U$ is a complement to $S_0$ in $S$. 
\end{itemize}
\item The split extension $(\F,U)$ is \emph{tight} if $\F_U(U)$ is tightly
embedded in $\F$. 
\item A \emph{critical split extension} is a tight split extension in which $U$
is a four group.
\item $\F_0$ is said to be \emph{split} if there exists no nontrivial critical
split extension of $\F_0$; that is, for each such extension $(\F,U)$, the
fusion system $\F$ is the central product of $\F$ with $C_S(\F_0)$. 
\end{itemize}
\end{definition}

Suppose $\F$ is a saturated $2$-fusion system and $\C$ is a standard component
with centralizer $\Q$ on $Q$. If $\C$ is split, then by
\cite[Theorem~8]{AschbacherFSCT}, $\C$ is either a component of $\F$, or $Q$ is
elementary abelian, or the $2$-rank of $Q$ equals $1$.  We show in
Lemma~\ref{L:Csplit} that the Benson--Solomon fusion systems are split. So
after showing that a component $\C$ as in Theorem~\ref{T:main} is standard, we
know that, unless $\C$ is a component of $\F$, its centralizer $Q$ in $S$ is
either elementary abelian or quaternion or cyclic. Accordingly, these are the
cases we will treat. 

\begin{lemma}
\label{L:critsplitbasic}
Let $\C$ be a quasisimple saturated fusion system over the $2$-group $T$, and
let $(\F,U)$ be a critical split extension of $\C$ over the $2$-group $S$. Then
\begin{enumerate}
\item[(a)] $\Aut_\F(U) = 1$ and so $N_\F(U) = C_\F(U)$; and
\item[(b)] $\gen{u} \in \F^f$ and $C_\F(u) = C_\F(U)$ for each $1 \neq u \in U$. 
\end{enumerate}
\end{lemma}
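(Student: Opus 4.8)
I would argue as follows; the plan has three parts.

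Since $\C$ is quasisimple we have $O^2(\C)=\C$, so by the definition of a split extension $O^2(\F)=\C$; hence $T=\hyp(\F)$, and $\F/O^2(\F)$ is the trivial fusion system over the abelian group $S/T$, so that any two $\F$-conjugate elements of $S$ lie in a common coset of $T$. For part (a): given $\phi\in\Aut_\F(U)$ and $u\in U$, the elements $u$ and $u^\phi$ are $\F$-conjugate, so $u^{-1}u^\phi\in T\cap U=1$; thus $\Aut_\F(U)=1$, whence $\Aut_S(U)=1$ (so $N_S(U)=C_S(U)$), and every morphism of $N_\F(U)$ induces the identity on $U$ and hence lies in $C_\F(U)$, giving $N_\F(U)=C_\F(U)$. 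Throughout part (b) I use that $U\in\F^f$, that $\F_U(U)$ is tightly embedded, and that $\F_U(U)^f$ consists of all subgroups of $U$.

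For the first half of (b), fix $1\ne u\in U$, choose $v\in u^\F$ with $\gen v\in\F^f$, and (by Lemma~\ref{AAnonempty}) choose $\alpha\in\AA(\gen u)$ with $u^\alpha=v$. Applying condition (T1) for $\F_U(U)$ to $X=\gen u$ and $\alpha$ — noting $O^{2'}(N_{\F_U(U)}(\gen u))=\F_U(U)$ — yields $\F_{U^\alpha}(U^\alpha)\norm N_\F(\gen v)=C_\F(v)$. Since $T$ is strongly closed, $U^\alpha\cap T=1$, so $U^\alpha$ is a complement to $T$ in $S$ with $v\in U^\alpha$; this gives $C_S(v)=C_T(v)U^\alpha$ with $|C_S(v)|=4|C_T(v)|$, and similarly $|C_S(u)|=4|C_T(u)|$ and $|C_S(U)|=4|C_T(U)|$. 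Next, $U^\alpha$ is strongly closed in $C_\F(v)$, while (T2) and (a) give $w^\F\cap U=\{w\}$ and hence $w^\F\cap U^\alpha=\{w^\alpha\}$ for every $w\in U$; so each $w^\alpha$ has trivial $C_\F(v)$-conjugacy class, and as $U$ is a four group, $U^\alpha\le Z(C_\F(v))\le Z(C_S(v))$, i.e.\ $C_S(v)=C_S(U^\alpha)$. Since $U^\alpha$ is $\F$-conjugate to the fully centralized $U$ and $C_T(U)\le C_T(u)$, this gives
\[
|C_S(v)|=|C_S(U^\alpha)|\le|C_S(U)|\le|C_S(u)|\le|C_S(v)|,
\]
the last inequality because $\gen v\in\F^f$ with $v\in u^\F$. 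Equality throughout forces $|C_S(u)|$ to be maximal in $u^\F$, so $\gen u\in\F^f$, and also $C_S(U)=C_S(u)$.

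With $\gen u\in\F^f$ established, I may take $\alpha=\id$ in (T1) to obtain $\F_U(U)\norm C_\F(u)$, so that $U$ is strongly closed in $C_\F(u)$; arguing as above with (T2) and (a), every $w\in U$ is fixed by all morphisms of $C_\F(u)$, so $U\le Z(C_\F(u))$ and therefore $C_\F(u)=C_{C_\F(u)}(U)=C_\F(U)$. The step I expect to be the main obstacle is proving $\gen u\in\F^f$: the morphism $\alpha$ gives only $|C_S(u)|\le|C_S(v)|$ directly, and the key trick is to route the comparison through $C_S(U)$, using that it is $U$ (not $u$) that is fully normalized and that tight embedding forces $U^\alpha$ to be central in $C_\F(v)$, so as to produce the reverse inequality and close the loop.
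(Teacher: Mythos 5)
Your proof is correct. Part (a) is essentially the paper's argument: both reduce to $\foc(\F)=\hyp(\F)=T$ (you phrase it via the quotient system $\F/O^2(\F)$, the paper via \cite[Lemma~I.7.2]{AschbacherKessarOliver2011}) and then use $T\cap U=1$ to kill $\Aut_\F(U)$. In part (b) you diverge at the key step. The paper gets $\gen{u}\in\F^f$ in one line by quoting \cite[3.1.5]{AschbacherFSCT}, which says $\gen{u}$ has a fully $\F$-normalized conjugate \emph{inside} $U$; combined with $u^\F\cap U=\{u\}$ from (T2) and (a), that conjugate must be $\gen{u}$ itself. You instead rebuild this from scratch: transport $U$ by $\alpha$ to a complement $U^\alpha$ containing the fully normalized representative $v$, use (T1) and (T2) to force $U^\alpha\leq Z(C_S(v))$, and close the centralizer-order loop through $C_S(U)$ using that $U$ (being fully normalized by the definition of tight embedding) is fully centralized. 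This is a legitimate and self-contained substitute for Aschbacher's (3.1.5) in this special case, at the cost of length; it also incidentally yields $C_S(u)=C_S(U)$. The second half of (b) then coincides with the paper's: $\F_U(U)\norm C_\F(u)$ via (T1) with $\alpha=\mathrm{id}$, so $U$ is strongly closed and abelian, hence normal in $C_\F(u)$, and $\Aut_{C_\F(u)}(U)\leq\Aut_\F(U)=1$ forces $C_\F(u)\subseteq C_\F(U)$. The only spots where you should tighten the wording are the two appeals to ``$U\leq Z(\cdot)$'' of a fusion system: what you actually need (and what your argument delivers) is that $U$ is normal in the relevant system with every morphism extending to one acting trivially on $U$, which follows from strong closure of the abelian group $U$ (\cite[Corollary~I.4.7]{AschbacherKessarOliver2011}) together with $\Aut_\F(U)=1$, rather than from elementwise fixedness alone.
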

\begin{proof}
By definition of critical split extension, $U$ is a four subgroup of $S$
tightly embedded in $\F$ and a complement to $T$ in $S$. Also, $O^2(\F) =
O^2(\C) = \C$, as $\C$ is quasisimple. Since $O^2(\F) = \C$, this means
$\hyp(\F) = T$. Since $S/\hyp(\F) \cong U$ is abelian, we see from
\cite[Lemma~I.7.2]{AschbacherKessarOliver2011} that also $\foc(\F) = T$.  Thus,
$\Aut_\F(U) = 1$, since otherwise $T \cap U = \foc(\F) \cap U \geq
[U,\Aut_\F(U)] > 1$, which is not the case. This proves the first assertion in
(a), and the second then follows from the definitions of the normalizer
and centralizer systems. 

Now by definition of tight embedding, $U$ is fully normalized in $\F$. Fix $1
\neq u \in U$. By (T2) and part (a), it follows that $u^\F \cap U = \{u\}$.
However, (3.1.5) of \cite{AschbacherFSCT} says that $\gen{u}$ has a fully
$\F$-normalized $\F$-conjugate in $U$, so $\gen{u} \in \F^f$.  Then taking
$\alpha$ to be identity in (T1), we see that $U$ is normal in $N_\F(\gen{u}) =
C_\F(u)$, so that $C_\F(u) \leq N_\F(U) = C_\F(U)$ by (a).  This completes the
proof of (b), as the other inclusion is clear.
\end{proof}

\subsection{The fusion system of $\Spin_7(q)$ and $\F_{\Sol}(q)$}\label{SS:spinsol}

Our main references for $\F_{\Sol}(q)$ and for $2$-fusion systems of
$\Spin_7(q)$ are \cite{LeviOliver2002, LeviOliver2005,
ChermakOliverShpectorov2008, AschbacherChermak2010, HenkeLynd2018}.

We follow Section 4 of Aschbacher and Chermak fairly closely
\cite{AschbacherChermak2010} within this subsection, except that it will be
convenient to restrict the choice of the finite fields $\mathbf{F}_q$ over
which the systems in question are defined, and to make small changes to
notation.  For concreteness, we consider a fixed but arbitrary nonnegative
integer $l$, and set $q_l = 5^{2^l}$. Except for the fact that $l$ is in the
role of ``$k$'', we adopt the notation in
\cite[Section~4]{AschbacherChermak2010}, as follows.

Let $\tilde{\mathbf{F}}$ be an algebraic closure of the field with $5$ elements
(thus, we take $p = 5$ in \cite[Section~4]{AschbacherChermak2010}), and let
$\mathbf{F}$ be the union of the subfields of the form $\mathbf{F}_{5^{2^{n}}}$
in $\tilde{\mathbf{F}}$. Let $H = \Spin_7(\mathbf{F})$, let $T$ be a maximal
torus of $H$, and let $T_{2^\infty}$ be the $2$-power torsion subgroup of $T$.

Let $W$ be the subgroup of $H$ defined on page 911 of
\cite{AschbacherChermak2010}, let $W_S$ be the subgroup of $W$ defined on page
915 of \cite{AschbacherChermak2010}, and set $S = T_{2^\infty} W_S$. Thus,
$S \cap W = W_S$.  The subgroup $B$ of $H$ is defined just before Lemma~4.4 of
\cite{AschbacherChermak2010} as the normalizer of the unique normal four
subgroup $U$ of $S$ (see Notation~\ref{N:spin}).  Finally, the group $K$ is
defined at the top of page 918 as a certain semidirect product of the connected
component $B^0$ of $B$ with a subgroup $\gen{y,\tau} \cong S_3$, such that
$\tau \in B$ is of order $2$, and such that $y$ is of order $3$ and permutes
transitively the involutions in $U$.  A free amalgamated product $G = H *_B K$
having Sylow $2$-subgroup $S$ is then defined by an amalgam in
\cite[Section~5]{AschbacherChermak2010}, which is ultimately constructed
at the top of page 923. Here, Sylow $p$-subgroups of infinite groups are
defined in \cite[Definition~1.4]{AschbacherChermak2010} generalizing properties
of Sylow $p$-subgroups of finite groups in a natural way.

Let $\psi$ be the Frobenius endomorphism of $H$ as defined in (4.2.2) of
\cite{AschbacherChermak2010} and inducing the $5$-th power map on $T$, and set
$\psi_l = \psi^{2^l}$ for each integer $l \geq 0$.  

\begin{theorem}
\label{T:uniqueliftsigma}
For any choice of integer $l \geq 0$, the automorphism $\psi_l$ of $H$ lifts
uniquely to an automorphism $\sigma_l$ of the group $G$ that commutes with
$y$, and hence an automorphism that leaves $K$ invariant. Moreover,
$C_S(\sigma_l)$ is then a finite Sylow $2$-subgroup of
$C_G(\sigma_l)$, and $\F_{C_S(\sigma_l)}(C_G(\sigma_l))$ is
isomorphic to $\F_{\Sol}(q_l)$. 
\end{theorem}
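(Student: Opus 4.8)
The plan is to prove the existence and uniqueness of the lift of $\sigma$ by working inside the amalgam, exploiting the rigidity built into the defining construction of $G = H *_B K$ in \cite[Section~5]{AschbacherChermak2010}. First I would recall that $\sigma = \psi_l$ stabilizes $H$ and normalizes $S$ (since $\psi$ acts on $T$ by the $5$-th power map and preserves $S_\infty$ and $W_S$ by construction), so $\sigma$ normalizes $B = N_H(U)$. Because $\sigma$ commutes with the action of $W$ on $T$ it fixes $U$ pointwise up to the Weyl group, but more precisely $\sigma$ centralizes $U$ (the involutions in $U$ lie in $S_\infty$ and are $\psi$-fixed), and it normalizes $B^0$ and the complement $\gen{y,\tau}$; one checks from the explicit description of $K$ on page~918 of \cite{AschbacherChermak2010} that $\sigma$ can be chosen to centralize $y$ and $\tau$, hence leaves $K$ invariant. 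This gives an automorphism $\sigma_H$ of $H$ and an automorphism $\sigma_K$ of $K$ agreeing on $B$, which by the universal property of the free amalgamated product assemble to an automorphism of $G$.

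Next I would address uniqueness. Any lift $\hat\sigma$ of $\sigma$ to $\Aut(G)$ restricting to $\sigma$ on $H$ must send $B$ to $B$ (as $B = N_H(U)$ and $\hat\sigma$ normalizes $H$ and $U^{\hat\sigma} = U^\sigma = U$), hence permutes the two vertex groups $H, K$ of the amalgam; since $H \not\cong K$ it fixes each, so $\hat\sigma|_K \in \Aut(K)$ is determined up to an automorphism of $K$ trivial on $B$. The group of automorphisms of $K = B^0 \rtimes \gen{y,\tau}$ trivial on $B$ is controlled by $H^1(\gen{y,\tau}, Z(B^0))$-type data together with inner automorphisms by $C_{B^0}(B)$; imposing the normalization that $\hat\sigma$ commutes with $y$ (equivalently, acts as a standard field automorphism) kills the remaining ambiguity. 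The cleanest route is to observe that the subgroup of $\Aut(G)$ lifting $\sigma$ and centralizing $y$ is a torsor under $C_G(G) \cap (\text{lifts of } \id) = Z(G) = 1$, so the lift is unique; I would extract this from the structure of the amalgam rather than recompute it.

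For the last two assertions, I would argue as follows. The fixed-point subgroup $C_G(\sigma)$ is a finite group since $\sigma$ is a Frobenius-type endomorphism on the ``algebraic'' part $H$ and acts with finite fixed points on $K$ (indeed $B^{0,\sigma}$ is finite of Lie type and $\gen{y,\tau}$ is finite), and $C_S(\sigma)$ is a finite $2$-group. That $C_S(\sigma) \in \Syl_2(C_G(\sigma))$ follows from a Lang--Steinberg style argument on $H$ together with the fact that $S$ meets both amalgam factors in the expected way: every $\sigma$-invariant $2$-subgroup of $G$ is, after conjugating by an element of $C_G(\sigma)$, contained in $S$ (using that $S^\sigma = C_S(\sigma)$ and that $\sigma$-invariant Sylow $2$-subgroups of $C_G(\sigma)$ correspond to $\sigma$-fixed points in the coset space $G/N_G(S)$, which are nonempty and form a single $C_G(\sigma)$-orbit by Lang--Steinberg applied to $H$). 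Finally, the identification $\F_{C_S(\sigma)}(C_G(\sigma)) \cong \F_{\Sol}(q_l)$ is obtained by comparing with the construction of $\F_{\Sol}(q)$ in \cite{AschbacherChermak2010}: that paper constructs $\F_{\Sol}(q)$ precisely as the fusion system of the fixed points of such a Frobenius lift on the analogous amalgam, so one matches the two constructions up to the change of notation $k \leftrightarrow l$, $p = 5$, $q_l = 5^{2^l}$, checking that the subgroups $S_\infty$, $W_S$, $B$, $K$ used here are the $\sigma$-relevant versions of theirs.

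The main obstacle I expect is the \emph{uniqueness} of the lift commuting with $y$: showing that the only ambiguity in extending $\sigma|_B$ to $K$ is absorbed by the normalization condition, and that no further ambiguity is introduced when amalgamating with $H$. This requires a careful analysis of $\Aut(K)$ relative to $\Aut(B)$ and of how automorphisms of $G$ must respect the amalgam decomposition; the key leverage is that $H$ and $K$ are non-isomorphic, that $Z(G) = 1$, and that the semidirect-product structure $K = B^0 \rtimes S_3$ with $y$ acting transitively on $U^\#$ is rigid enough that commuting with $y$ pins down the field-automorphism action on $B^0$ uniquely.
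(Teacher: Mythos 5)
The paper does not prove this theorem at all: its proof is a two-line citation. The lifting statement is Lemma~5.7 of \cite{AschbacherChermak2010} (and the paragraph preceding it), and the Sylow and fusion-system assertions are Theorem~A(3) of that paper, proved there in Theorem~9.9. You are therefore attempting to reconstruct arguments that the paper deliberately imports wholesale, and judged as a standalone proof your sketch has genuine gaps at exactly the two places you flag as delicate.

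First, uniqueness. The assertion that the set of lifts of $\sigma$ commuting with $y$ ``is a torsor under $Z(G)=1$'' is not an argument: two lifts of $\sigma|_H$ differ by an automorphism of $G$ trivial on $H$, and there is no a priori reason such an automorphism is inner, let alone central. The correct (and much shorter) observation is that $B^0 \leq B \leq H$ and $\tau \in B \leq H$, so $K=\gen{B^0,\tau,y}$ gives $G = H *_B K = \gen{H,y}$; hence an automorphism of $G$ is determined by its restriction to $H$ together with the image of $y$, and requiring it to centralize $y$ leaves no ambiguity. Your detour through $\Aut(K)$ relative to $\Aut(B)$ and $H^1(\gen{y,\tau},Z(B^0))$ is both unnecessary and not actually carried out. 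Second, the Sylow claim. $G$ is a free amalgamated product, not an algebraic group, so Lang--Steinberg does not apply to $G$; the statement that the $\sigma$-fixed points of $G/N_G(S)$ are nonempty and form a single $C_G(\sigma)$-orbit is precisely what has to be proved, and in \cite{AschbacherChermak2010} it is established by analyzing the action of $G$ and of $\sigma$ on the tree of the amalgam, combined with Lang's theorem on the factor $H$. Finally, the identification $\F_{C_S(\sigma)}(C_G(\sigma)) \cong \F_{\Sol}(q_l)$ is not a matter of matching notation: it is the main content of Theorem~9.9 of \cite{AschbacherChermak2010} (saturation of the fixed-point fusion system plus comparison with the Levi--Oliver construction). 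If you intend to use this theorem as the paper does, the right move is simply to cite these results rather than to sketch a replacement proof whose hardest steps are left open.
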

\begin{proof}
The first statement here is shown in Lemma~5.7 of \cite{AschbacherChermak2010}
and in the paragraph before it. The second part is Theorem~A(3) of
\cite{AschbacherChermak2010}: it is shown in
\cite[Lemma~7.4(b)]{AschbacherChermak2010} that $C_G(\sigma_l)$ is isomorphic to
$C_H(\sigma_l) *_{C_B(\sigma_l)} C_K(\sigma_l)$ and in
\cite[Lemma~7.5(b)]{AschbacherChermak2010} that $C_S(\sigma_l)$ is a Sylow
$2$-subgroup of $C_G(\sigma_l)$. Ultimately it is then shown in Theorem~9.9
that $\F_{C_S(\sigma_l)}(C_G(\sigma_l))$ is isomorphic to the fusion system
$\F_{\Sol}(q_l)$ defined by Levi and Oliver in \cite{LeviOliver2002,
LeviOliver2005}. 
\end{proof}

For the automorphism $\sigma_l$ of $G$ given by
Theorem~\ref{T:uniqueliftsigma} and for any subgroup $X \leq G$, we write
$X_{\sigma_l}$ for $C_X(\sigma_l)$. 

\begin{lemma}
\label{L:sylowchoice}
Let $\sigma_l$ be the unique lift of $\psi_l = \psi^{2^l}$ to $G$.  Then for
each such $l \geq 0$,
\begin{enumerate}
\item[(a)] $X$ and $X_{\sigma_l}$ are invariant under $\sigma_0$ for each $X \in
\{H,B,K,S,W,T\}$, 
\item[(b)] $\sigma_l$ centralizes $W$, and
\item[(c)] $S_{\sigma_l} = (T_{2^\infty})_{\sigma_l}W_S$, and $S_{\sigma_l}$ is
a Sylow $2$-subgroup of $H_{\sigma_l}$. 
\end{enumerate}
\end{lemma}
\begin{proof}
Except for the case $X = K$, all parts of (a) follow from the definitions in
\cite[Section~4]{AschbacherChermak2010}, since the statement just expresses
invariance of $X$ under $\psi$ for $X \leq H$. By uniqueness of the lift in
Theorem~\ref{T:uniqueliftsigma}, $\sigma_0^{2^l} = \sigma_l$, so
$[\sigma_0,\sigma_l] = 1$ in particular.  Hence, $(K_{\sigma_l})^{\sigma_0} =
C_{K^{\sigma_0}}(\sigma_l) = K_{\sigma_l}$.  Part (b) is proved in
\cite[Lemma~4.3]{AschbacherChermak2010}, while part (b) is proved in
\cite[Lemma~4.9]{AschbacherChermak2010}.
\end{proof}

Thus, as $H$ is of universal type, 
\[
H_{\sigma_l} = C_{H}(\sigma_l) = C_H(\psi_l)=\Spin_7(q_l),
\]
and so part (a) of the lemma says for example that $(H_{\sigma_l})_{\sigma_0} =
\Spin_7(5)$. Also note that $\psi_l$ acts on $T$ as $t \mapsto t^{q_l}$ (by
definition of $\psi$), so $T_{\sigma_l}$ is a split maximal torus of
$H_{\sigma_l}$, isomorphic to $(C_{q_l-1})^{3}$. 

\begin{notation}[$\F_{\Spin}(q)$ and $\F_{\Sol}(q)$]
\label{N:Fsigma}
Fix $l \geq 0$ and set $\sigma = \sigma_l$ for short.  Write 
\[
\H_\sigma := \F_{S_\sigma}(H_\sigma) = \F_{\Spin}(q) \quad \text{ and } \quad
\F_\sigma := \F_{S_\sigma}(G_\sigma) = \F_{\Sol}(q). 
\]
Let $Z := Z(S_\sigma)$, a group of order $2$. Write $z$ for the
involution in $Z$. 
\end{notation}

Thus, by Theorem~A(3) of \cite{AschbacherChermak2010}, $\F_\sigma$ is
isomorphic to the exotic fusion system defined by Levi and Oliver in
\cite{LeviOliver2002,LeviOliver2005}, and $C_{\F_\sigma}(z) = \H_\sigma$.

We continue to set up notation for some common subgroups of $S_\sigma$, and we
recall the various parts of the set up appearing in \cite[\S
4]{AschbacherChermak2010} that are needed later.

\begin{notation}[Some subgroups of $S_\sigma$]\label{N:spin}
Set $k := l+2$, and set $T_{2^k} := (T_{2^\infty})_\sigma \cong
(C_{2^k})^3$. This is the $2^k$-torsion subgroup of $T_{2^\infty}$ and a Sylow
$2$-subgroup of the finite abelian group $T_\sigma$.  Let $w_0 \in W_S$ be the
element of order $2$ fixed in \cite[Lemma~4.3]{AschbacherChermak2010}. Thus,
$w_0$ is centralized by $\sigma$ by Lemma~\ref{L:sylowchoice}(b) and $w_0$
inverts $T_{2^k}$.  The $2$-group $S_\sigma$ has a sequence of
elementary abelian subgroups
\[
1 < Z < U < E < A,
\]
each of index $2$ in the next, with $Z = Z(S_\sigma)$ as above, $U$ the unique
normal four subgroup of $S_\sigma$, $E = \Omega_1(T_{2^k})$, and $A =
E\gen{w_0}$, an elementary abelian subgroup of order $16$. We also set
$R_\sigma = C_{S_\sigma}(E) = T_{2^k}\gen{w_0} = T_{2^k}A$.  
\end{notation}

\medskip
\textbf{We adopt Notation~\ref{N:Fsigma} and \ref{N:spin} for the remainder of this subsection.}

\medskip
The following lemma collects a number of properties of these subgroups and
their automorphism groups. 
\begin{lemma}
\label{L:standardsequence}
The following hold. 
\begin{enumerate}
\item[(a)] For each $k_0 \geq 2$, $T_{2^{k_0}}$ is the unique homocyclic
abelian subgroup of $S$ of rank $3$ and exponent $2^{k_0}$, and
$T_{2^{k_0}}$ is inverted by $w_0$. 
\item[(b)] $T_{2^k}$ is $\F_\sigma$-centric, $S_\sigma/T_{2^k} \cong C_2 \times D_8$, and
$\Aut_{\F_\sigma}(T_{2^k}) \cong C_2 \times GL_3(2)$.
\item[(c)] $R_\sigma$ is characteristic in $S_\sigma$, and $\Out_{\F_\sigma}(R_\sigma) \cong GL_3(2)$. 
\item[(d)] $A$ is an elementary abelian subgroup of $S_\sigma$ of maximum
order, and so $S_\sigma$ has $2$-rank $4$. 
\item[(e)] $\Aut_{\F_\sigma}(X) = \Aut(X)$ for $X \in \{Z,U,E,A\}$. 
\end{enumerate}
\end{lemma}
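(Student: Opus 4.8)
The plan is to read all five parts off the explicit description of $H_\sigma=\Spin_7(q_l)$ and of its Sylow $2$-subgroup $S_\sigma=T_kW_S$ in \cite[\S4]{AschbacherChermak2010}, the construction of $\F_\sigma\cong\F_{\Sol}(q_l)$ and the computation of its essential subgroups in \cite{LeviOliver2002,AschbacherChermak2010} (see also \cite{HenkeLynd2018}), together with a few short computations. Throughout I will use that $w_0$ acts on $T_k$ as $-1$ (Notation~\ref{N:spin}): since $T_k$ is homocyclic of rank $3$ and exponent $2^k$ with $k=l+2\ge2$, this means $w_0$ centralizes $E=\Omega_1(T_k)$, inverts every subgroup $T_{k_0}\le T_k$, and does not centralize $T_k$. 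For (a): $T_\sigma$ is a split maximal torus of the simply connected group $\Spin_7(q_l)$, so its subgroup of exponent $2^{k_0}$ is $T_{k_0}$, homocyclic of rank $3$ and exponent $2^{k_0}$, and inverted by $w_0$, for each $2\le k_0\le k$. Uniqueness of $T_{k_0}$ among homocyclic abelian subgroups of $S$ of rank $3$ and exponent $2^{k_0}$ is part of the subgroup analysis of $S$ in \cite[\S4]{AschbacherChermak2010} (one reduces to $S_\infty\cong(C_{2^\infty})^3$, where the statement is clear). A consequence used repeatedly below is that $T_k$ is characteristic in $S_\sigma$, hence so are $E$ and $R_\sigma=C_{S_\sigma}(E)$; this settles the first clause of (c).

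For (b): by the uniqueness in (a), any $\F_\sigma$-isomorphic image of $T_k$ is again homocyclic of rank $3$ and exponent $2^k$ inside $S_\sigma$, hence equals $T_k$; so $T_k$ is weakly closed, in particular $T_k\in\F_\sigma^f$. Also $C_{S_\sigma}(T_k)\le C_{S_\sigma}(E)=R_\sigma=T_k\langle w_0\rangle$ and $w_0$ does not centralize $T_k$, so $C_{S_\sigma}(T_k)=T_k$ and $T_k$ is $\F_\sigma$-centric. An order count ($|S_\sigma|=2^{3l+10}=|T_k|\cdot|W_S|$) gives $T_k\cap W_S=1$, so $S_\sigma/T_k\cong W_S$, the Sylow $2$-subgroup $C_2\times D_8$ of $W\cong W(B_3)$. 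The identities $\Aut_{\F_\sigma}(T_k)\cong C_2\times GL_3(2)$ and $\Out_{\F_\sigma}(R_\sigma)\cong GL_3(2)$ (note $Z(R_\sigma)=E$, so $\Out_{\F_\sigma}(R_\sigma)=\Aut_{\F_\sigma}(R_\sigma)/(R_\sigma/E)$) record that these $\Spin_7(q_l)$-automizers get enlarged by the extra generators of the amalgam $G=H*_BK$; for their precise shape I would cite the computations in \cite{LeviOliver2002} and \cite[\S4]{AschbacherChermak2010}.

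For (d) and the quick cases of (e): $A=E\langle w_0\rangle$ is elementary abelian of order $16$ since $w_0$ centralizes $E$ and $w_0^2=1$. That $A$ has maximal order among elementary abelian subgroups of $S_\sigma$ — whence $S_\sigma$ has $2$-rank $4$ — is a computation in $S_\sigma=T_kW_S$ carried out in \cite[\S4]{AschbacherChermak2010}; its core is that an elementary abelian subgroup containing $E$ lies in $C_{S_\sigma}(E)=R_\sigma=T_k\langle w_0\rangle$, inside which (using $w_0^2=1$ and that $w_0$ inverts $T_k$) one checks that every elementary abelian subgroup lies in $\langle E,tw_0\rangle\cong C_2^4$ for some $t\in T_k$, the remaining cases being dispatched via the extension $1\to T_k\to S_\sigma\to W_S\to1$. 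In (e): $\Aut_{\F_\sigma}(Z)=1=\Aut(Z)$ is trivial; $\Aut_{\F_\sigma}(U)=\Aut(U)\cong GL_2(2)$ because the order-$3$ element $y$ of $K$ (Notation~\ref{N:spin}) cycles the three involutions of $U$ and $\tau$ supplies a transposition, so $\Aut_{\F_\sigma}(U)$ already contains $\Aut(U)$; and $\Aut_{\F_\sigma}(E)=\Aut(E)\cong GL_3(2)$ is built into $\F_{\Sol}(q_l)$ by the constructions of Solomon and Levi--Oliver \cite{LeviOliver2002}.

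The main obstacle is (e) for $X=A$, i.e.\ showing $\Aut_{\F_\sigma}(A)=\Aut(A)\cong GL_4(2)$. My plan is as follows. First compute $C_{S_\sigma}(A)=C_{S_\sigma}(E)\cap C_{S_\sigma}(w_0)=R_\sigma\cap C_{S_\sigma}(w_0)=A$, so that $A$ is self-centralizing and $N_{S_\sigma}(A)/A\le\Aut(A)$; moreover conjugation by the subgroup $\Omega_2(T_k)\le T_k\le N_{S_\sigma}(A)$ already realizes inside $\Aut_{\F_\sigma}(A)$ the full group $\Hom(A/E,E)\cong E$ of translations fixing $E$. Next, using $\Aut_{\F_\sigma}(E)=\Aut(E)$, extend each automorphism of $E$ to an $\F_\sigma$-automorphism of $A$; this is the delicate point, for which I would invoke the explicit local structure around $R_\sigma$ and $A$, or linking-system-level extension arguments, from \cite[\S4]{AschbacherChermak2010} and \cite{HenkeLynd2018}. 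Together these produce inside $\Aut_{\F_\sigma}(A)$ the stabilizer of the hyperplane $E$, isomorphic to $E\rtimes GL_3(2)$. Finally, since all involutions of $\F_{\Sol}(q_l)$ are $\F_\sigma$-conjugate, not every hyperplane of $A$ can be $\Aut_{\F_\sigma}(A)$-invariant, so $\Aut_{\F_\sigma}(A)$ properly contains the stabilizer of $E$; as $E\rtimes GL_3(2)$ is a maximal subgroup of $GL_4(2)$, this forces $\Aut_{\F_\sigma}(A)=\Aut(A)$, completing the proof.
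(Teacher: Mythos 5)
Your treatment of (a)--(d) follows essentially the same route as the paper: everything is read off from \cite[\S4]{AschbacherChermak2010}. Two minor remarks there. For (c), the paper does not merely cite the automizer of $R_\sigma$: it derives $\Out_{\F_\sigma}(R_\sigma)\cong GL_3(2)$ from $\Aut_{\F_\sigma}(T_k)$ by showing the restriction map $\Aut_{\F_\sigma}(R_\sigma)\to\Aut_{\F_\sigma}(T_k)$ is surjective (extension axiom, since $R_\sigma/T_k$ induces $O_2(\Aut_{\F_\sigma}(T_k))$) with kernel $\Aut_{T_k}(R_\sigma)$ (via \cite[Lemma~A.8]{BrotoLeviOliver2003} and centricity of $T_k$); your plan to ``cite the computations'' would want to be replaced by something like this. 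For (d), your sketch only bounds elementary abelian subgroups \emph{containing} $E$; the general case is exactly what \cite[Lemma~7.9(a)]{AschbacherChermak2010} supplies, and ``the remaining cases being dispatched via the extension'' is not yet an argument.

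The genuine gap is in (e) for $X=A$. The paper simply quotes \cite[Lemma~3.1]{LeviOliver2002}, where $\Aut_{\F_\sigma}(A)=\Aut(A)$ is essentially built into the Levi--Oliver construction. Your replacement argument has two holes. First, the ``delicate point'' of extending each element of $\Aut(E)$ to an element of $\Aut_{\F_\sigma}(A)$ is left entirely open; note that elements of $\Aut_{\F_\sigma}(R_\sigma)$ need not normalize $A$ (they may send $w_0$ to $tw_0$ with $t\in T_k\setminus E$), so this does not follow from (c) without additional work. Second, and more seriously, the final step does not close: knowing that all involutions of $S_\sigma$ are $\F_\sigma$-conjugate does not imply that some element of $\Aut_{\F_\sigma}(A)$ moves $E$, because the fusion between an element of $E$ and an element of $A\setminus E$ could a priori be realized by a chain of morphisms none of which is defined on all of $A$. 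Worse, within this paper's logical order the step is circular: Lemma~\ref{L:involutionsconjugate} (transitivity on involutions) is itself deduced \emph{from} part (e) of the present lemma. The fix is to do what the paper does and take $\Aut_{\F_\sigma}(A)=\Aut(A)$ directly from \cite{LeviOliver2002}, where it is part of the construction, rather than try to re-derive it from downstream consequences.
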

\begin{proof}
By Lemma~4.9(b) of \cite{AschbacherChermak2010}, $T_{2^2} \leq T_{2^{k_0}}$ is the
unique homocyclic subgroup of $S$ of rank $3$ and exponent $4$. Moreover,
for the maximal torus $T$ of $H$, we have $T = C_{H}(T_{2^2})$, and
$S_\sigma \cap T = T_{2^k}$  is of rank $3$ and of exponent $2^k$.  This shows
that $T_{2^2}$, and more generally, $T_{2^{k_0}} = \Omega_{k_0}(T_{2^k})$
for $2 \leq k_0 \leq k$ is the unique subgroup of $S$ of its isomorphism type.
Also, $w_0$ inverts $T$ by \cite[Lemma~4.3(a)]{AschbacherChermak2010}.  This
completes the proof of (a).  Again as $T = C_H(T_{2^2})$, one has
$C_{H_\sigma}(T_{2^2}) = T_\sigma = T_{2^k} \times O(T_\sigma)$, and it follows
that $T_{2^k}$ is $\F_\sigma$-centric.  The
second statement in part (b) follows from
\cite[Lemma~4.3(c)]{AschbacherChermak2010}, while the third is the content of
\cite[Theorem~5.2]{AschbacherChermak2010}.

For the proof of (c), note first that $T_{2^k}$ is characteristic in $S$ by (a). So
also $R_\sigma = C_S(\Omega_1(T_{2^k}))$ is characteristic in $S$.  Finally, as
$T_{2^k}$ is fully $\F_\sigma$-normalized by (a) and as $R_\sigma/T_{2^k}$ is of order
$2$ and induces $O_2(\Aut_{\F_\sigma}(T_{2^k}))$ on $T_{2^k}$, the restriction map
$\rho\colon \Aut_{\F_\sigma}(R_\sigma) \to \Aut_{\F_\sigma}(T_{2^k})$ is surjective
by the Extension Axiom. Let $\phi \in \ker(\rho)$.  Then by
\cite[Lemma~A.8]{BrotoLeviOliver2003} and the first statement in (b), $\phi$ is
conjugation by an element of $Z(T_{2^k}) = T_{2^k}$.  It follows that $\ker(\rho) =
\Aut_{T_{2^k}}(R_\sigma)$ is of index $2$ in $\Inn(R_\sigma)$. Thus,
$\Out_{\F_\sigma}(R_\sigma) \cong
\Aut_{\F_\sigma}(T_{2^k})/O_2(\Aut_{\F_\sigma}(T_{2^k})) \cong GL_3(2)$ by the last
statement in (b).

Now as $E = \Omega_1(T_{2^k})$ is elementary abelian of order $8$ by (a), and $w_0$
inverts $T_{2^k}$, it follows that $A$ is elementary abelian of order $16$.  There
are no elementary abelian subgroups of $S$ of rank $5$ by
\cite[Lemma~7.9(a)]{AschbacherChermak2010}, so (d) holds. Finally, we refer to
Lemma~3.1 of \cite{LeviOliver2002} for the $\F_\sigma$-automorphism groups of
$X \in \{Z,U,E,A\}$, where $A$ is denoted ``$E^*$''. 
\end{proof}

\begin{lemma}
\label{L:involutionsconjugate}
All involutions in $S_{\sigma}$ are $\F_{\sigma}$-conjugate. 
\end{lemma}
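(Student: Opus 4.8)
The plan is to reduce the statement to showing that every involution of $S_\sigma$ is $\F_\sigma$-conjugate into the elementary abelian subgroup $A$, and then to handle that using $R_\sigma$ and the involution centralizer $C_{\F_\sigma}(z) = \H_\sigma \cong \F_{S_\sigma}(\Spin_7(q_l))$.

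Since $z \in Z \leq A$ and $\Aut_{\F_\sigma}(A) = \Aut(A) \cong GL_4(2)$ acts transitively on $A^\#$ by Lemma~\ref{L:standardsequence}(d),(e), all fifteen involutions of $A$ are $\F_\sigma$-conjugate to $z$; this already covers the seven involutions of $E = \Omega_1(T_k)$, since $E \leq A$. So it suffices to prove that every involution of $S_\sigma$ is $\F_\sigma$-conjugate into $A$. Next I would reduce further to the involutions of $R_\sigma$. As $T_k$ is characteristic in $S_\sigma$, and hence in $R_\sigma$, by Lemma~\ref{L:standardsequence}(a), and as $R_\sigma = C_{S_\sigma}(E) = T_k\langle w_0\rangle$ with $w_0$ of order $2$ inverting $T_k$ (Notation~\ref{N:spin}), the involutions of $R_\sigma$ are exactly the nonidentity elements of $E$ together with the whole coset $w_0 T_k$ (every element $w_0 t$ is an involution, as $w_0$ inverts $T_k$). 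A routine calculation with $\Aut_{\F_\sigma}(R_\sigma)$ — conjugation by $T_k$ moves $w_0 t$ to $w_0 s^2 t$, and $\Out_{\F_\sigma}(R_\sigma) \cong GL_3(2)$ by Lemma~\ref{L:standardsequence}(c) — together with the previous reduction then shows that every involution of $R_\sigma$ is $\F_\sigma$-conjugate to $z$. It thus remains to show that every involution of $S_\sigma$ is $\F_\sigma$-conjugate into $R_\sigma = C_{S_\sigma}(E)$, equivalently, is $\F_\sigma$-conjugate to one that centralizes $E$.

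The main obstacle is this last reduction, for which the full structure of $\F_{\Sol}(q_l) \cong \F_\sigma$ from \cite{LeviOliver2002, AschbacherChermak2010} is needed. The route I would take goes through $\H_\sigma = C_{\F_\sigma}(z)$: every involution $t \neq z$ of $S_\sigma$ commutes with $z$, hence lies in $\H_\sigma \cong \F_{S_\sigma}(\Spin_7(q_l))$, and by the classical description of the involution classes of $\Spin_7(q_l)$ — whose noncentral classes lie over the involutions of $\Omega_7(q_l)$ with $(-1)$-eigenspace of dimension $2$, $4$, or $6$ — it is $\H_\sigma$-conjugate to one of finitely many standard representatives, each of which can be taken to centralize a conjugate of $E$. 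To see that these $\Spin_7(q_l)$-classes collapse in $\F_\sigma$ one uses the one extra piece of fusion present in $\F_\sigma$ but not in $\H_\sigma$: as $\Aut_{\F_\sigma}(U) = \Aut(U)$ permutes the noncentral involutions $z_1$ of $U$ transitively, each such $z_1$ is $\F_\sigma$-conjugate to $z$, and replacing $z$ by $z_1$ and passing to $C_{\F_\sigma}(z_1) \cong \H_\sigma$ changes which $\Spin_7(q_l)$-class a given involution represents; iterating such moves brings every involution of $S_\sigma$ into a conjugate of $R_\sigma$, and hence, by the reductions above, to $z$. Alternatively, and more systematically, one may argue via Alperin's fusion theorem \cite[Theorem~I.3.6]{AschbacherKessarOliver2011}: $R_\sigma$ is $\F_\sigma$-centric (it is self-centralizing in $S_\sigma$) and $\F_\sigma$-radical by Lemma~\ref{L:standardsequence}(c), and one inspects the action on involutions of $\Aut_{\F_\sigma}(P)$ for $P$ running over the finitely many $\F_\sigma$-classes of essential subgroups. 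Either way, the reductions of the first two paragraphs are elementary consequences of the automorphism data in Lemma~\ref{L:standardsequence}, while recognizing that the distinct $\Spin_7(q_l)$-classes of involutions actually merge is where the exotic fusion, and the real work, lie.
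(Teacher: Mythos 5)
There is a genuine gap, and it sits exactly where you say the ``real work'' lies: your argument never actually carries out the fusion of the noncentral involution classes. Both of the routes you sketch for the final step (``iterating such moves brings every involution \dots into a conjugate of $R_\sigma$'', or ``one inspects the action \dots for $P$ running over the \dots essential subgroups'') are plans, not proofs, and the preceding reductions do not make that step any easier. Moreover, your description of the involution classes of $\Spin_7(q_l)$ is not correct: an involution of $\Omega_7(q_l)$ lifts to an \emph{involution} of the spin group only when its $(-1)$-eigenspace has dimension $4$; the classes with eigenspace of dimension $2$ or $6$ lift to elements of order $4$. This is precisely \cite[Lemma~A.4(b)]{LeviOliver2002}, and it is the key input: it shows $H_\sigma$ has exactly \emph{two} classes of involutions, namely $\{z\}$ and all of the involutions in $S_\sigma - Z$ (up to $H_\sigma$-conjugacy). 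Once you know that, the lemma is immediate: $U$ contains $z$ and a noncentral involution, and $\Aut_{\F_\sigma}(U) = \Aut(U)$ by Lemma~\ref{L:standardsequence}(e) permutes $U^\#$ transitively, so the two $H_\sigma$-classes fuse in $\F_\sigma$. That two-line argument is the paper's proof.

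A secondary issue: your ``routine calculation'' in $N_{\F_\sigma}(R_\sigma)$ is not obviously routine. The $T_k$-classes of involutions in the coset $w_0T_k$ are the cosets of $\mho^1(T_k) = T_{k-1}$, an affine space of size $8$ on which $\Out_{\F_\sigma}(R_\sigma) \cong GL_3(2)$ acts. If that action fixes the class $w_0T_{k-1}$ (the only one meeting $A$, since $w_0E \subseteq w_0T_{k-1}$), then the other seven classes are never conjugated into $A$ inside $N_{\F_\sigma}(R_\sigma)$, and your reduction to $A$ fails at that point; you would need to verify transitivity of $GL_3(2)$ on all eight classes (possible, since $GL_3(2) \cong PSL_2(7)$ does act $2$-transitively on $8$ points, but it requires an argument you have not given). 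I recommend abandoning the reduction chain and using the eigenspace lemma directly.
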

\begin{proof}
This is a direct consequence of the construction of these systems
\cite[Theorem~2.1]{LeviOliver2002} and can be seen as follows.  Each involution
in $H_\sigma-Z$ has $-1$-eigenspace of dimension $4$ on the orthogonal space
for $H_\sigma/Z(H_\sigma)$ by \cite[Lemma~A.4(b)]{LeviOliver2002}. It follows
from this that $H_\sigma$ has two conjugacy classes of involutions, namely the
classes in $Z$ and in $S_{\sigma}-Z$.  In $\F_{\sigma}$ these two classes
become fused by construction (c.f.  Lemma~\ref{L:standardsequence}(e)). 
\end{proof}

\begin{lemma}
\label{L:lim1=0}
Let $\F \in \{\F_{\sigma}, \H_\sigma\}$, and let $\L$ be the centric linking
system for $\F$.  Then the natural map $\mu\colon \Out(\L) \to \Out(\F)$ is an
isomorphism.
\end{lemma}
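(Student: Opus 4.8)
The plan is to read off both halves of the isomorphism statement directly from the exact commutative diagram \eqref{E:diagram}. Since $\L$ is the centric linking system for $\F$, one has $\Delta = \F^c$, so every row and column of \eqref{E:diagram} is exact; in particular the fourth column gives the exact sequence $1 \to \varprojlim^1(\Z_\F) \xrightarrow{\lambda} \Out(\L) \xrightarrow{\mu} \Out(\F) \to 1$. This already shows that $\mu$ is surjective, and it identifies $\ker(\mu)$ with $\varprojlim^1(\Z_\F)$. Hence the whole lemma reduces to the single assertion that $\varprojlim^1(\Z_\F) = 0$ for each of $\F = \F_\sigma \cong \F_{\Sol}(q_l)$ and $\F = \H_\sigma \cong \F_{\Spin}(q_l)$.

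For $\F = \F_\sigma$ I would invoke the computation already contained in the construction of the Benson--Solomon linking systems: Levi and Oliver show in \cite{LeviOliver2002} that the higher limits of the center functor over the orbit category of $\F_{\Sol}(q)$-centric subgroups all vanish, which is precisely what forces the associated centric linking system to exist and to be unique; in particular $\varprojlim^1(\Z_{\F_\sigma}) = 0$. For $\F = \H_\sigma$ I would use that $\H_\sigma = \F_{S_\sigma}(\Spin_7(q_l))$ is the $2$-fusion system of a finite group: its centric linking system exists as the transporter-category linking system on the $\F$-centric subgroups, and $\varprojlim^1(\Z_{\H_\sigma}) = 0$ by the general theorem that $\varprojlim^i(\Z_\F) = 0$ for $i = 1,2$ and every saturated fusion system $\F$ (due to Chermak and to Oliver, with the characteristic-$2$ case made free of the classification of the finite simple groups in work of Glauberman and Lynd). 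Either way $\varprojlim^1(\Z_\F) = 0$, and so $\mu$ is an isomorphism.

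I do not expect a genuine obstacle here: once the exactness of \eqref{E:diagram} is available, all the content lies in the two cited higher-limit vanishing statements, both of which are on hand. The only things needing a moment's care are bookkeeping ones---checking that the center-functor calculation quoted from \cite{LeviOliver2002} is the one over the $\F$-centric collection (rather than, say, the $\F$-centric radical collection), and that the hypotheses of the general vanishing theorem genuinely apply to the group system $\H_\sigma$---neither of which causes any difficulty.
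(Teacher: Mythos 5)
Your reduction is the same as the paper's: the exact diagram \eqref{E:diagram} identifies $\ker(\mu)$ with $\varprojlim^1(\Z_\F)$ and $\operatorname{coker}(\mu)$ with (a subgroup of) $\varprojlim^2(\Z_\F)$, so everything comes down to vanishing of these higher limits. Your treatment of $\F_\sigma$ is correct and is exactly what the paper does: Levi--Oliver compute the higher limits of the center functor over $\O(\F^c)$ in \cite[Lemma~3.2]{LeviOliver2002}, and that computation covers the $\F$-centric collection.

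The $\H_\sigma$ half, however, rests on a theorem that does not exist. There is no general result asserting $\varprojlim^1(\Z_\F)=0$ for every saturated fusion system. The Chermak--Oliver (and Glauberman--Lynd) vanishing theorems concern only degrees $\geq 2$: the obstruction to uniqueness of the centric linking system lives in $\varprojlim^2(\Z_\F)$ and the obstruction to existence in $\varprojlim^3(\Z_\F)$, and those are the groups shown to vanish. The degree-one term is precisely $\ker\bigl(\mu\colon \Out(\L)\to\Out(\F)\bigr)$ and it is nonzero in general; this is why the ambient paper states Lemma~\ref{L:fusiontriv} and Proposition~\ref{P:CST}(ii) with injectivity of $\mu$ as an explicit hypothesis rather than a free fact, and why the present lemma needs a proof at all. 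So as written your argument does not establish injectivity of $\mu$ for $\H_\sigma$. The repair is immediate and is what the paper does: \cite[Lemma~3.2]{LeviOliver2002} computes the higher limits of the center functor for \emph{both} systems, $\F_{\Spin}(q)$ as well as $\F_{\Sol}(q)$, and shows they vanish in all positive degrees, so the same citation that you use for $\F_\sigma$ also disposes of $\H_\sigma$.
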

\begin{proof}
This follows from \cite[Lemma~3.2]{LeviOliver2002} and the obstruction sequence
in \cite[Proposition~5.12]{AschbacherKessarOliver2011} (that is, from
\eqref{E:diagram} above). 
\end{proof}

The choice of $q_l = 5^{2^l}$ is motivated by the next two lemmas, especially
Lemma~\ref{L:tameSpin}(a).

\begin{lemma}
\label{L:tameSpin}
Let $\H_q$ be the $2$-fusion system of $\Spin_7(q)$ for some odd $q$, let $l+3$
be the $2$-adic valuation of $q^2-1$, and set $H_\sigma = \Spin_7(q_l)$ as
above. Then the following hold. 
\begin{enumerate}
\item[(a)] $\H_q$ is tamely realized by $H_\sigma$. 
\item[(b)] With $R_\sigma$ as in Notation~\ref{N:spin}, each automorphism of
$H_\sigma$ that normalizes $S_\sigma$ and centralizes $R_\sigma$ is conjugation
by an element of $E = Z(R_\sigma)$.
\end{enumerate}
\end{lemma}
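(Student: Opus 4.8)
Both parts use the computation, via lifting the exponent, that $v_2(q_l^2-1)=v_2\!\left(5^{2^{l+1}}-1\right)=v_2(5^2-1)+v_2(2^{l+1})-1=l+3$, which by hypothesis equals $v_2(q^2-1)$. For (a) the plan is then to reduce to the tameness of $\H_\sigma$. The $2$-fusion system of $\Spin_7$ over a finite field of odd order is governed by the $2$-local structure of a maximal torus, and in particular depends only on $v_2(q^2-1)$; combined with the computation above this gives $\H_q\cong\H_\sigma$, and since the property that a finite group tamely realizes a given fusion system depends only on the pair up to isomorphism, it suffices to show that $\H_\sigma=\F_{S_\sigma}(H_\sigma)$ is tamely realized by $H_\sigma=\Spin_7(q_l)$. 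This is the tameness of the $2$-fusion system of a finite group of Lie type in odd characteristic, and I would cite the relevant results (the case of $\Spin_7(q)$ following from \cite{HenkeLynd2018} and the work of Broto--M{\o}ller--Oliver on fusion systems of groups of Lie type), noting that $\Spin_7(q_l)$ is quasisimple rather than simple, its center being the group $Z$ of order $2$. The point of the choice $q_l=5^{2^l}$ is that $\Out(\Spin_7(q_l))$ is then generated by a standard Frobenius of order $2^l$ together with a diagonal automorphism of order $2$, and this group maps onto $\Out(\L)\cong\Out(\H_\sigma)$ (via Lemma~\ref{L:lim1=0}) with a splitting by field and diagonal automorphisms.

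\textbf{Part (b).} Let $\phi\in\Aut(H_\sigma)$ with $S_\sigma^\phi=S_\sigma$ and $\phi|_{R_\sigma}=\mathrm{id}$. I would first compute $C_{H_\sigma}(R_\sigma)$: since $T_2\leq T_k\leq R_\sigma$ and $C_{H_\sigma}(T_2)=T_\sigma$ is a maximal torus (see the proof of Lemma~\ref{L:standardsequence}), we get $C_{H_\sigma}(R_\sigma)=C_{T_\sigma}(w_0)$, and as $w_0$ inverts $T_\sigma$ by Lemma~\ref{L:standardsequence}(a) this is the $2$-torsion subgroup $\Omega_1(T_k)=E$. Hence it suffices to show $\phi$ is inner: then $\phi=c_g$ with $g\in C_{H_\sigma}(R_\sigma)=E$, giving $\phi=c_e$ with $e\in E$ as required. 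To see $\phi$ is inner, note that $\phi$ fixes $T_k\supseteq T_2$ pointwise, hence normalizes $C_{H_\sigma}(T_k)=T_\sigma$. Since $B_3$ admits no graph automorphism, Steinberg's theorem gives $\phi=a\psi^{j}$ with $a\in\Inndiag(H_\sigma)$ and $0\leq j<2^l$; since $\psi$ normalizes $T_\sigma$ (Lemma~\ref{L:sylowchoice}(c)), so does $a$, and therefore $\phi$ acts on $T_k\cong(\mathbb{Z}/2^k)^3$ as $t\mapsto w(t^{5^j})$ for some element $w$ of the Weyl group $W(B_3)=\{\pm1\}^3\rtimes S_3$. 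Requiring this to be the identity forces the $S_3$-component of $w$ to be trivial, then each sign to be $+1$ (because $5^j\not\equiv-1\pmod{2^k}$), and finally $5^j\equiv1\pmod{2^k}$; lifting the exponent gives $2^l\mid j$, so $j=0$, and then $a$ fixes $T_k$ pointwise, so $a$ is conjugation by an element $x$ of the maximal torus $\widehat T=C_{\mathrm{SO}_7(q_l)}(T_k)$ of $\mathrm{SO}_7(q_l)=\Inndiag(H_\sigma)$. Thus $\phi=\mathrm{ad}(x)$ with $x\in\widehat T$. Since $w_0$ has order $2$ and inverts $\widehat T$, one has $[x,w_0]=x^{-2}$, so $\phi|_{R_\sigma}=\mathrm{id}$ forces $x^2=1$. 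Here is where the choice $q_l\equiv1\pmod{4}$ enters: then $-1$ is a square in $\mathbf{F}_{q_l}$, so every involution of the split torus $\widehat T$ has trivial spinor norm and hence lies in $\Omega_7(q_l)$, the image of $\Spin_7(q_l)$ in $\mathrm{SO}_7(q_l)$. Therefore $x$ lifts to $\Spin_7(q_l)$, whence $\phi\in\Inn(H_\sigma)$, completing the proof.

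\textbf{Expected main obstacle.} For (a) the essential content is the external input that the $2$-fusion system of the quasisimple group $\Spin_7(q_l)$ is tame, together with a compatible splitting of $\kappa$. For (b) the crux is the last step --- ruling out a genuinely diagonal automorphism centralizing $R_\sigma$ --- which reduces to the isogeny bookkeeping for $\Spin_7(q_l)\to\mathrm{SO}_7(q_l)$ and the fact that $\widehat T[2]\leq\Omega_7(q_l)$; this fact fails when $q_l\equiv3\pmod{4}$, so the choice of the prime $5$ (equivalently, the congruence $q_l\equiv1\pmod{4}$) is genuinely needed here.
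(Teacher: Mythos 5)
Your proposal is correct. For part (a) you take essentially the paper's route: the isomorphism $\H_q\cong\H_\sigma$ comes from the fact that the $2$-fusion system of $\Spin_7$ depends only on $v_2(q^2-1)$ (Broto--M{\o}ller--Oliver), and tameness of $\H_\sigma$ by $H_\sigma$ comes from their result that $\Out(H_\sigma)\to\Out(\H_\sigma)$ is an isomorphism for $q_l=5^{2^l}$, combined with Lemma~\ref{L:lim1=0}; the paper cites \cite[Theorem~A]{BrotoMollerOliver2012} and \cite[Proposition~5.16]{BrotoMollerOliver2016} for exactly these two inputs. For part (b) your route is genuinely different from, and more self-contained than, the paper's. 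The paper outsources the key step to \cite[Lemmas~5.3(a) and 5.9]{BrotoMollerOliver2016}: an automorphism centralizing $T_k$ lies in $\Inndiag(H_\sigma)=\Inn(H_\sigma)\Aut_T(H_\sigma)$, where $T$ is the maximal torus of the \emph{ambient algebraic group} $H=\Spin_7(\mathbf{F})$; the realizing element then lies in $C_H(T_k)=T$, is inverted by $w_0$, hence is an involution of $T$, hence lies in $\Omega_1(T)=E$. Because the diagonal automorphisms are realized inside $\Spin_7(\mathbf{F})$ itself, the paper never confronts the isogeny $\Spin_7\to SO_7$. You instead redo the Steinberg decomposition by hand (eliminating the field-automorphism and Weyl-group parts via the explicit action on $T_k\cong(\dsz/2^k)^3$, which is a correct and pleasantly concrete computation) and then must show that the resulting torus involution of $SO_7(q_l)$ is inner on $\Spin_7(q_l)$, which is where your spinor-norm argument and the congruence $q_l\equiv 1\pmod 4$ enter. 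Your approach buys independence from the specific BMO lemmas at the cost of the isogeny bookkeeping; the paper's buys brevity and makes the role of the hypothesis $q_l=5^{2^l}$ implicit (it is absorbed into verifying the hypotheses of the cited lemmas). Both are valid, and your closing observation about where $q_l\equiv 1\pmod 4$ is genuinely needed is accurate.
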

\begin{proof}
Since $q^2-1$ and $q_l^2-1$ have the same $2$-adic valuation, the fusion systems
$\H_q$ and $\H_{\sigma}$ are isomorphic by
\cite[Theorem~A(a,c)]{BrotoMollerOliver2012}.  The composition $\Out(H_\sigma)
\to \Out(\H_\sigma)$ of $\mu$ with $\kappa$  (see Section~\ref{SS:aut}) is an
isomorphism with $q_l = 5^{2^l}$ by
\cite[Proposition~5.16]{BrotoMollerOliver2019}. Thus, $\H_q$ is tamely realized
by $H_{\sigma}$ by Lemma~\ref{L:lim1=0} and the definition of tame
(Definition~\ref{D:tame}).

Set $k = l+2$ as before. For the sake of convenience, we make appeals to
\cite[\S5]{BrotoMollerOliver2019} also for (b).  Note that by choice of $q_l$,
$H_\sigma$ satisfies Hypotheses~5.1(III.1) of that reference.  Let $\alpha$ be
an automorphism of $H_\sigma$ that normalizes $S_\sigma$ and centralizes
$R_\sigma$.  Since $R_\sigma \geq T_{2^k}$, $\alpha$ centralizes $T_{2^k}$. Thus, by
\cite[Lemma~5.9]{BrotoMollerOliver2019}, $\alpha \in \Inndiag(H_\sigma) =
\Inn(H_\sigma)\Aut_{T}(H_\sigma)$, and so there is $h \in H_\sigma$ and $t \in
T$ such that $\alpha$ is conjugation by $ht$. Then also $h \in C_{H}(T_{2^k}) = T$,
with the last equality by \cite[Lemma~5.3(a)]{BrotoMollerOliver2019}, so that
$ht \in T$. However, $R_\sigma$ contains the element $w_0$ inverting $T_{2^k}$,
c.f.  Lemma~\ref{L:standardsequence}(a), and so it follows that $ht \in
\Omega_1(T_{2^k}) = Z(R_\sigma)$.
\end{proof}

\begin{lemma}
\label{L:SolEquivalences}
The following hold.
\begin{enumerate}
\item[(a)] The collection $\{\F_{\Sol}(q_l) \mid l \geq 0\}$ gives a
nonredundant list of the isomorphism types of the $2$-fusion systems
$\F_{\Sol}(q)$ as $q$ ranges over odd prime powers. 
\item[(b)] The collection $\{\F_{\Spin}(q_l) \mid l \geq 0\}$ gives a
nonredundant list of the isomorphism types of the $2$-fusion systems
$\F_{\Spin}(q)$ as $q$ ranges over odd prime powers. 
\end{enumerate}
\end{lemma}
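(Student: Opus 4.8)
The plan is to deduce both parts from the single arithmetic identity $v_2(q_l^2-1)=l+3$, where $v_2$ denotes the $2$-adic valuation, together with the classification up to isomorphism of the $2$-fusion systems of $\Spin_7$ that was already invoked in the proof of Lemma~\ref{L:tameSpin}. First I would record that identity. Since $q_l=5^{2^l}$, we have $q_l^2-1=5^{2^{l+1}}-1$, and ``lifting the exponent'' gives $v_2(5^n-1)=v_2(5-1)+v_2(5+1)+v_2(n)-1=2+v_2(n)$ for every even $n$; taking $n=2^{l+1}$ yields $v_2(q_l^2-1)=l+3$. I would also note that for every odd prime power $q$ one has $v_2(q^2-1)=v_2(q-1)+v_2(q+1)\geq 3$, since of the two consecutive even integers $q-1$ and $q+1$ one is divisible by exactly $2$ and the other by $4$. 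Hence, as $l$ ranges over the nonnegative integers, $v_2(q_l^2-1)$ ranges without repetition over the set $\{3,4,5,\dots\}$ of all values attained by $v_2(q^2-1)$ on odd prime powers $q$.

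Part (b) is then immediate. By \cite[Theorem~A(a,c)]{BrotoMollerOliver2012} (used already in the proof of Lemma~\ref{L:tameSpin}), for odd prime powers $q$ and $q'$ one has $\F_{\Spin}(q)\cong\F_{\Spin}(q')$ if and only if $v_2(q^2-1)=v_2((q')^2-1)$; combined with the first paragraph this says exactly that $l\mapsto\F_{\Spin}(q_l)$ is a bijection from the nonnegative integers onto the set of isomorphism types of the systems $\F_{\Spin}(q)$.

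For part (a), nonredundancy follows from (b). Any isomorphism $\F_{\Sol}(q_l)\to\F_{\Sol}(q_{l'})$ is induced by an isomorphism of the underlying $2$-groups, and hence restricts to an isomorphism from $C_{\F_{\Sol}(q_l)}(z)$ onto $C_{\F_{\Sol}(q_{l'})}(z')$, where $z$ is the central involution of the underlying $2$-group of $\F_{\Sol}(q_l)$ and $z'$ is its image. Here $C_{\F_{\Sol}(q_l)}(z)\cong\F_{\Spin}(q_l)$ by construction, while $C_{\F_{\Sol}(q_{l'})}(z')\cong\F_{\Spin}(q_{l'})$ because all involutions of $\F_{\Sol}(q_{l'})$ are $\F_{\Sol}(q_{l'})$-conjugate (Lemma~\ref{L:involutionsconjugate}), so this centralizer is isomorphic to the centralizer of the central involution. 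Thus $\F_{\Spin}(q_l)\cong\F_{\Spin}(q_{l'})$, and (b) forces $l=l'$. For exhaustiveness I would invoke the fact that the isomorphism type of $\F_{\Sol}(q)$ depends only on $v_2(q^2-1)$; this is part of the construction and uniqueness results of Levi--Oliver \cite{LeviOliver2002} (see also \cite{AschbacherChermak2010}), the underlying point being that $\F_{\Sol}(q)$ is pinned down by its involution centralizer $\F_{\Spin}(q)$, whose isomorphism type depends only on $v_2(q^2-1)$. Granting this, given an odd prime power $q$ one sets $l:=v_2(q^2-1)-3\geq 0$ and obtains $\F_{\Sol}(q)\cong\F_{\Sol}(q_l)$.

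The step that needs genuine care is exhaustiveness in part (a): whereas \cite{BrotoMollerOliver2012} supplies an explicit isomorphism criterion for the $\Spin_7$ systems, for the Benson--Solomon systems one must extract from the literature that $v_2(q^2-1)$ is a complete isomorphism invariant of $\F_{\Sol}(q)$, i.e., that the construction depends on the prime power $q$ only through this quantity. Everything else is elementary bookkeeping.
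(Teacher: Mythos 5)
Your argument is correct, but it takes a different route from the paper's, which is much shorter: the paper disposes of part (a) in one line by citing \cite[Theorem~B]{ChermakOliverShpectorov2008}, whose statement is precisely that $\F_{\Sol}(q)\cong\F_{\Sol}(q')$ if and only if $v_2(q^2-1)=v_2((q')^2-1)$; combined with your (correct) computation that $v_2(q_l^2-1)=l+3$ exhausts the possible values $\geq 3$ without repetition, this is all of (a). For (b) the paper, like you, gets exhaustiveness from \cite[Theorem~A]{BrotoMollerOliver2012} (packaged as Lemma~\ref{L:tameSpin}(a)), but gets nonredundancy even more cheaply by observing that the Sylow $2$-subgroup of $\Spin_7(q_l)$ has order $2^{10+3l}$, so the systems have pairwise nonisomorphic Sylow subgroups. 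What your approach buys is a reassembly of the Chermak--Oliver--Shpectorov result from more primitive ingredients: you deduce (a)-nonredundancy from (b) by restricting a hypothetical isomorphism to the centralizer of the central involution (this step is fine — note you do not actually need Lemma~\ref{L:involutionsconjugate} here, since a fusion-system isomorphism is induced by a group isomorphism of the Sylow subgroups and therefore already carries $Z(S)$ to $Z(S')$), and you reduce (a)-exhaustiveness to the uniqueness clause in the Levi--Oliver construction. That last step is the one you rightly flag as needing care; rather than leaving it as something ``to be extracted from the literature,'' you should simply cite \cite[Theorem~B]{ChermakOliverShpectorov2008}, which is the definitive reference and makes the whole of part (a) a citation.
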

\begin{proof}
Part (a) is the content of \cite[Theorem~B]{ChermakOliverShpectorov2008}.  For
each odd prime power $q$, the fusion system of $\Spin_7(q)$ is isomorphic to
some fusion system in the given collection by Lemma~\ref{L:tameSpin}(a).  Then
(b) follows as a Sylow $2$-subgroup of $\Spin_7(q_l)$ has order $2^{10+3l}$ by
Lemma~\ref{L:standardsequence}(a,b).
\end{proof}

The next lemma shows that $\F_\sigma$ has just one more essential subgroup in
addition to the essential subgroups of $\H_\sigma$. 

\begin{lemma}
\label{L:essentials}
Let $P \in \F_{\sigma}^e$ be an essential subgroup. Then one of the following holds.
\begin{enumerate}
\item[(a)] $\Aut_{\F_\sigma}(P) = \Aut_{\H_\sigma}(P)$ and $P$ is $\H_{\sigma}$-essential, or
\item[(b)] $P = C_{S_{\sigma}}(U)$, $\Aut_{\F_{\sigma}}(P) =
\gen{\Aut_{\H_{\sigma}}(P), c_{y}}$, and $\Out_{\F_\sigma}(P) \cong S_3$. 
\end{enumerate}
\end{lemma}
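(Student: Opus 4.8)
The plan is to identify the essential subgroups of $\F_\sigma$ by comparing them with those of $\H_\sigma$, using the fact from Lemma~\ref{L:essentials} setup that $\F_\sigma = \F_{S_\sigma}(G_\sigma)$ with $G_\sigma = H_\sigma *_B K$ (up to taking fixed points under $\sigma$), and that $C_{\F_\sigma}(z) = \H_\sigma$ for the central involution $z$. The key structural input is that an essential subgroup $P$ of $\F_\sigma$ is in particular $\F_\sigma$-centric and $\F_\sigma$-radical, and $\Out_{\F_\sigma}(P)$ has a strongly $2$-embedded subgroup. First I would invoke the general principle that, since $\F_\sigma$ is generated by $\H_\sigma = C_{\F_\sigma}(z)$ together with $\Aut_{\F_\sigma}(P)$ for $P$ ranging over essentials, and since the ``new'' fusion not visible in $\H_\sigma$ comes precisely from the free amalgamated product structure via the subgroup $B$ and the element $y$ of order $3$ permuting the involutions of $U$ transitively, any essential $P$ either has all its $\F_\sigma$-automorphisms already realized in $\H_\sigma$ — giving case (a) — or else $\Aut_{\F_\sigma}(P)$ strictly contains $\Aut_{\H_\sigma}(P)$.

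**Handling the new essential.** In the second case, the extra automorphisms must come from $\Aut_{G_\sigma}(P)$ using elements outside $H_\sigma$, hence ultimately from (a conjugate of) $y \in K$. Since $y$ normalizes $U$ and acts nontrivially on it, and since $P$ must be normalized by $\Aut_{\F_\sigma}(P)$-equivariantly related to $U$, I would argue that $P$ must be normalized by $\langle y \rangle$ and contain $U$; the natural candidate — and, I expect, the only one — is $P = C_{S_\sigma}(U)$. To pin this down: $C_{S_\sigma}(U)$ is $\F_\sigma$-centric (one checks $C_{S_\sigma}(C_{S_\sigma}(U)) \leq C_{S_\sigma}(U)$ directly from the subgroup structure in Notation~\ref{N:spin} and Lemma~\ref{L:standardsequence}), it is normalized by $c_y$ since $y$ centralizes... no, rather $y$ normalizes $U$ hence normalizes $C_{S_\sigma}(U)$, and the group $\langle \Aut_{\H_\sigma}(P), c_y\rangle$ projects onto a subgroup of $\Out(P)$ containing $\Out_{\H_\sigma}(P)$ and the image of $c_y$, which by the $S_3$-action of $\langle y, \tau\rangle$ on $U$ from the definition of $K$ yields $\Out_{\F_\sigma}(P) \cong S_3$. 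The containment $\Aut_{\F_\sigma}(P) = \langle \Aut_{\H_\sigma}(P), c_y \rangle$ (not something larger) follows because $\F_\sigma = \langle \H_\sigma, \Aut_{\F_\sigma}(C_{S_\sigma}(U))\rangle$ by the amalgam structure — there is exactly one essential orbit beyond those of $\H_\sigma$ — so no further automorphisms of $P$ can appear.

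**Main obstacle.** The hard part will be the rigidity argument showing $P = C_{S_\sigma}(U)$ is the \emph{only} possibility in case (b), i.e. ruling out other subgroups $P$ carrying $\F_\sigma$-automorphisms not realized in $\H_\sigma$. This requires knowing the $\F_\sigma$-conjugacy classes and centric-radical subgroups well enough to see that the new fusion introduced by passing from $\H_\sigma$ to $\F_\sigma$ is ``concentrated'' at the single subgroup $C_{S_\sigma}(U)$ — essentially the statement that in the Benson–Solomon construction via $H *_B K$, the group $K$ contributes exactly one new essential class. I would handle this by citing the explicit determination of essential subgroups in the Levi–Oliver and Aschbacher–Chermak papers (\cite{LeviOliver2002}, \cite{AschbacherChermak2010}): $\H_\sigma$-essentials are $R_\sigma$ and the maximal subgroups on which $\Aut$ induces $GL_3(2)$-type or $S_3$-type action, listed in \cite[Lemma~3.1]{LeviOliver2002}, and the only subgroup acquiring a strongly-$2$-embedded $\Out$ in $\F_\sigma$ but not $\H_\sigma$ is $C_{S_\sigma}(U)$, where the transitive action of $y$ on $U^\#$ forces $\Out_{\F_\sigma}(C_{S_\sigma}(U))$ to be $S_3$ rather than the $\H_\sigma$-value (which does not have a strongly $2$-embedded subgroup). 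Once that classification is in hand, the lemma is immediate; the bulk of any detailed proof is therefore bookkeeping against those references rather than new argument.
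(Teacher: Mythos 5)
Your overall strategy is the same as the paper's: reduce the problem to a classification of the $\F_\sigma$-centric radical subgroups and their automizers, observe that the only outer automorphism groups with a strongly $2$-embedded subgroup force either case (a) or $P = C_{S_\sigma}(U)$, and then identify $\Aut_{\F_\sigma}(C_{S_\sigma}(U))$ as $\gen{\Aut_{\H_\sigma}(P), c_y}$. The conclusion and the shape of the argument are right. The gap is in where you propose to get the two crucial inputs. First, the ``explicit determination of essential subgroups'' you want to cite is not in \cite[Lemma~3.1]{LeviOliver2002} --- that lemma only computes $\Aut_{\F_\sigma}(X)$ for the elementary abelian subgroups $X \in \{Z,U,E,A\}$ and says nothing about which subgroups of $S_\sigma$ are centric radical --- nor is it cleanly stated in \cite{AschbacherChermak2010}. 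The paper instead appeals to the tables of centric radical subgroups and their outer automorphism groups in \cite{LyndSemeraro} (Tables~1 and 4), which is exactly the bookkeeping your proof needs and which does not exist in the sources you name. Second, the precise equality $\Aut_{\F_\sigma}(C_{S_\sigma}(U)) = \gen{\Aut_{\H_\sigma}(C_{S_\sigma}(U)), c_y}$ is, as the authors remark, difficult to extract from the amalgam construction; the paper pins it down by matching $c_y$ with the explicitly constructed automorphism $\widehat{\gamma}_u$ of \cite[Definition~1.6]{LeviOliver2005}, where $\Gamma_n = \Aut_{\F_\sigma}(C_{S_\sigma}(U))$ is defined by generators.

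A further caution: you invoke the generation statement $\F_\sigma = \gen{\H_\sigma, \Aut_{\F_\sigma}(C_{S_\sigma}(U))}$ to cap off $\Aut_{\F_\sigma}(P)$, but in this paper that statement is Lemma~\ref{L:gen}(a), whose proof is deduced \emph{from} Lemma~\ref{L:essentials} via Alperin--Goldschmidt. Using it here is circular unless you rederive it independently from the Levi--Oliver construction (which is possible, since their definition of $\F_{\Sol}(q)$ is precisely as the system generated by $\H_\sigma$ and $\Gamma_n$, but then you are back to needing the \cite{LeviOliver2005} identification rather than the amalgam). Even granting generation, it does not by itself determine $\Aut_{\F_\sigma}(C_{S_\sigma}(U))$; you still need the explicit description of that automizer, which is the content the paper imports from \cite{LeviOliver2005}.
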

\begin{proof}
Recall that an essential subgroup in a fusion system is in particular both
centric and radical. In \cite{LyndSemeraro}, the centric radical subgroups and
their outer automorphism groups in $\H_\sigma$ and $\F_\sigma$ are explicitly
tabulated. From Tables~1 and 4 there, the only outer automorphism groups having
a strongly embedded subgroup are $S_3$ and a Frobenius group of order $3^2
\cdot 2$. In all cases, either $P$ is essential in $\H_\sigma$ and
$\Out_{\F_\sigma}(P) = \Out_{\H_\sigma}(P)$ so that (a) holds, or $P =
C_{S_\sigma}(U)$ and $\Out_{\F_\sigma}(P) \cong S_3$. In the latter case,
$\Aut_{\F_{\sigma}}(P)$ is generated by $\Aut_{\H_{\sigma}}(P)$ and $c_y$
essentially by the construction of $y$ in Section~5 of
\cite{AschbacherChermak2010}, but it is difficult to find a precise statement
of this claim.  So instead, we appeal to \cite{LeviOliver2005} where $C_{S_\sigma}(U)$
is denoted $S_0(q^{\infty})$ on p. 2400, and where $c_y$ is explicitly
constructed as the automorphism $\widehat{\gamma}_u$ of $C_{S_\sigma}(U)$ in
\cite[Definition~1.6]{LeviOliver2005}.  There, $\Gamma_n$ is used to denote
$\Aut_{\F_{\sigma}}(C_{S_\sigma}(U))$ when $n = 2^l$. 
\end{proof}

The following generation statements will be needed in the process of showing
that a subintrinsic maximal Benson-Solomon subsystem is standard.  The
generation statement of Lemma~\ref{L:gen}(a) is the one which is obtained by
the construction by Levi and Oliver in \cite{LeviOliver2005}. 

\begin{lemma}
\label{L:gen}
The following hold.
\begin{enumerate}
\item[(a)] $\F_\sigma$ is generated by $\H_\sigma$ and $\Aut_{\F_\sigma}(C_{S_{\sigma}}(U))$.
\item[(b)] $\F_{\sigma}$ is generated by $\H_\sigma$ and $N_{\F_\sigma}(R_\sigma)$. 
\end{enumerate}
\end{lemma}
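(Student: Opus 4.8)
The plan is to prove both parts by exhibiting the generating subsystems via their essential subgroups, using the fact that a saturated fusion system is generated by the normalizers of its essential subgroups together with $N_\F(S)$ (Alperin's fusion theorem).

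For part (a), I would start from Lemma~\ref{L:essentials}, which says that the essential subgroups of $\F_\sigma$ are precisely the $\H_\sigma$-essentials together with $P := C_{S_\sigma}(U)$, and that $\Aut_{\F_\sigma}(P) = \gen{\Aut_{\H_\sigma}(P), c_y}$. By Alperin's fusion theorem, $\F_\sigma = \gen{N_{\F_\sigma}(P_0) : P_0 \in \F_\sigma^e} \cdot N_{\F_\sigma}(S_\sigma)$. I would split the essentials into the two families. For each $\H_\sigma$-essential $P_0$, I claim $N_{\F_\sigma}(P_0)$ is generated by $\H_\sigma$ and $\Aut_{\F_\sigma}(P)$; in fact, since $\Aut_{\F_\sigma}(P_0) = \Aut_{\H_\sigma}(P_0)$ by Lemma~\ref{L:essentials}(a), one gets $N_{\F_\sigma}(P_0) = N_{\H_\sigma}(P_0) \subseteq \H_\sigma$ outright — this requires checking that the fusion inside $N_{S_\sigma}(P_0)$ (below $P_0$) is also captured by $\H_\sigma$, which follows because $\H_\sigma$ is a subsystem of $\F_\sigma$ over the same $2$-group and these two systems have the same automorphizers on every subgroup except $P$. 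Similarly, $N_{\F_\sigma}(S_\sigma) = \F_{S_\sigma}(S_\sigma) \subseteq \H_\sigma$. For the remaining essential $P$, I would observe that $N_{\F_\sigma}(P)$ is generated by $\Aut_{\F_\sigma}(P) = \gen{\Aut_{\H_\sigma}(P), c_y}$ together with the fusion in proper subgroups of $N_{S_\sigma}(P)$, all of which lies in $\H_\sigma$ by the preceding paragraph. Hence every generator of $\F_\sigma$ coming from Alperin lies in $\gen{\H_\sigma, \Aut_{\F_\sigma}(P)}$, proving (a). The one subtlety I expect is being careful that "$\Aut_{\F_\sigma}(P_0) = \Aut_{\H_\sigma}(P_0)$" together with saturation genuinely forces $N_{\F_\sigma}(P_0) = N_{\H_\sigma}(P_0)$ — one should invoke that both normalizer systems are determined by the automorphizer data on subgroups of $N_{S_\sigma}(P_0)$, and those data agree by induction on index (the subgroups strictly below $P_0$ being non-essential in $\F_\sigma$ unless they are $\H_\sigma$-essential or equal $P$, but $P$ does not lie below an $\H_\sigma$-essential here).

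For part (b), I would use the same mechanism but repackage the single extra essential $P = C_{S_\sigma}(U)$ into $N_{\F_\sigma}(R_\sigma)$. The point is that $\Aut_{\F_\sigma}(P)$ can be recovered from $\H_\sigma$ together with $N_{\F_\sigma}(R_\sigma)$. Concretely, $R_\sigma = C_{S_\sigma}(E)$ is characteristic in $S_\sigma$ (Lemma~\ref{L:standardsequence}(c)) and $\Out_{\F_\sigma}(R_\sigma) \cong GL_3(2)$, and the element $y$ of order $3$ normalizes $R_\sigma$ (it permutes the involutions of $U \leq E$ transitively and normalizes the relevant torus-type subgroups), so $c_y$ — or a suitable $\F_\sigma$-conjugate of it generating the missing part of $\Aut_{\F_\sigma}(P)$ — appears inside $N_{\F_\sigma}(R_\sigma)$. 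Thus $\gen{\H_\sigma, N_{\F_\sigma}(R_\sigma)}$ contains $\Aut_{\F_\sigma}(P)$, and by part (a) this subsystem equals $\F_\sigma$. Conversely $\H_\sigma \subseteq \F_\sigma$ and $N_{\F_\sigma}(R_\sigma) \subseteq \F_\sigma$ trivially, so equality holds. The main obstacle in (b) will be pinning down that $y$ (or the needed generator of $S_3 \cong \Out_{\F_\sigma}(P)$ modulo the $GL_3(2)$ already present) really does normalize $R_\sigma$ and hence contributes to $N_{\F_\sigma}(R_\sigma)$; for this I would trace through the construction of $y$ in \cite[\S5]{AschbacherChermak2010} (or the automorphism $\widehat{\gamma}_u$ of \cite[Definition~1.6]{LeviOliver2005}) and use that $y$ acts on $U$, hence on $E = \Omega_1(T_k)$ after a suitable adjustment, hence on $R_\sigma = C_{S_\sigma}(E)$. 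Alternatively, and perhaps more cleanly, I would deduce (b) from (a) together with the observation that $N_{\F_\sigma}(R_\sigma)$ already contains a Sylow-normalizer-sized chunk of $\Aut_{\F_\sigma}(P)$ because $P \geq R_\sigma$ — wait, in fact $P = C_{S_\sigma}(U) \geq C_{S_\sigma}(E) = R_\sigma$ since $U \leq E$, so $R_\sigma \leq P$, and any automorphism of $P$ normalizing the characteristic subgroup $R_\sigma$ restricts into $\Aut_{\F_\sigma}(R_\sigma)$; this makes the containment $\Aut_{\F_\sigma}(P) \cap (\text{stuff fixing } R_\sigma) \subseteq N_{\F_\sigma}(R_\sigma)$ immediate and reduces (b) to checking the order-$3$ generator normalizes $R_\sigma$.

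Throughout, the genuinely delicate step — common to both parts — is the bookkeeping of which subgroups below the essentials contribute fusion, i.e. verifying that the fusion systems $\H_\sigma$ and $\F_\sigma$ differ only through the single automorphizer $\Aut_{\F_\sigma}(C_{S_\sigma}(U))$; this is exactly what Lemma~\ref{L:essentials} buys us, so the proof is short once that lemma is in hand. I would write (a) first and then derive (b) from it as above.
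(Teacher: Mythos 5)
Your proposal is correct and follows essentially the same route as the paper: part (a) is Lemma~\ref{L:essentials} plus the Alperin--Goldschmidt theorem, and part (b) follows because $R_\sigma \leq C_{S_\sigma}(U)$ and every element of $\Aut_{\F_\sigma}(C_{S_\sigma}(U))$ preserves $R_\sigma$ (the paper derives this from weak closure of $R_\sigma$, via weak closure of $T_k$, rather than from $R_\sigma$ being characteristic in $C_{S_\sigma}(U)$, but both work). The only small slip is the claim $N_{\F_\sigma}(S_\sigma)=\F_{S_\sigma}(S_\sigma)$, which is neither needed nor justified: all you need is that $\Aut_{\F_\sigma}(S_\sigma)$ lies in $\H_\sigma$, which holds because every $\F_\sigma$-automorphism of $S_\sigma$ fixes $z\in Z(S_\sigma)$ and is therefore a morphism of $C_{\F_\sigma}(z)=\H_\sigma$.
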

\begin{proof}
Part (a) follows from Lemma~\ref{L:essentials} and the Alperin-Goldschmidt
fusion theorem \cite[Theorem~I.3.6]{AschbacherKessarOliver2011}. As $U \leq E$,
$R_\sigma = C_{S_{\sigma}}(E) \leq C_{S_\sigma}(U)$. Further, $T_{2^k}$ is abelian
and weakly $\F_\sigma$-closed by Lemma~\ref{L:standardsequence}(a), hence also
$R_\sigma = C_{S_\sigma}(\Omega_1(T_{2^k}))$ is weakly $\F_\sigma$-closed.  Thus,
each element of $\Aut_{\F_{\sigma}}(C_{S_\sigma}(U))$ restricts to normalize
$R_\sigma$, and hence lies in $N_{\F_\sigma}(R_{\sigma})$.  This shows that (b)
is a consequence of (a). 
\end{proof}

The next lemma augments the results of \cite{HenkeLynd2018} on automorphisms
and extensions of the Benson-Solomon systems. 

\begin{proposition}
\label{P:fieldconjugate}
Let $\D$ be a saturated fusion system over the $2$-group $D$ such that $F^*(\D)
= \F_\sigma = \F_{\Sol}(q_l)$. 
\begin{enumerate}
\item[(a)] All involutions in $D-S_\sigma$ are $C_\D(z)$-conjugate, hence
$\D$-conjugate. 
\item[(b)] If $f \in D-S_\sigma$ is a fully $\D$-centralized involution, then
\[
C_{\F_{\sigma}}(f) = \F_{\sigma_{l-1}} \cong \F_{\Sol}(q_{l-1}). 
\]
\end{enumerate}
\end{proposition}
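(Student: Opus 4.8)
Below is a plan for proving Proposition~\ref{P:fieldconjugate}.

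The plan is to reduce both assertions to the explicit realization of $\F_\sigma=\F_{\Sol}(q_l)$ and of its extensions inside the amalgam $G=H*_B K$ of \cite{AschbacherChermak2010}, to use the classification of extensions of the Benson--Solomon systems from \cite{HenkeLynd2018}, and finally to recognise $C_{\F_\sigma}(f)$ as the $2$-fusion system of the fixed points of a ``square root'' of the Frobenius $\sigma=\psi_l$. First I would fix the ambient structure. As $\F_\sigma$ is simple, $O_2(\D)\leq Z(\F_\sigma)=1$, so $\D$ is almost simple with $F^*(\D)=\F_\sigma$. By \cite{HenkeLynd2018}, $\Out(\F_\sigma)$ is cyclic, generated by the class of the restriction to $S_\sigma$ of a standard field automorphism, and each saturated fusion system over a $2$-group whose generalised Fitting subsystem is isomorphic to $\F_\sigma$ is determined up to isomorphism by its image in $\Out(\F_\sigma)$. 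Hence $D/S_\sigma$ is cyclic, and, writing $|D/S_\sigma|=2^m$, I would identify $\D$ with $\F_D(\hat G)$, where $\hat G=G_\sigma\langle\psi_{l-m}\rangle$ is the subgroup of $G\langle\psi\rangle$ generated by $G_\sigma:=C_G(\sigma)$ and $\psi_{l-m}=\psi^{2^{l-m}}$: by Lemma~\ref{L:sylowchoice}(c), $\psi$ normalises $G_\sigma$ and $S_\sigma$, while $(\psi_{l-m})^{2^m}=\psi_l=\sigma$ acts trivially on $G_\sigma$, so $\hat G$ has Sylow $2$-subgroup $D=S_\sigma\langle f_0\rangle$ with $f_0:=\psi_{l-m}|_{S_\sigma}$ of order $2^m$, $\F_\sigma=\F_{S_\sigma}(G_\sigma)\norm\F_D(\hat G)$, and the outer action of $D/S_\sigma$ on $\F_\sigma$ is the standard cyclic group of field automorphisms. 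If $D=S_\sigma$ (equivalently $l=0$, so $\Out(\F_\sigma)=1$) both assertions are vacuous; so I assume $l\geq 1$. As $D/S_\sigma$ is a nontrivial cyclic $2$-group its unique involution is the image of $f_0^{2^{m-1}}$, and this element equals $f:=\psi_{l-1}|_{S_\sigma}\in D$ (note $\psi_{l-1}^2=\psi_l=\sigma$ acts trivially on $S_\sigma$, whereas $C_{S_\sigma}(\psi_{l-1})=C_S(\psi_{l-1})\neq S_\sigma$); hence every involution of $D$ outside $S_\sigma$ lies in the single coset $S_\sigma f$.

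For part (a) I would pass to $C_\D(z)$. Since $Z(S_\sigma)=\langle z\rangle$ is characteristic in $S_\sigma$ and of order $2$, it is central in $D$, so $\langle z\rangle\in\D^f$ and $C_\D(z)=N_\D(\langle z\rangle)$ is saturated over $D$. Applying Lemma~\ref{L:NEP1} to $\F_\sigma\norm\D$ and $\langle z\rangle$ shows that $\H_\sigma=C_{\F_\sigma}(z)=\F_{\Spin}(q_l)$ is normal in $C_\D(z)$; as $\H_\sigma$ is quasisimple, no nontrivial element of $D-S_\sigma$ centralises $\H_\sigma$ (it induces a nontrivial field automorphism there), and hence $O_2(C_\D(z))=\langle z\rangle$ with $\H_\sigma$ the unique component, one gets $F^*(C_\D(z))=\H_\sigma$; restriction of outer actions along the natural injection $\Out(\F_\sigma)\hookrightarrow\Out(\H_\sigma)$ then shows the involutions of $D-S_\sigma$ induce the field automorphism of order $2$ of $\H_\sigma$. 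Because $\H_\sigma$ is tamely realised by $\Spin_7(q_l)$ (Lemma~\ref{L:tameSpin}(a)), Theorem~\ref{T:reduct} realises $C_\D(z)$ by a finite group $\bar G$ with $F^*(\bar G)=O_2(\bar G)\Spin_7(q_l)$ and Sylow $2$-subgroup $D$, in which the involutions of $D-S_\sigma$ induce involutory field automorphisms of $\Spin_7(q_l)$. One then shows these involutions form a single $C_\D(z)$-conjugacy class, which yields (a) since $C_\D(z)$ is a subsystem of $\D$.

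Granting (a), part (b) is a computation. Identifying $\F_\sigma$ with $\F_{S_\sigma}(G_\sigma)$, by (a) it suffices to treat $f=\psi_{l-1}|_{S_\sigma}$, viewed as the element $\psi_{l-1}$ of $\hat G$; for a general fully $\D$-centralised involution of $D-S_\sigma$ one transports the conclusion along the $\D$-morphism supplied by (a), using $\F_\sigma\norm\D$. The conjugation action of $f$ on $G_\sigma$ is the $\psi_{l-1}$-action, so $C_{G_\sigma}(f)=C_G(\sigma)\cap C_G(\psi_{l-1})=C_G(\psi_{l-1})$ and $C_{S_\sigma}(f)=C_S(\sigma)\cap C_S(\psi_{l-1})=C_S(\psi_{l-1})$, both equalities because $\sigma=\psi_{l-1}^2$. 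By Theorem~\ref{T:uniqueliftsigma} applied with $l-1$ in place of $l$, $C_S(\psi_{l-1})$ is a Sylow $2$-subgroup of $C_G(\psi_{l-1})$; hence the standard description of the centraliser in a group fusion system of a Sylow-compatible automorphism (here the automorphism of $\F_\sigma$ induced by $f$; cf.\ \cite{AschbacherKessarOliver2011}) gives
\[
C_{\F_\sigma}(f)=\F_{C_S(\psi_{l-1})}(C_G(\psi_{l-1}))=\F_{\psi_{l-1}},
\]
which is isomorphic to $\F_{\Sol}(q_{l-1})$ by a further application of Theorem~\ref{T:uniqueliftsigma}.

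I expect the main obstacle to be the last step of part (a): proving that the involutions of $D-S_\sigma$ form a single $C_\D(z)$-conjugacy class. This reduces to a conjugacy statement for involutory field automorphisms in the finite group $\bar G$, whose generalised Fitting subgroup is of type $\Spin_7(q_l)$; at the level of $\Spin_7(q_l)$ alone such involutions form a single $\Inndiag(\Spin_7(q_l))$-class but need not \emph{a priori} form a single $\Spin_7(q_l)$-class, so keeping track of outer-diagonal phenomena in $\bar G$ is the delicate point. I would settle it by invoking the analysis of automorphisms of $2$-fusion systems of groups of Lie type in \cite[\S5]{BrotoMollerOliver2016} (equivalently, a Lang--Steinberg argument in $\Spin_7(\tilde{\mathbf F})$). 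Everything else is bookkeeping with the model of \cite{AschbacherChermak2010} and with the classification of extensions of $\F_{\Sol}(q_l)$ in \cite{HenkeLynd2018}.
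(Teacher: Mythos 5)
Your overall framing (reduce to the rank-one extension $\F_\sigma\gen{f}$ with $f=\psi_{l-1}|_{S_\sigma}$ via the classification of extensions in \cite{HenkeLynd2018}, then compute inside the Aschbacher--Chermak amalgam) matches the paper's, but both parts stop short at exactly the steps that carry the content. For (a), the single step you defer --- that the involutions of $D-S_\sigma$ form one $C_\D(z)$-class --- \emph{is} the proof of (a). The subtlety is not merely ``$\Inndiag$ versus inner'': in the universal group $H_\sigma=\Spin_7(q_l)$ the coset $H_\sigma f$ a priori meets two $\Inndiag$-fused $H_\sigma$-classes of elements projecting to involutions of $H_\sigma f/Z(H_\sigma)$, and one must check that the second class consists of elements of order $4$ (squaring to $z$) rather than involutions, and separately that $f$ and $fz$ are $H_\sigma$-conjugate. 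The paper does this by an explicit computation with a diagonal torus element $t$ of order $2^{k+1}$ powering to $z$: one finds $g^t=gu^{-1}$ with $u$ of order $4$, $u^2=z$, $[f,u]=1$, so the preimages $fu^{\pm1}$ have order $4$, and $f^{t^2}=fz$ with $t^2\in H_\sigma$. A Lang--Steinberg argument could in principle substitute, but the bookkeeping of the center is precisely where it can go wrong, so as written (a) has a gap at its core.

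The more serious problem is in (b). There is no ``standard description of the centraliser in a group fusion system'' that gives $C_{\F_\sigma}(f)=\F_{C_S(\psi_{l-1})}(C_G(\psi_{l-1}))$ here: \cite[I.5.4]{AschbacherKessarOliver2011} applies to a \emph{finite} group and to a fully centralized element of its Sylow subgroup, whereas $G_\sigma=C_G(\sigma)$ is an infinite amalgam and $f$ acts as an automorphism. Even the easy containment $\F_{\psi_{l-1}}\subseteq C_{\F_\sigma}(f)$ is not formal --- the paper proves it by lifting $\psi_{l-1}$ uniquely to $\Aut(G)$, checking that $y$ commutes with this lift so that $c_y$ lies in $C_\D(f)$, and invoking the generation statement Lemma~\ref{L:gen}(a); and the reverse containment (that $C_{\F_\sigma}(f)$ is no \emph{larger}) is exactly what the paper's appeal to Holt's Theorem \cite[Theorem~2.1.9]{AschbacherQFP} accomplishes, after first establishing that $\gen{f}$ is fully $\D$-centralized (which your plan also omits, and which is needed before $C_{\F_\sigma}(f)$ is even a saturated, well-defined object to transport via Lemma~\ref{L:NEP2}) and that $C_{C_{\F_\sigma}(f)}(z)=\H_{\sigma_1}$. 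You should either supply a Holt-type recognition argument or cite \cite[Theorem~11.19]{AschbacherWT}; without one, (b) is asserted rather than proved.
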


\begin{proof}
The almost simple extensions of $\F_\sigma$ were determined in
\cite{HenkeLynd2018}.  By \cite[Theorem~3.10, Theorem~4.3]{HenkeLynd2018}, we
have $O^2(\D) = \F_{\sigma}$. We may further fix a complement $F$ to $S_\sigma$
in $D$ such that $F$ is cyclic of order $2^{l_0}$ with $1 \leq l_0 \leq l$, and
such that the conjugation action of $F$ on $S_\sigma$ is the restriction of the
conjugation action of a group of field automorphisms of $H_\sigma$ to
$S_\sigma$. We may thus assume that $|F| = 2$, and that $F$ is generated by the
restriction of $\sigma_{l-1}$ to $S_\sigma$.  Write $f$ for the
automorphism $\sigma_{l-1}|_{H_\sigma}$ of $H_{\sigma}$, and also write
$f$ for its restriction to $S_\sigma$. Note that $\sigma_{l-1}$
normalizes $H_\sigma$ and $S_\sigma$ by Lemma~\ref{L:sylowchoice}(a). We
also rely on Lemma~\ref{L:sylowchoice}(a) at many places in the proof
below, without explicitly saying so.  By Lemma~\ref{L:tameSpin}(a) and
Theorem~\ref{T:reduct},
\begin{eqnarray} \label{E:CDzsemidirect} \text{$C_\D(z)$ is the fusion system
of the extension $H_\sigma\gen{f}$,} \end{eqnarray}
a semidirect product.

Recall $k = l+2$ as before, and let $H_1 = N_{H}(H_\sigma)$. Then
$H_1/Z(H_\sigma) \cong \Inndiag(H_\sigma)$ by \cite[Lemma~2.5.9(b)]{GLS3}, and
hence $H_1 = H_\sigma N_{T}(H_\sigma)$.  As $\Outdiag(H_\sigma)$ has order $2$,
we may fix $t \in N_{T}(H_\sigma)-H_\sigma$ with order $2^{k+1}$ and powering
to $z$, so that $H_1 = H_\sigma\gen{t}$.  As $\sigma_{l-1}$ normalizes
$H_\sigma$ and $T$, it restricts to an automorphism of $H_1$. Set $g =
\sigma_{l-1}|_{H_1}$, so that $g$ has order $4$, and $g|_{H_\sigma} = f$
has order $2$.  Set $J_1 := H_1\gen{g}$, $J := H_\sigma\gen{f}$, $\widehat{J}_1
= J_1/C_{J_1}(H_\sigma)$, and $\widetilde{J} = J/Z(H_\sigma)$.  Note that
$C_{J_1}(H_\sigma) = \gen{g^2,z}$.  Since $g^2$ centralizes $H_\sigma$,
$\gen{g^2}$ is normal in $H_\sigma\gen{g}$, and $H_\sigma\gen{g}/\gen{g^2}
\cong H_\sigma\gen{f}$ via an isomorphism which sends $g\gen{g^2}$ to $f$.
Hence, there is an isomorphism $\widehat{H}_\sigma\gen{\hat{g}} \to
\widetilde{H}_\sigma\gen{\tilde{f}} = \widetilde{J}$ which is the identity on
$\widehat{H}_\sigma = H_\sigma/Z(H_\sigma) = \widetilde{H}_\sigma$ and which
sends $\widehat{g}$ to $\widetilde{f}$. 

By \cite[Theorem~4.9.1(d)]{GLS3}, $\Inndiag(H_\sigma) \cong \widehat{H}_1$ acts
transitively on the involutions in
$\widehat{H}_\sigma\hat{g}-\widehat{H}_\sigma$, and so each involution in
$\widehat{H}_\sigma\hat{g}$ is $\widehat{H}_\sigma$-conjugate to $\hat{g}$ or
$\hat{g}^{\hat{t}}$. As $t^g = t^{\sigma_{l-1}} = t^{5^{2^{l-1}}}$ and
$5^{2^{l-1}}-1$ has $2$-adic valuation $l+1 = k-1$, we see that there is an
element $u \in \gen{t^{2^{k-1}}} \leq H_\sigma$ of order $4$, such that $[g,u]
= 1$, $u^2 = z$, and $t^g = tu$. Then $g^t = gu^{-1}$, so that
$\hat{g}^{\hat{t}} = \hat{g}\hat{u}^{-1}$. From the isomorphism
$\widehat{H}_\sigma\gen{\widehat{g}} \cong
\widetilde{H}_\sigma\gen{\tilde{f}}$, we conclude that each involution in
$\widetilde{H}_\sigma\gen{\tilde{f}}$ is $\widetilde{H}_\sigma$-conjugate to
either of $\tilde{f}$ or $\tilde{f}\tilde{u}^{-1}$. However, the two preimages
of $\tilde{f}\tilde{u}^{-1}$ in $H_\sigma\gen{f}$ are $fu^{-1}$ and $fu = fu^{-1}z$,
both of which are of order $4$ as $[f,u]=1$. Thus, all four subgroups of
$H_\sigma\gen{f}$ which contain $\gen{z}$ and are not contained in $H_\sigma$
are $H_\sigma$-conjugate.  Since $\gen{f,z}$ is such a four subgroup, it is
enough to show that $f$ is $H_\sigma$-conjugate to $fz$.  But $f^s = fz$ where
$s = t^2 \in H_\sigma$. This completes the proof that all involutions in
$H_\sigma f-H_\sigma$ are $H_\sigma$-conjugate, and this implies (a).

It remains to prove (b).  We keep the notation from above, writing $f$ for
$\sigma_{l-1}|_{H_\sigma}$ and for $\sigma_{l-1}|_{S_{\sigma}}$.

We first prove that $\gen{f}$ itself is fully $\D$-centralized.  By
Lemma~\ref{L:sylowchoice}(c), the $2$-group $S_\sigma$ has a decomposition
$S_\sigma = T_{2^k}W_S$ such that $f$ centralizes $W_S$ and acts on
$T_{2^k}$ via the map $t \mapsto t^{q_{l-1}}$. In particular
$C_{S_{\sigma}}(f) = T_{2^{k-1}}W_S$ is a Sylow $2$-subgroup of
$H_{\sigma_{l-1}}$. The centralizer $C_{H_\sigma}(f) =
H_{\sigma_{l-1}}$ is isomorphic to $\Spin_7(q_{l-1})$ by
\cite[Theorem~4.9.1(a)]{GLS3}, so $C_{C_D(z)}(f) = C_{D}(f) =
C_{S_\sigma}(f)\gen{f}$ is a Sylow $2$-subgroup of $C_{H_\sigma\gen{f}}(f)$.
Hence, $f$ is fully $C_{\D}(z)$-centralized by \eqref{E:CDzsemidirect}, and
\begin{eqnarray}
\label{E:CDzf}
C_{C_\D(z)}(f) \cong \H_{\sigma_{l-1}} \times \gen{f},
\end{eqnarray}
by \cite[I.5.4]{AschbacherKessarOliver2011}.  However, all involutions in
$D-S_{\sigma}$ are $C_\D(z)$-conjugate by (a), so two involutions in
$D-S_\sigma$ are $C_\D(z)$-conjugate if and only if they are $\D$-conjugate.
This completes the proof that $\gen{f}$ is fully $\D$-centralized.

Next, since $\gen{f}$ is fully $\D$-centralized, $C_\D(f)$ is saturated.
Lemma~\ref{L:NEP1} then shows $C_{\F_\sigma}(f)$ is normal in $C_\D(f)$, so
that $C_{C_{\F_{\sigma}}(f)}(z)$ is normal in $C_{C_\D(f)}(z)$ by
Lemma~\ref{L:NEP1} applied with $C_{\D}(z)$ in the role of $\F$.  Observe that
$C_{C_{\F_{\sigma}}(f)}(z)$ has Sylow group $S_{\sigma_{l-1}} =
C_{S_\sigma}(f)$.  Consequently, we have
\begin{eqnarray}
\label{E:CCFsigmafz}
C_{C_{\F_{\sigma}}(f)}(z) = \H_{\sigma_{l-1}}. 
\end{eqnarray}
from \eqref{E:CDzf}. 

Now Lemma~\ref{L:NEP2} shows that it suffices to determine $C_{\F_{\sigma}}(f)$
in order to finish the proof of (b).  For if $f' \in D-S_\sigma$ is another
involution fully centralized in $\D$, then any element $\phi$ of $\AA_{\D}(f)$
with $f^\phi = f'$ will induce an isomorphism $C_{\F_\sigma}(f) \to
C_{\F_\sigma}(f')$ by that lemma.

We argue next within the Aschbacher-Chermak amalgam to show that
$C_\D(f)$ contains $\F_{\sigma_{l-1}}$.  Now $\F_{\sigma_{l-1}}$
is generated by $\H_{\sigma_{l-1}}$ and
$\Aut_{\F_{\sigma_{l-1}}}(C_{S_{\sigma_{l-1}}}(U))$ from Lemma~\ref{L:gen}(a).
So to prove that $C_\D(f)$ contains $\F_{\sigma_{l-1}}$, we are reduced
via \eqref{E:CDzf} to a verification that $C_\D(f)$ contains
$\Aut_{\F_{\sigma}}(C_{S_{\sigma_{l-1}}}(U))$.  Now $y$ (in the notation
of Theorem~\ref{T:uniqueliftsigma}) acts on $C_{S_{\sigma_{l-1}}}(U)$.
Viewing $y$ in the semidirect product of
$K_{\sigma}\gen{\sigma_{l-1}}|_{K_{\sigma}}$ and applying
Theorem~\ref{T:uniqueliftsigma} with $\sigma_{l-1}$ in the role of
$\sigma_l$, we see that $y$ centralizes $\sigma_{l-1}$, so that
$c_y$ extends to the automorphism $c_y$ of
$C_{S_{\sigma_{l-1}}}(U)\gen{f}$ that centralizes $f$. So as
$\Aut_{\F_{\sigma_{l-1}}}(C_{S_{\sigma_{l-1}}}(U))$ is generated
by $\Aut_{\H_{\sigma_{l-1}}}(U)$ and $c_y$ by
Lemma~\ref{L:essentials}(b), it follows that
$\Aut_{\F_{\sigma_{l-1}}}(C_{S_{\sigma_{l-1}}}(U))$ is contained
in $C_\D(f)$. Lemma~\ref{L:gen}(a) with $\sigma_{l-1}$ in the role of
$\sigma$ shows that $C_\D(f)$ contains $\F_{\sigma_{l-1}}$. We have $\D
= \F_{\sigma}\gen{f}$, since $\F_{\sigma} = O^2(\D)$. So also
$C_{\F_{\sigma}}(f) \geq O^2(C_\D(f)) \geq O^2(\F_{\sigma_{l-1}}) =
\F_{\sigma_{l-1}}$.  This shows that $C_{\F_{\sigma}}(f)$ contains
$\F_{\sigma_{l-1}}$. 

Finally, we complete the proof by appealing to Holt's Theorem for fusion
systems \cite[Theorem~2.1.9]{AschbacherQFP}.  The systems $C_{\F_{\sigma}}(f)$
and $\F_{\sigma_{l-1}}$ both have Sylow group $C_{S_{\sigma}}(f)$.
Further, all involutions in $\F_{\sigma_{l-1}}$ are conjugate by (a), and
so $z^{C_{\F_{\sigma}}(f)} = z^{\F_{\sigma_{l-1}}}$ for the involution $z \in
Z(S_{\sigma_{l-1}})$.  Moreover, we showed in \eqref{E:CCFsigmafz} that
$C_{C_{\F_{\sigma}}(f)}(z) = \H_{\sigma_{l-1}} \leq
\F_{\sigma_{l-1}}$. This completes the verification of the hypotheses of
Holt's Theorem, and by that theorem we have $C_{\F_{\sigma}}(f) =
\F_{\sigma_{l-1}}$.
\end{proof}

We close this section by verifying that the Benson-Solomon systems are split.
This allows one, via Theorem~8 of \cite{AschbacherFSCT}, to severely restrict
the Sylow subgroup of the centralizer of a Benson-Solomon standard subsystem.

\begin{lemma}\label{L:Csplit}
$\F_\sigma$ is split.
\end{lemma}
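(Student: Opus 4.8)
The plan is to fix a critical split extension $(\F,U)$ of $\F_\sigma$ over a $2$-group $S = S_\sigma U$ --- so $\F_\sigma \norm \F$, $O^2(\F) = \F_\sigma$, and $U$ is a four-group complement to $S_\sigma$ with $\F_U(U)$ tightly embedded in $\F$ --- and to prove that the extension is trivial, i.e.\ that $\F$ equals the central product $\F_\sigma C_\F(\F_\sigma)$ of $\F_\sigma$ with its centralizer. First I would invoke Lemma~\ref{L:critsplitbasic}: $\Aut_\F(U) = 1$, $N_\F(U) = C_\F(U)$, and for each involution $u \in U$ one has $\gen{u} \in \F^f$ and $C_\F(u) = C_\F(U)$. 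Since $\F_\sigma \norm \F$, conjugation by $u$ normalizes $\F_\sigma$, so $C_{\F_\sigma}(u) := N_{\F_\sigma}(\gen{u})$ is defined and, by Lemma~\ref{L:NEP1}, is a normal subsystem of $C_\F(u) = C_\F(U)$ over $C_{S_\sigma}(u)$. As the Sylow group of a normal subsystem lies inside that of the ambient system, $C_{S_\sigma}(u) \leq C_S(U)$, and intersecting with $S_\sigma$ gives $C_{S_\sigma}(u) = C_{S_\sigma}(U) =: W$ for every involution $u \in U$.

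The heart of the argument is to show that $U$ centralizes $S_\sigma$. Suppose not, so $W \subsetneq S_\sigma$, and let $\lambda\colon U \to \Out(\F_\sigma)$ be the homomorphism induced by conjugation. By \cite{HenkeLynd2018} (recalled in the introduction) $\Out(\F_\sigma)$ is cyclic, so the image of $\lambda$ has order at most $2$ and $\ker\lambda$ contains an involution $w$; I claim $w$ centralizes $S_\sigma$, which is already a contradiction since then $C_{S_\sigma}(w) = S_\sigma \neq W$. For the claim, $c_w|_{S_\sigma} \in \Aut_{\F_\sigma}(S_\sigma)$, and by the structure of $\F_\sigma$ from Section~\ref{SS:spinsol} this automorphism is inner, say $c_w|_{S_\sigma} = c_x$ with $x \in S_\sigma$ and $x^2 \in Z(S_\sigma)$, so that $C_{S_\sigma}(x) = C_{S_\sigma}(w) = W$ and $x \in Z(W)$. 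Writing $v = wx^{-1}$, which centralizes $S_\sigma$, the subsystem $\F_\sigma\gen{w} = \F_\sigma\gen{v}$ is a central product of $\F_\sigma$ with $\gen{v}$, inside which $\gen{w} = \gen{vx}$ is conjugate to $\gen{vz}$ (all involutions of $\F_\sigma$ are conjugate, Lemma~\ref{L:involutionsconjugate}), and $\gen{vz}$ has normalizer $S_\sigma\gen{v}$ in $S_\sigma\gen{v}$, strictly larger than the normalizer $C_{S_\sigma}(x)\gen{v}$ of $\gen{w}$ unless $x \in Z(S_\sigma)$; since $\gen{w} \in \F^f$ is fully $\F_\sigma\gen{w}$-normalized by Lemma~\ref{L:NEP1}, this forces $x \in Z(S_\sigma)$, i.e.\ $c_w|_{S_\sigma} = 1$. (When $c_x$ has order $4$ one argues instead directly from the description in Lemma~\ref{L:standardsequence} of the homocyclic subgroups and elements of small order of $S_\sigma$ to rule out $C_{S_\sigma}(x) = W$; and the subcase where $\lambda$ is nonzero, which also produces the contradiction through $w$, is in addition excluded at once via Proposition~\ref{P:fieldconjugate}(b), by which $W$ would be the Sylow group of $\F_{\Sol}(q_{l-1})$ and hence have centre $\gen{z}$ of order $2$, so that $x \in Z(W) = \gen{z}$.)

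Finally, with $U$ centralizing $S_\sigma$ we have $U \leq C_S(\F_\sigma)$, hence $S = S_\sigma C_S(\F_\sigma)$ and $C_\F(\F_\sigma)$ is a saturated subsystem on $C_S(\F_\sigma)$. Since $\Aut_{C_\F(\F_\sigma)}(P) \leq \Aut_\F(P)$ for every $P \leq C_S(\F_\sigma)$, we get $\hyp(C_\F(\F_\sigma)) \leq \hyp(\F) = S_\sigma$, and as also $\hyp(C_\F(\F_\sigma)) \leq C_S(\F_\sigma)$ this yields $\hyp(C_\F(\F_\sigma)) \leq S_\sigma \cap C_S(\F_\sigma) = Z(S_\sigma)$. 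Therefore $\hyp(\F_\sigma C_\F(\F_\sigma)) = S_\sigma$, so $O^2(\F_\sigma C_\F(\F_\sigma)) = \F_\sigma = O^2(\F)$; as $\F_\sigma C_\F(\F_\sigma)$ and $\F$ are both saturated subsystems of $\F$ over $S$ with this $O^2$, the uniqueness of product subsystems forces $\F = \F_\sigma C_\F(\F_\sigma)$. Thus every critical split extension of $\F_\sigma$ is trivial, i.e.\ $\F_\sigma$ is split. The main obstacle is the middle paragraph: carrying out, with the explicit local structure of $\F_\sigma$ from Section~\ref{SS:spinsol}, the bookkeeping needed to eliminate all nontrivial (in particular inner-type) actions of $U$ on $S_\sigma$; the tight-embedding input from Lemma~\ref{L:critsplitbasic} together with the cyclicity of $\Out(\F_\sigma)$ and Proposition~\ref{P:fieldconjugate} reduce this to a manageable but non-formal case analysis.
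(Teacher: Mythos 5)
Your strategy is genuinely different from the paper's: you reduce, via cyclicity of $\Out(\F_\sigma)$ and Proposition~\ref{P:fieldconjugate}, to an involution $w\in U$ with $c_w|_{S_\sigma}=c_x$ for some $x\in S_\sigma$ with $x^2\in Z(S_\sigma)$, and then use the single class of involutions together with full normalization of $\gen{w}$ in the product system to force $x\in Z(S_\sigma)$. The involution branch of this is essentially sound (and the assertion that $c_w|_{S_\sigma}$ is inner needs no appeal to the explicit structure of $\F_\sigma$: it has $2$-power order while $\Out_{\F_\sigma}(S_\sigma)$ has odd order by the Sylow axiom). However, there are two linked gaps, both of which are repaired by Proposition~\ref{P:CST} together with Lemma~\ref{L:lim1=0} — a tool you never invoke.

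First, knowing that $v=wx^{-1}$ centralizes the \emph{group} $S_\sigma$ does not make $\F_\sigma\gen{v}$ a central product of $\F_\sigma$ with $\gen{v}$; for that you need $v$ to centralize the fusion system, which requires $C_S(S_\sigma)=C_S(\F_\sigma)Z(S_\sigma)$ from Proposition~\ref{P:CST}. Second, and more seriously, the case $x^2=z$ is a genuine case (replacing $x$ by $xz$ does not change $x^2$), and your proposed treatment — ``rule out $C_{S_\sigma}(x)=W$ from Lemma~\ref{L:standardsequence}'' — does not make sense as stated: $C_{S_\sigma}(x)=C_{S_\sigma}(w)=W$ is forced by the construction of $x$, not something to be excluded, and the normalizer-comparison argument has no obvious analogue for order-$4$ elements squaring to $z$ without a detailed analysis of their $\F_\sigma$-classes, which you do not supply. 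The clean fix is again Proposition~\ref{P:CST}: writing $v=cz^\epsilon$ with $c\in C_S(\F_\sigma)$, one gets $z=v^2=c^2\in C_S(\F_\sigma)\cap S_\sigma\le Z(\F_\sigma)=1$, so this case cannot occur. For comparison, the paper sidesteps the action analysis entirely: Semeraro's theorem embeds $S/C_S(\F_\sigma)S_\sigma$ into the cyclic group $\Out(\L_\sigma)$, so $U$ meets $C_S(\F_\sigma)S_\sigma$ nontrivially; if $U\cap C_S(\F_\sigma)\neq 1$ then (T1) forces $U\le Z(\F)$, and otherwise an element $u=ct$ is $\F$-conjugate to $cz\in Z(S)$ by Lemma~\ref{L:involutionsconjugate}, whence $U\le Z(S)=C_S(\F_\sigma)Z(S_\sigma)$ by Proposition~\ref{P:CST}.
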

\begin{proof}
Let $(\F,V)$ be a critical split extension of $\F_\sigma$, where $\F$ is a
saturated fusion system over any finite $2$-group $S$. Let $\L_\sigma$ be the
centric linking system for $\F_\sigma$. Note that $S/C_S(\F_\sigma)S_\sigma$
embeds in $\Out(\L_\sigma)$ by \cite[Theorem~A]{Semeraro2015}, while
$\Out(\L_\sigma)$ is cyclic of $2$-power order by
\cite[Theorem~3.10]{HenkeLynd2018}. Hence 
\begin{eqnarray}
\label{E:VcapCSCT}
V \cap C_S(\F_\sigma)S_\sigma > 1.
\end{eqnarray}

Write $V = \gen{u,v}$ with $u \in V \cap C_S(\F_\sigma)S_\sigma$.  Then either
$V \cap C_S(\F_\sigma) > 1$, or there exist elements $c \in C_S(\F_\sigma)$ and
$1 \neq t \in S_\sigma$ such that $u = ct$. 

Assume the former case, and set $X = V \cap C_S(\F_\sigma) > 1$.  As $X$
commutes with $V$ and $S_\sigma$ and $S = S_\sigma V$, we have $X \leq Z(S)$.
Thus, $C_\F(X)$ is a saturated subsystem over $S$ which contains $\F_{\sigma}$,
so $C_\F(X) = \F = N_\F(X)$.  Applying condition (T1) in
Definition~\ref{D:tightlyEmb} with $Q = V$, $\Q = \F_V(V)$, $X = V \cap
C_{S}(\F_{\sigma})$, and $\alpha = \id_X$, we see that $V$ is normal in $\F$,
and hence that $V \leq Z(\F)$ by Lemma~\ref{L:critsplitbasic}(a).
Therefore, $\F$ is the central product of $V = C_S(\F_\sigma)$ and
$\F_\sigma$. 

Consider the latter case. As $C_S(\F_\sigma) \cap S_\sigma \leq Z(\F_\sigma) =
1$ and $u$ is an involution, $t$ is an involution. Then, as $\F_\sigma$ has one
class of involutions by Lemma~\ref{L:involutionsconjugate}, $u$ is $\F$-conjugate
to $cz \in Z(S)$.  However, $\gen{u}$ is itself fully $\F$-centralized by
Lemma~\ref{L:critsplitbasic}(a), and so $u \in Z(S)$. As $\gen{v}$ is fully
$\F$-centralized and $C_\F(u) = C_\F(v)$ by Lemma~\ref{L:critsplitbasic}, we
have $v \in Z(S)$.  But then, using Lemma~\ref{L:lim1=0} to see that
Proposition~\ref{P:CST} applies, we have $V \leq Z(S) = C_S(S_\sigma) =
C_S(\F_\sigma)Z(S_\sigma)$ by that proposition applied with $\F_1 = \F$, so
that $C_S(\F_\sigma)S_\sigma = VS_\sigma = S$.  Thus, $\F$ is the central
product of $C_S(\F_\sigma)$ with $\F_\sigma$ in this case as well.
\end{proof}

\subsection{The known quasisimple groups and quasisimple $2$-fusion
systems}\label{SS:known}

In this section, we define the class $\Chev[\larg]$ of $2$-fusion systems
that appears in Walter's Theorem and which is needed in
Section~\ref{S:general}, and we prove two lemmas about components and
involution centralizers in known almost quasisimple groups that will be used in
Sections~\ref{S:quaternion} and \ref{S:general}.  By a \emph{known finite
simple group} we mean a finite group isomorphic to one of the groups appearing
in the statement of the classification of finite simple groups. By a
\emph{known quasisimple group} we mean a quasisimple covering of a known finite
simple group. Such coverings are listed in \cite[Section~6.1]{GLS3}. An
\emph{almost simple group} is a finite group whose generalized Fitting subgroup
is simple.  Similarly, an \emph{almost quasisimple group} is a finite group
whose generalized Fitting subgroup is quasisimple.  

\begin{definition}
Let $p$ be an odd prime. $\Chev(p)$ is the class of quasisimple groups of Lie
type in characteristic $p$, i.e, the quasisimple groups $K$ for which there is
a simple algebraic group $\ol L$ over $\ol\FF_p$ and a Steinberg endomorphism
$\sigma$ such that $K/Z(K) \cong C_{\ol L}(\sigma)'$ (cf.
\cite[Definition~2.2.8]{GLS3}), and 
\begin{align*}
\Chev^*(p) = \Chev(p) &- \{L_2(q) \mid q = p^a, q \geq 5\} \\
                      &- \{ { }^2G_2(q) \mid q = 3^{2a+1}, a \geq 1\}\\
                      &- \{ { }^2G_2(3)'\}.
\end{align*}
\end{definition}
Thus, the groups $SL_2(q)$ for $q = p^a \geq 5$ lie in $\Chev^*(p)$ (but
$SL_2(3)$ does not). The above working definition of $\Chev^*(p)$ appears to be
consistent with that in \cite{AschbacherWT} and \cite{AschbacherQFP}, but our
main interest is in $\Chev[\larg]$ below.

We use analogous terminology for the class of known simple, quasisimple, almost
simple, and almost quasisimple $2$-fusion systems, respectively. The class of
\emph{known simple $2$-fusion systems} consists of the fusion systems of simple
groups whose $2$-fusion systems are simple, together with the Benson-Solomon
fusion systems. As was shown in \cite[Theorem~5.6.18]{AschbacherQFP}, the
simple groups whose $2$-fusion systems fail to be simple are precisely the
\emph{Goldschmidt groups}, namely the finite simple groups which have a
nontrivial strongly closed abelian $2$-subgroup. By a result of Goldschmidt, a
Goldschmidt group is either a group of Lie type in characteristic $2$ of Lie
rank $1$, or has abelian Sylow $2$-subgroups. For example, $\Omega_7(q)$ is
not a Goldschmidt group, so its $2$-fusion system is simple for any odd prime
power $q$.

In the proof of Walter's Theorem, and in Section~\ref{S:general} below, the
$2$-fusion systems of a certain subclass of $\Chev^*(p)$ consisting of
``large'' groups play an important role.

\begin{definition}[{\cite[Definition~1.1]{AschbacherWT}}] 
\label{D:chevlarge}
Let $\Chev^*$ be the union of $\Chev^*(p)$ as $p$ ranges over the odd primes.
$\Chev[\larg]$ is the class of quasisimple $2$-fusion systems $\K$ such that
$\K = \F_2(K)$ for some finite group $K$ in the collection
\begin{align*}
\Chev^*    &- \{L_3^\epsilon(q) \mid q \equiv \pm 3 \!\!\!\!\pmod{8}, \,\,\epsilon = \pm 1\}\\
           &- \{G_2(q) \mid q \equiv \pm 3 \!\!\!\! \pmod{8}\}\\
           &- \{P\Omega_n^{\epsilon}(q) \mid q \equiv \pm 3 \!\!\!\!\pmod{8}, \,\,\epsilon = \pm 1,\,\, 5 \leq n \leq 8\}\\
           &- \{\Omega_6^+(q) \mid q \equiv \pm 3 \!\!\!\!\pmod{8}\}. 
\end{align*}
\end{definition}
For example, the $2$-fusion system of $\Omega_7(q)$, $q \equiv \pm 3 \pmod{8}$
is excluded from $\Chev[\larg]$, while the $2$-fusion system of $\Spin_7(q)$ is
a member of $\Chev[\larg]$ for each odd prime power $q$.

\begin{lemma}
\label{L:knowncent}
Let $G$ be a finite group with $F^*(G)$ a known quasisimple group. Then for
each involution $t \in G$ and each component $\bar{L}$ of $C_G(t)/O(C_G(t))$,
$\bar{L}$ is a known quasisimple group. 
\end{lemma}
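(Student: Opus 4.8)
The plan is to reduce the statement to the bookkeeping fact that $G$ is a $\mathcal{K}$-group — that every composition factor of every subgroup of $G$ is a known simple group — and then to observe that this alone forces every component of $C_G(t)/O(C_G(t))$ to be a known quasisimple group.

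First I would check that $G$ is a $\mathcal{K}$-group. Put $L = F^*(G)$. By hypothesis $L/Z(L)$ is a known simple group, hence a $\mathcal{K}$-group, and it follows easily that the quasisimple group $L$ is itself a $\mathcal{K}$-group, since any simple section of $L$ is either of prime order or a simple section of $L/Z(L)$. Because $F^*(G) = L$, we have $C_G(L) = Z(L) \leq L$, so $G/L$ embeds into $\Out(L)$; and $\Out(L)$ embeds into $\Out(L/Z(L))$, which is solvable by Schreier's conjecture for the known simple group $L/Z(L)$. Thus $G/L$ is solvable. Now for any subgroup $H \leq G$, the composition factors of $H$ split into those of $H \cap L \trianglelefteq H$ — which are simple sections of $L$, hence known — and those of $H/(H \cap L) \hookrightarrow G/L$, all of prime order. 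Hence $G$ is a $\mathcal{K}$-group.

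Now fix an involution $t \in G$ and a component $\overline{L}$ of $C_G(t)/O(C_G(t))$. Then $\overline{L}$ is quasisimple, so $\overline{L}/Z(\overline{L})$ is a composition factor of the group $C_G(t)/O(C_G(t))$, which is a quotient of the subgroup $C_G(t)$ of $G$. As $G$ is a $\mathcal{K}$-group, $\overline{L}/Z(\overline{L})$ is therefore a known simple group, and since a quasisimple covering of a known simple group is by definition a known quasisimple group, $\overline{L}$ is known. This completes the proof.

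The only inputs here that are not purely formal are the two standard ``$\mathcal{K}$-group facts'' used in the second step: that every known simple group is a $\mathcal{K}$-group (equivalently, that the list of known simple groups is closed under passing to simple sections), and Schreier's conjecture for the known simple groups. Both are established by inspection of the known simple groups as catalogued in \cite{GLS3}; granting the classification of finite simple groups they — and with them the whole lemma — are immediate. Should one instead want explicit structural information about $\overline{L}$ rather than merely its membership in the known list, one would replace the third step by the determination of involution centralizers in the known almost simple groups in \cite[Chapters 4 and 5]{GLS3} (with the groups of Lie type in odd characteristic being the delicate case), but that is not needed for the statement as given.
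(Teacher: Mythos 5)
Your argument has a genuine gap at its foundation: the claim that the class of known simple groups is closed under passing to simple sections (equivalently, that every known simple group is a $\mathcal{K}$-group) is not a bookkeeping fact that can be established by inspection of \cite{GLS3}. It is in fact equivalent to the classification of finite simple groups itself: every finite group embeds in some alternating group $A_n$, and $A_n$ is known, so closure of the known list under simple subgroups of known groups already forces every finite simple group to be known. The problem appears concretely in the lemma: if $F^*(G) = A_n$ for large $n$, your first step asserts that every simple subgroup of $A_n$ is known, which is the classification in disguise; the situation for large-rank classical groups is no better. Since this lemma sits inside Aschbacher's program to give a new proof of a major part of the classification --- and the paper's notion of ``known'' exists precisely so as not to presuppose that theorem --- your reduction is circular in the only setting where the lemma is not vacuous.

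The non-circular proof is the one you dismiss in your final sentence as not needed: one quotes the explicit determination of the conjugacy classes of involutions and of the components of their centralizers in the known almost quasisimple groups (Tables~4.5.1--4.5.3, Section~4.9 and Corollary~3.1.4 of \cite{GLS3} for the groups of Lie type, Table~5.3 of \cite{GLS3} for the sporadic groups and their covers, and Section~5.2 of \cite{GLS3} for the alternating groups and their covers). These are background results proved directly for the specific groups in question, independently of the classification, and they show by inspection that every component that arises is on the known list. Your subsidiary observations --- that $G/F^*(G)$ is solvable by Schreier's property of the known simple groups, and that central quotients of components are composition factors of sections of $G$ --- are correct, but they only reduce the lemma to the closure statement above, which cannot be verified by inspection.
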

\begin{proof}
This follows from the determination of the conjugacy classes of involutions and
their centralizers in the known almost quasisimple groups in \cite{GLS3}.  More
precisely, see Tables~4.5.1-4.5.3, Section~4.9, and Corollary~3.1.4 of
\cite{GLS3} for these data with regard to the members of $\Chev$, Table~5.3 of
\cite{GLS3} for the sporadic groups and their covers, and Section~5.2 of
\cite{GLS3} for the alternating groups and their covers.
\end{proof}

The next lemma says that each component of the $2$-fusion system of a group $G$
with $O(G) = 1$ is the fusion system of a component of the group provided the
components of $G$ are known finite quasisimple groups. It was suggested to us
by Aschbacher. 

\begin{lemma}\label{L:ComponentsFSGroups}
Let $G$ be a finite group with $O(G)=1$. Assume that for each
component $K$ of $G$, $K/Z(K)$ is a known finite simple group. Let
$S\in\Syl_2(G)$ and let $\C$ be a component of $\F_S(G)$. Then there exists a
component $K$ of $G$ such that $\C=\F_{S\cap K}(K)$.  
\end{lemma}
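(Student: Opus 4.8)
The plan is to peel $G$ down to its layer $E(G)$ — a central product of the components — and then invoke the fact that the $2$-fusion system of a known quasisimple group is quasisimple unless the group is a Goldschmidt group, in which case it has no components at all. So fix a component $\C$ of $\F:=\F_S(G)$ with Sylow subgroup $T$.

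First I would reduce to the case $G=F^*(G)$. Since $O(G)=1$, the subgroup $F^*(G)=O_2(G)E(G)$ is self-centralizing in $G$, so $C_G(F^*(G))=Z(F^*(G))$; using the hypothesis that the components of $G$ are known, together with the description in \cite{GLS3} of the automorphisms and involution centralizers of the known quasisimple groups (compare Lemma~\ref{L:knowncent}), one checks that $C_S(R)\leq R$, where $R:=S\cap F^*(G)$. Now $\N:=\F_R(F^*(G))$ is a normal subsystem of $\F$ (a normal subgroup of $G$ gives a normal subsystem), and since $\C\trianglelefteq F^*(\F)$ by Lemma~\ref{L:ConjugateComponents}, the theory of the generalized Fitting subsystem (\cite[Chapter~9]{AschbacherGeneralized}) gives that either $\C\subseteq\N$ or $T$ centralizes $R$; the latter would force $T\leq C_S(R)\leq R$ and hence $T\leq Z(R)$, contradicting the quasisimplicity of $\C$. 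So $\C\subseteq\N$, and $\C$ is a component of $\N$. As $O(F^*(G))=1$ and $F^*(G)$ has the same components as $G$, we may replace $G$ by $F^*(G)$, so now $G=O_2(G)E(G)$. Next, since $\C$ is perfect, $\C=O^2(\C)\subseteq O^2(\F)$, while $\F_{S\cap O^2(G)}(O^2(G))$ is a normal subsystem of $\F$ of $2$-power index and hence contains $O^2(\F)$; thus $\C$ is a component of $\F_{S\cap O^2(G)}(O^2(G))$, and we may replace $G$ by $O^2(G)$. Since $O^2(O_2(G)E(G))=E(G)$, this gives $G=E(G)$.

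At this point $G=E(G)=K_1\cdots K_n$ is a central product of the components $K_i$, each quasisimple with $K_i/Z(K_i)$ a known simple group, and $S=(S\cap K_1)\cdots(S\cap K_n)$ is the corresponding internal central product of Sylow $2$-subgroups. Hence $\F_S(G)$ is the central product of the subsystems $\F_{S\cap K_i}(K_i)$, so by Lemma~\ref{L:DirectProductComponent} the component $\C$ is a component of some $\F_{S\cap K_i}(K_i)$. Since $\F_{S\cap K_i}(K_i)$ now carries a component, $K_i/Z(K_i)$ cannot be a Goldschmidt group — the $2$-fusion system of a quasisimple Goldschmidt group has no components — so by \cite{Aschbacher2FASG} and standard properties of quasisimple fusion systems $\F_{S\cap K_i}(K_i)$ is itself quasisimple, and therefore equals its unique component $\C$. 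As $K:=K_i$ is a component of $G$ and $S\cap K\leq S\cap F^*(G)$, undoing the reductions yields $\C=\F_{S\cap K}(K)$, as required.

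The main obstacle is the reduction to $G=E(G)$ — concretely, ruling out that a component of $\F_S(G)$ "leaks" into $O_2(G)$ or into the part of $G$ outside $F^*(G)$. This is exactly where the hypothesis that the components are known enters: the inequality $C_S(S\cap F^*(G))\leq S\cap F^*(G)$ rests on knowing that no nontrivial $2$-element of $S$ induces on a component $K_i$ an automorphism centralizing a Sylow $2$-subgroup of $K_i$ without being inner modulo the center, which must be read off from the data on automorphisms and involution centralizers in \cite{GLS3}. The remaining ingredients — the behaviour of $O^2(\F)$ and of normal subsystems of $2$-power index, the dichotomy for a component relative to a normal subsystem, the central-product decomposition of the fusion system of a central product of groups, Lemma~\ref{L:DirectProductComponent}, and the classification of quasisimple groups whose $2$-fusion system is not quasisimple — are either routine or directly available in the references cited above.
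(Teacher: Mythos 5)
Your proof is correct in outline, and its endgame coincides with the paper's: decompose $\F_{S\cap E(G)}(E(G))$ as a central product, apply Lemma~\ref{L:DirectProductComponent}, and use the fact that the $2$-fusion system of a known quasisimple group is either constrained (Goldschmidt case) or quasisimple. The genuine divergence is in the first reduction, where you place $\C$ inside $F^*(G)$. You run the dichotomy ``$\C$ is a component of $\N$, or $T$ centralizes $R=S\cap F^*(G)$'' and, to kill the second branch, you need the group-theoretic input $C_S(R)\leq R$. That input is true, but be aware that it is not literally read off from involution-centralizer data: it is the separate (standard but substantive) assertion that for a known quasisimple $K$ with $P\in\Syl_2(K)$ every $2$-element of $C_{\Aut(K)}(P)$ is inner, which has to be verified family by family in \cite{GLS3}. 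The paper avoids this entirely: if $\C$ is not a component of $\E:=\F_{S\cap F^*(G)}(F^*(G))$, then by \cite[9.13]{AschbacherGeneralized} the product of the components of $\F$ outside $\E$ forms a central product with $\E$, so the \emph{whole subsystem} $\E$ (not merely the group $R$) centralizes $T$; Theorem~B of \cite{HenkeSemeraro2015} then converts $\E\subseteq C_\F(T)$ into $T\leq C_S(F^*(G))=Z(F^*(G))$, forcing $T$ abelian. That route is classification-free at this step and reserves the ``known components'' hypothesis for the final appeal to \cite[Theorem~5.6.18]{AschbacherQFP}, whereas your route spends it twice. Two smaller remarks: your intermediate passage to $O^2(G)=E(G)$ is sound but unnecessary, since Lemma~\ref{L:DirectProductComponent} already disposes of the $\F_{O_2(G)}(O_2(G))$ factor (a quasisimple subsystem cannot be a component of the fusion system of a $2$-group); and the closing step ``quasisimple, therefore equals its unique component'' should cite \cite[9.4]{AschbacherGeneralized}, after checking, as the paper does, that $Z(\F_{S\cap K}(K))=Z(K)$ and $O^2(\F_{S\cap K}(K))=\F_{S\cap K}(K)$ via Puig's hyperfocal theorem.
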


\begin{proof}
Set $\F=\F_S(G)$ and $\m{E}=\F_{S\cap F^*(G)}(F^*(G))$. Suppose $\C$ is a
subsystem of $\F$ on $T$. As $F^*(G)$ is normal in $G$, the subsystem $\m{E}$
is normal in $\F$ by \cite[Proposition~I.6.2]{AschbacherKessarOliver2011}. 

Assume first that $\C$ is not a component of $\m{E}$. Write $J$ for the set of
components of $\F$ which are not a component of $\m{E}$, and set
$\D=\Pi_{\C'\in J}\C'$. Then by \cite[9.13]{AschbacherGeneralized}, $\F$
contains a subsystem $\D\E$ which is the central product of $\D$ and $\E$. As
$\C\in J$, this implies in particular that $\E\leq C_\F(T)$. Since
$\m{E}=\F_{S\cap F^*(G)}(F^*(G))$, it follows now from
\cite[Theorem~B]{HenkeSemeraro2015} that $T\leq C_S(F^*(G))\leq
C_G(F^*(G))=Z(F^*(G))$. In particular, $T$ is abelian, which by
\cite[9.1]{AschbacherGeneralized} yields a contradiction to $\C$ being
quasisimple. Thus, we have shown that $\C$ is a component of $\m{E}$. 

As $O(G)=1$, $F^*(G)$ is the central product of $O_2(G)$ and the components of
$G$. Thus, $\m{E}$ is a central product of $\F_{O_2(G)}(O_2(G))$ and the
subsystems of the form $\F_{S\cap K}(K)$ where $K$ is a component of $G$. Since
$\C$ is not the fusion system of a $2$-group, it follows now from
Lemma~\ref{L:DirectProductComponent} that $\m{E}$ is a component of $\F_{S\cap
K}(K)$ for some component $K$ of $G$. Set $\ov{K}:=K/Z(K)$. As $O(G)=1$, we
have that $Z(K)\leq S$ and $Z(K)$ is contained in the centre of $\F_{S\cap
K}(K)$. Moreover, $\F_{S\cap K}(K)/Z(K)= \F_{\ov{K\cap S}}(\ov{K})$. Recall
that $\ov{K}$ is a ``known'' finite simple group. So by
\cite[Theorem~5.6.18]{AschbacherQFP}, $\F_{\ov{S\cap K}}(\ov{K})$ is either
simple or $\ov{S\cap K}$ is normal in $\ov{K}$. As the fusion system $\F_{S
\cap K}(K)$ contains $\m{C}$ as a component, it is not constrained
by \cite[9.9.1]{AschbacherGeneralized}. It follows thus from
\cite[Lemma~2.10]{Henke2019} that $\F_{\ov{K\cap S}}(\ov{K})$ is not
constrained, which excludes the case that $\ov{S\cap K}$ is normal in
$\F_{\ov{S\cap K}}(\ov{K})$. Hence, $\F_{S\cap K}(K)/Z(K)=\F_{\ov{S\cap
K}}(\ov{K})$ is simple. In particular, $Z(K)=Z(\F_{S\cap K}(K))$. As $K$ is
quasisimple, we have $K=O^2(K)$. Therefore, it follows from Puig's hyperfocal
subgroup theorem \cite[\S 1.1]{Puig2000} and
\cite[Corollary~I.7.5]{AschbacherKessarOliver2011} that $O^2(\F_{S\cap
K}(K))=\F_{S\cap K}(K)$. So $\F_{S\cap K}(K)$ is quasisimple and thus, by
\cite[9.4]{AschbacherGeneralized}, we have $\C=\F_{S\cap K}(K)$.
\end{proof}

\subsection{Summary of preliminary definitions}\label{SS:prelim-summary}
For the convenience of the reader, we summarize in a quick-reference
table the most important definitions from Section~\ref{S:prelim} for following
the later arguments.

\bigskip
\renewcommand{\arraystretch}{1.3}
\begin{longtable}{|P{1.5in}|P{1in}|P{2.3in}|P{1in}|}
\caption{Summary of Section~\ref{S:prelim}}
\label{Ta:defs}\\
\hline 
\textbf{Notation/Property} & \textbf{Assumption} & \textbf{Description} & \textbf{Reference}\\
\hline
\hline
\endfirsthead
\caption[]{Summary of Section~\ref{S:prelim} (continued from previous page)}\\
\hline 
\textbf{Notation/Property} & \textbf{Assumption} & \textbf{Description} & \textbf{Reference}\\
\cline{1-4}
\cline{1-4}
\endhead
$\AA(X)=\AA_\F(X)$ & $X\leq S$ & set of $\alpha\in\Hom_\F(N_S(X),S)$ with $X^\alpha\in\F^f$ & Notation~\ref{N:AA}\\
\hline
$N_\E(P)$ & $\E\unlhd\F$, $P\leq S$ with $P\in (\E P)^f$ & unique normal subsystem of $N_{\E P}(P)$ over $N_T(P)$ of $p$-power index & Definition~\ref{D:NEP}\\
\hline
component of $\F$ &  & quasisimple subnormal subsystem of $\F$ & \\
\cline{1-1}\cline{3-3}
$E(\F)$ & & central product of the components of $\F$ & Section~\ref{SS:Components}\\
\cline{1-1}\cline{3-3}
$F^*(\F)$ & & central product of $O_p(\F)$ and $E(\F)$ & \\
\hline
$\X(\C)=\X_\F(\C)$ 
&
& set of elements or subgroups $X$ of $C_S(T)$ with $\C\leq C_\F(X)$ 
& \\
\cline{1-1} \cline{3-3}
$\tilde{\X}(\C)=\tilde{\X}_\F(\C)$ 
& $\C$ is a quasisimple subsystem of $\F$ over $T\leq S$
& set of elements or subgroups $X$ of $S$ such that $\C^\alpha$ is a component of $N_\F(X^\alpha)$ for some $\alpha\in\AA(X)$ 
& Notation~\ref{N:XCIC}\\
\cline{1-1} \cline{3-3}
$\I(\C)=\I_\F(\C)$ &   & set of involutions in $\tilde{\X}(\C)$ & \\
\cline{1-4}
\pagebreak

\cline{1-4}
$\CC(\F)$ 
& 
& set of quasisimple subsystems $\C$ of $\F$ with $\I(\C)\neq\varnothing$ 
& Notation~\ref{N:XCIC}\\
\hline
$\D$ proper pump-up of $\C$ 
& $\C\in\CC(\F)$, $\D$ a subsystem of $\F$ 
& hypothesis and conclusion (2) of the ``Pump-Up Lemma'' \ref{L:pumpupbasic} hold
& Definition~\ref{D:pumpup}\\
\cline{1-1}\cline{2-3}

$\C$ maximal 
& $\C\in\CC(\F)$ 
& no proper pump-up of $\C$ 
& \\
\hline
$P_{t,\alpha}$ & $\C\in\CC(\F)$, & $P_{t,\alpha}:=C_{C_S(t^\alpha)}(\C^\alpha)\cap C_S(t)^\alpha$ & Notation~\ref{N:PtalphaQt}\\
\cline{1-1}\cline{3-3}
$Q_t:=Q_{t,\alpha}$ & $t\in\I(\C)$, $\alpha\in\AA(t)$ & $Q_{t,\alpha}:=P_{t,\alpha}^{\alpha^{-1}}$ & \\
\hline
$\Delta(\C)$ 
& 
& set of $\F$-conjugates $\C_1$ of $\C$ over some $T_1\leq S$ such that 
  $T_1^\#\subseteq\tilde{\X}(\C)$ and $T^\#\subseteq\tilde{\X}(\C_1)$
& Notation~\ref{N:DeltaC}\\
\cline{1-1} \cline{3-3}

$\rho(\C)$ 
& 
& set of all $(t^\phi,\C^\phi)$ where $t\in\I(\C)$ and $\phi\in\Hom_\F(\gen{t,T},S)$
&\\
\cline{1-1}\cline{3-4}

$\C$ terminal 
& 
& 
\raggedright
\begin{itemize}
\item[(C1)] $T\in\F^f$,
\item[(C2)] $\Delta(\C)\neq\varnothing$,
\item[(C3)] $(t_1,\C_1)\in\rho(\C)\Longrightarrow Q_{t_1}^\#\subseteq\tilde{\X}(\C_1)$
\end{itemize}
& Definition~\ref{D:terminal}\\
\cline{1-1} \cline{3-4}

$\C$ nearly standard 
& $\C\in\CC(\F)$ a subsystem over $T \leq S$ 
& 
\raggedright
\begin{itemize}
\item[(S1)] $\tilde \X(\C)$ contains a unique maximal member $Q$
\item[(S2)] $\C^\alpha \norm N_\F(X^\alpha)$ for each $1 \neq X \leq Q$ and $\alpha \in \AA(X)$
\item[(S3)] $T^\beta = T$ for each $1 \neq X \leq Q$ and $\beta \in \AA(X)$ with $X^{\beta} \leq Q$
\end{itemize}
& Definition~\ref{D:standard}\\
\cline{1-1} \cline{3-3}

$\C$ standard 
& 
& $\C$ nearly standard and
\begin{itemize}
\item[(S4)] $\Aut_\F(T) \leq \Aut(\C)$
\end{itemize}
(ensures existence of a ``centralizer'' of $\C$, in many cases fulfilled if $\C$ terminal)
& \\
\cline{1-1} \cline{3-4}

$\C$ subintrinsic 
& 
& there is $\H\in\CC(\C)$ with $\I_\F(\H)\cap Z(\H)\neq\varnothing$ 
& Definition~\ref{D:subintrinsic} \\
\hline

$\Q$ tightly embedded
& $\Q$ subsystem over $Q \in \F^f$
& 
\raggedright
\begin{itemize}
\item[(T1)] $O^{p'}(N_\Q(X))^\alpha$ is normal in  $N_\F(X^\alpha)$ for each $1
\neq X \in \Q^f$ and $\alpha \in \AA(X)$
\item[(T2)] $X^\F \cap Q = X^{\Aut_\F(Q)\Q}$ for each $X$ of order $p$. 
\item[(T3)] $\Aut_\F(Q) = \Aut(\Q)$
\end{itemize}

\bigskip
\centering ((T1)--(T3) hold if $\Q$ is the ``centralizer'' of standard subsystem)
& Definition~\ref{D:tightlyEmb}\\
\hline
\end{longtable}

\section{Subintrinsic maximal Benson-Solomon components}\label{S:showStandard}

We assume the following hypothesis throughout this section.

\begin{hypothesis}\label{H:maximal}
Let $\F$ be a saturated fusion system over the $2$-group $S$, and let $q$ be an
odd prime power. Suppose $\C \cong \F_{\Sol}(q)$ is a subsystem of $\F$ over
the subgroup $T \in \F^f$. Let $z \in Z(T)$ be the involution, set $\H =
C_\C(z)$.  Assume $\C$ is maximal in $\CC(\F)$ and $z \in \I_\F(\H)$, but that
$\C$ is not a component of $\F$.
\end{hypothesis}

As we explain in detail in Remark~\ref{R:Subintrinsic} below, this hypothesis
amounts to assuming that the pair $(\F,\C)$ is a counterexample to
Theorem~\ref{T:main}. The purpose of this section is to prove the following
theorem.

\begin{theorem}
\label{T:Cstandard}
Assume Hypothesis~\ref{H:maximal}. Then $\C$ is standard.
\end{theorem}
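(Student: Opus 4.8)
The plan is to verify conditions (S1)--(S4) of Definition~\ref{D:standard} in turn, relying on the fact that a subintrinsic maximal Benson-Solomon component is terminal, which is indicated in the excerpt to follow from \cite[Theorem~7.4.14]{AschbacherFSCT} together with the observations in Remark~\ref{R:Subintrinsic}. So the first step would be to record that $\C$ is terminal, i.e.\ that (C0)--(C2) of Definition~\ref{D:terminal} hold under Hypothesis~\ref{H:maximal}; in particular (C0) is already part of the hypothesis ($T \in \F^f$). Once terminality is in hand, conditions (S2) and (S4) should come relatively cheaply: (S4) asserts $\Aut_\F(T) \leq \Aut(\C)$, which I would deduce from the structure of $\Aut(T)$ for $T$ a Sylow $2$-subgroup of $\Spin_7(q)$ together with the fact that $\C \cong \F_{\Sol}(q)$ is determined by its fusion, using Lemma~\ref{L:standardsequence} and the rigidity of $\F_\sigma$ (every automorphism of $T$ normalizing the relevant characteristic subgroups already normalizes $\C$); and (S2), that $\C^\alpha \norm N_\F(X^\alpha)$ for $1 \neq X \leq Q$ and $\alpha \in \AA(X)$, should follow from terminality via the $\rho(\C) = \rho_0(\C)$ condition combined with the Pump-Up Lemma (Lemma~\ref{L:pumpupbasic}) and maximality of $\C$ --- any pump-up inside $N_\F(X^\alpha)$ would contradict maximality, and the diagonal case is excluded because $\C$ is simple (not a product), leaving only the trivial case, which says $\C^\alpha$ is a component of $C_\F(X^\alpha)$, whence normal by Lemma~\ref{L:ConjugateComponents}(a) applied inside that centralizer.

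The crux of the argument, and the step I expect to be the main obstacle, is (S1): that $\tilde{\X}(\C)$ has a \emph{unique} maximal member $Q$. The existence of maximal members is clear by finiteness, so the work is in uniqueness, equivalently in showing $\tilde{\X}(\C)$ is closed under joins of its members lying in a common $C_S(T)$, or rather that $C_S(T)$ itself (suitably interpreted) serves as the maximal member. Here I would exploit two structural facts: first, that the Benson-Solomon systems are \emph{split} (Lemma~\ref{L:Csplit}) together with the classification of split extensions, which via \cite[Theorem~8]{AschbacherFSCT} forces $Q$ into one of a very restricted list once we know $\C$ is standard --- but since we are \emph{proving} standardness we cannot yet invoke that; second, and more to the point, that $\Out(\L_\sigma)$ is cyclic of $2$-power order (\cite[Theorem~3.10]{HenkeLynd2018}) so that, by \cite[Theorem~A]{Semeraro2015}, the ``action'' of any $2$-group on $\C$ is severely constrained, and by Proposition~\ref{P:CST} (whose hypotheses hold by Lemma~\ref{L:lim1=0}) we get $C_S(T) = C_{S_1}(\C) Z(T)$ in the relevant normalizers. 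The idea is that distinct maximal members of $\tilde{\X}(\C)$ would generate, together with $T$, a subsystem containing $\C$ as a component of the centralizer of too-large a $2$-group, contradicting either maximality of $\C$ or the structure of $\Spin_7(q)$-fusion; I would make this precise by taking two maximal members $Q_1, Q_2$, passing to fully normalized conjugates, and analyzing $N_\F(Q_1 \cap Q_2)$ and the image of $Q_1 Q_2$ acting on $\C$ via the $\nu$ map of Proposition~\ref{P:CST}.

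Finally, for (S3) --- that $X^\beta \leq Q$ with $1 \neq X \leq Q$ and $\beta \in \AA(X)$ forces $T^\beta = T$ --- I would argue that $\beta$ must map the component $\C$ of $C_\F(X)$ to a component of $C_\F(X^\beta)$, and since $X^\beta \leq Q$ and $Q$ is (by (S1)) the canonical maximal member attached to $\C$, the component $\C^\beta$ is again the unique Benson-Solomon component ``centralized by $Q$'' in the appropriate sense; uniqueness of that component (using again that all involutions of $\F_{\Sol}(q)$ are conjugate and Lemma~\ref{L:involutionsconjugate}, and that $\C$ is maximal) then gives $\C^\beta = \C$, hence $T^\beta = T$. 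The whole proof is thus an assembly job: terminality from Aschbacher, then (S4) and (S3) from the rigidity of $\F_{\Sol}(q)$, (S2) from maximality and the Pump-Up Lemma, and (S1) --- the hard part --- from the triviality of $\lim^1$ and the cyclicity of $\Out(\L_\sigma)$ feeding Proposition~\ref{P:CST} and \cite[Theorem~A]{Semeraro2015}.
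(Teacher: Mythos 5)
Your high-level plan (terminality first, then the standardness axioms, with (S1) as the crux) matches the paper's, but the proposal has genuine gaps at each of the hard steps. The most serious is (S1). Your sketch --- take two maximal members $Q_1,Q_2$ of $\tilde{\X}(\C)$ and feed $Q_1Q_2$ into the $\nu$ map of Proposition~\ref{P:CST} --- cannot get off the ground as stated, because Proposition~\ref{P:CST} requires a \emph{saturated} subsystem $\F_1$ in which $\C$ is \emph{normal} and whose Sylow group contains $C_S(T)$; producing such an $\F_1$ is essentially equivalent to the standardness you are trying to prove (the paper only invokes Proposition~\ref{P:CST} in Lemma~\ref{L:CST}, \emph{after} Theorem~\ref{T:Cstandard}). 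What the paper actually does is much more specific: it uses tameness of $\Spin_7(q)$ (Lemma~\ref{L:tameSpin}(b), via \cite[Lemma~2.22]{AschbacherOliver2016}) to prove $\H\subseteq C_\F(C_S(T))$ and $C_S(R)=EC_S(T)$; constructs a model $G_R$ for $N_{N_\F(R)}(V_R)$ and applies Gasch\"utz's theorem twice to produce a complement $Q$ to $\gen{z}$ in $C_S(T)$ centralized by the model $N_1$ of $N_\C(R)$; uses the generation $\C=\gen{\H,N_\C(R)}$ (Lemma~\ref{L:gen}(b)) to reduce ``$X$ centralizes $\C$'' to ``$X$ centralizes $N_\C(R)$''; and then runs a delicate maximality argument (extending an order-$7$ automorphism of $R$ step by step up to $V_R$) to show every such $X$ lies in $Q$. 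None of these ingredients appears in your proposal, and the uniqueness of the maximal member is exactly where they are needed.

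The other axioms also have problems as you argue them. For (S4), your claim that ``every automorphism of $T$ normalizing the relevant characteristic subgroups already normalizes $\C$'' is false: $\Aut(\C)$ is in general a proper subgroup of $\Aut(T)$, and the paper's Lemma~\ref{L:AutFTAutC} genuinely uses that $\alpha\in\Aut_\F(T)$ is realized in $\F$ --- it normalizes $\H$ as the unique component of $C_\F(z)$ on $T$, extends into the model $G_R$, hence normalizes the model of $N_\C(R)$, and therefore normalizes $\gen{\H,N_\C(R)}=\C$. For (S2), the Pump-Up Lemma only speaks about involutions, whereas (S2) quantifies over all nontrivial $X\leq Q$; moreover ``component of $C_\F(X^\alpha)$'' does not by itself yield ``normal in $N_\F(X^\alpha)$'', and your exclusion of the diagonal case ``because $\C$ is simple (not a product)'' is not a valid reason (in the diagonal case $\C$ is still quasisimple, diagonally embedded in $\D\D^t$; that case is ruled out by $\Delta(\C)=\emptyset$, i.e.\ by \cite[Theorem~7.4.14]{AschbacherFSCT}). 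The paper sidesteps all of this by deriving (S1)--(S3) simultaneously from terminality plus the unique-maximal-member property via \cite[Proposition~7]{AschbacherFSCT}, rather than verifying them one by one.
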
 
\begin{proof}
This is the content of Lemmas~\ref{L:nearlystandard} and \ref{L:AutFTAutC}
below. 
\end{proof}

By Lemma~\ref{L:SolEquivalences}, we may take $q = 5^{2^l}$ for a unique $l
\geq 0$ in Hypothesis~\ref{H:maximal}. Since we will be working in this section
with the internal structure of $\C \cong \F_{\Sol}(q)$ in some detail, we shall
make the following identifications in order to make it simpler to apply the
results of Section~\ref{SS:spinsol}. Here, $\sigma = \sigma_l$ is the
automorphism of the Aschbacher-Chermak free amalgamated product appearing in
Theorem~\ref{T:uniqueliftsigma}.

\medskip

\begin{center}
\begin{tabular}{c|c|c|p{2.2in}}
Hypothesis~\ref{H:maximal} & Identify with & Reference in \S\S\ref{SS:spinsol} & Note\\
\hline
$T$ & $S_\sigma $ & Lemma~\ref{L:sylowchoice}(c) & $T$ not to be confused with a maximal torus \\
$\C$ & $\F_\sigma$ & Notation~\ref{N:Fsigma} & \\
$\H$ & $\H_\sigma$ & Notation~\ref{N:Fsigma} & \\
\end{tabular}
\end{center}

\medskip
We reiterate that the symbol $S$ is now used for the Sylow group in the
ambient system $\F$ of Hypothesis~\ref{H:maximal}. (Previously, it stood for
the Sylow $2$-subgroup of the Aschbacher-Chermak free amalgamated product.)

\medskip

\begin{remark}\label{R:Subintrinsic}
Apart from the assumption that $\C$ is not a component of $\F$,
Hypothesis~\ref{H:maximal} is a restatement of the hypotheses of
Theorem~\ref{T:main}. The assumption $z \in \I(\H) = \I_\F(\H)$ in
Hypothesis~\ref{H:maximal} is equivalent to the condition that $\C$ is
subintrinsic in $\CC(\F)$, because $z$ is the unique involution in $Z(\H)$
and $\CC(\C)=\{\H\}$. To see the latter property recall that every
involution in $T$ is $\C$-conjugate to $z$ by
Lemma~\ref{L:involutionsconjugate} and that $z$ is fully normalized, as $z\in
Z(T)$. Therefore, each member of $\CC(\C)$ is $\C$-conjugate to a component of
$\H$ by Lemma~\ref{AAnonempty} and Lemma~\ref{L:CCconjugate}. As $\H$ is
quasisimple, $\H$ is by \cite[9.4]{AschbacherGeneralized} the only component of
$\H$. Hence, every member of $\CC(\C)$ is $\C$-conjugate to $\H$. For each
$\alpha \in \Hom_\F(T,T)$, $z^\alpha = z$  as $Z(T) = \gen{z}$ is
characteristic in $T$, and so $\H^\alpha = \H$. Hence, $\H$ is the only
$\C$-conjugate of $\H$ and $\CC(\C)=\{\H\}$.
\end{remark}

\begin{lemma}\label{L:Shift}
Let $\alpha\in\Hom_\F(T,S)$. Then $\C^\alpha$ is maximal in $\CC(\F)$,
$\H^\alpha=C_{\C^\alpha}(z^\alpha)$, $z^\alpha\in\I(\H^\alpha)$, and
$\m{C}^\alpha$ is not a component of $\F$.
\end{lemma}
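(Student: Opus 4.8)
The plan is to verify, one clause at a time, that each property of $\C$ demanded in Hypothesis~\ref{H:maximal} is inherited by the $\F$-conjugate $\C^\alpha$, the underlying point being that quasisimplicity, the centralizer construction, the sets $\tilde{\X}$ and $\I$, the property of being a component of $\F$, and the pump-up ordering on $\CC(\F)$ are all invariant under $\F$-conjugation (the last via Lemmas~\ref{L:CCconjugate} and~\ref{L:pumpupbasic}). Throughout I would use that $\alpha$ restricts to a group isomorphism $T \to T^\alpha$ whose inverse $\alpha^{-1}\colon T^\alpha \to T \hookrightarrow S$ is again a morphism in $\F$, and that any involution lying in $\tilde{\X}$ of a quasisimple subsystem in fact centralizes that subsystem's Sylow group, since the definition of $\tilde{\X}$ forces the Sylow group to normalize, hence centralize, the order-$2$ subgroup in question.

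First I would dispatch the three easy clauses. Since $\alpha$ induces an isomorphism of fusion systems $\C \to \C^\alpha$, the system $\C^\alpha \cong \C \cong \F_{\Sol}(q)$ is quasisimple. As $z$ is the unique involution of $Z(T)$, its image $z^\alpha$ is the unique involution of $Z(T^\alpha)$, and unwinding the definitions of the centralizer subsystem and of $\E^\alpha$ — an extension witnessing that a morphism belongs to $C_\C(z)$ is carried by $\alpha$ to an extension witnessing the corresponding membership in $C_{\C^\alpha}(z^\alpha)$, and conversely — gives $\H^\alpha = (C_\C(z))^\alpha = C_{\C^\alpha}(z^\alpha)$. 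For $z^\alpha \in \I(\H^\alpha)$: the Sylow group of $\H = C_\C(z)$ is $C_T(z) = T$, so $\langle\langle z\rangle,T\rangle = T$ and $\alpha \in \Hom_\F(\langle\langle z\rangle,T\rangle,S)$; since $\langle z\rangle \in \tilde{\X}(\H)$ because $z \in \I(\H)$, Lemma~\ref{L:CCconjugate}(b) applied to $\H$ yields $\langle z^\alpha\rangle = \langle z\rangle^\alpha \in \tilde{\X}(\H^\alpha)$, and $z^\alpha$ is an involution. Finally, were $\C^\alpha$ a component of $\F$, then Lemma~\ref{L:ConjugateComponents}(b), applied to the component $\C^\alpha$ over $T^\alpha$ and the morphism $\alpha^{-1} \in \Hom_\F(T^\alpha,S)$, would make $(\C^\alpha)^{\alpha^{-1}} = \C$ a component of $\F$, contrary to Hypothesis~\ref{H:maximal}.

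The remaining assertion — that $\C^\alpha$ is a \emph{maximal} member of $\CC(\F)$ — is where the real work lies, and it amounts to the $\F$-conjugacy invariance of membership in $\CC(\F)$ together with that of the pump-up order. For membership one fixes $t_0 \in \I(\C)$; since $t_0 \in C_S(T)$ the map $c_{t_0}$ is trivial on $T$, so after composing $\alpha$ with some $\gamma \in \AA(T^\alpha)$ to make the target $T^{\alpha\gamma}$ fully $\F$-normalized (hence fully centralized), the extension axiom extends $\alpha\gamma$ over $\langle t_0\rangle T$, and Lemma~\ref{L:CCconjugate}(b) carries $t_0$ to an involution of $\tilde{\X}((\C^\alpha)^\gamma)$; as $\C^\alpha$ and $(\C^\alpha)^\gamma$ are $\F$-conjugate, transporting a witness back — using $T \in \F^f$ to supply the needed fully normalized target — gives $\I(\C^\alpha) \ne \emptyset$. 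Maximality is then handled in the same spirit: any proper pump-up of $\C^\alpha$ produced by Lemma~\ref{L:pumpupbasic} from data $t_1 \in \I(\C^\alpha)$ and $a$ with $\langle t_1,a\rangle \in \X(\C^\alpha)$ has $t_1, a \in C_S(T^\alpha)$, so $\alpha^{-1}$ extends over $\langle t_1,a\rangle T^\alpha$ (the target $T$ being fully $\F$-normalized), and transporting the whole configuration back yields a proper pump-up of $\C$, against Hypothesis~\ref{H:maximal}. The only genuine obstacle is the bookkeeping with the extension domains $N_\phi$: one must repeatedly exploit that the relevant involutions centralize the Sylow group and that $T \in \F^f$ in order to know the conjugating morphism (or $\alpha^{-1}$) really extends over them; granting that, the argument is routine.
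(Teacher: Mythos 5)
Three of your four clauses track the paper's proof exactly: $\H^\alpha=C_{\C^\alpha}(z^\alpha)$ because $\alpha$ induces an isomorphism $\C\to\C^\alpha$, the non-component clause via the contrapositive of Lemma~\ref{L:ConjugateComponents}(b), and $z^\alpha\in\I(\H^\alpha)$ via Lemma~\ref{L:CCconjugate}(b) (here there is no extension issue, since the Sylow group of $\H$ is $T$ itself and $\alpha$ is already defined on $\gen{z}T=T$). The divergence is the maximality clause. The paper discharges it in one line by citing \cite[6.2.13]{AschbacherFSCT}, whose hypothesis is exactly $T\in\F^f$; you instead try to re-derive that result by transporting configurations between $\C$ and $\C^\alpha$.

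Your transport works cleanly in one direction only. To kill proper pump-ups of $\C^\alpha$ you move a configuration $(t_1,a)$ with $\gen{t_1,a}\in\X(\C^\alpha)$ over to $\C$ via $\alpha^{-1}\colon T^\alpha\to T$; since the \emph{target} $T$ is fully centralized and $\gen{t_1,a}\le C_S(T^\alpha)\le N_{\alpha^{-1}}$, the extension axiom applies and this direction is sound. But to establish membership $\I(\C^\alpha)\ne\emptyset$ you must transport a witness \emph{from} $\C$ \emph{to} $\C^\alpha$, and there the extension axiom does not apply directly: $T^\alpha$ need not be fully centralized, so neither $\alpha$ itself nor the return map $\gamma^{-1}$ in your detour through $\AA(T^\alpha)$ is guaranteed to extend over the relevant involution (the composite $\alpha\gamma$ lands on a fully normalized conjugate of $\C^\alpha$, not on $\C^\alpha$, and ``transporting a witness back'' is the same problematic direction again). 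Note also that the definition of $\I(\C^\alpha)$ requires a morphism in $\AA(x)$ that is in particular defined on $T^\alpha$, which is a further constraint your sketch does not address. This is precisely the technical content of Aschbacher's 6.2.13, so your reduction of the lemma to ``routine bookkeeping'' understates the one step that actually requires an argument; either supply that argument in full or, as the paper does, cite \cite[6.2.13]{AschbacherFSCT}.
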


\begin{proof} 
By Lemma~\ref{L:ConjugateComponents}(b), $\C^\alpha$ is not a component of $\F$
as $\C$ is not a component of $\F$. As $T\in\F^f$, it follows from
\cite[6.2.13]{AschbacherFSCT} that $\C^\alpha$ is maximal in $\CC(\F)$. As
$\alpha$ induces an isomorphism from $\C$ to $\C^\alpha$, we have
$\H^\alpha=C_{\C^\alpha}(z^\alpha)$. Since $z\in\I(\H)$,
Lemma~\ref{L:CCconjugate}(b) gives $z^\alpha\in\I(\H^\alpha)$.  
\end{proof}

\begin{lemma}
\label{L:terminal}
$\C$ is terminal in $\CC(\F)$.
\end{lemma}
\begin{proof}
By Hypothesis~\ref{H:maximal}, $\C$ is maximal and subintrinsic in
$\CC(\F)$.  Condition (C0) in Definition~\ref{D:terminal} is $T \in
\F^f$, which holds by Hypothesis~\ref{H:maximal}. Assume $\Delta(\C) \neq
\varnothing$.  Then in the notation of \cite[6.1.17]{AschbacherFSCT}, the set
$\C^\perp = \Delta(\C) \cup \{\C\} \neq \C$. Since $Z(\C) = 1$,
Hypothesis~7.2.1 of \cite{AschbacherFSCT} holds. By
\cite[Theorem~7.2.5]{AschbacherFSCT}, $\C$ is a component of $\F$, contrary to
hypothesis. Thus, $\Delta(\C) = \varnothing$, i.e (C1) is verified. Since
$m(T) = 4$ by Lemma~\ref{L:standardsequence}(d), Theorem~7.4.14 of
\cite{AschbacherFSCT} shows that $\Delta(\C)=\varnothing$ (where $\Delta(\C)$
is defined as in Notation~\ref{N:DeltaC}). 

It remains to verify (C2).  Let $t \in \I(\C)$ and $\phi
\in \Hom_\F(\<t,T\>,S)$ so that $(t^\phi,\C^\phi)\in\rho(\C)$. Fixing $1\neq
a\in Q_{t^\phi}$, we need to show that $a\in \tilde{\X}(\C^\phi)$. Note that
$a\in \X(\C^\phi)$.  So if $\tilde{a}$ is the unique involution in $\<a\>$ and
$\tilde{a}\in \tilde{\X}(\C^\phi)$, then $a\in\tilde{\X}(\C^\phi)$ by
\cite[6.1.5]{AschbacherFSCT}. So we may assume without loss of generality that
$a$ is an involution. Fix $\alpha \in \AA(a)$.  It remains to show that
$\C^{\phi\alpha}$ is a component of $C_\F(a^\alpha)$ and thus $a\in
\tilde{\X}(\C^\phi)$.

Note first that, by definition of $Q_{t^\phi}$, $\C^\phi\leq C_\F(a)$ and thus
$\C^{\phi\alpha}\leq C_\F(a^\alpha)$. By Lemma~\ref{L:CCconjugate}(b) applied
with $(\<t\>,\phi\alpha)$ in place of $(X,\phi)$, we have
$t^{\phi\alpha}\in\tilde{\X}(\C^{\phi\alpha})$.  Moreover, $[t^\phi,a]=1$ by
definition of $Q_{t^\phi}$ and thus $[t^{\phi\alpha},a^\alpha]=1$. Hence,
Lemma~\ref{L:CClocalsubsystem} yields $\C^{\phi\alpha}\in\CC(C_\F(a^\alpha))$. 

We will argue next that $\C^{\phi\alpha}$ is subintrinsic in
$\CC(C_\F(a^\alpha))$. By Lemma~\ref{L:Shift}, we have
$C_{\C^{\phi\alpha}}(z^{\phi\alpha})=\H^{\phi\alpha}$ and
$z^{\phi\alpha}\in\I(\H^{\phi\alpha})$. Recall that $\H^{\phi\alpha}\leq
\C^{\phi\alpha}\leq C_\F(a^\alpha)$. In particular,
$[T^{\phi\alpha},a^\alpha]=1$ and thus $[z^{\phi\alpha},a^\alpha]=1$. Hence, by
Lemma~\ref{L:CClocalsubsystem} applied with
$(\<z^{\phi\alpha}\>,\<a^\alpha\>,\H^{\phi\alpha})$ in place of $(X,Y,\C)$, we
have $z^{\phi\alpha}\in\I_{C_\F(a^\alpha)}(\H^{\phi\alpha})$. As
$z^{\phi\alpha}\in Z(\H^{\phi\alpha})$, this implies that $\C^{\phi\alpha}$ is
indeed subintrinsic in $\CC(C_\F(a^\alpha))$ as we wanted to prove.

As we have verified that $\C^{\phi\alpha}$ is a subintrinsic member of
$\CC(C_\F(a^\alpha))$, it follows now from \cite[1.9.2]{AschbacherWT} applied
with $C_\F(a^\alpha)$ in the role of $\F$ and with $\C^{\phi\alpha}$ in the
role of $\M$ that $\C^{\phi\alpha}$ is contained in some component of
$C_\F(a^\alpha)$. Since $\C^\phi$ is maximal in $\CC(\F)$ by
Lemma~\ref{L:Shift}, it follows from Lemma~\ref{L:pumpupbasic} applied with
$(t^\phi,\C^\phi)$ in place of the pair $(t,\C)$ of that lemma
that $\C^{\phi\alpha}$ is a component of $C_\F(a^\alpha)$. As argued above this
shows (a).
\end{proof}

For the remainder of this section, we adopt the following notation for certain
subgroups of $T$. After this, we will not need to refer explicitly to any
additional notation from in Section~\ref{SS:spinsol}.

\medskip
\begin{center}
\begin{tabular}{c|c|c|p{2.2in}}
Notation below & Subgroup & Reference in \S\S\ref{SS:spinsol} & Description\\
\hline
$T_{2^k}$ & $T_{2^k}$ & Notation~\ref{N:spin} & unique homocyclic subgroup of $T$ of rank $3$ and exponent $2^k$\\
$E$ & $E$ & Notation~\ref{N:spin} & $\Omega_1(T_{2^k})$\\
$R$ & $R_\sigma$ & Notation~\ref{N:spin} & \raggedright$R = C_T(E) = T_{2^k}\gen{w_0}$ for an involution $w_0$ inverting $T_{2^k}$
\end{tabular}
\end{center}

\medskip
\begin{lemma}\label{L:AutR}
The following hold:
\begin{itemize}
\item [(a)] The subgroup $E$ is characteristic in $R$ and thus $\Aut_\F(R)$-invariant. We have $\Aut_\F(R)=C_{\Aut_\F(R)}(E)\Aut_{\C}(R)$ and
$O_2(\Aut_\F(R))=C_{\Aut_\F(R)}(E)$.
\item [(b)] We have $N_S(R)=N_S(T)=C_S(E)T$.
\item [(c)] $R$ is fully $\F$-normalized.
\item [(d)] We have $\Out_\F(R)=O_2(\Out_\F(R))\times \Out_\C(R)$ and
$O_2(\Aut_\F(R))=\Aut_{C_S(E)}(R)$. In particular,
$O^2(\Aut_\F(R))=O^2(\Aut_\C(R))$.
\end{itemize}
\end{lemma}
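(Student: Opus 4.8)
The plan is to make part~(a) the engine and then read off (b), (c), (d) from it together with the terminality of $\C$ (Lemma~\ref{L:terminal}). The structural fact driving everything is that $E=Z(R)$: since $w_0$ inverts $T_k$ and $k=l+2\geq 2$, no element outside $T_k$ centralizes $T_k$, while $C_{T_k}(w_0)=\Omega_1(T_k)=E$, so $Z(R)=E$. As $E$ is then characteristic in $R$, restriction defines a group homomorphism $r\colon \Aut_\F(R)\to\Aut(E)\cong GL_3(2)$ with $\ker(r)=C_{\Aut_\F(R)}(E)$, and $\Inn(R)\leq\ker(r)$ since inner automorphisms fix $Z(R)$.

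For (a) I would first show $\ker(r)$ is a normal $2$-group. Here one uses that $T_k$ is characteristic in $R$, being generated by the elements of $R$ of maximal order $2^k$ (every element of $T_kw_0$ is an involution). Thus each $\phi\in\ker(r)$ restricts to an automorphism of $T_k$ trivial on $\Omega_1(T_k)=E$, hence lying in the congruence $2$-group $\ker(\Aut(T_k)\to\Aut(T_k/T_k^2))$; and the kernel of $\Aut_\F(R)\to\Aut(T_k)$ is a subgroup of the automorphisms of $R$ fixing $T_k$ pointwise, which is isomorphic to $C_R(T_k)=T_k$. Next, $r(\Aut_\C(R))=\Aut(E)$: restriction gives a surjection $\Aut_\C(R)\to\Aut_\C(T_k)\cong C_2\times GL_3(2)$ as in the proof of Lemma~\ref{L:standardsequence}(c), the $GL_3(2)$ direct factor acts on $E=\Omega_1(T_k)$ as the full automorphism group of $E$ (faithfully, since $GL_3(2)$ is simple and nonabelian), and the $C_2$ (inversion) is trivial on $E$. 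Since $\Aut_\C(R)\leq\Aut_\F(R)$ while $r(\Aut_\F(R))\leq\Aut(E)=r(\Aut_\C(R))$, we get $\Aut_\F(R)=C_{\Aut_\F(R)}(E)\Aut_\C(R)$; and since $C_{\Aut_\F(R)}(E)$ is a normal $2$-subgroup with quotient $GL_3(2)$, which has no nontrivial normal $2$-subgroup, $O_2(\Aut_\F(R))=C_{\Aut_\F(R)}(E)$. This proves (a).

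For (b) and (c): the inclusion $N_S(T)\leq N_S(R)$ is immediate since $R$ is characteristic in $T$ (Lemma~\ref{L:standardsequence}(c)); the reverse inclusion, that $N_S(R)$ normalizes $T$, is the main obstacle, and I expect it to be the only genuinely hard step. The approach is to show that $N_\C(R)$ is a normal subsystem of $N_\F(R)$ --- which should follow from the terminality of $\C$ together with the fact that $R$ is characteristic in $T$ and contains the $\C$-centric subgroup $T_k$ --- so that $T$, the Sylow group of the normal subsystem $N_\C(R)$, is strongly closed in $N_\F(R)$ and hence normal in its Sylow group $N_S(R)$; this gives $N_S(R)=N_S(T)$. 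Granting that, (c) follows: any $\F$-conjugate $R'$ of $R$ is characteristic in the corresponding conjugate $T'$ of $T$, the same argument applies after using Lemma~\ref{L:Shift} to carry the hypotheses over to $T'$, and $|N_S(R')|=|N_S(T')|\leq|N_S(T)|=|N_S(R)|$ since $T\in\F^f$. For $N_S(T)=C_S(E)T$: once $R\in\F^f$, $\Aut_S(R)$ is Sylow in $\Aut_\F(R)$, so by (a) the image $r(\Aut_S(R))$ is a Sylow $2$-subgroup of $\Aut(E)$, as is $r(\Aut_T(R))=\Aut_T(E)$; since $T\leq N_S(R)=N_S(T)$ and the two groups induce the same automorphisms on $E=Z(R)$, we get $N_S(T)=C_{N_S(T)}(E)\,T$, and $C_{N_S(T)}(E)=C_S(E)$ because $C_S(E)\leq N_S(R)$ (any such element induces on $R$ an automorphism in $\ker(r)=O_2(\Aut_\F(R))$), yielding $N_S(T)=C_S(E)T$.

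For (d): passing to $\Out_\F(R)=\Aut_\F(R)/\Inn(R)$, part (a) gives $\Out_\F(R)=O_2(\Out_\F(R))\Out_\C(R)$ with $O_2(\Out_\F(R))=C_{\Aut_\F(R)}(E)/\Inn(R)$ and $\Out_\C(R)\cong GL_3(2)$ by Lemma~\ref{L:standardsequence}(c); this product is direct because $\Out_\C(R)$ is normal in $\Out_\F(R)$ (coming from $N_\C(R)\trianglelefteq N_\F(R)$, as in the discussion of (b)) and centralizes the $2$-group $O_2(\Out_\F(R))$. The identity $O_2(\Aut_\F(R))=\Aut_{C_S(E)}(R)$ combines (a) with (c): by (c), $\Aut_S(R)$ is Sylow in $\Aut_\F(R)$ and hence contains the normal $2$-subgroup $O_2(\Aut_\F(R))=C_{\Aut_\F(R)}(E)$, while an element $c_g|_R\in\Aut_S(R)$ centralizes $E\leq R$ exactly when $g\in C_S(E)$, which lies in $N_S(R)$ by (b); so $O_2(\Aut_\F(R))=\Aut_S(R)\cap C_{\Aut_\F(R)}(E)=\Aut_{C_S(E)}(R)$. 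Finally, since $\Aut_\C(R)$ is normal in $\Aut_\F(R)$, $C_{\Aut_\F(R)}(E)$ is a $2$-group, and $\Aut_\F(R)=C_{\Aut_\F(R)}(E)\Aut_\C(R)$, a short computation with the smallest normal subgroup of $2$-power index gives $O^2(\Aut_\F(R))=O^2(\Aut_\C(R))$.
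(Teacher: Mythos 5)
Your part~(a) is sound and is essentially the paper's argument: $C:=C_{\Aut_\F(R)}(E)$ acts trivially on each layer of the characteristic filtration of $T_k$ and on $R/T_k$, hence is a normal $2$-group, while $\Aut_\C(R)$ covers $\Aut(E)\cong GL_3(2)$. The genuine gap is exactly at the step you flag as "the main obstacle": the inclusion $C_S(E)\leq N_S(T)$ (needed both for $N_S(R)\leq N_S(T)$ and for the equality $N_S(T)=C_S(E)T$, since a priori $C_S(E)$ need not normalize $T$ or $R$ at all). Your proposed route --- that $N_\C(R)$ is normal in $N_\F(R)$, "which should follow from the terminality of $\C$" --- is not an argument. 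Terminality consists of the conditions $\Delta(\C)=\emptyset$ and $\rho(\C)=\rho_0(\C)$, which constrain conjugates of $\C$ and involutions in $\tilde\X(\C)$ but say nothing about $N_\F(R)$; moreover at this stage $N_\F(R)$ is not even known to be saturated (that is part~(c), which in your own ordering comes after this step), so "normal subsystem of $N_\F(R)$" is not yet meaningful. Your later parenthetical that $C_S(E)\leq N_S(R)$ because "any such element induces on $R$ an automorphism in $\ker(r)$" is circular: inducing an automorphism of $R$ presupposes normalizing $R$.

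What actually closes this step in the paper is the \emph{subintrinsic} hypothesis $z\in\I(\H)$, used via a component argument: first the Sylow computation from (a) gives $N_S(R)\leq C_S(E)T\leq C_S(z)$ (since $\Aut_S(R)$ lies in the unique Sylow $2$-subgroup $C\Aut_T(R)$ of $\Aut_\F(R)$ containing $\Aut_T(R)$); then for $x\in C_S(E)\leq C_S(z)$ one picks $\alpha\in\AA(z)$ with $\H^\alpha$ a component of $C_\F(z^\alpha)$, notes that $x^\alpha$ conjugates $\H^\alpha$ to a component of $C_\F(z^\alpha)$, and observes that if $(\H^\alpha)^{x^\alpha}\neq\H^\alpha$ then the two Sylow groups commute, forcing $E^\alpha\leq Z(\H^\alpha)=\gen{z^\alpha}$ --- absurd. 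Hence $T^x=T$ and $C_S(E)\leq N_S(T)$. This is not recoverable from terminality of $\C$ alone. The same missing ingredient propagates into your (d): you invoke normality of $\Out_\C(R)$ in $\Out_\F(R)$ to get the direct product, whereas the paper instead derives $[O_2(\Aut_\F(R)),\Aut_\C(R)]\leq\Inn(R)$ directly from the commutator relation $[C_S(E),T]\leq C_S(E)\cap T=C_T(E)=R$, which is available only once (b) is proved.
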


\begin{proof}
\textbf{(a)}: It follows from Lemma~\ref{L:standardsequence}(a) that $T_{2^k}$ and $E=\Omega_1(T_{2^k})$ are characteristic in $R$. Set $C:=C_{\Aut_\F(R)}(E)$. Observe that $\Aut_\F(R)/C$ embeds into
$\Aut(E)\cong GL_3(2)$. As $\Aut_\C(R)/\Inn(R)\cong GL_3(2)$ and
$C_{\Aut_\C(R)}(E)=\Inn(R)$, it follows that $\Aut_\F(R)=C\Aut_{\C}(R)$.  By
Lemma~\ref{L:standardsequence}(a), $T_{2^k}$ is homocyclic of rank $3$ and exponent
$2^k$. So as $T_{2^k}$ is characteristic in $R$, for every $1\leq i<k$, the
map $\Omega_{i+1}(T_{2^k})/\Omega_i(T_{2^k})\rightarrow
\Omega_i(T_{2^k})/\Omega_{i-1}(T_{2^k}),x\Omega_i(T_{2^k})\mapsto x^2\Omega_{i-1}(T_{2^k})$ is
an isomorphism of $\Aut_\F(R)$-modules. So in particular, $C$ acts trivially on
$\Omega_{i+1}(T_{2^k})/\Omega_i(T_{2^k})$ for all $1\leq i<k$. As $|R/T_{2^k}|=2$, $C$ acts
also trivially on $R/T_{2^k}$. Hence, $C$ is a $2$-group and thus contained in
$O_2(\Aut_\F(R))$. As $E$ is an irreducible $\Aut_\F(R)$-module, it follows
$C=O_2(\Aut_\F(R))$. This shows (a). 

\textbf{(b)}: As $R$ is characteristic in $T$ by
Lemma~\ref{L:standardsequence}(c), we have $N_S(T)\leq N_S(R)$. By (a),
$C\Aut_T(R)$ is the unique Sylow $2$-subgroup of $\Aut_\F(R)$ containing
$\Aut_T(R)$. As $\Aut_S(R)$ is a $2$-group containing $\Aut_T(R)$, it follows
$\Aut_S(R)\leq C\Aut_T(R)$ and thus $N_S(R)\leq C_S(E)T\leq C_S(z)$. Let now
$x\in C_S(E)\leq C_S(z)$. As $z\in\I(\H)$, there exists $\alpha\in\AA(z)$ such
that $\H^\alpha$ is a component of $C_\F(z^\alpha)$. Then $x^\alpha\in
C_S(E^\alpha)\leq C_S(z^\alpha)$ and $(\H^\alpha)^{x^\alpha}$ is a component of
$C_\F(z^\alpha)$ by Lemma~\ref{L:ConjugateComponents}. So by
\cite[9.8.2]{AschbacherGeneralized}, either
$\H^\alpha=(\H^\alpha)^{x^\alpha}$, or $\H^\alpha$ and $(\H^\alpha)^{x^\alpha}$
form a commuting product. In the latter case,
$E^\alpha=(E^\alpha)^{x^\alpha}\leq Z(\H^\alpha)$, a contradiction to
$Z(\H^\alpha)=\<z^\alpha\>$. Hence, $\H^\alpha=(\H^\alpha)^{x^\alpha}$ and thus
$(T^x)^\alpha=(T^\alpha)^{x^\alpha}=T^\alpha$. This implies $x\in N_S(T)$. So
we have shown that $C_S(E)\leq N_S(T)$ and thus $N_S(R)\leq C_S(E)T\leq
N_S(T)\leq N_S(R)$. This yields (b). 

\textbf{(c)}: Let $\gamma\in\AA(R)$. Recall from (b) that $T\leq
N_S(T)=N_S(R)$. So in particular, as $T\in\F^f$,  we have $T^\gamma\in\F^f$ and
$N_S(T)^\gamma=N_S(T^\gamma)$. Thus, along with $T^\gamma \in \F^f$,
Lemma~\ref{L:Shift} says that Hypothesis~\ref{H:maximal} holds for $\C^\gamma$,
$z^\gamma$, and $\H^\gamma$ in place of $\C$, $z$, and $\H$.  So we can apply
(b) with $R^\gamma$ and $T^\gamma$ in place of $R$ and $T$ to obtain
$N_S(R^\gamma)=N_S(T^\gamma)$. This gives
$N_S(T^\gamma)=N_S(T)^\gamma=N_S(R)^\gamma\leq N_S(R^\gamma)=N_S(T^\gamma)$ and
therefore  $N_S(R)^\gamma=N_S(R^\gamma)$. Since $R^\gamma$ is fully normalized,
it follows that $R$ is fully normalized.  This shows (c). 

\textbf{(d)}: By (c) and the Sylow axiom,  $\Aut_S(R)\in\Syl_2(\Aut_\F(R))$ and so
$C=O_2(\Aut_\F(R))\leq \Aut_S(R)$. Thus, $C=\Aut_{C_S(E)}(R)$. By (b),
$[C_S(E),T]\leq C_S(E)\cap T = C_T(E)=R$. Hence, $[C,\Aut_T(R)]\leq \Inn(R)$.
For each $\alpha \in \Aut_\C(R)$, 
\[
[C,\Aut_T(R)^\alpha] = [C^{\alpha^{-1}}, \Aut_T(R)]^\alpha = [C,\Aut_T(R)]^\alpha,
\]
because $C$ is normalized by $\Aut_\C(R)$. So as $\Aut_\C(R) =
\gen{\Aut_T(R)^{\Aut_\C(R)}}$ by Lemma~\ref{L:standardsequence}(c), the
previous equation gives
\[
[C, \Aut_\C(R)] = [C,\gen{\Aut_T(R)^{\Aut_\C(R)}}] =
\gen{[C,\Aut_T(R)]^{\Aut_\C(R)}} \leq \Inn(R). 
\]
This together with (a) implies that (d) holds.  
\end{proof}

Notice that $N_S(T)\leq C_S(z)$ as $z$ is the unique central involution of $T$.
Hence, if $\alpha\in\AA(z)$, then $T^\alpha\in\F^f$ as $T\in\F^f$. So by
Lemma~\ref{L:Shift}, replacing $(\C,T,z)$ by $(\C^\alpha,T^\alpha,z^\alpha)$,
we may assume that $z$ is fully centralized. Moreover, we set 
\[
V_R:=RC_S(R)\mbox{\quad and \quad}Q_0:=C_S(T).
\] 

\begin{lemma}\label{L:towardstandard}
The following hold.
\begin{itemize}
\item [(a)] We have $\H\leq C_\F(Q_0)$.
\item [(b)] We have $C_S(R) = EC_S(T)$, and hence $V_R=RC_S(T)$. 
\item [(c)] $N_{N_\F(R)}(V_R)$ is a constrained fusion system and $N_\C(R)\leq N_{N_\F(R)}(V_R)$.
\item [(d)] Let $G_R$ be a model for $N_{N_\F(R)}(V_R)$ and $N:=C_{G_R}(V_R/R)$. Then $N_1:=\<T^N\>=O^2(N)R$ is a model for $N_\C(R)$.
\item [(e)] We have $Q_0=\<z\>\times Q$ where $Q=C_{Q_0}(N_1)$ with $N_1$ as in (d). 
\item [(f)] If $Q$ is as in (e), then $Q$ is the unique largest subgroup of $S$
centralized by $\m{C}$. More precisely, $\C\leq C_\F(Q)$, and $X\leq
Q$ for all $X\leq S$ with $\C\leq C_\F(X)$.
\item [(g)] If $Q$ is as in (e), then $Q$ is the unique largest member of $\tilde{\X}(\C)$.
\end{itemize}
\end{lemma}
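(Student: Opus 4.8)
The obstruction to constructing the centralizer of $\C$ is that $\H = C_\C(z)$ centralizes $Z(T)=\langle z\rangle$ while $\C$ does not (all involutions of $T$ are $\C$-conjugate), so $Q_0 = C_S(T)$ is ``one copy of $\langle z\rangle$ too large''. The plan is to peel this copy off using the action of $N_\C(R)$ on $E = Z(R)$, where $R = R_\sigma$ is the centric characteristic subgroup of $T$ with $\Out_\C(R) \cong \GL_3(2)$ (Lemma~\ref{L:standardsequence}(c)): since $z\in E$ and $\GL_3(2)$ has no nonzero fixed vector on its natural module $E$, the element $z$ is moved by $N_\C(R)$. Because $N_\C(R)$ is constrained, this action is realized inside a finite group, namely a model $G_R$ of the constrained system $N_{N_\F(R)}(V_R)$, inside which both $Q_0$ and a model $N_1$ of $N_\C(R)$ sit; the required group theory is done in $G_R$. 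Throughout one uses that $\C$ is terminal (Lemma~\ref{L:terminal}), that $N_S(R)=N_S(T)$ and $O^2(\Aut_\F(R))=O^2(\Aut_\C(R))$ (Lemma~\ref{L:AutR}), and that $\C=\langle \H, N_\C(R)\rangle$ (Lemma~\ref{L:gen}(b)).

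\textbf{Parts (a)--(c).} For (a): as $z$ is fully $\F$-centralized and $z\in\I(\H)$, the subsystem $\H$ is a component of $C_\F(z)$, and as $z\in Z(\H)$ this is an intrinsic component; since $Q_0=C_S(T)\leq C_S(z)$ and $T$ is the Sylow group of $\H$, Aschbacher's theory of intrinsic components in \cite{AschbacherFSCT} gives $Q_0\in\X(\H)$, i.e.\ $\H\subseteq C_\F(Q_0)$. For (b): $EC_S(T)\leq C_S(R)$ is immediate from $E=Z(R)$ and $R\leq T$; conversely, if $x\in C_S(R)$ then $x\in N_S(R)=N_S(T)$ by Lemma~\ref{L:AutR}(b), so $c_x$ is an automorphism of $T$ trivial on $R$, and since $R$ is $\C$-centric with $C_T(R)=E=Z(R)$, \cite[Lemma~A.8]{BrotoLeviOliver2003} shows $c_x$ agrees on $T$ with conjugation by an element of $E$, whence $x\in EC_S(T)$; then $V_R=RC_S(R)=RC_S(T)$ as $E\leq R$. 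For (c): $V_R$ is $\F$-centric because $C_S(V_R)=C_S(R)\cap C_S(C_S(R))\leq C_S(R)\leq V_R$; as $V_R$ is normal in $N_S(R)$, it is fully normalized in the saturated system $N_\F(R)$ (Lemma~\ref{L:AutR}(c)), so $N_{N_\F(R)}(V_R)$ is saturated with normal centric subgroup $V_R$ and is therefore constrained. Finally $C_S(R)$ is strongly closed in $N_\F(R)$ (a morphism normalizing $R$ carries centralizers of $R$ to centralizers of $R$), and every morphism of $N_\C(R)$ normalizes $R$ and extends over its centralizer, which contains $C_S(R)\leq V_R$; hence $N_\C(R)\subseteq N_{N_\F(R)}(V_R)$.

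\textbf{Parts (d)--(g).} Fix a model $G_R$ for $N_{N_\F(R)}(V_R)$ and set $N=C_{G_R}(V_R/R)$, $N_1=\langle T^N\rangle$. By (b) one has $[T,C_S(R)]\leq E\leq R$, hence $[T,V_R]\leq R$ and $T\leq N$; since $T\in\Syl_2(N)$ this yields $N_1=O^2(N)R$, and $\F_T(N_1)=N_\C(R)$ then follows by matching $O_2(N_1)=R$ (which is centric) and the automizer $\Aut_{N_1}(R)$ with the data of $N_\C(R)$, using $O^2(\Aut_\F(R))=O^2(\Aut_\C(R))$ (Lemma~\ref{L:AutR}(d)); this is (d). For (e): since $N_1\leq N$, the group $N_1$ centralizes $V_R/R$, so $[C_S(R),N_1]\leq E$, $C_S(R)/E$ is a trivial $N_1$-module, and $N_1$ acts on $C_S(R)$ through $N_1/R\cong\GL_3(2)$ with $E$ as the natural submodule (so $C_{C_S(R)}(N_1)\cap E=1$ while $z\in E$); setting $Q=C_{Q_0}(N_1)=C_{C_S(R)}(N_1)$ (the second equality since $C_{C_S(R)}(N_1)$ is centralized by $T\leq N_1$, hence lies in $C_S(T)=Q_0$), one exhibits an $N_1$-invariant complement to $E$ in $C_S(R)$ and reads off $Q_0=\langle z\rangle\times Q$. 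For (f): if $X\leq S$ with $\C\subseteq C_\F(X)$ then $T\leq C_S(X)$ gives $X\leq C_S(T)=Q_0$, and $N_\C(R)\subseteq C_\F(X)$ translates in $G_R$ to ``$N_1$ centralizes $X$'', so $X\leq C_{Q_0}(N_1)=Q$; conversely $\H$ centralizes $Q\leq Q_0$ by (a) and $N_1$ centralizes $Q$ by construction, so $\C=\langle\H,N_\C(R)\rangle\subseteq C_\F(Q)$ by Lemma~\ref{L:gen}(b). For (g): by (f) one has $t\in Q$ (since a component centralizes the $O_2$ of its ambient, $\C\subseteq C_\F(t)$), so $\C$ is a component of $C_\F(t)$ contained in the subsystem $C_\F(Q)$, hence a component of $C_\F(Q)$ and therefore of $N_\F(Q)$; conjugating $Q$ to be fully normalized shows $Q\in\tilde{\X}(\C)$, while any $X\in\tilde{\X}(\C)$ is, after an $\AA(X)$-conjugation, centralized by the corresponding conjugate of $\C$ and so, by (f) applied to it, contained in the corresponding conjugate of $Q$ — hence $Q$ is the unique maximal member of $\tilde{\X}(\C)$.

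\textbf{Main obstacle.} The technical heart is (e): realizing $N_\C(R)$ as $N_1=O^2(N)R$ inside $G_R$ and then showing $E$ has an $N_1$-invariant complement in $C_S(R)$ (equivalently, that the relevant extension of $\GL_3(2)$-modules splits), which forces $|Q_0:Q|=2$ with $z$ spanning a complement. The other delicate points are the passage between the fusion-theoretic assertion ``$N_\C(R)$ centralizes $X$'' and the group-theoretic one ``$N_1$ centralizes $X$'' in (f), and the conjugacy bookkeeping in (g), where one must use the $\F$-equivariance of the assignment $\C\mapsto Q$ (which rests on $R,E,T,z$ all being canonically attached to $\C$).
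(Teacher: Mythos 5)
Your overall architecture is the paper's: peel $\gen{z}$ off $Q_0$ using the fixed-point-free action of $N_\C(R)/R\cong\GL_3(2)$ on $E$, realized inside a model $G_R$ of the constrained system $N_{N_\F(R)}(V_R)$. But the two load-bearing steps are not actually proved. First, parts (a) and (b): the assertion $\H\subseteq C_\F(Q_0)$ concerns the \emph{subsystem} $\H$, not just its Sylow group $T$, and there is no general ``intrinsic component'' result in \cite{AschbacherFSCT} delivering it --- controlling the difference between $C_S(T)$ and the centralizer of the subsystem is exactly what Proposition~\ref{P:CST}, Lemma~\ref{L:lim1=0} and the tameness machinery are for. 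For (b), your appeal to \cite[Lemma~A.8]{BrotoLeviOliver2003} does not apply: that lemma requires the automorphism to be a morphism of a saturated fusion system in which $R$ is centric and to induce the identity on $T/R$ as well as on $R$; here $c_x|_T$ is only known to lie in $\Aut_\F(T)$ (not in $\C$, where $R$ is centric --- $R$ is not $\F$-centric since $C_S(R)\supseteq C_S(T)$), and nothing forces it to be trivial on $T/R$. The paper instead uses tameness of $\H$ (Lemma~\ref{L:tameSpin}(a) and \cite[Lemma~2.22]{AschbacherOliver2016}) to lift the conjugation action of $C_S(R)$ to genuine automorphisms of the group $H=\Spin_7(q)$, and then applies the group-theoretic rigidity statement Lemma~\ref{L:tameSpin}(b) to get $C_S(R)\leq EC_S(H)$; this single argument yields (a) and (b) simultaneously, and I see no way to avoid passing through the group $H$. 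Since $V_R=RC_S(T)$ and $Q_0\leq C_S(R)$ underpin everything that follows, this gap is fatal to the rest as written.

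Two further steps are asserted rather than proved. In (d), your claim that $T\in\Syl_2(N)$ is false in general: $O_2(N)\geq C_N(R)$ contains the image of $C_S(T)\not\leq T$, and the paper needs a Gasch\"utz complement argument in $N/R$ to produce $N_1=O^2(N)R$ with $T\in\Syl_2(N_1)$ and $O_2(N_1)=R$ before it can identify $\F_T(N_1)$ with $N_\C(R)$. In (f), the sentence ``$N_\C(R)\subseteq C_\F(X)$ translates in $G_R$ to `$N_1$ centralizes $X$'\,'' is precisely the crux: the morphisms of $C_\F(X)$ extending those of $N_\C(R)$ are a priori morphisms of $\F$ defined on small subgroups, not morphisms of $N_{N_\F(R)}(V_R)$, so they are not realized by elements of $G_R$. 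The paper's proof of this implication is a full page: one takes an order-$7$ element of $\Aut_\C(R)$, extends it to an automorphism of $RX$ fixing $X$, and then inductively (via the maximal-$|Y|$ argument on the set $\Theta$, using $O^2(\Aut_\F(R))=O^2(\Aut_\C(R))$ from Lemma~\ref{L:AutR}(d) and the saturation of $N_\F(R)$) pushes the extension up to all of $V_R$, where it finally becomes conjugation by an element of $G_R$ and forces $X\leq C_{U_R}(\phi)=Q$. You correctly flag this as delicate, but flagging is not proving. (A minor further point: in (e) the $\GL_3(2)$-complement must be produced modulo $\Phi(C_S(R))$, since $C_S(T)$ need not be elementary abelian; the paper checks $E\cap\Phi(U_R)=1$ before applying Gasch\"utz and pulling back.)
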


\begin{proof}
\textbf{(a), (b)}: Recall that $E=Z(R)$. As $R \leq T$, clearly
$EC_S(T) \leq C_S(R)$, so for (b) we must show the other inclusion. Since $z
\in R$, we have $C_S(R)=C_{C_S(z)}(R) \leq C_S(z)$. Now by our choice of
notation, $\gen{z}$ is fully $\F$-centralized, so $C_\F(z)$ is a saturated
fusion system on $C_S(z)$. By Hypothesis~\ref{H:maximal}, $\H$ is a component
of $C_{\F}(z)$. The normalizer of a component is constructed in
\cite[\S2.1]{AschbacherFSCT}, and thus, we may form $N_{C_\F(z)}(\H)$ over the
$2$-group $N_S(T)=N_{C_S(z)}(T)$. By Lemma~\ref{L:AutR}(b), $C_S(R)\leq
N_S(R)=N_S(T)$, so we may form the product system $\widehat{\H} := \H C_S(R)$
as in \cite[Chapter~8]{AschbacherGeneralized} or \cite{Henke2013} in the
normalizer $N_{C_\F(z)}(\H)$. Thus $\widehat{\H}$ is a saturated subsystem of
$C_\F(z)$ with $O^2(\widehat{\H}) = O^2(\H) = \H =E(\widehat{\H})$.
Since $\H$ is tamely realized by $H = \Spin_7(5^{2^l})$ by
Lemma~\ref{L:tameSpin}(a), Theorem~\ref{T:reduct} gives an extension
$\widehat{H} = HC_S(R)$ of $H$ that tamely realizes $\widehat{\H}$. By
Lemma~\ref{L:tameSpin}(b), each automorphism of $H$ normalizing $T$ and
centralizing $R$ is conjugation by an element of $E$.  Hence, $Q_0\leq
C_S(R)\leq EC_S(H)\leq EC_S(T)$. This implies $C_S(R)=EC_S(T)$ and
$Q_0=C_E(T)C_S(H)=\<z\>C_S(H)=C_S(H)$. The first property gives (b), and the
latter property yields (a).

\textbf{(c)}: Since $R$ is fully normalized by Lemma~\ref{L:AutR}(c),
$N_\F(R)$ is saturated.  Note that $V_R$ is weakly closed and thus fully
normalized in $N_\F(R)$. So $N_{N_\F(R)}(V_R)$ is saturated. Clearly
$N_{N_\F(R)}(V_R)$ is constrained, as $V_R$ is a centric normal subgroup of
this fusion system. 

We show next that $N_\C(R)\leq N_{N_\F(R)}(V_R)$. Let $R\leq P\leq T$ and
$\phi\in\Aut_{N_\C(R)}(P)$. By Alperin's fusion theorem
\cite[Theorem~I.3.6]{AschbacherKessarOliver2011}, it is enough to show that
$\phi$ extends to an element of $\Aut_\F(PV_R)$ normalizing $V_R$. Let
$\alpha\in\AA(P)$ and observe that $\phi^\alpha\in\Aut_\F(P^\alpha)$. By (b),
$V_R=RC_S(T)\leq PC_S(P)$. Thus $V_R^\alpha\leq P^\alpha C_S(P^\alpha)\leq
N_{\phi^\alpha}$. As $P^\alpha$ is fully normalized, it follows from the
extension axiom that $\phi^\alpha$ extends to a morphism $\psi\colon P^\alpha
V_R^\alpha\rightarrow S$ in $\F$. Note that
$R^{\alpha\psi}=(R^\alpha)^{\phi^\alpha}=R^\alpha$ as $R^\phi=R$. Since $R$ is
fully normalized and thus fully centralized, we have
$C_S(R)^{\alpha\psi}=C_S(R^{\alpha\psi})=C_S(R^\alpha)=C_S(R)^\alpha$ and thus
$V_R^{\alpha\psi}=R^\alpha C_S(R)^\alpha=V_R^\alpha$. So
$\psi\in\Aut_\F(P^\alpha V_R^\alpha)$ extends $\phi^\alpha$ and normalizes
$V_R^\alpha$. Hence, $\hat{\phi}:=\psi^{\alpha^{-1}}\in\Aut_\F(PV_R)$ extends
$\phi$ and normalizes $V_R$. This proves (c).

\textbf{(d)}: Let now $G_R$ and $N$ be as in (d), and set $N_1:=\<T^N\>$.
(The model $G_R$ for $N_{N_\F(R)}(R)$ exists and is unique up to isomorphism by
\cite[Proposition~III.5.8]{AschbacherKessarOliver2011}. Moreover,
$C_{G_R}(V_R)\leq V_R$.) Note that $S_0:=N_S(R)\in\Syl_2(G_R)$.  By (b),
$[V_R,T]\leq R$. As $V_R$ and $R$ are normal in $G_R$, it follows that 
$[V_R,\<T^{G_R}\>]\leq R$ and thus $N_1\leq \<T^{G_R}\>\leq N$. Let $P\leq T$
be essential in $N_\C(R)$. As $\Out_\C(R)\cong GL_3(2)$, we observe that $R\leq
P$, $P/R\cong C_2\times C_2$ and $\Out_{N_\C(R)}(P)\cong GL_2(2)\cong S_3$. In
particular, $\Aut_{N_\C(R)}(P)=\gen{\Aut_T(P)^{\Aut_{N_\C(R)}(P)}}$. Since
$\Aut_{N_\C(R)}(P)\leq \Aut_{G_R}(P)$ by (c), it follows that
$\Aut_{N_\C(R)}(P)\leq \Aut_{\<T^{G_R}\>}(P)\leq \Aut_N(P)$. Now we conclude
similarly that $\Aut_{N_\C(R)}(P)\leq \<\Aut_T(P)^{\Aut_N(P)}\>\leq
\Aut_{N_1}(P)$. As $P$ was arbitrary, the Alperin--Goldschmidt Fusion Theorem
yields that $N_\C(R)\leq \F_{S_0\cap N_1}(N_1)$.  

Note that $N/C_N(R)$ embeds into $\Aut_\F(R)$. As $C_{G_R}(V_R)\leq V_R$, and
$C_N(R)$ centralizes $V_R/R$ and $R$, $C_N(R)$ is a normal $2$-subgroup of $N$.
So it follows from Lemma~\ref{L:AutR}(d) that $N/O_2(N)\cong \Out_\C(R)\cong
GL_3(2)$ and $O_2(N)=C_N(E)\leq C_{S_0}(E)$. Using Lemma~\ref{L:AutR}(b), we
conclude that $O_2(N)\leq C_S(E)\leq N_S(T)$ and thus $[O_2(N),T]\leq
C_T(E)=R$. Since $O_2(N)$ and $R$ are normal in $N$, this implies
$[O_2(N),N_1]\leq R$. In particular, noting $O_2(N_1)=O_2(N)\cap N_1$ and
setting $\ov{N}:=N/R$, it follows that $\ov{O_2(N_1)}$ is abelian. Observe that
$T/(T\cap O_2(N))=T/R$ is isomorphic to a Sylow $2$-subgroup of $GL_3(2)$.
Thus, $TO_2(N)/O_2(N)$ is a Sylow $2$-subgroup of $N/O_2(N)$ and so $TO_2(N)$
is a Sylow $2$-subgroup of $N$. In particular, $TO_2(N_1)=(TO_2(N))\cap N_1$ is
a Sylow $2$-subgroup of $N_1$. Note that $T\cap O_2(N_1)=T\cap
O_2(N)=C_T(E)=R$. Thus $\ov{T}$ is a complement to $\ov{O_2(N_1)}$ in the Sylow
$2$-subgroup $\ov{TO_2(N_1)}$ of $\ov{N_1}$. So by a Theorem of Gasch{\"u}tz
\cite[Theorem~3.3.2]{KurzweilStellmacher2004}, there exists a complement
$\ov{N_0}$ of $\ov{O_2(N_1)}$ in $\ov{N_1}$. We choose a preimage $N_0$ of such
a complement $\ov{N_0}$ with $R\leq N_0\leq N_1$. As $N/O_2(N)\cong GL_3(2)$ is
simple, we have $N=O_2(N)N_1=O_2(N)N_0$. Since $O_2(N)\cap N_0=O_2(N_1)\cap
N_0=R$ and $\ov{O_2(N)}$ is centralized by $\ov{N_1}$, it follows
$\ov{N}=\ov{O_2(N)}\times \ov{N_0}$. In particular, $N_0=O^2(N)R$ is normal in
$G_R$. As $N_\C(R)\leq \F_{S_0\cap N_1}(N_1)\leq \F_{S_0\cap N}(N)$,
we have $\hyp(N_\C(R))\leq \hyp(\F_{S_0\cap N}(N))\leq O^2(N)$. Hence
$T=\hyp(N_\C(R))R\leq O^2(N)R=N_0$. In particular, $N_0=N_1$, $O_2(N_1)=R$,
$T\in\Syl_2(N_1)$ and $N_1/R\cong GL_3(2)$. We show next that
$N_\C(R)=\F_T(N_1)$. We have seen already that $N_\C(R)\leq \F_T(N_1)$. If
$P$ is essential in $\F_T(N_1)$, then it follows from $N_1/R\cong GL_3(2)$
that $R\leq P\leq T$, $P/R\cong C_2\times C_2$ and $\Out_{N_1}(P)\cong
GL_2(2)$. As $GL_2(2)\cong \Out_{N_\C(R)}(P)\leq \Out_{N_1}(P)$, it follows that 
$\Aut_{N_1}(P)=\Aut_\C(P)$. Hence, we have $N_\C(R)=\F_T(N_1)$. Since
$C_{N_1}(O_2(N_1))\leq N_1\cap C_N(E)=N_1\cap O_2(N)=O_2(N_1)$, we conclude
that $N_1$ is a model for $N_\C(R)$. This completes the proof of (d). 

\textbf{(e)}: We consider now the action of $N_1/R\cong GL_3(2)$ on $U_R:=C_S(R)=C_{V_R}(R)$.
Note that $E = Z(R)$ is central in $U_R$ and recall that $U_R=EC_S(T)$ by (b).
In particular, $U_R/\Phi(C_S(T))$ is elementary abelian and thus $\Phi(U_R)\leq
\Phi(C_S(T))$. If $E\cap \Phi(U_R)$ were non-trivial, then we would have $E\leq
\Phi(U_R)$ as $N_1$ acts irreducibly on $E$. So it would follow that $E\leq
C_S(T)$ contradicting $E\not\leq Z(T)$. This shows that $E\cap \Phi(U_R)=1$.
Set
\[
\widetilde{U_R}=U_R/\Phi(U_R).
\] 
As $\widetilde{U_R}=\widetilde{E}\widetilde{C_S(T)}$ is elementary abelian,
there is a complement to $\widetilde{E}$ in $\widetilde{U_R}$ which lies in
$\widetilde{C_S(T)}$. So by a Theorem of Gasch{\"u}tz
\cite[Theorem~3.3.2]{KurzweilStellmacher2004}, applied in the semidirect
product $N_1\ltimes \widetilde{U_R}$, there exists a complement $\widetilde{Q}$
to $\widetilde{E}$ in $\widetilde{U_R}$ which is normalized by $N_1$. We choose
the preimage $Q$ of $\widetilde{Q}$ such that $\Phi(U_R)\leq Q\leq U_R$. 

As $[U_R,N_1]\leq [V_R,N]\leq R$, we have $[Q,N_1]\leq [U_R,N_1]\leq U_R\cap
R=Z(R)=E$. In particular,  $[\widetilde{Q},N_1]\leq \widetilde{Q}\cap
\widetilde{E}=1$. So $[Q,N_1]\leq \Phi(U_R)\cap E=1$.  Recalling $Q_0=C_S(T)$,
we conclude $Q\leq C_{Q_0}(N_1)$. Observe that $Q$ has index $2$ in
$Q_0=C_S(T)$ as $\widetilde{E}\cap \widetilde{C_S(T)}=\gen{\widetilde{z}}$ has
order $2$. Hence, since $[z,N_1]\neq 1$, it follows $Q=C_{Q_0}(N_1)$ and
$Q_0=\<z\>\times Q$. This proves (e).  

\textbf{(f)}: By (a), $Q_0$ centralizes $\H$, and by Lemma~\ref{L:gen}(b), we have
$\m{C}=\gen{\H,N_\C(R)}$. So if $X\leq Q_0=C_S(T)$, then $X$ contains $\C$ in
its centralizer if and only if it contains $N_\C(R)$ in its centralizer. As
$N_\C(R)=\F_T(N_1)$ by (d) and $Q$ is centralized by $N_1$, clearly every
subgroup of $Q$ contains $N_\C(R)$ in its centralizer. 

Fix $X\leq C_S(T)$ with $N_\C(R)\leq C_\F(X)$. To complete the proof of
(f), we need to show that $X\leq Q$. To prove this let $\Theta$ be the set of
all pairs $(Y,\phi)$ such that $RX\leq Y\leq V_R$, $\phi\in\Aut_\F(Y)$,
$[Y,\phi]\leq R$, $\phi|_X=\id_X$, and $\phi|_R\in\Aut_\C(R)$ has order $7$. As
$\Aut_\C(R)/\Inn(R)\cong GL_3(2)$, there exists an element $\phi_0$ of order
$7$ in $\Aut_\C(R)$. As $N_\C(R)\leq C_\F(X)$, $\phi_0$ extends to an
automorphism $\phi\in\Aut_\F(RX)$ with $\phi|_X=\phi_0$, and for such $\phi$ we
have $(RX,\phi)\in\Theta$. Thus $\Theta\neq\varnothing$ and we may fix
$(Y,\phi)\in\Theta$ such that $|Y|$ is maximal. 

Assume first that $Y=V_R$. Then $\phi$ is a morphism in $N_{N_\F(R)}(V_R)$ and
thus realized by conjugation with an element of $G_R$. Recall that
$H_1=O^2(H)R$ is normal in $G_R$ and contains $T$. Hence, $Q=C_{V_R}(H_1)$ is
normal in $G_R$ and thus $\phi$-invariant. As $[V_R,\phi]\leq R$ by definition
of $\Theta$, it follows $[Q,\phi]\leq R\cap Q=1$. As $U_R=EC_S(T)=EQ$ and
$\phi|_R$ acts fixed-point-freely on $E^\#$, it follows $Q=C_{U_R}(\phi)$. By
definition of $\Theta$, we have $\phi|_X=\id_X$ and thus $X\leq
C_{U_R}(\phi)=Q$. So $X\leq Q$ if $Y=V_R$. 

Assume now $Y<V_R$. Recall from above that $N_\F(R)$ is saturated. So we can
fix $\alpha\in\AA_{N_\F(R)}(Y)$. Then $\phi^\alpha\in\Aut_\F(Y^\alpha)$ and
$[Y^\alpha,\phi^\alpha]\leq R$ as $[Y,\phi]\leq R$ by definition of $\Theta$.
Recall also that $\phi|_R\in\Aut_\C(R)$ has order $7$. By
Lemma~\ref{L:AutR}(d), we have $O^2(\Aut_\F(R))=O^2(\Aut_\C(R))$. So we can
conclude that $\phi^\alpha|_R=(\phi|_R)^\alpha\in
O^2(\Aut_\F(R))^\alpha=O^2(\Aut_\F(R))\leq \Aut_\C(R)$. As $N_\C(R)=\F_T(N_1)$
by (d), there exists thus $n\in N_1$ with $\phi^\alpha|_R=c_n|_R$. Set
$\psi:=c_n|_{V_R}\in\Aut_\F(V_R)$. As $N_1\leq N$, we have $[V_R,\psi]\leq R$.
In particular, as $R\leq Y^\alpha\leq V_R$, we have $(Y^\alpha)^\psi=Y^\alpha$.
Thus, $\chi:=(\psi|_{Y^\alpha})^{-1}\circ\phi^\alpha\in\Aut_\F(Y^\alpha)$ is
well-defined. Observe also that $\chi|_R=\id_R$ and $[Y^\alpha,\chi]\leq R$, as
$[Y^\alpha,\psi]\leq [V_R,\psi]\leq R$ and $[Y^\alpha,\phi^\alpha]\leq R$. So
$\chi$ is an element of $C_{\Aut_\F(Y^\alpha)}(R)\cap
C_{\Aut_\F(Y^\alpha)}(Y^\alpha/R)$, which is a normal $2$-subgroup of
$\Aut_{N_\F(R)}(Y^\alpha)$. Since $Y^\alpha\in N_\F(R)^f$, the Sylow axiom
yields that $\Aut_{N_S(R)}(Y^\alpha)$ is a Sylow $2$-subgroup of
$\Aut_{N_\F(R)}(Y^\alpha)$. Hence, there exists $s\in N_{C_S(R)}(Y^\alpha)$
with $\chi|_{Y^\alpha}=c_s|_{Y^\alpha}$. So $\phi^\alpha=\psi|_{Y^\alpha}\circ
c_s|_{Y^\alpha}$ extends to $\rho=\psi\circ c_s|_{V_R}\in\Aut_\F(V_R)$. Since
$[V_R,\psi]\leq R$, the automorphism $\rho$ acts on $V_R/R$ in the same way as
$c_s|_{V_R}$. So writing $m$ for the order of $s$, we have $[V_R,\rho^m]\leq
R$. Moreover, $\rho^m$ extends $(\phi^\alpha)^m$. Since $Y<V_R$, we have
$Y<W:=N_{V_R}(Y)$. Note that $R\leq Y^\alpha\leq W^\alpha\leq V_R$, so
$[W^\alpha,\rho^m]\leq [V_R,\rho^m]\leq R$ and
$\rho^m|_{W^\alpha}\in\Aut_\F(W^\alpha)$. Therefore,
$\hat{\phi}:=(\rho^m|_{W^\alpha})^{\alpha^{-1}}\in\Aut_\F(W)$ with
$[W,\hat{\phi}]\leq R^{\alpha^{-1}}=R$. Moreover,
$\hat{\phi}|_R=(\phi|_R)^m\in\Aut_\C(R)$ has order $7$, as
$\phi|_R\in\Aut_\C(R)$ has order $7$ and $m$ is a power of $2$. Also
$\hat{\phi}|_X=(\phi|_X)^m=\id_X$ as $\phi|_X=\id_X$. This shows
$(W,\hat{\phi})\in\Theta$. As $|W|>|Y|$ and $(Y,\phi)\in\Theta$ was chosen such
that $|Y|$ is maximal, this contradicts the assumption that $Y < V_R$. So
ultimately, we have shown that $Y=V_R$.  As argued in the previous paragraph,
this yields $X\leq Q$, and thus completes the proof of (f). 

\textbf{(g)}: Since $\C \in \CC(\F)$, we may fix an involution $t \in
\I(\C)$.  We have $\I(\C) \subseteq \tilde{\X}(\C) \subseteq \X(\C)$ by
definition of these collections, so $\gen{t} \in \tilde{\X}(\C)$, and also
$\gen{t} \leq Q \leq \X(\C)$ by (f).  Therefore, $Q \in \tilde{\X}(\C)$ by
\cite[6.1.5]{AschbacherFSCT}.
\end{proof}

\begin{lemma}
\label{L:nearlystandard}
$\C$ is nearly standard.
\end{lemma}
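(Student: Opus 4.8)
The plan is to verify the three conditions (S1), (S2), (S3) of Definition~\ref{D:standard}; the remaining condition (S4), which upgrades ``nearly standard'' to ``standard'', is then handled in Lemma~\ref{L:AutFTAutC}. Throughout I will use the structural description of $Q$ furnished by Lemma~\ref{L:towardstandard} --- in particular that $Q$ is the unique largest subgroup of $S$ centralized by $\C$ and the unique largest member of $\tilde{\X}(\C)$ --- together with the terminality of $\C$ from Lemma~\ref{L:terminal}. Condition (S1) is then immediate: it is exactly Lemma~\ref{L:towardstandard}(g).

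For (S2), the first task is to show that every nonidentity element of $Q$ lies in $\tilde{\X}(\C)$, and in particular that every involution of $Q$ lies in $\I(\C)$. Since $\C$ is terminal we have $(t,\C)\in\rho(\C)=\rho_0(\C)$, so every nonidentity element of $Q_t$ lies in $\tilde{\X}(\C)$; using that $t\in Q$ by Lemma~\ref{L:towardstandard}(f) and that $\C\unlhd C_\F(t)$ by Lemma~\ref{L:terminal}(b), one identifies $Q$ with $Q_t$ (equivalently, shows $t\in Z(Q)$), so that $Q^{\#}\subseteq\tilde{\X}(\C)$. For an involution $x\in Q$ it then follows from the Pump-Up Lemma~\ref{L:pumpupbasic} --- with $\C$ maximal, and using Lemma~\ref{L:DirectProductComponent} to exclude the diagonal case since $\F_{\Sol}(q)$ is simple --- that $x\in\I(\C)$. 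Now fix $1\neq X\leq Q$ and $\alpha\in\AA(X)$. By Lemma~\ref{L:towardstandard}(f) we have $\C\subseteq C_\F(X)$, hence $\C^\alpha\subseteq C_\F(X^\alpha)$; using that $\C$ is terminal and that the involutions in $X^\alpha$ are accounted for by $\I(\C^\alpha)$, the argument of Lemma~\ref{L:terminal}(b) via \cite[8.1.2]{AschbacherFSCT} shows first that $\C^\alpha$ is a component of $C_\F(X^\alpha)$ and then, propagating the resulting normality up the chain of subgroups between $\Omega_1(Z(X^\alpha))$ and $X^\alpha$, that $\C^\alpha\unlhd N_\F(X^\alpha)$. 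This is (S2).

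For (S3), let $1\neq X\leq Q$ and $\beta\in\AA(X)$ with $X^\beta\leq Q$, and assume for contradiction that $T^\beta\neq T$, so that $\C^\beta\neq\C$. Applying (S2) to $\C$ with $Y=X^\beta\leq Q$, and to $\C^\beta$ (which, like $\C$, is centralized by $X^\beta$, so that $X^\beta$ lies in the centralizer of $\C^\beta$), we find that $\C$ and $\C^\beta$ are both normal in $N_\F(X^\beta)$, hence both components of $C_\F(X^\beta)$. As they are distinct they centralize one another (cf.\ \cite[9.8.2]{AschbacherGeneralized}); in particular $\C$ centralizes $T^\beta$ and $\C^\beta$ centralizes $T$, so $T^\beta\leq Q$ and $T$ lies in the centralizer of $\C^\beta$, by Lemma~\ref{L:towardstandard}(f). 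Combining this with the first step of (S2) --- applied to $\C$ and, after passing to a fully normalized conjugate, to $\C^\beta$ --- gives $(T^\beta)^{\#}\subseteq\tilde{\X}(\C)$ and $T^{\#}\subseteq\tilde{\X}(\C^\beta)$. Hence $\C^\beta\in\Delta(\C)$, contradicting $\Delta(\C)=\emptyset$, which holds by the terminality of $\C$ (Lemma~\ref{L:terminal}(a)). Therefore $T^\beta=T$, and (S3) holds.

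The main obstacle is the second half of (S2): passing from the fact that $\C$ is a component of $C_\F(x)$ for each involution $x\in Q$ to the normality of $\C^\alpha$ in $N_\F(X^\alpha)$ for an arbitrary nontrivial $X\leq Q$. Components are only subnormal, so subnormality has to be upgraded to normality, and $N_\F(X^\alpha)$ may be strictly larger than the centralizers $C_\F(x^\alpha)$ of the involutions in $X^\alpha$; this forces one to use the terminality of $\C$ in combination with the precise description of $Q$ from Lemma~\ref{L:towardstandard}, rather than a single invocation of the Pump-Up Lemma.
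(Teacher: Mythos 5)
You have assembled exactly the two inputs the paper uses---terminality of $\C$ (Lemma~\ref{L:terminal}(a)) and the existence of a unique maximal member $Q$ of $\tilde{\X}(\C)$ (Lemma~\ref{L:towardstandard}(g))---but the paper then simply cites \cite[Proposition~7]{AschbacherFSCT}, which says precisely that a terminal member of $\CC(\F)$ whose set $\tilde{\X}(\C)$ has a unique maximal member is nearly standard. Your proposal instead tries to reprove that proposition by verifying (S2) and (S3) directly, and the hard steps are exactly the ones left unjustified. First, the identification $Q = Q_t$ requires $Q \leq C_S(t)$, i.e.\ $t \in Z(Q)$; at this stage nothing forces $Q$ to be abelian or $t$ to be central in it (the structural restrictions on $Q$---elementary abelian or of $2$-rank $1$---are derived only \emph{after} standardness is established, via Lemma~\ref{L:Csplit} and \cite[Theorem~8]{AschbacherFSCT}), so terminality as stated only delivers $Q_t^\# \subseteq \tilde{\X}(\C)$ for the possibly smaller subgroup $Q_t$. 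Second, and more seriously, the passage in (S2) from ``$\C^\alpha$ is a component of $C_\F(x^\alpha)$ for the involutions $x$ of $X$'' to ``$\C^\alpha \unlhd N_\F(X^\alpha)$'' for an arbitrary nontrivial $X \leq Q$ is the actual content of Aschbacher's proposition; ``propagating the resulting normality up the chain of subgroups'' is not an argument, and you yourself identify this as the main obstacle without resolving it. Since (S3) is made to rest on the first step of (S2), the gap propagates there as well.

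Two smaller points. You do not need the Pump-Up Lemma to see that involutions of $Q_t$ lie in $\I(\C)$: for an involution, membership in $\tilde{\X}(\C)$ is by definition membership in $\I(\C)$, so condition (C2) of terminality already gives this. And the appeal to Lemma~\ref{L:DirectProductComponent} to ``exclude the diagonal case'' is misplaced, since that lemma concerns components of central products, not diagonally embedded subsystems. The intended proof is simply to quote \cite[Proposition~7]{AschbacherFSCT} once the two hypotheses are in hand.
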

\begin{proof}
By Lemma~\ref{L:terminal}(a), $\C$ is terminal in $\CC(\F)$. By
Lemma~\ref{L:towardstandard}(g), the collection $\tilde{\X}(\C)$ has a unique
maximal member. Hence, $\C$ is nearly standard by
\cite[Proposition~7]{AschbacherFSCT}. 
\end{proof}

\begin{lemma}
\label{L:AutFTAutC}
$\Aut_{\F}(T) \leq \Aut(\C)$. 
\end{lemma}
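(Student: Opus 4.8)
The plan is to fix an arbitrary $\alpha\in\Aut_\F(T)$ and to prove $\C^\alpha=\C$ by matching $\C$ and $\C^\alpha$ on the two generating pieces of Lemma~\ref{L:gen}(b). Note first that $\C^\alpha$ is again a quasisimple subsystem of $\F$ over $T$, and that by Lemma~\ref{L:Shift} it satisfies Hypothesis~\ref{H:maximal} with the same central involution $z$ (since $z$ is the unique involution of $Z(T)$, we have $z^\alpha=z$). Thus every conclusion proved about $\C$ in Section~\ref{S:showStandard} applies equally to $\C^\alpha$, and the distinguished subgroups $Z<U<E<A$, $T_k$, $R$ of Notation~\ref{N:spin} are the same whether defined via $\C$ or via $\C^\alpha$, being determined intrinsically by the abstract group $T$ (Lemma~\ref{L:standardsequence}(a), Notation~\ref{N:spin}).

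\emph{Step 1: $\alpha$ normalizes $\H$.} As $Z(T)=\gen z$ is characteristic in $T$ and $z$ is fully $\F$-centralized by our standing reductions, $\alpha$ is a morphism of the saturated system $C_\F(z)$, that is, $\alpha\in\Aut_{C_\F(z)}(T)$. By Hypothesis~\ref{H:maximal}, $\H$ is a component of $C_\F(z)$, so Lemma~\ref{L:ConjugateComponents}(b) applied inside $C_\F(z)$ shows $\H^\alpha$ is a component of $C_\F(z)$ over $T^\alpha=T$. If $\H\neq\H^\alpha$, then by \cite[9.8.2]{AschbacherGeneralized} these two components form a commuting product, forcing $T$ to centralize $T$ --- the same contradiction used in the proof of Lemma~\ref{L:AutR}(b), since $T$ is nonabelian. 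Hence $\H^\alpha=\H$, so $C_{\C^\alpha}(z)=\big(C_\C(z)\big)^\alpha=\H^\alpha=\H$.

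\emph{Step 2: $N_{\C^\alpha}(R)=N_\C(R)$.} Since $R$ is characteristic in $T$ by Lemma~\ref{L:standardsequence}(c), we have $R^\alpha=R$, and by the remark above Lemma~\ref{L:towardstandard}(d) applies verbatim to $\C^\alpha$. The key point is that the construction in Lemma~\ref{L:towardstandard}(d) of the model $N_1$ for $N_\C(R)$ --- through $V_R=RC_S(T)$, a model $G_R$ of $N_{N_\F(R)}(V_R)$, the subgroup $N=C_{G_R}(V_R/R)$, and $N_1=O^2(N)R$ --- makes no reference to $\C$, only to $\F$, $S$ and $T$. Hence the same $N_1$ is a model for $N_{\C^\alpha}(R)$, and so $N_{\C^\alpha}(R)=\F_T(N_1)=N_\C(R)$.

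Combining the two steps with Lemma~\ref{L:gen}(b), transported along the isomorphisms of $\C$ and $\C^\alpha$ with $\F_\sigma$ (which carry $z$ to the central involution and $R$ to $R_\sigma$), gives $\C=\gen{\H,N_\C(R)}$ and $\C^\alpha=\gen{C_{\C^\alpha}(z),N_{\C^\alpha}(R)}=\gen{\H,N_\C(R)}=\C$, as desired. The part I expect to need the most care is Step~2: one must verify that $\C^\alpha$ meets all the running hypotheses of Section~\ref{S:showStandard} so that Lemma~\ref{L:towardstandard}(d) genuinely applies to it, and that the subgroups $V_R$, $G_R$, $N$, $N_1$ entering that construction really are independent of the choice of Benson--Solomon subsystem over $T$; granting this, the rest is formal.
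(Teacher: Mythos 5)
Your proposal is correct and follows essentially the same route as the paper: show that $\alpha$ fixes $\H$ (as the unique component of $C_\F(z)$ over the nonabelian group $T$) and fixes $N_\C(R)$ via the model $N_1$ of Lemma~\ref{L:towardstandard}(d), then conclude from the generation statement of Lemma~\ref{L:gen}(b). The only difference is cosmetic and lies in the middle step: the paper realizes $\alpha$ as conjugation by an element of $N_{G_R}(T)$ which normalizes $N_1$, whereas you invoke the $\C$-independence of the construction of $N_1$ and apply Lemma~\ref{L:towardstandard}(d) to both $\C$ and $\C^\alpha$ (which, as you note, requires checking via Lemma~\ref{L:Shift} that $\C^\alpha$ satisfies the running hypotheses); both work.
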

\begin{proof}
Let $\alpha \in \Aut_\F(T)$ and note that $\alpha \in C_\F(z)$. Recall that $z$
was chosen to be fully normalized. Thus, as $z \in \I(\H)$, $\H$ is a component
of $C_\F(z)$.  It follows from \cite[9.7]{AschbacherGeneralized} that there is
a unique component of $C_\F(z)$ with Sylow group $T$, so that $\H^\alpha = \H$
by Lemma~\ref{L:ConjugateComponents}(b).  Since $T$ is fully $\F$-normalized by
Hypothesis~\ref{H:maximal}, $\alpha$ extends to an automorphism
$\tilde{\alpha}$ of $Q_0T = C_S(T)T = C_S(R)T$ with the last equality by
Lemma~\ref{L:towardstandard}(b). From Lemma~\ref{L:standardsequence}(c), $R$ is
characteristic in $T$, so we have that $R^\alpha = R$, and hence that
$\tilde{\alpha}$ normalizes $C_S(R)$.  Thus, $\alpha \in N_{N_{\F}(V_R)}(R)$, a
model for which is, by definition, $G_R$. We may therefore choose $g \in
N_{G_R}(T)$ such that $\alpha = c_g|_T$. As $N := C_{G_R}(V_R/R)$ is a
normal subgroup of $G_R$, $g$ leaves invariant $O^2(N)R=\gen{T^N}$,
which is a model for $N_\C(R)$ by Lemma~\ref{L:towardstandard}(d), whence
$\alpha$ normalizes $N_\C(R)$.  Thus, $\alpha \in \Aut(\gen{\H,N_\C(R)}) =
\Aut(\C)$, the equality coming from the generation statement of
Lemma~\ref{L:gen}(b), and now the assertion follows as $\alpha$ was chosen
arbitrarily.
\end{proof}

\section{The centralizer of $\C$ and the elementary abelian case}\label{S:elemab}

We operate from now until just before the end of Section~\ref{S:cyclic}
under the following hypothesis and notation, although we will often
state it again for emphasis.

\begin{hypothesis}\label{H:standard}
Let $\F$ be a saturated fusion system over the $2$-group $S$, and let $q$
be an odd prime power. Suppose $\C \cong \F_{\Sol}(q)$ is a subsystem of $\F$
over the subgroup $T \in \F^f$.  Assume $\C \in \CC(\F)$ is standard in $\F$,
but that $\C$ is not a component of $\F$.
\end{hypothesis}

By Theorem~\ref{T:Cstandard} (and Remark~\ref{R:Subintrinsic}), if $\C$
is a Benson-Solomon system which is maximal \emph{and subintrinsic} in
$\CC(\F)$, then Hypothesis~\ref{H:standard} holds or $\C$ is a component of
$\F$. But we have not assumed that $\C$ is subintrinsic in $\CC(\F)$ in
Hypothesis~\ref{H:standard} and there is no obvious reason that
Hypothesis~\ref{H:standard} implies this property.  This means that the
results of Section~\ref{S:showStandard} are generally not applicable in
Sections~\ref{S:elemab}-\ref{S:cyclic}. 

\begin{notation}\label{N:standard}
Let $\Q$ be the centralizer of $\C$ (Remark~\ref{R:QCentralizer}), and let $Q$
be the Sylow group of $\Q$.
\end{notation}

Thus, in the case where $\C$ is subintrinsic in $\CC(\F)$, the group $Q$
was ultimately constructed in Lemma~\ref{L:towardstandard}(f,g).  For future
reference, we record the following lemma.

\begin{lemma}
\label{L:Qtight}
The following hold. 
\begin{enumerate}
\item[(a)] $\Q$ is tightly embedded in $\F$, and
\item[(b)] $C_S(T) = QZ(T)$. 
\end{enumerate}
\end{lemma}
\begin{proof}
Since $\C$ is assumed to be a standard subsystem in
Hypothesis~\ref{H:standard}, the subsystem $\Q$ exists and is saturated by
\cite[9.1.4, 9.1.5]{AschbacherFSCT}. Then $\Q$ is a tightly embedded subsystem
of $\F$ by \cite[9.1.6.2]{AschbacherFSCT}. Part (b) then follows from a
combination of Lemmas~\ref{L:CST} and \ref{L:lim1=0}. 
\end{proof}

\begin{lemma}
\label{L:Qelemaborquat}
One of the following holds.
\begin{enumerate}
\item[(a)] $Q$ is elementary abelian, or
\item[(b)] $Q$ is of $2$-rank $1$. 
\end{enumerate}
\end{lemma}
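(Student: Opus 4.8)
The plan is to deduce this essentially at once from the fact that the Benson--Solomon systems are split, which was established in Lemma~\ref{L:Csplit}, together with Theorem~8 of \cite{AschbacherFSCT} (quoted in the discussion preceding Lemma~\ref{L:critsplitbasic}). Since the property of being \emph{split} depends only on the isomorphism type of a saturated fusion system, and $\C \cong \F_{\Sol}(q)$ under Hypothesis~\ref{H:standard}, Lemma~\ref{L:Csplit} gives that $\C$ is split; alternatively, one may first invoke Lemma~\ref{L:SolEquivalences}(a) to assume outright that $\C = \F_\sigma = \F_{\Sol}(q_l)$ for some $l \geq 0$.

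The remaining step is simply to check that the hypotheses of \cite[Theorem~8]{AschbacherFSCT} are met and to read off the conclusion. Under Hypothesis~\ref{H:standard}, $\C$ is a standard subsystem of $\F$ with centralizer $\Q$ over $Q$, it is split by the previous paragraph, and its structure is known, so that theorem applies and asserts that one of three alternatives holds: $\C$ is a component of $\F$, $Q$ is elementary abelian, or $Q$ has $2$-rank $1$. Since Hypothesis~\ref{H:standard} excludes the first alternative, we are left precisely with (a) or (b).

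There is essentially no obstacle here, since the substantive content --- that $\F_{\Sol}(q)$ admits no nontrivial critical split extension --- has already been carried out in Lemma~\ref{L:Csplit} (using in turn the cyclicity of $\Out(\L_\sigma)$ from \cite{HenkeLynd2018}, Proposition~\ref{P:CST}, and the single class of involutions from Lemma~\ref{L:involutionsconjugate}). The only points requiring any care are that ``split'' is an invariant of the isomorphism type of the fusion system, and that Hypothesis~\ref{H:standard} supplies exactly the data (a standard subsystem of known structure which is not a component) needed to invoke Aschbacher's theorem on standard subsystems.
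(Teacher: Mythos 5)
Your proposal is correct and matches the paper's proof, which cites exactly Hypothesis~\ref{H:standard}, Lemma~\ref{L:Csplit}, and \cite[Theorem~8]{AschbacherFSCT} and notes that the hypothesis rules out the alternative that $\C$ is a component. The extra remarks about ``split'' being an isomorphism invariant are a harmless elaboration of the same argument.
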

\begin{proof}
This is a direct consequence of Hypothesis~\ref{H:standard},
Lemma~\ref{L:Csplit}, and \cite[Theorem~8]{AschbacherFSCT}. 
\end{proof}

\begin{proposition}\label{P:m(Q)=1}
Assume Hypothesis~\ref{H:standard}. Then $Q$ has $2$-rank $1$. 
\end{proposition}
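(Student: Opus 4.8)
The plan is to argue by contradiction, ruling out the possibility left open by Lemma~\ref{L:Qelemaborquat}: I would assume that $Q$ is elementary abelian with $m(Q)\geq 2$ and derive a contradiction with Hypothesis~\ref{H:standard}.

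\emph{Step 1 (structure of $C_S(T)$ and of involution centralizers).} First I would record that $z\notin Q$: all involutions of $\C$ are $\C$-conjugate by Lemma~\ref{L:involutionsconjugate} and $\C$ is nonabelian, so $\C\not\subseteq C_\F(z)$, whereas $\C$ centralizes every subgroup of $Q$ by Remark~\ref{R:QCentralizer}. Hence $Q\cap T=Q\cap Z(T)=1$. Since $\mu\colon\Out(\L)\to\Out(\C)$ is injective by Lemma~\ref{L:lim1=0}, Lemma~\ref{L:CST} gives $C_S(T)=QZ(T)=Q\times\langle z\rangle$. By Lemma~\ref{QQ}, $\F_Q(Q)$ is tightly embedded in $\F$, so for each involution $a\in Q$ and each $\alpha\in\AA(a)$, condition (S2) shows $\C^\alpha\unlhd C_\F(a^\alpha)=N_\F(\langle a^\alpha\rangle)$, whence $\C^\alpha$ is a component of $C_\F(a^\alpha)$ and $a\in\I(\C)$; and condition (T1) shows $\F_{Q^\alpha}(Q^\alpha)\unlhd C_\F(a^\alpha)$. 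I would also note here that, because $X\leq Q$ for every $X$ with $\C\subseteq C_\F(X)$, any pump-up of $\C$ in the sense of Lemma~\ref{L:pumpupbasic} takes place relative to a subgroup of $Q$, so that (S2) forces this pump-up to be trivial; thus $\C$ is maximal in $\CC(\F)$.

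\emph{Step 2 (the contradiction).} Now I would fix independent involutions $a,b\in Q$, so that $W:=\langle a,b\rangle$ is a four-subgroup of $Q$ all of whose involutions lie in $\I(\C)$, and choose $\alpha\in\AA(a)$ with $a^\alpha$ fully $\F$-centralized. Inside the saturated system $C_\F(a^\alpha)$ the component $\C^\alpha$ then has the strongly closed elementary abelian subgroup $Q^\alpha$ of rank $\geq 2$ in its centralizer, with $b^\alpha\in Q^\alpha$ an involution distinct from $a^\alpha$; since $\F_{Q^\alpha}(Q^\alpha)\unlhd C_\F(a^\alpha)$, this pins down $F^*(C_\F(a^\alpha))$ enough that the components of $C_\F(a^\alpha)$ are under control. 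The aim is to combine the fact that $\C$ is a component of $C_\F(v)$ for every involution $v$ of $W$ with the $E$-balance statement for fusion systems (as used, via \cite[1.9.2]{AschbacherWT}, in the proof of Lemma~\ref{L:terminal}) and with the maximality of $\C$: each such component is contained in a component of $\F$, and running this across the four-group $W$ -- while excluding diagonal and proper configurations using that $\C^\alpha\cong\F_{\Sol}(q)$ is quasisimple, has a single class of involutions, and has only field automorphisms (Section~\ref{SS:spinsol}), together with the nonredundancy of the Benson--Solomon family (Lemma~\ref{L:SolEquivalences}) -- should force $\C$ to be subnormal in $\F$, hence a component of $\F$. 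That contradicts Hypothesis~\ref{H:standard}, so $m(Q)\leq 1$ and the Proposition follows from Lemma~\ref{L:Qelemaborquat}.

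\emph{Main obstacle.} The delicate part is Step~2: turning ``$\C$ is a component of $C_\F(v)$ for every involution $v$ in a four-subgroup of $Q$, and $\C$ is maximal'' into ``$\C$ is a component of $\F$''. This amounts to a fusion-systems analogue of the standard group-theoretic fact that a maximal component centralizing a four-group, which is a component of each of the three involution centralizers, lies in $E(\F)$. Making this work requires carefully matching fully $\F$-normalized representatives as one moves among the three centralizers $C_\F(v^\gamma)$, applying the Pump-Up Lemma and $E$-balance in each, and using the rigidity of $\F_{\Sol}(q)$ to rule out every proper and diagonal pump-up; a secondary technical point is to confirm that $\F_{Q^\alpha}(Q^\alpha)\unlhd C_\F(a^\alpha)$ together with $\C^\alpha$ being a component there really does place $Q^\alpha$ in the generalized Fitting subsystem of $C_\F(a^\alpha)$ in the way needed for the $E$-balance argument to be uniform.
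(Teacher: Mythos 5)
Your proposal has a genuine gap at its core, and the route you sketch is not the one the paper takes. The paper's proof does not attempt to show that $\C$ is a component of $\F$. Instead it exploits tight embedding directly: since $\F_Q(Q)$ is tightly embedded (Lemma~\ref{QQ}) and $Q$ is elementary abelian of rank $\geq 2$, \cite[9.4.11]{AschbacherFSCT} produces a conjugate $P\in Q^\F$ with $P\leq N_S(Q)$, $P\neq Q$, and hence $P\cap Q=1$. Forming $\C P$ inside $N_\F(Q)$ and picking $1\neq x\in P\cap TQ$ (which exists because $N_S(Q)/TQ$ is cyclic while $P$ has rank $\geq 2$), one conjugates $x$ to a fully centralized element $x^\alpha=zv^\alpha$ with $v^\alpha\in Q$, shows $C_\C(z)\subseteq C_{\C P}(x^\alpha)$ and $P^\alpha\unlhd C_{\C P}(x^\alpha)$ with $P^\alpha\cap T\leq\gen{z}$, and then a Three-Subgroup-Lemma computation over the hyperfocal generation of $T$ gives $[T,P^\alpha]=1$. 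Hence $P^\alpha\leq C_S(T)=Q\gen{z}$, so $|P^\alpha|=|Q|>2$ forces $P^\alpha\cap Q\neq 1$, contradicting \cite[3.1.8]{AschbacherFSCT}. None of this appears in your argument.

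Your Step~2, by contrast, is a plan rather than a proof, and its central claim is not available. The assertion that ``$\C$ is a component of $C_\F(v)$ for every involution $v$ of a four-group $W\leq Q$, together with maximality, forces $\C$ to be a component of $\F$'' is a fusion-system analogue of an $L$-balance/two-generated-core argument that does not exist in this generality. The only containment statement of this type in the paper's toolkit is \cite[1.9.2]{AschbacherWT}, which requires $\C$ to be \emph{subintrinsic} in $\CC(\F)$ --- a hypothesis absent from Hypothesis~\ref{H:standard} --- and even then it only yields that $\C$ lies in a component of the relevant \emph{involution centralizer}, not of $\F$ itself. Indeed, proving ``$\C$ is a component of $\F$'' is precisely the conclusion of Theorem~\ref{T:main}, whose proof occupies Sections~\ref{S:showStandard}--\ref{S:cyclic}; you cannot invoke it (or an unstated analogue) inside the proof of Proposition~\ref{P:m(Q)=1}, which is itself one of the steps toward that theorem. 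A secondary, fixable issue is your claim in Step~1 that maximality of $\C$ follows from (S2): the Pump-Up Lemma is applied to elements $a$ with $\gen{t,a}\in\X(\C)$, and you have not shown that every such $a$ lies in $Q$ (condition (S1) controls $\tilde\X(\C)$, not $\X(\C)$). But the fatal problem is the unproved balance step, which you yourself flag as the ``main obstacle''; the paper avoids it entirely.
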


\begin{proof}
The subsystem $\Q$ is tightly embedded by Lemma~\ref{L:Qtight}(a).  Assume that
$Q$ has $2$-rank larger than $1$. Then by Lemma~\ref{L:Qelemaborquat}, $Q$ is
elementary abelian and $|Q|>2$. Moreover, by Lemma~\ref{QQ}, $\F_Q(Q)$ is
tightly embedded in $\F$. By \cite[9.4.11]{AschbacherFSCT}, we can fix $P\in
Q^\F$ such that $P\leq N_S(Q)$ and $P\neq Q$. By \cite[3.1.8]{AschbacherFSCT},
we have 
\[
P\cap Q=1.
\] 
As $\C$ is standard, we have $\C\unlhd N_\F(Q)$. In particular, we can form the
product $\C P$ inside of $N_\F(Q)$. As $Q$ is normal in $N_\F(Q)$, we have
$Q\not\in P^{\C P}$. Furthermore, if $\alpha\in \Hom_{\C P}(P,TP)$ then
$\alpha$ induces the identity on $PT/T$ by the construction of $\C P$ in
\cite{Henke2013} and since $P\cong Q$ is abelian. So $TP=TP^\alpha$. Hence,
replacing $P$ by a suitable $\C P$-conjugate of $P$, we may assume 
\[
P\in (\C P)^f.
\]
Then by \cite[Theorem~3.4.2]{AschbacherFSCT}, $\F_P(P)$ is tightly embedded in
$\C P$.

By \cite[Theorem~3.10]{HenkeLynd2018}, $\Out(\C)$ is cyclic. Note that $N_S(Q)$
induces automorphisms of $\C$ via conjugation as $\C\unlhd N_\F(Q)$. Moreover,
the elements of $N_S(Q)$ inducing inner automorphisms of $\C$ are precisely the
elements in $TC_S(T)$. Thus, $N_S(Q)/TC_S(T)$ is cyclic. Writing $z$ for
the unique involution in $Z(T)$, by Lemma~\ref{L:Qtight}(b),
$C_S(T)=\langle z\rangle Q$ and so $TC_S(T)=TQ$.  Since $P\cong Q$ is
elementary abelian of $2$-rank at least $2$, it follows that $P\cap (TQ)\neq
1$. Let $1\neq x\in P\cap (TQ)$ and write $x=uv$ with $u\in T$ and $v\in Q$.
Note that $u$ and $v$ commute. As $x$ is an involution, it follows that $u$ and
$v$ have order at most $2$. If $u=1$ then $x=v\in P\cap Q$ contradicting $P\cap
Q=1$.  Hence $u$ is an involution. Let $\alpha\in \Hom_{\C P}(C_{TP}(x),TP)$
such that $x^\alpha\in (\C P)^f$. We proceed now in several steps to reach a
contradiction.

\smallskip
\noindent
\emph{Step~1:} We show that $x^\alpha\in C_S(T)$ and $x^\alpha=zv^\alpha$ with
$v^\alpha\in Q$. 

For the proof note first that, as $\C\unlhd N_\F(Q)$, we have $T\unlhd N_S(Q)$
and thus $Z(T)=\langle z\rangle\unlhd N_S(Q)$. Hence, $z$ is central in
$N_S(Q)$ and thus fully centralized in $\C P$. As $u\in T$ is an involution and
all involutions in $T$ are $\C$-conjugate by Lemma~\ref{L:involutionsconjugate},
the element $u$ is $\C P$-conjugate to $z$. Hence, there exists $\phi\in
\Hom_{\C P}(C_{T P}(u),TP)$ such that $u^\phi=z$. Note that $x,v\in C_{TP}(u)$,
since $x=uv$ and $u$ and $v$ commute.  We obtain $x^\phi=zv^\phi$, where
$v^\phi\in Q\leq C_S(T)$, as $v\in Q$ and $\phi$ is a morphism in $N_\F(Q)$.
Since  $z\in Z(T)$, it follows $T\leq C_S(x^\phi)$.  Recall that $\alpha$ was
chosen such that $x^\alpha\in (\C P)^f$. Thus, using
Lemma~\ref{GetStronglyClosedinNormalizer}, we can conclude that $T\leq
C_S(x^\alpha)$ and so $x^\alpha\in C_S(T)$. Note that $u,v\in C_{TP}(x)$ as $u$
and $v$ commute. Moreover, since $\alpha$ is a morphism in $N_\F(Q)$, we have
$v^\alpha\in Q\leq C_S(T)$. So $x^\alpha=u^\alpha v^\alpha$ and
$u^\alpha=x^\alpha(v^\alpha)^{-1}\in C_S(T)$.  As $T$ is strongly closed in
$N_\F(Q)$, we have $u^\alpha\in T$ and thus $u^\alpha\in Z(T)=\langle
z\rangle$. As $u\neq 1$, it follows $u^\alpha=z$ and $x^\alpha=zv^\alpha$ with
$v^\alpha\in Q$. This completes Step~1.

\smallskip
\noindent
\emph{Step~2:} We show $C_\C(z)\leq C_{\C P}(x^\alpha)$. 

For the proof, we may assume that $x^\alpha\neq z$. By definition of $Q$, we
have $\C\leq C_\F(Q)$. By Step~1, $x^\alpha\in Q\langle z\rangle$.
Therefore $C_\C(z)\leq C_\F(Q\langle z\rangle)\leq
C_{N_\F(Q)}(x^\alpha)$. Let $D\in C_\C(z)^{fc}$ and let
$\chi\in\Aut_{C_\C(z)}(D)$ be an arbitrary element of odd order. Then $\chi$
extends to some $\hat{\chi}\in\Aut_{N_\F(Q)}(D\<x^\alpha\>)$ with
$(x^\alpha)^{\hat{\chi}}=x^\alpha$. The order of $\hat{\chi}$ equals the order
of $\chi$ and is therefore odd. As $x^\alpha\neq z$ is by Step~1 an involution
centralizing $T$, we have $x^\alpha\not\in T$ and thus $(D\<x^\alpha\>)\cap
T=D$. Moreover, clearly $[D\<x^\alpha\>,\hat{\chi}]\leq D$ and
$\hat{\chi}|_D=\chi$ is a morphism in $\C$. By
\cite[Lemma~6.2]{BrotoLeviOliver2003}, we have $D\in \C^c$. So it follows from
the definition of $\C P$ in \cite{Henke2013} that $\hat{\chi}$ is a morphism in
$\C P$. Hence, $\chi$ is a morphism in $C_{\C P}(x^\alpha)$. By Alperin's
fusion theorem \cite[Theorem~I.3.6]{AschbacherKessarOliver2011}, $C_\C(z)$ is
generated by the collection of automorphism groups $\Aut_{C_\C(z)}(D)$ as
$D$ ranges over $C_\C(z)^{fc}$.  But $\Aut_{C_\C(z)}(D) =
O^2(\Aut_{C_\C(z)}(D))\Aut_T(D)$ since $\Aut_T(D)$ is a Sylow $2$-subgroup of
$\Aut_{C_{\C}(z)}(D)$ for such $D$, so $C_\C(z)$ is generated by $\Inn(T)$
together with $O^2(\Aut_{C_\C(z)}(D))$ as $D$ ranges over $C_\C(z)^{fc}$. As
$T\leq C_S(x^\alpha)$, it follows that $C_\C(z)\leq C_{\C P}(x^\alpha)$.

\smallskip
\noindent
\emph{Step~3:} We show that $P^\alpha\unlhd C_{\C P}(x^\alpha)$ and
$P^\alpha\cap T\leq \langle z\rangle$.  

As remarked above, $\F_P(P)$ is tightly embedded in $\C P$. Hence, it follows
from (T1) that $P^\alpha\unlhd N_{\C P}(\langle x^\alpha\rangle)=C_{\C
P}(x^\alpha)$. In particular, as $C_\C(z)\leq C_{\C P}(x^\alpha)$ by
Step~2, it follows that $P^\alpha\cap T$ is strongly closed in $C_\C(z)$. As
$P^\alpha\cap T$ is abelian, \cite[Corollary~I.4.7]{AschbacherKessarOliver2011}
gives that $P^\alpha\cap T$ is normal in $C_\C(z)$. Since $C_\C(z)/\langle
z\rangle$ is the $2$-fusion system of $\Omega_7(q)$, which is not a Goldschmidt
group, $C_\C(z)/\gen{z}$ is simple by \cite[Theorem~5.6.1]{AschbacherQFP} (see
also \cite[Proposition~1.17]{AOV2017}).  Therefore, $P^\alpha\cap T\leq
\langle z\rangle$ as required.

\smallskip
\noindent
\emph{Step~4:} We show that $[T,P^\alpha]=1$. 

As $C_\C(z)=O^2(C_\C(z))$, we have 
\[
T=\mathfrak{hyp}(C_\C(z))=\langle [Y,\beta]\colon Y\leq T,\;\beta\in
\Aut_{C_\C(z)}(Y)\mbox{ of odd order}\rangle.
\] 
Let $Y\leq T$ and $\beta\in\Aut_{C_\C(z)}(Y)$ of odd order. We will show that
$[Y,\beta,P^\alpha]=1$, which is sufficient to complete Step~4. By Step~2,
$C_\C(z)\leq C_{\C P}(x^\alpha)$. As $P^\alpha\unlhd C_{\C P}(x^\alpha)$
by Step~3, we can thus extend $\beta$ to $\hat{\beta}\in\Aut_{\C P}(YP^\alpha)$
with $(P^\alpha)^{\hat{\beta}}=P^\alpha$. By the definition of $\C P$ in
\cite{Henke2013} and since $P$ is abelian, we have $[P^\alpha,\hat{\beta}]\leq
P^\alpha\cap T\leq \<z\>$, where the last inclusion uses Step~3. In particular,
$[P^\alpha,\hat{\beta},Y]=1$. As $P^\alpha\unlhd C_{\C P}(x^\alpha)$ and $T$
centralizes $x^\alpha$ by Step~1, $T$ normalizes $P^\alpha$. Hence, again using
Step~3, we conclude $[Y,P^\alpha]\leq [T,P^\alpha]\leq T\cap P^\alpha\leq
\langle z\rangle$ and so $[Y,P^\alpha,\hat{\beta}]=1$.  It follows now from the
Three-Subgroup-Lemma that $[Y,\beta,P^\alpha]=[\hat{\beta},Y,P^\alpha]=1$. This
finishes Step~4.

\smallskip
\noindent
\emph{Step~5:} We now derive the final contradiction. 

By Step~4, we have $P^\alpha\leq C_S(T)$. As we saw above, $C_S(T)=Q\langle
z\rangle$ and thus $Q$ has index $2$ in $C_S(T)$. Since $|P^\alpha|=|Q|>2$, it
follows $P^\alpha\cap Q\neq 1$. However, as $Q\not\in P^{\C P}$, the subgroup
$P^\alpha$ is an $\F$-conjugate of $Q$ not equal to $Q$. Hence, by
\cite[3.1.8]{AschbacherFSCT}, we have $P^\alpha\cap Q=1$. This contradiction
completes the proof.  
\end{proof}

We are thus left with the case that $Q$ has $2$-rank $1$, i.e. is either
cyclic or quaternion. We end this section with a lemma which handles a residual
situation occurring in this context. It will be needed both in
Section~\ref{S:quaternion} to exclude the quaternion case and in
Section~\ref{S:cyclic} to handle the cyclic case.  When $Q$ is of
$2$-rank $1$, the unique involution in $Q$ lies in $\I(\C)$ by (S2).  This
explains the choice of notation for it below.

\begin{lemma}
\label{L:2rank1residual}
Assume Hypothesis~\ref{H:standard} with $Q$ of $2$-rank $1$. Let $t$ be the
unique involution in $Q$ and fix a subnormal subsystem $\F_0$ of $\F$ over $S_0
\leq S$ such that $t\in S_0$. Write $z$ for the unique involution in $Z(T)$. Then the following hold:
\begin{enumerate}
\item[(a)] $\gen{t}$ is fully $\F_0$-normalized. 
\item[(b)] If $[T,C_{S_0}(t)]\neq 1$, then $\C$ is a component of
$C_{\F_0}(t)$. Moreover,
\[
\Omega_1(C_{S_0}(C_{S_0}(t)))=\Omega_1(Z(C_{S_0}(t)))=\<t,z\>.
\]
\item[(c)] Assume $Q \leq S_0$ and $\C \leq C_{\F_0}(t)$. If $\gen{t}
\leq Z(S_0)$, then $\gen{t}$ is not weakly $\F_0$-closed in $Z(S_0)$.
\end{enumerate}
\end{lemma}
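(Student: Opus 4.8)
\emph{Plan.} I would prove (a) first, use it to prove (b), and then deduce (c) from (b) together with the E-balance theorem.

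\emph{Part (a).} Since $\F_0$ is subnormal in $\F$, by Lemma~\ref{L:Subnormalfn} it is enough to show $\gen{t}\in\F^f$. By \cite[9.1.1]{AschbacherFSCT} the subsystem $\Q$ is tightly embedded in $\F$ and $Q\in\F^f$. As $Q$ has $2$-rank $1$, its only subgroup of order $2$ is $\gen{t}$, so the fully $\F$-normalized $\F$-conjugate of $\gen{t}$ lying in $Q$ furnished by \cite[3.1.5]{AschbacherFSCT} must be $\gen{t}$ itself; hence $\gen{t}\in\F^f$.

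\emph{Part (b).} First I would record two facts that hold under Hypothesis~\ref{H:standard}. Applying property (S2) of Definition~\ref{D:standard} to $X=\gen{t}\in\F^f$ with $\alpha$ the identity gives $\C\norm C_\F(t)=N_\F(\gen{t})$; in particular $T$ is strongly closed in $C_\F(t)$, so $C_S(t)=N_S(\gen{t})$ normalizes $T$, hence normalizes $Z(T)=\gen{z}$ and therefore centralizes $z$, i.e. $C_S(t)\leq C_S(z)$. Also, since $\F$ contains a central product of $\Q$ and $\C$ (Remark~\ref{R:QCentralizer}, \cite[9.1.6.1]{AschbacherFSCT}) and $Z(\C)=1$, we get $Q\cap T=1$; thus $z\notin Q$ and $t\neq z$, and $C_S(T)=QZ(T)=Q\times\gen{z}$ by Lemma~\ref{L:CST} and Lemma~\ref{L:lim1=0}. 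Now assume $[T,C_{S_0}(t)]\neq1$ and choose $g\in C_{S_0}(t)$ with $[T,g]\neq1$. As $g\in C_S(t)$ normalizes $T$ we have $1\neq[T,g]\leq T$; and since $\F_0$ is subnormal, $S_0$ is strongly closed in $\F$, hence normal in $S$, so $[T,g]\leq[S,S_0]\leq S_0$. Therefore $T\cap S_0\neq1$; as $S_0$ is strongly closed, $T\cap S_0$ is strongly closed in the simple system $\C$, so $T\cap S_0=T$, i.e. $T\leq S_0$. For the componency statement: $\C$ is a component of $C_\F(t)$ by the above, and along a subnormal series for $\F_0$ in $\F$ repeated use of Lemma~\ref{L:NEP1} (each $\gen{t}$ being fully normalized at every level by Lemma~\ref{L:Subnormalfn}) shows $C_{\F_0}(t)$ is subnormal in $C_\F(t)$; at each step the Sylow $T$ of $\C$ lies in $S_0$, hence in the Sylow of the next term, and centralizes $t$, so if $\C$ failed to be a component of that term then $T$ would centralize a group containing $T$ by \cite[9.6]{AschbacherGeneralized}, which is absurd. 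Hence $\C$ is a component of $C_{\F_0}(t)$. For the ``moreover'' clause, note $C_{S_0}(C_{S_0}(t))=Z(C_{S_0}(t))$, since any element of the left side centralizes $t\in C_{S_0}(t)$ and lies in $S_0$. Now $t\in Z(C_{S_0}(t))$, while $z\in T\leq S_0$ lies in $C_{S_0}(t)$ and is centralized by $C_{S_0}(t)\leq C_S(t)\leq C_S(z)$, so $\gen{t,z}\leq\Omega_1(Z(C_{S_0}(t)))$; conversely an involution $w\in Z(C_{S_0}(t))$ centralizes $T\leq C_{S_0}(t)$, so $w\in C_S(T)=Q\times\gen{z}$, whose involutions all lie in $\gen{t,z}$. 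Thus $\Omega_1(C_{S_0}(C_{S_0}(t)))=\Omega_1(Z(C_{S_0}(t)))=\gen{t,z}$.

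\emph{Part (c).} From $\C\subseteq C_{\F_0}(t)$ we get $T\leq C_{S_0}(t)$, and $\gen{t}\leq Z(S_0)$ gives $C_{S_0}(t)=S_0$, so $T\leq S_0$ and $[T,C_{S_0}(t)]=[T,S_0]\neq1$ (otherwise $T\leq C_S(T)=Q\times\gen{z}$ with $T\cap Q=1$ would force $|T|\leq2$). Hence (b) applies and $\C$ is a component of $C_{\F_0}(t)$. Moreover $\C$ is not a component of $\F_0$, for otherwise $\C$ would be subnormal in $\F$ and quasisimple, hence a component of $\F$, contrary to Hypothesis~\ref{H:standard}. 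Now suppose, for contradiction, that $\gen{t}$ is weakly $\F_0$-closed in $Z(S_0)$. Applying the E-balance theorem --- the ``whole system'' form of the Pump-Up Lemma; cf. the discussion preceding Lemma~\ref{L:pumpupbasic} and \cite{AschbacherFSCT} --- to the component $\C$ of $C_{\F_0}(t)$ inside $\F_0$, one of the alternatives holds for a component $\D$ of $\F_0$: $\C$ is a component of $\F_0$, or a proper pump-up, or a diagonal pump-up. Any conjugating morphism $\zeta$ occurring in the proper (or diagonal) alternative lies in $\Hom_{\F_0}(C_{S_0}(t),S_0)=\Aut_{\F_0}(S_0)$ because $C_{S_0}(t)=S_0$, so $\zeta$ preserves $Z(S_0)$ and $t^\zeta\in Z(S_0)$; by the weak-closure assumption, $t^\zeta=t$. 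But $t\in Z(S_0)$ centralizes the Sylow subgroup of every component $\D$ of $\F_0$ (those Sylow subgroups lie in $S_0$), so $\D^t=\D$ and $C_{\D\gen{t}}(t)=\D\gen{t}$, whose only component is $\D$. This rules out the diagonal alternative ($\D\neq\D^t$) and the proper one ($\C^\zeta\neq\D$), while the remaining alternative contradicts that $\C$ is not a component of $\F_0$. This contradiction proves (c).

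\emph{Main obstacle.} The hard part will be (c): one must invoke the correct ``whole system'' E-balance statement in $\F_0$ and track the conjugating morphism $\zeta$ carefully, as that is exactly what turns the weak-closure hypothesis into a contradiction. A secondary difficulty, in (b), is the transfer of componency from $C_\F(t)$ down the subnormal series to $C_{\F_0}(t)$, which relies on the fusion-system form of $L$-balance (\cite[9.6]{AschbacherGeneralized}).
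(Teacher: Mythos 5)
Your parts (a) and (b) are essentially the paper's argument: (a) combines \cite[3.1.5]{AschbacherFSCT} with Lemma~\ref{L:Subnormalfn}, and (b) rests on the subnormality of $C_{\F_0}(t)$ in $C_\F(t)$ (obtained by iterating Lemma~\ref{L:NEP1} along a subnormal series; the paper cites \cite[8.23.2]{AschbacherGeneralized}), on \cite[9.6]{AschbacherGeneralized}, and on the identity $C_S(T)=Q\gen{z}$ from Lemmas~\ref{L:CST} and \ref{L:lim1=0}. Your preliminary strong-closure argument that $T\leq S_0$ is superfluous, since $T\leq C_{S_0}(t)$ already follows from the componency conclusion, but it is harmless.

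Part (c), however, contains a fatal gap. Your contradiction rests on the claim that, since $t\in Z(S_0)$ centralizes the Sylow subgroup $D$ of each component $\D$ of $\F_0$, one has $C_{\D\gen{t}}(t)=\D\gen{t}$, so that $\D$ is the only component of $C_{\D\gen{t}}(t)$ and the proper pump-up is impossible. Centralizing a Sylow $2$-subgroup of a quasisimple fusion system is much weaker than centralizing the system, and the paper itself supplies a counterexample to your claim: take $\F_0=\F_{\Sol}(q)$ and $t=z\in Z(S_0)$. Then $\F_0$ is its own unique component, $z$ centralizes its Sylow subgroup, yet $C_{\F_0}(z)=\H\neq\F_0$, and $\H\cong\F_{\Spin}(q)$ is a component of $C_{\F_0}(z)$ that is not a component of $\F_0$ --- exactly the proper pump-up you purport to exclude. (The same phenomenon occurs for groups: a $2$-central classical involution of $PSp_4(q)$, $q$ odd, centralizes a Sylow $2$-subgroup but has $SL_2(q)$ components in its centralizer.) A telling symptom is that your weak-closure hypothesis is used only to conclude $t^\zeta=t$, which plays no role in the final contradiction; if the argument were sound it would prove that every component of $C_{\F_0}(t)$ with $t\in Z(S_0)$ is a component of $\F_0$, which is false.

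The case $t\in Z(S_0)$ with $C_{\F_0}(t)\subsetneq\F_0$ is precisely the hard case, and the paper handles it with an argument specific to the Benson--Solomon systems. After noting (via Craven's theorem) that $\C\norm C_{\F_0}(t)$, it shows by Alperin's fusion theorem that $\gen{t}$ cannot be weakly closed in all of $S_0$ (else $t\in Z(\F_0)$ and $\C$ would be a component of $\F$), producing a conjugate $f\neq t$; it then shows $f\notin QT$ using the conjugacy of all involutions of $T$ together with the weak-closure-in-$Z(S_0)$ hypothesis; finally, Proposition~\ref{P:fieldconjugate} and the transitivity of $\Aut_{\C_1}(T_{k-1})$ on $E^{\#}$ show that $f$ is $\C\gen{f}$-conjugate to all of $fE$, and since $E$ has rank $3$ while $S_0/QT$ is cyclic (as $\Out(\C)$ is cyclic), $t$ is forced to be conjugate to some $te\in QT$ with $e\neq 1$, contradicting the previous step. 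None of this structure is recoverable from a purely formal E-balance argument.
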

\begin{proof}
As $\Q$ is tightly embedded from Lemma~\ref{L:Qtight}(a), there is a fully
$\F$-normalized $\F$-conjugate of $\gen{t}$ in $Q$ by
\cite[3.1.5]{AschbacherFSCT}. It follows that $\gen{t}$ is fully
$\F$-normalized, since $t$ is the unique involution in $Q$. So (a) follows from
\ref{L:Subnormalfn}. In particular, $C_{\F_0}(t)$ is saturated.

In the proof of (b) and (c), we will use that $\C$ is normal in $C_\F(t)$
by (S2). In particular $\C$ is a component of $C_\F(t)$. In addition, we will
use that $C_S(T)=\langle z\rangle Q$ from Lemma~\ref{L:Qtight}(b). 

For the proof of (b) assume that $[T,C_{S_0}(t)]\neq 1$. By (a) and
\cite[8.23.2]{AschbacherGeneralized}, $C_{\F_0}(t)$ is subnormal in
$C_\F(t)$. So by \cite[9.6]{AschbacherGeneralized}, $\C$ is a component of
$C_{\F_0}(t)$ as $[T,C_{S_0}(t)]\neq 1$. In particular, $T\leq C_{S_0}(t)$. As
$\C$ is normal in $C_\F(t)$, we have $T\unlhd C_S(t)$ and in particular, $z\in
Z(C_{S_0}(t))$. As $C_S(T)=\langle z\rangle Q$, we obtain $\<t,z\>\leq
\Omega_1(Z(C_{S_0}(t)))\leq \Omega_1(C_{S_0}(C_{S_0}(t))) \leq
\Omega_1(C_S(T))=\<t,z\>$ and this implies that (b) holds.

For the proof of (c) assume now that $Q \leq S_0$, $\C \leq C_{\F_0}(t)$,
and $\gen{t} \leq Z(S_0)$ is weakly $\F_0$-closed in $Z(S_0)$. Then in
particular, $S_0=C_{S_0}(t)$ and $C_{\F_0}(t)$ is saturated. As $T\leq
C_{S_0}(t)$ is nonabelian, (b) gives that $\C$ is a component of $C_{\F_0}(t)$
and $\Omega_1(Z(S_0))=\<t,z\>$. As $\C$ is normal in $C_\F(t)$, one easily
checks that $\C$ is $C_{\F_0}(t)$-invariant (using the equivalent definition of
$\F$-invariant subsystems given in
\cite[Proposition~I.6.4(d)]{AschbacherKessarOliver2011}). Hence, by a theorem
of Craven \cite{Craven2011}, $\C=O^{p^\prime}(\C)$ is normal in $C_{\F_0}(t)$.
We proceed now in several steps to reach a contradiction.

\smallskip
\noindent
\emph{Step~1:} We show that $\gen{t}$ is not weakly $\F_0$-closed.
Suppose this is false. Then for every essential subgroup $P$ of $\F_0$, we have
$t\in Z(S_0)\leq C_{S_0}(P)\leq P$ and $t$ is fixed by $\Aut_{\F_0}(P)$. So by
Alperin's Fusion Theorem \cite[Theorem~I.3.6]{AschbacherKessarOliver2011}, we
have $t\in Z(\F_0)$. Hence, $\C$ is a component of $C_{\F_0}(t) = \F_0$. As
$\F_0$ is subnormal in $\F$, it follows that $\C$ is subnormal in $\F$ and thus
a component of $\F$, contradicting Hypothesis \ref{H:standard}. This completes
Step~1.

As shown in Step~1, there exists an $\F_0$-conjugate $f$ of $t$ with $f\neq t$. Fix
such $f$ from now on. 

\smallskip
\noindent
\emph{Step~2:} We show that $f\not\in QT$ and $t$ is weakly $\F_0$-closed in
$QT$. 

Assuming $f\in QT$, we would have $f\in \Omega_1(QT)\leq T\<t\>$. So $f\in T$
or $f=ut$ with $u\in T$. In the latter case, since $t\in Q\leq C_S(T)$ and $f$
is an involution, $u$ is an involution. By Lemma~\ref{L:involutionsconjugate}, all
involutions in $T$ are $\C$-conjugate.  Moreover $\C\leq C_{\F_0}(t)$. So
if $f\in T$, then $f$ is $\F_0$ conjugate to $z$, and if $f=ut$ for some
involution $u\in T$, then $f$ is $\F_0$-conjugate to $zt$. In both cases we get
a contradiction to the assumption that $t$ is $\F_0$-closed in $Z(S_0)$. So
$f\not\in QT$.  Because of the arbitrary choice of $f$, this completes Step~2. 

As $\C$ is normal in $C_{\F_0}(t)$, we can form the product system $\C\gen{f}$
(as defined in \cite{Henke2013}) in $C_{\F_0}(t)$ over the $2$-group
$T\gen{f}$. 

\smallskip
\noindent
\emph{Step~3:} We show that $f$ is $\C\gen{f}$-conjugate to every element of the
coset $fE$.

Note first that $F^*(\C\gen{f}) = \C$. As all
involutions in $T\gen{f}-T$ are $\C\gen{f}$-conjugate by
Proposition~\ref{P:fieldconjugate}(a), we see that indeed $f$ is
$\C\gen{f}$-conjugate to every element of $fE$.

\smallskip
\noindent
\emph{Step~4:} We derive the final contradiction. 

Since $\C$ is normal in $C_\F(t)$ and $S_0\leq C_S(t)$, $S_0$ induces
automorphisms of $\C$ by conjugation. As $C_S(T) = Q\gen{z}$ and $\Aut(\C)$ is
cyclic by \cite[Theorem~3.10]{HenkeLynd2018}, it follows that $QT=TC_S(T)$ is
normal in $S_0$ and $S_0/QT$ is cyclic. Now let $\alpha \in \AA_{\F_0}(f)$
with $f^\alpha = t$.  Then $\alpha$ is defined on $\gen{f}E$ and, hence $t$ is
$\F_0$-conjugate to every member of the coset $tE^{\alpha}$ by Step~3. Since
$E^{\alpha}$ is of $2$-rank $3$, while $S_0/QT$ is cyclic, it follows that
$E^{\alpha} \cap QT \neq 1$. For $1\neq e\in E^{\alpha} \cap QT$, $t$ is
conjugate to $te\in QT$.  This contradicts Step~2.
\end{proof}

\section{The quaternion case}\label{S:quaternion}

In this section we show, assuming Hypothesis~\ref{H:standard}, that $Q$ is not
quaternion using Aschbacher's classification of quaternion fusion packets
\cite{AschbacherQFP}. When combined with Proposition~\ref{P:m(Q)=1}, the
results of this section reduce to the case in which $Q$ is cyclic, which is
handled in Section~\ref{S:cyclic}.

The Classical Involution Theorem identifies the finite simple groups which have
a \emph{classical involution}, that is, an involution whose centralizer has a
component (or solvable component) isomorphic to $SL_2(q)$ (or $SL_2(3)$)
\cite{Aschbacher1977I, Aschbacher1977II}. With the exception of $M_{11}$, the
simple groups having a classical involution are exactly the groups of Lie type
in odd characteristic other than $L_2(q)$ or ${ }^2 G_2(q)$, where the
$SL_2(q)$ components in involution centralizers are fundamental subgroups
generated by the center of a long root subgroup and its opposite.

In a group with a classical involution, the collection of these $SL_2(q)$
subgroups satisfies special fusion theoretic properties that were identified
and abstracted by Aschbacher in \cite[Hypothesis $\Omega$]{Aschbacher1977I}.
More recently, Aschbacher has formulated these conditions in fusion systems in
the definition of a \emph{quaternion fusion packet}, and his memoir
\cite{AschbacherQFP} classifies all such packets.

\begin{definition}
A \emph{quaternion fusion packet} is a pair $\tau = (\F, \Omega)$, where $\F$
is a saturated fusion system on a finite $2$-group $S$, and $\Omega$ is an
$\F$-invariant collection of subgroups of $S$ such that
\begin{enumerate}
\item[(QFP1)] There exists an integer $m$ such that for all $K \in \Omega$, $K$
has a unique involution $z(K)$ and is nonabelian of order $m$. 
\item[(QFP2)] For each pair of distinct $K, J \in \Omega$, $|K \cap J| \leq 2$.
\item[(QFP3)] If $K, J \in \Omega$ and $v \in J-Z(J)$, then $v^\F \cap
C_S(z(K)) \subseteq N_S(K)$. 
\item[(QFP4)] If $K, J \in \Omega$ with $z = z(K) = z(J)$, $v \in K$, and $\phi
\in \Hom_{C_\F(z)}(\gen{v}, S)$, then either $v^\phi \in J$ or $v^\phi$
centralizes $J$. 
\end{enumerate}
\end{definition}

We assume the following hypothesis until the last result in this section.
(See Section~\ref{SSS:subnormalclosure} for a discussion of normal and
subnormal closures in fusion systems.)

\begin{hypothesis}\label{H:quaternion}
Hypothesis~\ref{H:standard} and Notation~\ref{N:standard} hold with $Q$
quaternion. Let $t$ be the unique involution in $Q$. Set $\Omega = Q^\F$,
denote by $\F^\circ$ the subnormal closure of $Q$ in $\F$ over the subgroup
$S^\circ \leq S$, and set $\Omega^\circ = Q^{\F^\circ}$. 
\end{hypothesis}

A tightly embedded subsystem with quaternion Sylow $2$-subgroups, such as the
centralizer system $\Q$ in Hypothesis~\ref{H:quaternion}, always yields a
quaternion fusion packet in a straightforward way.

\begin{lemma}
\label{L:qfp}
$(\F,\Omega)$ is a quaternion fusion packet.
\end{lemma}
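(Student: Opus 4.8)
The plan is to verify the four axioms (QFP1)--(QFP4) directly for the pair $(\F, \Omega)$ where $\Omega = Q^\F$ and $Q$ is the quaternion Sylow group of the tightly embedded centralizer subsystem $\Q$. Since all members of $\Omega$ are $\F$-conjugate to $Q$, axiom (QFP1) is immediate: every $K \in \Omega$ is isomorphic to $Q$, hence nonabelian of the fixed order $m = |Q|$ with a unique involution $z(K)$ (the image of $t$). For (QFP2), I would use the tight embedding of $\Q$: by \cite[3.1.8]{AschbacherFSCT} (already invoked in the proof of Proposition~\ref{P:m(Q)=1}), any two distinct $\F$-conjugates $K, J$ of $Q$ with $K \cap J \neq 1$ must intersect trivially --- wait, more precisely that reference gives $K \cap J = 1$ for distinct conjugates under the relevant fully-normalized hypothesis, so certainly $|K \cap J| \leq 2$; alternatively one argues that if $|K\cap J|>2$ then $K\cap J$ contains the unique involution of each, and then a tight embedding argument (property (T2) together with \cite[0.7.1]{AschbacherFSCT}-style consequences) forces $K = J$.

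For (QFP3), the key input is condition (T1) in Definition~\ref{D:tightlyEmb} of tight embedding together with the standardness of $\C$. Given $K, J \in \Omega$ and $v \in J - Z(J)$, the element $v$ has order $>2$ (it lies outside the center of a quaternion group, so it generates a cyclic subgroup of order $\geq 4$ whose square is $z(J)$), hence $\gen{v}$ is a nontrivial subgroup of the conjugate $J$. I would pass to a fully $\F$-normalized conjugate via some $\alpha \in \AA$, apply (T1) to see that the odd part $O^{2'}(N_\Q(\cdot))$ --- or really the relevant conjugate of $Q$ --- is normal in the appropriate normalizer, and then translate this into the statement that any $\F$-conjugate of $v$ landing in $C_S(z(K))$ must normalize $K$. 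The cleanest route is probably to cite \cite[0.7.1]{AschbacherFSCT}, which reformulates tight embedding in terms of conditions (T1'), (T2') at the group level, or to mimic the standard group-theoretic argument that in a tightly embedded subgroup the fusion of non-central elements is controlled by the normalizer. For (QFP4), I would similarly use (T2): if $z = z(K) = z(J)$ then $K$ and $J$ are conjugates of $Q$ sharing the same central involution, so $K, J \leq C_S(z)$; given $v \in K$ and $\phi \in \Hom_{C_\F(z)}(\gen v, S)$, condition (T2) controls the $\F$-fusion of order-$p$ subgroups of $Q$ within $Q$, and combined with the standardness hypothesis (S2) --- that $\C$ is normal in $N_\F(X^\alpha)$ for nontrivial $X \leq Q$ --- one deduces the dichotomy that $v^\phi \in J$ or $v^\phi$ centralizes $J$; the point is that $\C \leq C_\F(Q)$ so elements not fusing into $J$ must centralize the conjugate $J$.

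The main obstacle I anticipate is (QFP3): matching up Aschbacher's fusion-theoretic definition of tightly embedded subsystem (Definition~\ref{D:tightlyEmb}, with its conditions phrased via $\AA(X)$ and normalizers of fully-normalized conjugates) with the concrete statement $v^\F \cap C_S(z(K)) \subseteq N_S(K)$ requires carefully chasing through a fully-normalized conjugate of $\gen v$ and using that $\C$ --- being standard --- satisfies (S1)--(S3), in particular that $Q$ is the unique maximal member of $\tilde{\X}(\C)$. One has to be careful that the conjugating morphism lands back inside $C_S(z(K))$ and that the reformulation in \cite[0.7.1]{AschbacherFSCT} or the analogous lemma applies verbatim. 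Once the right reference is pinned down, the verification should be short; I would expect the whole proof to be three or four lines per axiom, leaning heavily on \cite[Chapter~3]{AschbacherFSCT} and on \cite[9.1.6.2]{AschbacherFSCT} (that $\Q$ is tightly embedded) together with \cite[3.1.8]{AschbacherFSCT}.
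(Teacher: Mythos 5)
Your verification of (QFP1) and (QFP2) is essentially the paper's: the paper notes $\Omega\subseteq\mathcal{P}^*$ in the sense of \cite[Definition~3.1.9]{AschbacherFSCT} and cites \cite[3.1.12.2]{AschbacherFSCT} to get $K\cap J=1$ for distinct $K,J\in\Omega$, which is the trivial-intersection statement you reach for. The gap is in (QFP3) and (QFP4). For (QFP3) you flag the axiom as the main obstacle and propose a detour through (T1), $O^{2'}(N_\Q(\cdot))$, and a chase through fully normalized conjugates — but you already have the fact that makes (QFP3) a one-liner. Since $\Omega$ is $\F$-invariant, for any $s\in C_S(z(K))$ the subgroup $K^s$ lies in $\Omega$ and contains $z(K)^s=z(K)$, so $K\cap K^s\neq 1$ and hence $K=K^s$ by the trivial-intersection property. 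Thus $C_S(z(K))\leq N_S(K)$ outright, and $v^\F\cap C_S(z(K))\subseteq N_S(K)$ follows with no appeal to (T1) or to \cite[0.7.1]{AschbacherFSCT}. Your proposed route is never actually carried out, and as written it does not close the axiom.

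For (QFP4) the proposed argument does not work. Condition (T2) controls $\F$-fusion only of subgroups of $Q$ of order $2$, i.e.\ of $\gen{t}$, so it says nothing about $\gen{v}^\phi$ when $v$ has order at least $4$; and the claim that ``elements not fusing into $J$ must centralize $J$'' is asserted, not derived — (S2) gives normality of $\C$ in local subsystems, not a centralize-or-fuse dichotomy for conjugates of $Q$. The paper's argument is different and complete: since $z(K)=z(J)$ forces $K\cap J\neq 1$, trivial intersection gives $K=J$; then for $v\in K$ and $\phi\in\Hom_{C_\F(z)}(\gen{v},S)$ one has $z\in\gen{v}$, $z^\phi=z$, so $\gen{v}^\phi\cap K>1$, and \cite[3.1.14]{AschbacherFSCT} (applied to $\gen{v}\in\mathcal{P}$ and $K\in\mathcal{P}^*$) yields $\gen{v}^\phi\leq K=J$. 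So the first alternative of (QFP4) always holds; no dichotomy is needed. The missing ingredient in your write-up is precisely this pair of facts about $\mathcal{P}^*$ from \cite[3.1.12.2, 3.1.14]{AschbacherFSCT}, which replace both your (T1) detour and your unjustified (T2)/(S2) dichotomy.
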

\begin{proof}
We go through the list of axioms. (QFP1) holds by definition of $\Omega$. Note
that $\Omega \subseteq \mathcal{P}^*$ in the sense of Definition~3.1.9 of
\cite{AschbacherFSCT}. Hence, by \cite[3.1.12.2]{AschbacherFSCT}, $K \cap J =
1$ for each pair of distinct $K, J \in \Omega$.  This shows that (QFP2) holds,
and that any element of $S$ centralizing $z(K)$ must normalize $K$, so that
(QFP3) also holds. Finally, under the hypotheses of (QFP4), $K = J$ in the
current situation. Fix $1 \neq v \in K$ and $\phi \in
\Hom_{C_\F(z(K))}(\gen{v},S)$.  Then $z(K) \in \gen{v}$, and $z(K)^\phi =
z(K)$. Also, $\gen{v} \in \mathcal{P}$, and $K \in \mathcal{P}^*$, in the sense
of Definition~3.1.9 of \cite{AschbacherFSCT}. Since $\gen{v}^\phi \cap K > 1$,
we see from \cite[3.1.14]{AschbacherFSCT} (applied with $\gen{v}$, $\phi$, and
$K$ in the role of $P$, $\psi$, and $R$) that $\gen{v}^\phi \leq K$. This shows
that (QFP4) holds.
\end{proof}

\begin{lemma}
\label{L:red1}
Let $\F_0$ be a subnormal subsystem of $\F$ over the subgroup $S_0 \leq S$.
Assume that $Q \leq S_0$, and that $\C \leq C_{\F_0}(t)$. Then $Q^{\F_0} \neq
\{Q\}$.
\end{lemma}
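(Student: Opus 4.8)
The plan is to argue by contradiction: suppose $Q^{\F_0}=\{Q\}$ and derive a contradiction with Lemma~\ref{L:2rank1residual}(c). Note first that $\F_0$, being subnormal in $\F$, is saturated, and that Hypothesis~\ref{H:quaternion} incorporates Hypothesis~\ref{H:standard} with $Q$ quaternion, hence of $2$-rank $1$; so Lemma~\ref{L:2rank1residual} is available in the present setting.

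First I would show that $Q^{\F_0}=\{Q\}$ forces $\gen{t}\leq Z(S_0)$. For any $s\in S_0$ the map $c_s|_Q$ lies in $\Hom_{\F_0}(Q,S_0)$ (since $\F_0$ is a fusion system over $S_0$), so $Q^s\in Q^{\F_0}=\{Q\}$; hence $Q\norm S_0$. Then $t^s$ is an involution of $Q^s=Q$, and since $t$ is the unique involution of the (generalized) quaternion group $Q$ we get $t^s=t$. As $s$ was arbitrary, $\gen{t}\leq Z(S_0)$.

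Next, the hypotheses of Lemma~\ref{L:2rank1residual}(c) all hold: we are given $Q\leq S_0$ and $\C\subseteq C_{\F_0}(t)$, and we have just shown $\gen{t}\leq Z(S_0)$. That lemma therefore yields an involution $f\in t^{\F_0}\cap Z(S_0)$ with $f\neq t$. The remaining step is to promote $f$ to a proper $\F_0$-conjugate of $Q$. Since $\gen{f}\leq Z(S_0)$, the subgroup $\gen{f}$ is fully $\F_0$-centralized. Fix $\phi_0\in\Iso_{\F_0}(\gen{t},\gen{f})$ with $t^{\phi_0}=f$. Because $\Aut_{S_0}(\gen{t})=\Aut_{S_0}(\gen{f})=1$ and $N_{S_0}(\gen{t})=S_0$ (again as $\gen{t}\leq Z(S_0)$), the extension axiom — applicable since $\gen{f}$ is fully $\F_0$-centralized — extends $\phi_0$ to an automorphism $\bar{\phi}\in\Aut_{\F_0}(S_0)$ with $\bar{\phi}|_{\gen{t}}=\phi_0$. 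Then $Q^{\bar{\phi}}\in Q^{\F_0}$, while $\bar{\phi}$ restricts to an isomorphism $Q\to Q^{\bar{\phi}}$ carrying the unique involution $t$ of $Q$ to the unique involution of $Q^{\bar{\phi}}$, namely $f$. As $f\neq t$ this gives $Q^{\bar{\phi}}\neq Q$, contradicting $Q^{\F_0}=\{Q\}$.

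The step I expect to be the main obstacle is this last one, namely converting the conclusion of Lemma~\ref{L:2rank1residual}(c) about $\gen{t}$ into the desired statement about $Q$. It works cleanly precisely because the new conjugate $f$ can be taken inside $Z(S_0)$: this makes $\gen{f}$ receptive and forces the obstruction subgroup $N_{\phi_0}$ of the extension axiom to be all of $S_0$, so that $\phi_0$ extends to an automorphism of $S_0$ and drags $Q$ onto a genuinely different conjugate. This is also where the hypothesis $\gen{t}\leq Z(S_0)$ of Lemma~\ref{L:2rank1residual}(c), established in the first step, gets used a second time.
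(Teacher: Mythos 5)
Your proof is correct and uses exactly the same ingredients as the paper's: the observation that $Q^{\F_0}=\{Q\}$ forces $\gen{t}\leq Z(S_0)$, the extension axiom (valid since $\gen{t}$ is central, so $N_{\phi_0}=S_0$), the uniqueness of the involution in the quaternion group $Q$, and the contradiction with Lemma~\ref{L:2rank1residual}(c). The only difference is organizational — the paper shows $Q^{\F_0}=\{Q\}$ implies $\gen{t}$ is weakly $\F_0$-closed in $Z(S_0)$ and then cites (c), whereas you invoke (c) first to produce $f\neq t$ and drag $Q$ to a new conjugate — so this is essentially the same argument run in the opposite direction.
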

\begin{proof}
Suppose on the contrary that $Q^{\F_0} = \{Q\}$. Then $Q$ is normal in $S_0$,
and so $t \in Z(S_0)$. Let $\alpha$ be a morphism in $\F_0$ with $t^\alpha \in
Z(S_0)$. By the extension axiom, we may assume that $\alpha$ is defined on $Q$,
and then $Q^\alpha = Q$ by assumption, so that $t^\alpha = t$.  This shows that
$\gen{t}$ is weakly $\F_0$-closed in $Z(S_0)$, contradicting Lemma~\ref{L:2rank1residual}(c). 
\end{proof}

\begin{lemma}
\label{L:red2}
$\C$ is a component of $C_{\F^\circ}(t)$. In particular, $\C$ is contained in
$C_{\F^\circ}(t)$. 
\end{lemma}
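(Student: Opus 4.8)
The plan is to apply Lemma~\ref{L:2rank1residual}(b) with $\F^\circ$ in the role of $\F_0$. First I would verify the hypotheses: $t \in S^\circ$ by construction of $\F^\circ$ (it is the subnormal closure of $Q$, and $t$ is the unique involution of $Q$), and $\F^\circ$ is subnormal in $\F$, so Lemma~\ref{L:2rank1residual} is applicable. By part (a) of that lemma, $\gen{t}$ is fully $\F^\circ$-normalized, hence $C_{\F^\circ}(t)$ is saturated. The crux is then to show that $[T, C_{S^\circ}(t)] \neq 1$, after which part (b) immediately gives that $\C$ is a component of $C_{\F^\circ}(t)$, and in particular $\C \subseteq C_{\F^\circ}(t)$ as claimed.

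To establish $[T, C_{S^\circ}(t)] \neq 1$, I would argue that in fact $T \leq S^\circ$, so that $T \leq C_{S^\circ}(t)$ (as $t \in Q$ centralizes $\C$, hence centralizes $T$); then $[T, C_{S^\circ}(t)] \geq [T,T] \neq 1$ since $T$ is nonabelian (it is the Sylow group of a Benson--Solomon system). The inclusion $T \leq S^\circ$ should follow from the fact that $\F^\circ$ is the subnormal closure of $Q$: one expects $\F^\circ$ to contain not just $Q$ but also $\C$, since $\C \norm N_\F(Q)$ by (S2) and the subnormal closure of $Q$ cannot avoid the normal subsystem $\C$ of $N_\F(Q)$. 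More carefully, I would use Lemma~\ref{L:red1}: since $\F^\circ$ is subnormal in $\F$ with $Q \leq S^\circ$, if I can first show $\C \subseteq C_{\F^\circ}(t)$ — which is what we want — the argument is circular, so instead I would establish $T \leq S^\circ$ directly. The key point: $Q$ is not normal in $\F$ (else $\C$, centralizing $Q$, together with results on tight embedding would force $\C$ to be a component of $\F$, against Hypothesis~\ref{H:quaternion}); combining this with the classification-of-quaternion-packets structure from \cite{AschbacherQFP}, or more simply with the observation that $C_S(T) = \gen{z}Q \leq N_S(Q)$ and $N_\F(Q)$ contains $\C$ as a normal subsystem, the subnormal closure $\F^\circ$ of $Q$ must contain $T$. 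Indeed $\F^\circ$ is obtained by a chain $\F^\circ = \F_n \norm \cdots \norm \F_1 \norm \F = \F_0$ with $Q \leq S^\circ$; since $Q$ normalizes $T$ and $\C \norm N_\F(Q)$, at each stage the normal subsystem must contain the relevant part of $\C$, and an induction on the length of the subnormal series yields $T \leq S^\circ$.

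The main obstacle I anticipate is making rigorous the claim $T \leq S^\circ$ — that is, controlling what the subnormal closure of $Q$ must contain. The clean way is probably: since $\C$ is standard, $\Q$ is tightly embedded (Remark~\ref{R:QCentralizer}, \cite[9.1.6.2]{AschbacherFSCT}), and $\C$ normalizes $\Q$; one can then invoke the machinery of \cite{AschbacherQFP} on quaternion fusion packets — we already know $(\F,\Omega)$ is such a packet by Lemma~\ref{L:qfp} — to conclude that the subnormal closure of a packet member $Q$ together with the standard component structure forces $T \leq S^\circ$. Alternatively, and more in the spirit of the surrounding arguments, one shows $\C \subseteq \F^\circ$ by noting that $Q$ being non-normal in $\F$ forces the first proper term $\F_1$ of a subnormal series from $\F$ down to $\F^\circ$ to be a proper normal subsystem containing $Q^\F = \Omega$; since $\C$ normalizes every member of $\Omega$ and $N_\F(Q) \cap \F_1$ contains $\C$ as a subnormal subsystem, descending the chain gives $\C \subseteq \F_n = \F^\circ$, hence $T \leq S^\circ$. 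Once $T \leq S^\circ$ is in hand, the rest is immediate from Lemma~\ref{L:2rank1residual}(b).
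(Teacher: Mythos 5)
You have the right target (Lemma~\ref{L:2rank1residual}(b)) and you correctly identify the crux, namely verifying $[T,C_{S^\circ}(t)]\neq 1$; you also correctly sense that Lemma~\ref{L:red1} and an induction along the subnormal series must enter. But the step you flag as the "main obstacle" --- proving $T\leq S^\circ$ first --- is a genuine gap, and none of the routes you sketch closes it. There is no reason why a normal subsystem of $\F$ whose Sylow contains $Q$ must contain $T$: the Sylow of the normal closure of $Q$ in $\F_i$ is the smallest strongly closed subgroup of $S_i$ containing $Q$, and elements of $T$ need not be $\F$-conjugate into $Q$ at all. The assertion that "the subnormal closure of $Q$ cannot avoid the normal subsystem $\C$ of $N_\F(Q)$" confuses normality in $N_\F(Q)$ with normality in $\F$, and the claim that "$N_\F(Q)\cap \F_1$ contains $\C$ as a subnormal subsystem" is exactly what needs proof. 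In fact the logical order is the reverse of what you propose: the containment $T\leq S^\circ$ is a \emph{consequence} of $\C$ being a component of $C_{\F^\circ}(t)$, not a stepping stone towards it.

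The paper's proof verifies the hypothesis of Lemma~\ref{L:2rank1residual}(b) without knowing $T\leq S_{i+1}$ in advance. Write $\F=\F_0\trianglerighteq \F_1\trianglerighteq\cdots$ for the series of iterated normal closures of $Q$ terminating in $\F^\circ$, and induct: if $\C$ is a component of $C_{\F_i}(t)$ (the base case $i=0$ being (S2)), then Lemma~\ref{L:red1} applies to $\F_i$ and yields $Q'\in Q^{\F_i}\setminus\{Q\}$. Tight embedding gives $Q\cap Q'=1$ and $Q'\leq C_{S_i}(Q)\leq C_{S_i}(t)$, and $Q'\leq S_{i+1}$ by definition of the normal closure. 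Since $C_S(T)=Q\gen{z}$ and $Q\cap Q'=1$ with $Q'$ quaternion, $Q'\not\leq C_S(T)$, so $[Q',T]\neq 1$ and hence $[T,C_{S_{i+1}}(t)]\neq 1$. Now Lemma~\ref{L:2rank1residual}(b) (via Aschbacher's pump-down \cite[9.6]{AschbacherGeneralized}) makes $\C$ a component of $C_{\F_{i+1}}(t)$ --- and only at this point does $T\leq S_{i+1}$ follow. So the witness to $[T,C_{S^\circ}(t)]\neq 1$ is a second conjugate of $Q$, not $T$ itself; this is the idea missing from your proposal. Note also that a single application of Lemma~\ref{L:red1} to $\F^\circ$ is not available at the outset, since its hypothesis $\C\subseteq C_{\F^\circ}(t)$ is the conclusion you are after; the stage-by-stage induction is what breaks that circularity.
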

\begin{proof}
For each $i\geq 0$ set $\F_i:=\sub_i(\F,Q)$ and write $S_i$ for the Sylow
of $\F_i$. Recall from Section~\ref{SSS:subnormalclosure} that
$\F_{i+1} \norm \F_i$ for each $i \geq 0$, and $\F^{\circ}$ is by definition
the terminal member of this series. By Lemma~\ref{L:2rank1residual}(a),
$\gen{t}$ is fully normalized in $\F_i$ for $i \geq 0$, so $C_{\F_i}(t)$ is
saturated for each $i$.  

We assume now that the assertion is false. As $\C$ is normal in $C_\F(t)$ by
(S2), there exists $i\geq 0$ such that $\C$ is not a component of
$C_{\F_{i+1}}(t)$.  Fix the smallest such $i$. Then $\C$ is a component
of $C_{\F_i}(t)$.  By Lemma~\ref{L:red1}, we have that $Q^{\F_i} \neq \{Q\}$.
Fix $Q' \in Q^{\F_i}-\{Q\}$.  As $\Q$ is tightly embedded in $\F$
(Lemma~\ref{L:Qtight}(a)), we have $Q \cap Q' = 1$ by
\cite[3.1.12.2]{AschbacherFSCT}, and we have 
\[
Q' \leq C_{S_i}(Q) \leq C_{S_i}(t)
\]
by \cite[3.3.5]{AschbacherFSCT}.  By definition of $\F_{i+1}$, we have $Q'\leq
S_{i+1}$ and thus $Q'\leq C_{S_{i+1}}(t)$. As $C_S(T)=QZ(T)$ by
Lemma~\ref{L:Qtight}(b), it follows $[Q',T]\neq 1$,  and thus
$[T,C_{S_{i+1}}(t)]\neq 1$. Hence, $\C$ is a component of $C_{\F_{i+1}}(t)$ by
Lemma~\ref{L:2rank1residual}(b). This contradicts the
choice of $i$. 
\end{proof}

\begin{lemma}
\label{L:Eqfp} 
The pair $(\F^\circ, \Omega^\circ)$ is a quaternion fusion packet, $\F^{\circ}$
is the normal closure of $Q$ in $\F^{\circ}$, and $\F^{\circ}$ is transitive on
$\Omega^\circ$.  
\end{lemma}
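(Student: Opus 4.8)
The plan is to derive all three assertions from the setup already established, principally Lemma~\ref{L:red2} and the general machinery of subnormal closures. First I would observe that $\F^\circ$ is by construction the subnormal closure of $Q$ in $\F$, obtained as the terminal member of the descending series $\F = \F_0 \trianglerighteq \F_1 \trianglerighteq \cdots$ where $\F_{i+1}$ is the normal closure of $Q$ in $\F_i$. Since the series stabilizes at $\F^\circ$, we have that $\F^\circ$ equals its own normal closure of $Q$; this is precisely the statement that $\F^\circ$ is the normal closure of $Q$ in $\F^\circ$, which immediately gives the middle assertion. For transitivity on $\Omega^\circ = Q^{\F^\circ}$: by Lemma~\ref{L:red2}, $\C \subseteq C_{\F^\circ}(t)$ and $Q \leq S^\circ$, so the hypotheses of Lemma~\ref{L:red1} are met with $\F_0 = \F^\circ$, whence $Q^{\F^\circ} \neq \{Q\}$. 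More to the point, since $\F^\circ$ is generated by its conjugates of $Q$ (being the normal closure of $Q$ in itself) and $Q$ is tightly embedded, the transitivity of $\F^\circ$ on $\Omega^\circ$ should follow from the general fact that the normal closure of a tightly embedded subsystem acts transitively on the conjugacy class of its Sylow group; I would cite the relevant result of Aschbacher on subnormal closures of tightly embedded subsystems (in the spirit of \cite[Chapter~3]{AschbacherFSCT}), or argue directly that any two members of $\Omega^\circ$ are fused within $\F^\circ$ by an application of Alperin's fusion theorem combined with the tight embedding axiom (T2) restricted to $\F^\circ$.

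For the quaternion fusion packet assertion, the strategy is to check that $(\F^\circ, \Omega^\circ)$ inherits axioms (QFP1)--(QFP4) from $(\F, \Omega)$, which was shown to be a packet in Lemma~\ref{L:qfp}. Axiom (QFP1) is immediate: every member of $\Omega^\circ \subseteq \Omega$ is $\F$-conjugate (hence isomorphic) to $Q$, which is nonabelian of a fixed order with a unique involution. Axiom (QFP2) follows because distinct members of $\Omega^\circ$ are distinct members of $\Omega$, so $|K \cap J| \leq 2$ — in fact $K \cap J = 1$ by \cite[3.1.12.2]{AschbacherFSCT} as used in Lemma~\ref{L:qfp}. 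For (QFP3): if $K, J \in \Omega^\circ$ and $v \in J - Z(J)$, then the $\F^\circ$-class $v^{\F^\circ}$ is contained in the $\F$-class $v^\F$, and $(\F,\Omega)$ being a packet gives $v^\F \cap C_S(z(K)) \subseteq N_S(K)$; intersecting with $S^\circ$ and using $C_{S^\circ}(z(K)) = C_S(z(K)) \cap S^\circ$ yields $v^{\F^\circ} \cap C_{S^\circ}(z(K)) \subseteq N_{S^\circ}(K)$. (As in Lemma~\ref{L:qfp}, since $K$ is tightly embedded, anything centralizing $z(K)$ normalizes $K$, so this is in fact automatic.) Axiom (QFP4) similarly descends: any morphism in $C_{\F^\circ}(z)$ is a morphism in $C_\F(z)$, so the conclusion $v^\phi \in J$ or $v^\phi$ centralizes $J$ holds a fortiori.

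The main obstacle I anticipate is the transitivity statement — that $\F^\circ$ is transitive on $\Omega^\circ$. The inheritance of the four packet axioms is essentially formal "restriction to a subsystem" bookkeeping, and the "normal closure of $Q$ in $\F^\circ$ equals $\F^\circ$" part is immediate from the definition of subnormal closure. But transitivity genuinely uses that $\F^\circ$ is \emph{generated} (not merely built up as a tower of normal closures) by the $\F^\circ$-conjugates of $Q$, together with the tight embedding, and one must be careful that the relevant fusion of members of $\Omega^\circ$ actually takes place inside $\F^\circ$ rather than only in $\F$. I would handle this by invoking Aschbacher's structure theory for the subnormal closure of a tightly embedded subsystem directly; if a clean citation is unavailable, the fallback is an Alperin-fusion-theorem argument showing that every $K \in \Omega^\circ$ is $\F^\circ$-conjugate into $Q$ via automorphisms of members of $\Omega^\circ$, using axiom (T2) for the tightly embedded $\Q$ and the fact that essential subgroups of $\F^\circ$ relevant to fusing the $\Omega^\circ$-members lie in $S^\circ$.
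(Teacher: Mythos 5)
Your proposal is correct, but it diverges from the paper in two places worth noting. For the quaternion fusion packet assertion, the paper simply cites Lemma~\ref{L:qfp} together with \cite[Lemma~6.4.2.1]{AschbacherQFP}, which is Aschbacher's general statement that a quaternion fusion packet restricts to a subnormal subsystem; your direct verification of (QFP1)--(QFP4) by ``inheritance'' is essentially a proof of the special case of that lemma needed here, and it goes through as you describe (in fact (QFP2)--(QFP4) are, as you note, already automatic from tight embedding). The more significant difference is the transitivity claim, which you single out as ``the main obstacle'' and propose to attack via Lemma~\ref{L:red1}, Alperin's fusion theorem, and axiom (T2). None of that is needed: $\Omega^\circ$ is \emph{defined} as $Q^{\F^\circ}$, the $\F^\circ$-conjugacy class of $Q$, so transitivity of $\F^\circ$ on $\Omega^\circ$ is a tautology, and the paper disposes of it in one clause (``the third holds by definition of $\Omega^\circ$''). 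Your worry that fusion of members of $\Omega^\circ$ might only take place in $\F$ and not in $\F^\circ$ would be relevant if $\Omega^\circ$ were defined as $\Omega$ restricted to subgroups of $S^\circ$, but that is not the definition in Hypothesis~\ref{H:quaternion}. So your argument is not wrong, but the machinery you reserve for transitivity is superfluous, and recognizing this would have shortened the proof to the two-line argument the paper gives.
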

\begin{proof}
Note that $(\F^{\circ}, \Omega^\circ)$ is a quaternion fusion packet by
Lemma~\ref{L:qfp} and \cite[Lemma~6.4.2.1]{AschbacherQFP}. Recall that 
$\F^{\circ}$ is the subnormal closure of $Q$ in $\F$. So the second statement
follows from the definition of subnormal closure, while the third holds by
definition of $\Omega^\circ$.
\end{proof}

Now remove the standing assumption that Hypothesis~\ref{H:quaternion} holds. 

\begin{proposition}\label{P:quaternion}
Assume Hypothesis~\ref{H:standard}. Then $Q$ is cyclic. 
\end{proposition}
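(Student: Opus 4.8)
The plan is to assume Hypothesis~\ref{H:standard} together with the negation of the conclusion, so that by Proposition~\ref{P:m(Q)=1} the group $Q$ has $2$-rank $1$ but is not cyclic, hence is generalized quaternion. Then Hypothesis~\ref{H:quaternion} holds with all of its notation, and in particular Lemmas~\ref{L:qfp}, \ref{L:red1}, \ref{L:red2} and \ref{L:Eqfp} are available. The strategy is to play the structural conclusion coming from Aschbacher's classification of quaternion fusion packets, applied to $\F^\circ$, against the fact (Lemma~\ref{L:red2}) that $\C\cong\F_{\Sol}(q)$ is a component of $C_{\F^\circ}(t)$, where $t$ is the unique involution of $Q$.

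First I would record that, by Lemma~\ref{L:Eqfp}, $(\F^\circ,\Omega^\circ)$ is a quaternion fusion packet in which $\F^\circ$ equals its own normal closure of $Q$ and acts transitively on $\Omega^\circ$. This ``coconnected and transitive'' situation is exactly the one in which the classification of quaternion fusion packets in \cite{AschbacherQFP} has its cleanest conclusion: $\F^\circ$ is isomorphic either to $\F_{\Sol}(q')$ for some odd prime power $q'$, or to the $2$-fusion system of a finite group $G^\circ$ with $F^*(G^\circ)$ a known quasisimple group (a group of Lie type in odd characteristic, or one of a short list of other known possibilities such as $M_{11}$), and in the latter case we may arrange $O(G^\circ)=1$. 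Extracting the precise form of the classification needed here -- and checking that $\F_{\Sol}(q)$ itself cannot arise as the \emph{component of an involution centralizer} inside any system on the list -- is the heart of the argument and the step I expect to be most delicate to state correctly, since one must comb through the cases on Aschbacher's list rather than prove anything new.

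Granting the classification, I would finish as follows. By Lemma~\ref{L:2rank1residual}(a), $\gen{t}$ is fully $\F^\circ$-normalized, hence fully $\F^\circ$-centralized, so $C_{\F^\circ}(t)$ is saturated, and by Lemma~\ref{L:red2} it has $\C\cong\F_{\Sol}(q)$ as a component. If $\F^\circ\cong\F_{\Sol}(q')$, then since all involutions of $\F_{\Sol}(q')$ are conjugate and the centralizer of a fully centralized involution there is isomorphic to the $2$-fusion system of $\Spin_7(q')$, we get that $C_{\F^\circ}(t)$ is isomorphic to the $2$-fusion system of $\Spin_7(q')$; as $\Spin_7(q')$ is quasisimple with trivial core, Lemma~\ref{L:ComponentsFSGroups} shows every component of $C_{\F^\circ}(t)$ is realized by a finite group, contradicting the exoticity of $\F_{\Sol}(q)$. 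If instead $\F^\circ=\F_{S^\circ}(G^\circ)$ with $G^\circ$ as above and $O(G^\circ)=1$, then $C_{S^\circ}(t)\in\Syl_2(C_{G^\circ}(t))$ and $C_{\F^\circ}(t)=\F_{C_{S^\circ}(t)}(C_{G^\circ}(t))$; passing to $\overline{C}:=C_{G^\circ}(t)/O(C_{G^\circ}(t))$, Lemma~\ref{L:knowncent} gives that every component of $\overline{C}$ is a known quasisimple group, so Lemma~\ref{L:ComponentsFSGroups} again forces every component of $C_{\F^\circ}(t)$ to be the fusion system of a finite group, the same contradiction. Hence $Q$ is cyclic.

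In short, the main obstacle is not a new fusion-theoretic computation -- the reductions in Lemmas~\ref{L:qfp} through \ref{L:Eqfp} already do that work -- but rather correctly invoking the quaternion fusion packet classification in the transitive case and verifying, case by case on its conclusion, that a Benson-Solomon system never occurs as a component of the centralizer of a packet involution: every alternative there is either the $2$-fusion system of a finite group of Lie type in odd characteristic (or of another known group), handled by Lemmas~\ref{L:knowncent} and \ref{L:ComponentsFSGroups}, or a smaller Benson-Solomon system, whose involution centralizer is a realizable $\Spin_7$-type system.
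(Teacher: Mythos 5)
Your overall strategy is the paper's: pass to the subnormal closure $\F^\circ$ of $Q$, invoke Aschbacher's classification of quaternion fusion packets via Lemma~\ref{L:Eqfp}, and then use Lemma~\ref{L:red2} together with Lemmas~\ref{L:knowncent} and \ref{L:ComponentsFSGroups} to contradict the exoticity of $\F_{\Sol}(q)$ as a component of $C_{\F^\circ}(t)$. Your treatment of the ``known group'' branch (passing to $C_{G^\circ}(t)/O(C_{G^\circ}(t))$ and applying those two lemmas) is exactly what the paper does in its case (3)(b).

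The genuine gap is in your statement of the classification, which you yourself flag as the delicate step. Theorem~1 of \cite{AschbacherQFP}, as invoked in the paper, does \emph{not} conclude that $\F^\circ$ is either a Benson--Solomon system or the fusion system of a group with quasisimple $F^*$; its conclusion also admits the degenerate alternatives (1) $t \in Z(\F^\circ)$, (2) $t \in O_2(\F^\circ)\setminus Z(\F^\circ)$, and, in the group case, (3)(a) that $S^\circ$ has $2$-rank at most $3$, as well as (3)(c) $G/Z(G) \cong Sp_6(2)$ or $\Omega_8^+(2)$. None of these fits your dichotomy, and each needs its own (short but necessary) argument: in case (1), $\C$ is a component of $C_{\F^\circ}(t)=\F^\circ$ and hence, by subnormality of $\F^\circ$ in $\F$, a component of $\F$, contradicting Hypothesis~\ref{H:standard}; in case (2), $\F^\circ$ and hence $C_{\F^\circ}(t)$ is constrained (via Theorem~2 and Lemma~6.7.3 of \cite{AschbacherQFP}), so $E(C_{\F^\circ}(t))=1$; case (3)(a) is excluded because $QT$ already has $2$-rank $5$ by Lemma~\ref{L:standardsequence}(d); and case (3)(c) is handled either by your group argument or, as in the paper, by noting these groups are of characteristic $2$-type so that $C_{\F^\circ}(t)$ is again constrained. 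Conversely, the alternative $\F^\circ\cong\F_{\Sol}(q')$ on which you spend effort does not actually occur in the cited theorem's conclusion (your argument for ruling it out is sound, just not needed). As written, your proposal would not compile into a complete proof without supplying the missing cases.
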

\begin{proof}
We argue by contradiction, so that $Q$ is quaternion by
Proposition~\ref{P:m(Q)=1}. Hence, Hypothesis~\ref{H:quaternion} holds, and so
we adopt the notation there.  By Lemma~\ref{L:Eqfp}, the pair $(\F^{\circ},
\Omega^\circ)$ satisfies the hypotheses of Theorem~1 of \cite{AschbacherQFP}.
From that theorem, one of the following holds: either 
\begin{enumerate}
\item $t \in Z(\F^{\circ})$, or
\item $t \in O_2(\F^{\circ})-Z(\F^{\circ})$, or 
\item there is a finite group $G$ with Sylow $2$-subgroup $S^{\circ}$ such that
$\F^\circ = \F_{S^\circ}(G)$, and one of the following holds,
\begin{enumerate}
\item $S^{\circ}$ has $2$-rank at most $3$, or 
\item $G \in \Chev^*(p)$ for some odd prime $p$, or
\item $G$ is quasisimple with $Z(G)$ a $2$-group, and $G/Z(G) \cong Sp_6(2)$ or
$\Omega^+_8(2)$. 
\end{enumerate}
\end{enumerate}
Observe that in all cases, 
\begin{eqnarray}
\label{E:CcompCEt}
\text{$\C$ is a component of $C_{\F^\circ}(t)$}
\end{eqnarray}
by Lemma~\ref{L:red2}.

In Case (1), $\C$ is a component of $C_{\F^{\circ}}(t) = \F^{\circ}$. Hence
$\C$ is a component of $\F$ since $\F^{\circ}$ is subnormal in $\F$, contrary
to Hypothesis~\ref{H:standard}.  In Case (2), the hypotheses of
\cite[Theorem~2]{AschbacherQFP} hold for $(\F^{\circ}, \Omega^\circ)$, and then
by \cite[Lemma~6.7.3]{AschbacherQFP}, we have that $\F^\circ$ is constrained.
Thus, $C_{\F^\circ}(t)$ is also constrained, and hence $\C \leq
E(C_{\F^\circ}(t)) = 1$, a contradiction.

Case (3)(a) yields a contradiction, since $QT \leq S^{\circ}$ is of $2$-rank
$5$ by Lemma~\ref{L:standardsequence}(d).  In Case (3)(b), note that
$C_{\F^\circ}(t)$ is the fusion system of $C_G(t)/O(C_G(t))$ by
\cite[I.5.4]{AschbacherKessarOliver2011}.  By \eqref{E:CcompCEt} and
Lemma~\ref{L:knowncent}, the hypothesis of Lemma~\ref{L:ComponentsFSGroups}
hold, and so there is a component $K$ of $C_G(t)/O(C_G(t))$ such that $\C$ is
the $2$-fusion system of $K$ by that lemma. This contradicts the fact that $\C$
is exotic \cite[Proposition~3.4]{LeviOliver2002}.  

In Case 3(c) we may assume that Case (2) does not hold, so that $t \notin
Z(\F^\circ)$. Then $t \notin Z(G)$. As $Sp_6(2)$ and $\Omega^+_8(2)$ are of
characteristic $2$-type and as $t \notin Z(G)$, we have that $C_G(t)$ is of
characteristic $2$. Hence $C_{\F^\circ}(t)$ is constrained.  We therefore
obtain the same contradiction here as in Case (2).
\end{proof}

\section{The cyclic case and the proof of Theorem~\ref{T:main}}\label{S:cyclic}

In this section, we finish the proof of Theorem~\ref{T:main}. Using
Theorem~\ref{T:Cstandard} and Proposition~\ref{P:quaternion}, one quickly
reduces to the case that $\C$ is standard and the centralizer $Q$ of $\C$ in
$S$ is cyclic. We therefore assume the following hypothesis and notation for
most of this section.

\begin{hypothesis}\label{H:cyclic}
Hypothesis~\ref{H:standard} and Notation~\ref{N:standard} hold with $Q$ cyclic.
Write $Z(T)=\gen{z}$, $\Omega_1(Q) = \gen{t}$, $S_t = C_S(t)$, and $\F_t
= C_\F(t)$. 
\end{hypothesis}

\begin{lemma}
\label{L:cyclicbasic}
Assume Hypothesis~\ref{H:standard}. Then the following hold.
\begin{enumerate}
\item[(a)] $\gen{t} \in \F^f$,
\item[(b)] $C_S(T) = Q\gen{z}$, 
\item[(c)] $\Omega_1(C_S(S_t)) = \Omega_1(Z(S_t)) = \gen{t,z}$, 
\item[(d)] if $t\in Z(S)$, then $\gen{t}$ is not weakly $\F$-closed in $Z(S)$, and 
\item[(e)] $t$ is not $\F$-conjugate to $z$. 
\end{enumerate}
\end{lemma}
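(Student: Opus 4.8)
The plan is to deduce parts (a)--(d) by specializing earlier results to $\F_0=\F$, and then to concentrate on part~(e). First note that by Propositions~\ref{P:m(Q)=1} and~\ref{P:quaternion}, Hypothesis~\ref{H:standard} already forces $Q$ to be cyclic, so $\gen t=\Omega_1(Q)$, $S_t=C_S(t)$ and $\F_t=C_\F(t)$ are well defined; since $t\in Q=C_S(T)$ centralizes $T$, we have $T\le S_t$. As $\Q$ is tightly embedded in $\F$ by \cite[9.1.6.2]{AschbacherFSCT}, part~(a) is Lemma~\ref{L:2rank1residual}(a) with $\F_0=\F$. Part~(b) follows from Lemma~\ref{L:CST} together with Lemma~\ref{L:lim1=0}, the latter showing that $\mu\colon\Out(\L)\to\Out(\C)$ is an isomorphism and hence injective, whence $C_S(T)=QZ(T)=Q\gen z$. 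For part~(c), since $T$ is nonabelian and $T\le C_S(t)$ we have $[T,C_S(t)]\ne 1$, so Lemma~\ref{L:2rank1residual}(b) with $\F_0=\F$ gives $\Omega_1(C_S(C_S(t)))=\Omega_1(Z(C_S(t)))=\gen{t,z}$ (and en route that $\C$ is a component of $\F_t$). Finally, part~(d) is Lemma~\ref{L:2rank1residual}(c) with $\F_0=\F$, whose hypotheses $Q\le S$ and $\C\subseteq\F_t$ are immediate, the latter since $\C\unlhd\F_t$ by~(S2).

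For part~(e) I would argue by contradiction: suppose $t$ is $\F$-conjugate to $z$. I first record that $t\ne z$. By Remark~\ref{R:QCentralizer}, $\C$ is centralized by $\Q$, so $t\in Q$ centralizes $\C$; but $z$ does not, since $C_\C(z)$ is the subsystem $\H:=C_\C(z)\cong\F_{\Spin}(q)$, a proper subsystem of $\C\cong\F_{\Sol}(q)$ (e.g.\ the $\C$-conjugates of $z$ inside $T$ are not all equal to $z$, by Lemma~\ref{L:involutionsconjugate}). Since $\gen t\in\F^f$ by~(a) and $\gen t\in\gen z^{\F}\cap\F^f$, Lemma~\ref{AAnonempty} gives $\alpha\in\AA(\gen z)$ with $z^{\alpha}=t$. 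As $T\le C_S(z)=N_S(\gen z)$ is the Sylow group of the quasisimple subsystem $\H$ and every morphism of $\H$ fixes $z$, the image $\H^{\alpha}$ is a quasisimple subsystem of $\F_t$ over $T^{\alpha}\le S_t$, with $t=z^{\alpha}\in Z(T^{\alpha})$.

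I would then compare $\H^{\alpha}$ with the normal subsystem $\C\unlhd\F_t$ over $T$, recalling $C_{S_t}(\C)=Q$ by~(b) and $\C\cong\F_{\Sol}(q)\not\cong\F_{\Spin}(q)\cong\H^{\alpha}$. After passing to fully $\F$-normalized representatives so that the relevant centralizer systems are saturated, an $E$-balance argument (in the spirit of \cite[1.9.2]{AschbacherWT}, as used in the proof of Lemma~\ref{L:terminal}) should place $\H^{\alpha}$ inside a component $\K$ of $\F_t$. If $\K=\C$, then $T^{\alpha}\le T$, so $T^{\alpha}=T$ by comparing orders, and then $\alpha$ restricts to an automorphism of $T$, hence fixes $Z(T)=\gen z$; thus $z^{\alpha}=z$, the contradiction $t=z$. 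If $\K\ne\C$, then $\K$ and $\C$ commute by \cite[9.6]{AschbacherGeneralized}, so $[T^{\alpha},T]=1$ and $T^{\alpha}\le C_S(T)=Q\gen z$ is abelian, contradicting that $T^{\alpha}\cong T$ is nonabelian. In either case $t\not\sim_\F z$, completing part~(e).

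The main obstacle I anticipate is exactly this last $E$-balance step: choosing fully $\F$-normalized conjugates of $\gen z$ (and of $\gen{t,z}$ and of intermediate subgroups) so that the centralizer systems in play are saturated, checking that $\H^{\alpha}$ (up to conjugacy) is a component of an appropriate involution centralizer lying inside $\F_t$, and then invoking the correct fusion-system analogue of $L$-balance to push it into a component of $\F_t$. Once that is available, the dichotomy ``contained in $\C$ versus commuting with $\C$'' supplied by \cite[9.6]{AschbacherGeneralized} concludes the proof at once.
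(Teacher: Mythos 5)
Parts (a)--(d) of your proposal are correct and follow the paper exactly: (a), (c), (d) are Lemma~\ref{L:2rank1residual} with $\F_0=\F$ (your verification that $[T,C_S(t)]\neq 1$ for (c) is the right point to check), and (b) is Lemma~\ref{L:CST} combined with Lemma~\ref{L:lim1=0}. The problem is part (e), and it sits precisely at the step you flagged as the ``main obstacle'': there is no available form of $E$-balance that places $\H^{\alpha}$ inside a component of $\F_t$. The tool used in Lemma~\ref{L:terminal}, namely \cite[1.9.2]{AschbacherWT}, requires the subsystem to be a \emph{subintrinsic} member of $\CC$ of the ambient system; for $\H^{\alpha}$ this would mean exhibiting an involution in $Z(\H^{\alpha})=\gen{t}$ witnessing that (a conjugate of) $\H^{\alpha}$ is a component of its centralizer in $\F_t$ --- i.e.\ exactly the assertion that $\H^{\alpha}$ is a component of $C_{\F_t}(t)=\F_t$, which is what you are trying to prove. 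Hypothesis~\ref{H:standard} deliberately does not include the subintrinsic assumption $z\in\I(\H)$ of Hypothesis~\ref{H:maximal}; indeed Lemma~\ref{L:t2central} (which \emph{uses} part (e)) shows that subintrinsic together with $Q$ cyclic is contradictory, so importing any such hypothesis here would be circular. Your terminal dichotomy (``$\K=\C$ forces $T^{\alpha}=T$ and $t=z$; $\K\neq\C$ forces $T^{\alpha}\leq C_S(T)=Q\gen{z}$ abelian'') is fine, but it cannot be reached.

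The paper's argument for (e) avoids components entirely and runs through hyperfocal subgroups. Since $Q$ is cyclic, $\Q=\F_Q(Q)$, and (T1) gives $Q\norm\F_t$ with $Q=C_{S_t}(\C)$ by \cite[9.1.6.3]{AschbacherFSCT}. In the quotient $\F_t/Q$ the image of $\C$ is self-centralizing (\cite[Lemma~1.14]{Lynd2015}), so $F^*(\F_t/Q)\cong\F_{\Sol}(q)$, and \cite[Theorem~4.3]{HenkeLynd2018} then forces $O^2(\F_t/Q)$ to be the image of $\C$; unwinding, $O^2(\F_t)=\C$ and hence $\hyp(\F_t)=T$, which does not contain $t$. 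On the other hand, for any $\beta\in\AA(\gen{z})$ one has $z^{\beta}\in T^{\beta}=\hyp(\H^{\beta})\leq\hyp(C_\F(z^{\beta}))$ because $\H^{\beta}$ is a quasisimple subsystem of $C_\F(z^{\beta})$. As $\gen{t}$ and $\gen{z^{\beta}}$ are both fully normalized, an $\F$-conjugation between $t$ and $z$ would induce an isomorphism of their centralizers carrying one hyperfocal subgroup to the other, a contradiction. Note that this computation also rescues your own strategy a posteriori: once $O^2(\F_t)=\C$ is known, $\H^{\alpha}=O^2(\H^{\alpha})\subseteq O^2(\F_t)=\C$ gives $T^{\alpha}\leq T$ immediately, with no appeal to $E$-balance. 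So the missing ingredient in your proof is precisely the identification $O^2(\F_t)=\C$, which requires the quotient-by-$Q$ step and the classification of extensions of the Benson--Solomon systems.
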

\begin{proof}
Parts (a), (c), and (d) follow from Lemma~\ref{L:2rank1residual} applied
with $\F_0=\F$, while (b) is just a recollection of
Lemma~\ref{L:Qtight}(b). 

It remains to prove (e). As $Q$ is cyclic, we have $\Q=Q$. By (T1) in the
definition of tight embedding (Definition~\ref{D:tightlyEmb}), we have $Q = \Q
\norm \F_t$. Further, $Q = C_{S_t}(\C)$ by \cite[9.1.6.3]{AschbacherFSCT}.
Write quotients by $Q$ with bars.  Note that $C_{\bar{S}_t}(\bar{\C})$ is
trivial by \cite[Lemma~1.14]{Lynd2015}, and $\bar{\C} \cong \C$. Thus,
$F^*(\bar{\F}_t) = \bar{\C}$ is isomorphic to a Benson-Solomon system. By
\cite[Theorem~4.3]{HenkeLynd2018}, this quotient is therefore a split extension
of $\bar{\C}$ by a $2$-group of outer automorphisms, and in particular,
$O^2(\bar{\F}_t) = \bar{\C}$. It follows that $O^2(\F_t) \leq Q\C$. Since
$O^2(Q\C) = \C$ and since $O^2(O^2(\F_t)) = O^2(\F_t)$, we have that $O^2(\F_t)
= \C$. Hence, $t$ is fully normalized and not in the hyperfocal subgroup of
$\F_t$, while $z^\alpha$ is contained in the hyperfocal subgroup of $\H^\alpha
\leq C_\F(z^\alpha)$ for every $\alpha \in \AA(\gen{z})$. Thus, $t$ and $z$ are
not $\F$-conjugate.
\end{proof}

\begin{lemma}
\label{L:t2central}
If Hypothesis~\ref{H:cyclic} holds, then $\C$ is not subintrinsic in $\CC(\F)$.  
\end{lemma}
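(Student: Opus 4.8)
The plan is to argue by contradiction, so suppose that $\C$ is subintrinsic in $\CC(\F)$. By Remark~\ref{R:Subintrinsic} this means exactly that $z \in \I_\F(\H)$, where $\H = C_\C(z) \cong \F_{\Spin_7(q)}$; in particular $\H \in \CC(\F)$. Replacing the pair $(\C, z)$ by a suitable $\F$-conjugate (using Lemma~\ref{L:CCconjugate}), I would arrange that $z$ is fully $\F$-centralized, so that $C_\F(z)$ is saturated and $\H$ is a component of $C_\F(z)$. Throughout, I would keep $t$ as in Hypothesis~\ref{H:cyclic}, and use freely from Lemma~\ref{L:cyclicbasic} that $\gen{t}\in\F^f$, $C_S(T)=Q\gen{z}$, $t$ is not $\F$-conjugate to $z$, and $\Omega_1(Z(C_S(t)))=\gen{t,z}$.

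First I would pin down the local structure at $t$. By (S2), $\C\norm C_\F(t)$, and, as in the proof of Lemma~\ref{L:cyclicbasic}(e), $O^2(C_\F(t))=\C$, so $\C$ is the unique component of $C_\F(t)$. Since $T$ is strongly closed in $C_\F(t)$, the subgroup $Z(T)=\gen z$ is normal and hence central in $S_t:=C_S(t)$, so by Lemma~\ref{L:NEP1} the subsystem $\H=C_\C(z)$ is normal, and thus the unique component, of $C_{C_\F(t)}(z)=C_\F(\gen{t,z})$; and $\gen{t,z}$ is a four group lying in $\X(\H)$ because $t\ne z$ and $t$ centralizes $\C$, hence $\H$. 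Next I would observe that $\C$ is maximal in $\CC(\F)$: in the Pump-Up Lemma any pair of commuting involutions $t',a$ with $t'\in\I(\C)$ and $\gen{t',a}\in\X(\C)$ forces $\gen{t',a}\le C_S(\C)$, a group with cyclic Sylow subgroup $Q$, so $a=t'$; and then the pump-up along $t'$ is trivial since $\C\norm C_\F(t)$. Thus $\C$ satisfies Hypothesis~\ref{H:maximal}.

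Because $\C$ is now a subintrinsic, maximal member of $\CC(\F)$, I would apply \cite[1.9.2]{AschbacherWT} with $\F$ in the role of its ambient system and $\C$ in the role of ``$\M$'' to conclude that $\C$ is contained in some component $\D$ of $\F$; as $\C$ is not a component of $\F$, we get $\C\subsetneq\D$. Since $\C$ is a component of $C_\F(t)$ and $\C\le\D$, E-balance (and the impossibility of the diagonal alternative, which would put $\C$ into a product of two distinct components, contradicting \cite[1.9.2]{AschbacherWT}) forces $\D=\D^t$ and $\C$ to be a component of $C_\D(t)$. Moreover, since $\D\norm\F$ and $\H$ lies over $T\le\D$, the component $\H$ of $C_\F(z)$ is in fact a component of $C_\D(z)$. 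Thus $\D$ is a quasisimple system carrying $\F_{\Sol}(q)\cong\C$ as a Benson--Solomon component of an involution centralizer, and $\C$ is still a subintrinsic, maximal member of $\CC(\D)$ that is not a component of $\D$.

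The final step — which I expect to be the main obstacle — is to derive a contradiction from $z\in\I_\D(\H)$. The plan is to analyze $C_\D(z)$ and show that its component over the subgroup containing $T$ is strictly larger than $\H$: morally this component is $\F_{\Spin_7(q^2)}$, with $t$ acting on it as a field automorphism and $\H=C_{\F_{\Spin_7(q^2)}}(t)\subsetneq\F_{\Spin_7(q^2)}$, so that $\H$ is not a component of $C_\D(z)$ and hence not of $C_\F(z)$ — contradiction. Carrying this out requires first identifying $\D$ as the next larger Benson--Solomon system; the cleanest route is an induction, applying Theorem~\ref{T:main} to the smaller system $\D$ (with $\C$ as the subintrinsic maximal Benson--Solomon member of $\CC(\D)$) to conclude directly that $\C$ is a component of $\D$, contradicting $\C\subsetneq\D$. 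Should the induction on $|S|$ not close — e.g. if $\D$ is a system over all of $S$ — one would instead argue directly, using Proposition~\ref{P:fieldconjugate}, Holt's Theorem for fusion systems \cite[Theorem~2.1.9]{AschbacherQFP}, and the structure theory of $\F_{\Sol}$ and $\F_{\Spin}$ from Section~\ref{SS:spinsol}, to locate the larger $\Spin_7$-type component of $C_\D(z)$ through which $\H$ properly pumps up; establishing the existence of that component, and that a field automorphism acts nontrivially on it, is the crux of the argument.
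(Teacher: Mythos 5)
Your proposal does not close: the step you yourself flag as ``the crux'' --- identifying the component $\D \supsetneq \C$ of $\F$ and showing that $\H$ pumps up properly inside $C_\D(z)$ --- is exactly the hard content, and neither route you offer for it is available. Applying Theorem~\ref{T:main} to $\D$ is circular, since Lemma~\ref{L:t2central} is one of the final ingredients in the proof of Theorem~\ref{T:main}; no induction on $|S|$ or on the number of morphisms is set up, and as you note it would not close when $\D$ lives over all of $S$. The ``direct'' alternative --- recognizing $\D$ as $\F_{\Sol}(q^2)$ and exhibiting the $\Spin_7(q^2)$-component of $C_\D(z)$ on which $t$ acts as a field automorphism --- amounts to the identification carried out in Theorem~\ref{T:main2}, which rests on Walter's Theorem, which in turn rests on Theorem~\ref{T:main}; it cannot be invoked here. (There are also smaller imprecisions earlier, e.g.\ the maximality argument should rule out $a\in\{z,tz\}$ by noting $\C\not\subseteq C_\F(z)$ rather than by appealing to a ``cyclic'' centralizing group, since $C_S(T)=Q\gen{z}$ has $2$-rank~$2$.)

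The paper's proof is entirely different and much more local. It splits on whether $t\in Z(S)$. If $t\notin Z(S)$, an element $a\in N_S(S_t)-S_t$ must send $t$ to $tz$ (using $\Omega_1(Z(S_t))=\gen{t,z}$ and that $t\not\sim_\F z$); but since distinct components of $C_\F(z^\alpha)$ have commuting Sylow groups, $a$ is forced to normalize $T$, hence by (S4) to act on $\C$ and on $Q=C_{S_t}(\C)$, giving $t^a=t$ --- a contradiction. If $t\in Z(S)$, then $\Aut_\F(\gen{t,z})$ has odd order yet cannot contain an order-$3$ element, so it is trivial and $\gen{t}$ is weakly closed in $Z(S)$, contradicting Lemma~\ref{L:cyclicbasic}(d). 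The field-automorphism structure you correctly sense is relevant does enter, but only through Lemma~\ref{L:2rank1residual}(c) (underlying part~(d) of Lemma~\ref{L:cyclicbasic}), not through an identification of a global component. I would redo the argument along these lines.
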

\begin{proof}
Assume on the contrary that $\C$ is subintrinsic in $\CC(\F)$. As argued in
Remark~\ref{R:Subintrinsic}, this means that $z\in\I_\F(\H)$ where
$\H=C_\C(z)$. 

Assume first that $t \notin Z(S)$. Then $S_t < S$, so that $S_t < N_S(S_t)$.
Fix $a \in N_S(S_t)-S_t$.  Then $t^a = tz$ and $z^a = z$ by
Lemma~\ref{L:cyclicbasic}(c,e).

As $z\in\I_\F(\H)$, we may pick $\alpha\in\AA(z)$ such that $\H^\alpha$ is a
component of $C_\F(z^\alpha)$. Since $z^a = z$, we may define
$\ov{a}:=a^\alpha\in C_S(z^\alpha)$. Then $(\H^\alpha)^{\ov{a}}$ is a component
of $C_\F(z)$ on $(T^\alpha)^{\ov{a}}=(T^a)^\alpha$. However, if
$(\H^\alpha)^{\ov{a}} \neq \H^\alpha$, then since Sylow subgroups of distinct
components commute, we would have $[(T^a)^\alpha,T^\alpha]=1$ and thus $T^a
\leq C_{S_t}(T) \leq Q\gen{z}$ by Lemma~\ref{L:cyclicbasic}(b), and we would be
forced to conclude that $T$ is abelian. Since this is not the case, $\ov{a}$
normalizes $T^\alpha$, which implies that $a$ normalizes $T$.  Hence, by (S4)
in Definition~\ref{D:standard}, conjugation by $a$ restricts to an automorphism
of $\C$. As $t\in\F^f$ by Lemma~\ref{L:cyclicbasic}(a), it follows from
\cite[9.1.6.3]{AschbacherFSCT} that $Q = C_{S_t}(\C)$. Thus, $a$ acts also on
$Q = C_{S_t}(\C)$, so that $t^a = t$. This contradicts the choice of $a$. 

We have shown that $t \in Z(S)$. So Lemma~\ref{L:cyclicbasic}(c) yields
$V:=\Omega_1(Z(S)) = \Omega_1(Z(S_t)) = \gen{t,z}$, while
Lemma~\ref{L:cyclicbasic}(e) says that $t$ is not $\F$-conjugate to $z$. Notice
that $\Aut_\F(V)$ is by the Sylow axiom of odd order, since $S$ centralizes
$V$. As $\Aut(V)\cong S_3$ and every element of $\Aut(V)$ of order $3$ acts
transitively on $V^\#$, it follows that $\Aut_\F(V)=1$. If $t$ is
$\F$-conjugate to an element of $Z(S)$ under an $\F$-morphism $\alpha$, then by
Lemma~\ref{AAnonempty}, $\alpha$ can be assumed to be an $\F$-automorphism of
$S$, which thus restricts to an element of $\Aut_\F(V)$.  This shows that
$\gen{t}$ is weakly closed in $Z(S)$, contradicting
Lemma~\ref{L:cyclicbasic}(d). 
\end{proof}

We are now in the position to complete the proof of Theorem~\ref{T:main}, so we
now drop the standing assumption from the beginning of Section~\ref{S:elemab}
that Hypothesis~\ref{H:standard} holds.

\begin{proof}[Proof of Theorem~\ref{T:main}]
If $\F$ is a counterexample to Theorem~\ref{T:main}, then we may choose the
notation such that Hypothesis~\ref{H:maximal} holds (cf.
Remark~\ref{R:Subintrinsic}). So by Theorem~\ref{T:Cstandard},
Hypothesis~\ref{H:standard} holds. Adopt Notation~\ref{N:standard}. Thus,
Proposition~\ref{P:quaternion} yields that $Q$ is cyclic, so that
Hypothesis~\ref{H:cyclic} holds. However, now Lemma~\ref{L:t2central} yields a
contradiction to the assumption that $\C$ is subintrinsic. 
\end{proof}

\section{General Benson-Solomon components}\label{S:general}

In this section, we apply Walter's Theorem for fusion systems
\cite[Theorem]{AschbacherWT} to treat the general Benson-Solomon component
problem under the assumption that all components in involution centralizers are
on the list of currently known quasisimple $2$-fusion systems, and we thus
complete the proof of Theorem~\ref{T:main2}.  We stress that the proof of
Walter's theorem relies in turn on our Theorem~\ref{T:main}. 

\textbf{Throughout this section, let $\F$ be a saturated fusion system over a
$2$-group $S$.}

The reader is referred to Section~\ref{SS:known} for more information on the
class of known quasisimple fusion systems, as well as for the definition of the
subclass $\Chev[\larg]$. The following theorem is essentially a restatement of
Theorem~\ref{T:main2}. For if Theorem~\ref{T:maingen}(2) holds, then it follows
from \cite[10.11.3]{AschbacherGeneralized} that $\C$ is diagonally embedded in
$\D\D^t$ with respect to $t$.

\begin{theorem}
\label{T:maingen}
Let $\F$ be a saturated fusion system over the $2$-group $S$.  Assume that all
members of $\CC(\F)$ are known and that some fixed member $\C \in \CC(\F)$ is
isomorphic to $\F_{\Sol}(q)$ for some odd $q$. Then for each $t \in \I(\C)$,
there exists a component $\D$ of $\F$ such that one of the following holds.
\begin{enumerate}
\item $\D = \C$; 
\item $\D \cong \C$, $\D^t \neq \D$, $t\in (\D\D^t\gen{t})^f$ and $\C=C_{\D\D^t}(t)$; or
\item $\D \cong \F_{\Sol}(q^2)$, $t \nin \D$, $t\in(\D\gen{t})^f$ and $\C =
C_\D(t)$.
\end{enumerate}
\end{theorem}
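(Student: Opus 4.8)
The plan is to apply the Pump-Up Lemma (Lemma~\ref{L:pumpupbasic}) to the pair $(\C,t)$ with an auxiliary fully centralized involution, then leverage Walter's Theorem \cite{AschbacherWT} to identify the pumped-up component, and finally use Theorem~\ref{T:main} together with Proposition~\ref{P:fieldconjugate} to pin down the three possibilities. Concretely, fix $t \in \I(\C)$ and choose $\alpha \in \AA(t)$ so that $t^\alpha$ is fully $\F$-centralized; after replacing $(\C,t)$ by $(\C^\alpha, t^\alpha)$ via Lemma~\ref{L:CCconjugate}, we may assume $t$ itself is fully centralized, so that $\C$ is a component of the saturated system $C_\F(t)$. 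We may also assume $\C$ is \emph{not} a component of $\F$, else conclusion (1) holds with $\D = \C$.

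\textbf{Reduction to a maximal component.} First I would like to arrange that $\C$ is maximal in $\CC(\F)$. If it is not, then by the E-balance ordering on $\CC(\F)$ we may pass to a component $\C'$ with $\C \in \CC(\C')$ which is maximal; since every involution in the Sylow group of $\C$ is $\C$-conjugate to its central involution $z$ and $\C_\C(z) = \H$ is quasisimple, $\C$ is subintrinsic in $\CC(\C')$ in the sense of the paper. However, rather than iterate in this direction, the cleaner route is: among all proper pump-ups obtained by successive application of Lemma~\ref{L:pumpupbasic} to $(\C,t)$ and various commuting involutions $a$, take a maximal one $\bar{\C}$; this $\bar{\C}$ has no proper pump-ups, hence is a maximal member of $\CC(\F)$. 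By construction $\bar{\C}$ is a pump-up of a Benson-Solomon system, so by Proposition~\ref{P:fieldconjugate} and Lemma~\ref{L:invcentknown} every component of a relevant involution centralizer along the chain is known; and since components of centralizers of involutions acting on $\F_{\Sol}(q)$ are either $\F_{\Spin}(q)$ or Benson-Solomon systems, $\bar{\C}$ is isomorphic to $\F_{\Sol}(q^{2^k})$ for some $k \geq 0$, with $\C = C_{\bar{\C}}(t)$ up to the usual conjugation when $k \geq 1$ (and $\bar{\C} = \C$ when $k = 0$). This uses Lemma~\ref{L:ComponentsFSGroups} to rule out the component being realized by a group, since Benson-Solomon systems are exotic.

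\textbf{Identifying $\bar{\C}$ as a component of $\F$.} Now $\bar{\C}$ is a maximal member of $\CC(\F)$ isomorphic to a Benson-Solomon system. If $\bar{\C}$ is moreover subintrinsic, Theorem~\ref{T:main} gives immediately that $\bar{\C}$ is a component of $\F$. If it is not subintrinsic, I would invoke Walter's Theorem for fusion systems \cite{AschbacherWT} directly: since all members of $\CC(\F)$ are known and $\bar{\C}$ lies over a fully normalized subgroup, the relevant subnormal closure of $\bar{\C}$ in $\F$ is either a component of $\F$, or the $2$-fusion system of a finite group of Lie type in odd characteristic, or again a Benson-Solomon system arising as a component; in the Lie-type case Lemma~\ref{L:knowncent} and Lemma~\ref{L:ComponentsFSGroups} would force $\bar{\C}$ to be the fusion system of a component of that group, contradicting exoticity. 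Either way we obtain a component $\D_0$ of $\F$ with $\bar{\C} \subseteq \D_0$, and since $\bar{\C}$ is quasisimple and maximal, $O^2$-considerations force $\bar{\C} = \D_0$ (the inclusion $\bar{\C} = O^2(\bar{\C}) \subseteq O^2(\D_0) = \D_0$ combined with maximality gives equality). So $\bar{\C}$ is a component $\D$ of $\F$ with $\D \cong \F_{\Sol}(q^{2^k})$.

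\textbf{Sorting into the three cases.} It remains to analyze how $t$ acts on $E(\F)$ relative to this $\D$. Since $\C$ is a component of $C_\F(t)$ and $\C \subseteq \bar{\C} = \D$ (after the initial pump-up, which fixed $\C$ inside $\bar{\C}$), the E-balance/Pump-Up dichotomy of Lemma~\ref{L:pumpupbasic}, applied now with $a$ trivial so that we are comparing $\C$ as a component of $C_\F(t)$ against the components of $\F$, yields exactly: either $\C = \D$ (case (1), $k = 0$); or $\D^t \neq \D$ and $\C$ is the diagonal image $C_{\D\D^t}(t)$ (case (2)), which forces $\C \cong \D$ hence $k = 0$ again; or $\D^t = \D$, $t \notin \D$, and $\C = C_\D(t)$ with $\D \cong \F_{\Sol}(q^2)$ — here $k = 1$, because $C_\D(t) \cong \C \cong \F_{\Sol}(q)$ and Proposition~\ref{P:fieldconjugate}(b) shows that the centralizer of a fully centralized involution in $D - S_\sigma$ inside $\F_{\Sol}(q_l)$ is $\F_{\Sol}(q_{l-1})$, so the index of the field automorphism is forced to be $2$. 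In each case the fully-normalized clause (e.g. $t \in (\D\gen{t})^f$) follows from Lemma~\ref{L:NEP1} and Lemma~\ref{L:Subnormalfn} after adjusting $t$ by a morphism in $\AA_{E(\F)\gen{t}}(t)$, which does not disturb the identification of $\C$ as the appropriate centralizer by Lemma~\ref{L:NEP2}.

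\textbf{The main obstacle.} The hard part will be the transition in the second paragraph: showing that the maximal pump-up $\bar{\C}$ really is a Benson-Solomon system of the form $\F_{\Sol}(q^{2^k})$ and nothing else, and that Walter's Theorem applies to it. This requires carefully tracking, through each application of Lemma~\ref{L:pumpupbasic}, that the component stays on the list of known quasisimple systems and that the only way a Benson-Solomon system can pump up is to the next one via a field automorphism of order $2$ — this is exactly where Proposition~\ref{P:fieldconjugate}, Lemma~\ref{L:involutionsconjugate}, and Linckelmann's rigidity result (\cite[Theorem~4.2]{HenkeLynd2018}) are all needed simultaneously. A secondary subtlety is ensuring, when we replace $(\C,t)$ by conjugates to make things fully (normalized/centralized), that the final statement's fully-normalized conditions survive; this is bookkeeping with Lemmas~\ref{L:NEP1} and \ref{L:NEP2} but must be done in the right order.
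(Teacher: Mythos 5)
There is a genuine gap in the second paragraph of your proposal, at the step ``Identifying $\bar{\C}$ as a component of $\F$.'' Walter's Theorem cannot be ``invoked directly'' on $\bar{\C}$: its hypothesis requires a member of $\CC(\F)$ lying in $\Chev[\larg]$, i.e.\ the $2$-fusion system of a (not too small) group of Lie type in odd characteristic, and a Benson--Solomon system is not such a member. Nor can you fall back on Theorem~\ref{T:main}, since a maximal Benson--Solomon member of $\CC(\F)$ need not be subintrinsic --- subintrinsicness here means $z\in\I_\F(\H)$ for $\H=C_\C(z)\cong\F_{\Spin}(q)$, i.e.\ that $\H$ is (up to conjugacy) a component of $C_\F(z)$, and this is exactly what is \emph{not} available a priori; indeed the paper stresses that Theorem~\ref{T:main} does not by itself rule out a simple system with a Benson--Solomon involution-centralizer component. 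The actual proof manufactures the missing $\Chev[\larg]$ member: it first shows (by E-balance along a subnormal series) that $\H$ is a component of $C_{C_\F(z)}(t)$, then pumps $\H$ up to a component $\M$ of $C_\F(z)$, and in the nontrivial case applies Walter's Theorem \emph{to $C_\F(z)$ with $\H$ in the role of $\L$} (using Lemma~\ref{L:invcentknown} to verify its hypotheses there) to conclude $\M\in\Chev[\larg]$ and $z\in\I(\M)$. Only then does Walter's Theorem apply to $\F$ itself.

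A second structural omission: the paper's argument is a minimal-counterexample induction. Citing \cite[1.3]{AschbacherWT} it produces the $t^\alpha$-invariant component $\D$ with $\C^\alpha\in\Comp(C_{\D\gen{t^\alpha}}(t^\alpha))$, observes that $\D\gen{t^\alpha}$ is again a counterexample, and concludes by minimality that $\F=\D\gen{t^\alpha}$, so $F^*(\F)=O^2(\F)=\D$ is quasisimple. This reduction is what allows Walter's Theorem to identify $\D$ (as either a member of $\Chev[\larg]$, excluded by tameness and exoticity via Lemmas~\ref{L:knowncent} and \ref{L:ComponentsFSGroups}, or a Benson--Solomon system, pinned down by Proposition~\ref{P:fieldconjugate}). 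Your iterated-pump-up construction of $\bar{\C}$ and your case sorting at the end are in the right spirit (though the ``Pump-Up Lemma with $a$ trivial'' is really the L-balance statement \cite[1.3]{AschbacherWT}, and your claim that the pump-up of a Benson--Solomon system among known systems is again Benson--Solomon needs the exoticity argument via Lemmas~\ref{L:knowncent} and \ref{L:ComponentsFSGroups} rather than a bare appeal to the structure of involution centralizers), but without the $\Chev[\larg]$ member and the quasisimple reduction the decisive application of Walter's Theorem is unsupported, and the proof does not close.
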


It is worth remarking on a few technical points: if $t\in S$
normalizes a component $\D$ of $\F$, then $\D$ is normal in $E(\F)\gen{t}$ and
thus we may form $\D\gen{t}$ inside of $E(\F)\gen{t}$.  Moreover, if $t$ is
fully $\D\gen{t}$-normalized, then $C_\D(t)=C_\D(t)_{E(\F)\gen{t}}$ is defined
(cf. Definition~\ref{D:NEP}).  Whenever we write $C_\D(t)$, we mean
implicitly that $t$ normalizes $\D$.  If $\D$ is a component of $\F$ with
$\D\neq\D^t$, then similarly $\D\D^t$ is normal in $E(\F)\gen{t}$, thus we may
form $\D\D^t\gen{t}=(\D\D^t\gen{t})_{E(\F)\gen{t}}$ and, if $t\in
(\D\D^t\gen{t})^f$, then also $C_{\D\D^t}(t)=C_{\D\D^t}(t)_{E(\F)\gen{t}}$.

The remainder of this section is devoted to the proof of
Theorem~\ref{T:maingen}. As Walter's theorem for fusion systems is applied
twice in the proof, we restate that theorem here. 

\begin{theorem}[Walter's Theorem for fusion systems, \cite{AschbacherWT}]\label{T:WT}
Suppose that all members of $\CC(\F)$ are known and that there exists $\L\in
\CC(\F)$ such that $\L$ is in $\Chev[\larg]$. Let $t\in \I(\L)$  such that $t$
is fully $\F$-centralized. Then there exists a component $\D$ of $\F$ such that
one of the following holds.  
\begin{itemize}
\item [(1)] $\D\in\Chev[\larg]$ and $\L\in\Comp(C_\D(t))$;
\item [(2)] $\L=E(C_{\D\D^t}(t))$ is a homomorphic image of $\D$, so $\D\in\Chev[\larg]$;
\item [(3)] $\L$ is the $2$-fusion system of $\Spin_7(q)$, $\L=C_\D(t)$, and
$\D = \F_{\Sol}(q)$ for some odd prime power $q$; or
\item [(4)] $\L$ is the $2$-fusion system of $SL_2(9)$, $\L\in\Comp(C_\D(t))$,
and $\D$ is the $2$-fusion system of a finite group $D$ such that either
$D\cong 2A_n$ or $D/Z(D)\cong L_3(4)$ and $Z(\L)\leq\Phi(Z(D))$.
\end{itemize}
\end{theorem}

As in the statement of Walter's Theorem, $\L$ in this section is always
some type of fusion subsystem, and not a linking system. We will not need to
make any explicit reference to linking systems in this section.

We continue now with three technical lemmas that will be needed in the proof of
Theorem~\ref{T:maingen}.

\begin{lemma}\label{L:invcentknown}
Assume all members of $\CC(\F)$ are known quasisimple $2$-fusion systems, and
fix a fully centralized involution $z$ of $\F$. Then all members of
$\CC(C_\F(z))$ are known.
\end{lemma}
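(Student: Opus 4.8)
The plan is to take $\M \in \CC(C_\F(z))$, pump it up \emph{inside} the saturated system $C_\F(z)$ so as to relate it to a member of $\CC(\F)$ (which is known by hypothesis), and then deduce that $\M$ itself is known. First I would unwind the definition of $\CC(C_\F(z))$: since $\I_{C_\F(z)}(\M)\neq\varnothing$, Lemma~\ref{L:CCconjugate} applied inside $C_\F(z)$ lets me replace $\M$ by a $C_\F(z)$-conjugate and fix an involution $a\in C_S(z)$ with $\gen{a}$ fully normalized in $C_\F(z)$ and $\M$ a component of $N_{C_\F(z)}(\gen{a})=C_{C_\F(z)}(a)=C_\F(\gen{z,a})$. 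If $a\in\gen{z}$, then $\M$ is a component of $C_\F(z)=N_\F(\gen{z})$ and, as $z$ is fully normalized, $z\in\I_\F(\M)$, so $\M\in\CC(\F)$ is known; hence I may assume $a\neq z$.

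Next I would apply the Pump-Up Lemma~\ref{L:pumpupbasic} with $C_\F(z)$ in the role of the ambient system, $\M$ in the role of $\C$, the involution $a$ in the role of $t$, and $z$ in the role of the involution toward which one pumps. The hypotheses hold: $a\in\I_{C_\F(z)}(\M)$ from the previous step; $\gen{a,z}$ centralizes the Sylow group of $\M$ and $\M\subseteq C_\F(\gen{z,a})=C_{C_\F(z)}(\gen{a,z})$, so $\gen{a,z}\in\X_{C_\F(z)}(\M)$; and $z$ is fully normalized in $C_\F(z)$ (being central in $C_S(z)$), so the map ``$\alpha$'' of that lemma may be taken to be the identity and $C_{C_\F(z)}(z)=C_\F(z)$. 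The lemma then yields one of three outcomes. In the trivial case $\M$ is a component of $C_\F(z)$, so $\M\in\CC(\F)$ is known. In the other two cases there is a component $\D$ of $C_\F(z)=N_\F(\gen{z})$, which (as $z$ is fully normalized) lies in $\CC(\F)$ and so is known. In the diagonal case $\M$ is a homomorphic image of $\D$, whence $\M/Z(\M)\cong\D/Z(\D)$ is a known simple $2$-fusion system and $\M$ is a known quasisimple $2$-fusion system. In the proper case there are a morphism $\zeta$ of $C_\F(z)$ and an involution $b:=a^\zeta$ normalizing $\D$ with $\M^\zeta$ (isomorphic to $\M$) a component of $C_{\D\gen{b}}(b)$, the product being formed inside $C_\F(z)$; so it remains to show that any component of an involution centralizer inside the known quasisimple system $\D$ is known.

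I would finish this last point by cases on $\D$. If $\D$ is not a Benson-Solomon system, then $\D$ is the $2$-fusion system of a known finite quasisimple group $K$; using that $\D$ is tamely realized together with Theorem~\ref{T:reduct}, the extension $\D\gen{b}$ is realized by a finite group $G$ with $F^*(G)=K$, so $C_{\D\gen{b}}(b)$ is the $2$-fusion system of $C_G(b)/O(C_G(b))$; the components of the latter group are known quasisimple by Lemma~\ref{L:knowncent}, hence by Lemma~\ref{L:ComponentsFSGroups} the fusion-system components of $C_{\D\gen{b}}(b)$ are the $2$-fusion systems of those and are therefore known, and in particular so is $\M^\zeta$, hence $\M$. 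If instead $\D\cong\F_{\Sol}(r)$ for an odd prime power $r$, which by Lemma~\ref{L:SolEquivalences} we may take to be an even power of $5$, then either $b$ induces an inner automorphism of $\D$, in which case $C_{\D\gen{b}}(b)$ has unique component the $2$-fusion system of $\Spin_7(r)$ (all involutions of $\F_{\Sol}(r)$ being conjugate by Lemma~\ref{L:involutionsconjugate}), or $b$ induces an outer automorphism of $\D$, which by \cite[Theorem~3.10]{HenkeLynd2018} is the order-$2$ field automorphism, forcing $\D\gen{b}$ to be the corresponding field extension and, by Proposition~\ref{P:fieldconjugate}(b), $C_{\D\gen{b}}(b)$ to have unique component $\cong\F_{\Sol}(\sqrt{r})$; either way $\M^\zeta$, being that component by Lemma~\ref{L:DirectProductComponent}, is known, and hence so is $\M$.

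The hardest part will be this last case analysis --- identifying the components of an involution centralizer inside a known quasisimple $2$-fusion system. For the systems of finite quasisimple groups this forces a detour through tameness and Oliver's reduction theorem (Theorem~\ref{T:reduct}) back into a finite group, where Lemma~\ref{L:knowncent} and Lemma~\ref{L:ComponentsFSGroups} can be brought to bear; for the Benson-Solomon systems it rests on the explicit structure of their field extensions and on the computation of the associated involution centralizers in Proposition~\ref{P:fieldconjugate}. A secondary nuisance is the bookkeeping in the reduction step, where all the full-normalization hypotheses of the Pump-Up Lemma must be arranged inside $C_\F(z)$ rather than in $\F$ itself.
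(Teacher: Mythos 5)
Your proof is correct and follows essentially the same route as the paper's: relate a component of $C_{C_\F(z)}(t)$ to a component of $C_\F(z)$ (hence a known member of $\CC(\F)$, as witnessed by the fully centralized $z$) via an L-balance trichotomy, then identify components of involution centralizers in that known system using tameness, Theorem~\ref{T:reduct}, Lemmas~\ref{L:knowncent} and \ref{L:ComponentsFSGroups} in the group case, and Lemma~\ref{L:involutionsconjugate} with Proposition~\ref{P:fieldconjugate} in the Benson--Solomon case. The only cosmetic difference is that you derive the trichotomy by applying the Pump-Up Lemma~\ref{L:pumpupbasic} inside $C_\F(z)$ toward the central involution $z$, whereas the paper cites the L-balance statement \cite[1.3]{AschbacherWT} directly.
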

\begin{proof}
Given a fully centralized involution $t$ in $C_\F(z)$ and a component $\K$ of
$C_{C_\F(z)}(t)$, it follows from \cite[1.3]{AschbacherWT} that there is a
component $\L$ of $C_\F(z)$ such that either $\K$ is a homomorphic image
of $\L$, or $\L$ is $t$-invariant, $t$ does not centralize $\L$, and $\K \in
\Comp(C_{\L\gen{t}}(t))$. In the former case, $\K$ is known, so assume the
latter case. Then $\K$ is a component in the centralizer of some involution in
an almost quasisimple extension $\L\gen{t}$ of $\L \in \CC(\F)$.  By assumption
and Theorem~2.29 of \cite{AschbacherOliver2016}, either $\L$ is a
Benson-Solomon system, or $\L$ is tamely realized by a finite quasisimple
group. If $\L$ is a quasisimple extension of $\F_{\Sol}(q)$, then $\L =
\F_{\Sol}(q)$ by a result of Linckelmann \cite[Theorem~4.2]{HenkeLynd2018}.
Thus Lemma~\ref{L:involutionsconjugate} and Proposition~\ref{P:fieldconjugate}
yield that $\K$ is either the fusion system of $\Spin_7(q)$ (if $t$ is inner)
or a Benson-Solomon system (if $t$ is not inner).  Hence, $\K$ is known in
this case. On the other hand, if $\L$ is tamely realized by a finite
quasisimple group $L$, then $\L\gen{t}$ is tamely realized by an extension
$L\gen{t}$ of $L$ by Theorem~\ref{T:reduct}. Hence, components in centralizers
of involutions in $L\gen{t}$ are known by Lemma~\ref{L:knowncent}, and so $\K$
is known by Lemma~\ref{L:ComponentsFSGroups}.
\end{proof}

\begin{lemma}\label{L:CCM}
If $\M$ is a saturated subsystem of $\F$ such that $O^2(\M)$ is a component of
$\F$, then $\CC(\M)\subseteq \CC(\F)$.
\end{lemma}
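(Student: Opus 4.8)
The plan is to take an arbitrary $\K \in \CC(\M)$ and show $\K \in \CC(\F)$ by tracing through the definitions of $\CC$, $\I$, and $\tilde{\X}$. Write $\E := O^2(\M)$, a component of $\F$ over some $T_\E \leq S$. Since $\M$ is saturated with $O^2(\M) = \E$, the subsystem $\M$ is a subsystem of $\F$ of $2$-power index over $\E$ (in the sense of Section~\ref{S:prelim}), and in particular $\E \norm \M$. Fix $\K \in \CC(\M)$, say $\K$ is a quasisimple subsystem of $\M$ over $U \leq S$ with $\I_\M(\K) \neq \emptyset$; pick $s \in \I_\M(\K)$. First I would replace $(\K, s)$ by a suitable $\M$-conjugate so that, after applying a morphism in $\AA_\M(s)$ (which exists since $\AA_\M$ is nonempty for every subgroup), we may assume $\gen{s} \in \M^f$ and $\K$ is a component of $N_\M(\gen{s}) = C_\M(s)$. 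The key point is then to identify $C_\M(s)$ with a local subsystem of $\F$ so that ``component of $C_\M(s)$'' upgrades to ``component of something in $\CC(\F)$''.

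The main step is to relate $C_\M(s)$ to $C_\F(s)$ via the theory of local subsystems in product systems developed in Lemmas~\ref{L:NEP1} and \ref{L:NEP2} (indeed, the lemma statement itself flags that these are the tools used here). Since $\M = \M_\F$ is of the form $\E S_\M$ for the Sylow group $S_\M$ of $\M$, the subsystem $C_\M(s) = N_\M(\gen{s})$ is, by Lemma~\ref{L:NEP1} applied with $P = \gen{s}$, normal in $C_\F(s) = N_\F(\gen{s})$ once we have arranged $\gen{s} \in \F^f$; here I would first move $s$ (using a morphism in $\AA_\F(s)$, and noting that $\F$-fully-normalized implies $\M$-fully-normalized by Lemma~\ref{L:Subnormalfn} since $\M$ is subnormal in $\F$, so that the prior reductions on $s$ are preserved up to a further conjugation handled by Lemma~\ref{L:NEP2}). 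Thus $C_\M(s) \norm C_\F(s)$. Now $\K$ is a component of $C_\M(s)$, hence a subnormal quasisimple subsystem of the saturated system $C_\M(s)$; since $C_\M(s)$ is normal in the saturated system $C_\F(s)$, the subsystem $\K$ is subnormal in $C_\F(s)$, and being quasisimple it is therefore a component of $C_\F(s)$. This shows $s \in \I_\F(\K)$, so $\I_\F(\K) \neq \emptyset$, giving $\K \in \CC(\F)$.

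The hard part will be the careful bookkeeping between the two ``full normalization'' conditions and the conjugations needed to pass from $\M^f$ to $\F^f$: one must ensure that after making $\gen{s}$ fully $\F$-normalized, the pair $(\K,s)$ still witnesses membership in $\CC(\M)$. This is where Lemma~\ref{L:NEP2} does the work — it shows that a morphism $\phi \in \Hom_\F(N_{S_\M}(\gen{s})\gen{s}, S)$ carries $C_\M(s) = N_\E(\gen{s})$ isomorphically to $C_\M(s^\phi) = N_\E(\gen{s^\phi})$ inside $\F$, so $\K^\phi$ is still a component of $C_\M(s^\phi)$ — combined with Lemma~\ref{AAnonempty} to realize the move to an $\F$-fully-normalized conjugate of $\gen{s}$ by an element of $\AA_\F(\gen{s})$ defined on $N_S(\gen{s})$. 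Once that is in place, the promotion of ``component of a normal subsystem'' to ``component of the ambient system'' is the standard fact that subnormal quasisimple subsystems of subnormal subsystems are subnormal (transitivity of subnormality), so $\K$ is genuinely a component of $C_\F(s)$, completing the argument.
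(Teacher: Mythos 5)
There is a genuine gap at the central step. You assert that, after arranging $\gen{s}\in\F^f$, Lemma~\ref{L:NEP1} applied with $P=\gen{s}$ gives $C_\M(s)\norm C_\F(s)$. But Lemma~\ref{L:NEP1} requires the subsystem playing the role of $\E$ to be \emph{normal} in the ambient system $\F$, and neither $\M$ nor $\E=O^2(\M)$ is normal in $\F$: a component is only subnormal, and $\M$ itself (e.g.\ $\E\gen{t}$ for $t$ inducing an outer automorphism) need not be normal or even subnormal in $\F$. So the one-shot promotion from ``component of $C_\M(s)$'' to ``component of $C_\F(s)$'' is unjustified. Relatedly, your identification $C_\M(s)=N_\E(\gen{s})$ is false as stated: $N_\E(\gen{s})$ is a subsystem over $N_{T_\E}(\gen{s})\leq T_\E$ (Definition~\ref{D:NEP}), whereas $C_\M(s)$ lives over $C_{S_\M}(s)$, which contains $s$, and $s$ need not lie in $T_\E$ at all.

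The paper's proof shows why a longer chain is unavoidable. Writing $\D=O^2(\M)$, it first uses $\M\subseteq N_\F(\D)$ and the uniqueness of product systems to identify $(\D\gen{t})_\M$ with $(\D\gen{t})_{E(\F)\gen{t}}$; then Lemma~\ref{L:NEP1} applied \emph{inside $\M$} (where $\D$ is normal) gives $C_\D(t)\norm C_\M(t)$, and an argument with \cite[9.6]{AschbacherGeneralized} (using that the Sylow of $\C$ is nonabelian) upgrades $\C$ from a component of $C_\M(t)$ to a component of $C_\D(t)$. Only then does one conjugate $t$ into $(E(\F)\gen{t})^f$ via Lemma~\ref{L:NEP2}, pass to $C_{E(\F)}(t^\phi)$ using normality of $\D$ in $E(\F)\gen{t}$, and finally conjugate again and invoke $E(\F)\norm\F$ to get $C_{E(\F)}(t^{\phi\alpha})\norm C_\F(t^{\phi\alpha})$, whence $\C^{\phi\alpha}$ is a component of $C_\F(t^{\phi\alpha})$ and $t\in\tilde\X(\C)$. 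Your proposal has the right spirit (chain normality of local subsystems via Lemmas~\ref{L:NEP1} and \ref{L:NEP2}), but it collapses this three-stage chain $\D\norm \D\gen{t}\subseteq E(\F)\gen{t}$, $E(\F)\norm\F$ into a single invalid application, and it omits the step that moves $\C$ from a component of $C_\M(t)$ down into $C_\D(t)$ before any comparison with $\F$-local subsystems can begin.
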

\begin{proof}
Let $\M$ be a saturated subsystem of $\F$ over the subgroup $M \leq S$ such
that $\D:=O^2(\M)$ is a component of $\F$. Write $D$ for the Sylow subgroup of
$\D$. Fix $\C \in \CC(\M)$ and $t \in \I_\M(\C)$. We will show that
$\C\in\CC(\F)$. By definition of $\I_\M(\C)$ and by
Lemma~\ref{L:CCconjugate}(b), replacing $(t,\C)$ by $(t^\alpha,\C^\alpha)$ for
some suitable $\alpha\in\AA_\M(t)$, we may assume that $\gen{t}$ is fully
$\M$-normalized and $\C$ is a component of $C_\M(t)$.

As $\D$ is a component of $\F$, the normalizer $N_\F(\D)$ is defined in
\cite[Definition~2.2.1]{AschbacherFSCT}. By construction, this is a subsystem
of $\F$ over $N_S(D)$. Moreover, by \cite[Theorem~2.1.14, 2.1.15,
2.1.16]{AschbacherFSCT}, $N_\F(\D)$ is a saturated, $\D$ is normal in
$N_\F(\D)$, and every saturated subsystem of $\F$ in which $\D$ is normal is
contained in $N_\F(\D)$. In particular, $\M\leq N_\F(\D)$. Observe that
$\D$ is also normal in $E(\F)\gen{t}$ and thus $E(\F)\gen{t}\leq
N_\F(\D)$. By \cite[Theorem~1]{Henke2013}, $(\D\gen{t})_{N_\F(\D)}$ is the
unique saturated subsystem $\m{Y}$ of $N_\F(\D)$ over $D\gen{t}$ such that
$O^2(\m{Y})=O^2(\D)=\D$. Thus,
$(\D\gen{t})_\M=(\D\gen{t})_{N_\F(\D)}=(\D\gen{t})_{E(\F)\gen{t}}$ and we will
denote this subsystem by $\D\gen{t}$. As a consequence,
$C_\D(t)_\M=C_\D(t)_{E(\F)\gen{t}}$ and again we will denote this subsystem
just by $C_\D(t)$. 

As $\gen{t}$ is fully $\M$-normalized, it follows from Lemma~\ref{L:NEP1}
(applied with $\M$ and $\D$ in place of $\F$ and $\E$) that $\gen{t}$ is fully
$\D\gen{t}$-normalized and $C_\D(t)$ is a normal subsystem of $C_\M(t)$.
Recall that $\C$ is a component of $C_\M(t)$ and write $T$ for the Sylow of
$\C$. Observe that $\C=O^2(\C) \leq O^2(C_{\M}(t)) \leq \D$ and thus
$T\leq C_D(t)$. By \cite[9.1.2]{AschbacherGeneralized}, $T$ is nonabelian and
thus $[T,C_D(t)]\neq 1$. Hence, \cite[9.6]{AschbacherGeneralized} gives that
$\C$ is a component of $C_\D(t)$. 

Let $\phi\in\AA_{E(\F)\gen{t}}(t)$ so that $t^\phi\in (E(\F)\gen{t})^f$. Note
that $E(\F)\gen{t}=E(\F)\gen{t^\phi}$. By Lemma~\ref{L:NEP2} applied with
$E(\F)\gen{t}$ and $\D$ in place of $\F$ and $\E$, we have
$\gen{t^\phi}\in(\D\gen{t^\phi})^f$, $C_D(t)^\phi=C_D(t^\phi)$, and
$\phi|_{C_D(t)}$ induces an isomorphism from $C_\D(t)$ to $C_\D(t^\phi)$. Thus
$\C^\phi$ is a component of $C_\D(t^\phi)$. 

By Lemma~\ref{L:NEP1} applied with $E(\F)\gen{t}$, $\D$ and $\gen{t^\phi}$ in
place of $\F$, $\E$ and $P$, we have $C_\D(t^\phi)\unlhd
C_{E(\F)\gen{t^\phi}}(t^\phi)$. So $\C^\phi$ is subnormal in
$C_{E(\F)\gen{t^\phi}}(t^\phi)$ and thus a component of
$C_{E(\F)\gen{t^\phi}}(t^\phi)$. As $t^\phi\in (E(\F)\gen{t})^f$, it follows
from Lemma~\ref{L:NEP1} applied with $E(\F)\gen{t^\phi}$ and $E(\F)$ in place
of $\F$ and $\E$ that $C_{E(\F)}(t^\phi)\unlhd C_{E(\F)\gen{t^\phi}}(t^\phi)$.
Writing $E$ for the Sylow subgroup of $E(\F)$, we have $T^\phi\leq
C_D(t^\phi)\leq C_E(t^\phi)$. As $T^\phi$ is nonabelian, it follows thus from
\cite[9.6]{AschbacherGeneralized} that $\C^\phi$ is a component of
$C_{E(\F)}(t^\phi)$.

Let now $\alpha\in\AA_\F(t^\phi)$. By Lemma~\ref{L:NEP2}, $t^{\phi\alpha}$ is
fully $E(\F)\gen{t^{\phi\alpha}}$-centralized,
$C_E(t^\phi)^\alpha=C_E(t^{\phi\alpha})$, and $\alpha|_{C_E(t^\phi)}$ induces an
isomorphism from $C_{E(\F)}(t^\phi)$ to $C_{E(\F)}(t^{\phi\alpha})$. So
$\C^{\phi\alpha}$ is a component of $C_{E(\F)}(t^{\phi\alpha})$. By
Lemma~\ref{L:NEP1}, $C_{E(\F)}(t^{\phi\alpha})\unlhd C_\F(t^{\phi\alpha})$ and
thus $\C^{\phi\alpha}$ is a component of $C_\F(t^{\phi\alpha})$. In particular,
$t^{\phi\alpha}\in\tilde{\X}(\C^{\phi\alpha})$ and so $t\in\tilde{\X}(\C)$ by
Lemma~\ref{L:CCconjugate}(b) applied with $t^{\phi\alpha}$,
$\C^{\phi\alpha}$ and $(\phi\alpha|_{T\<t\>})^{-1}$ in place of $X$, $\C$ and
$\phi$. This implies $\C\in\CC(\F)$ as required.
\end{proof}

\begin{lemma}\label{L:ConjugateDDt}
Let $\C\in\CC(\F)$ be a subsystem over $T\leq S$. Fix $t\in \I(\C)$ and let
$\gamma\in\Hom_\F(\gen{T,t},S)$. If there exists a component $\D$ of $\F$ such
that one of the conditions (1), (2), or (3) in Theorem~\ref{T:maingen} holds,
then there exists a component $\hat{\D}$ of $\F$ such that the same condition
holds with $t^\gamma$, $\C^\gamma$ and $\hat{\D}$ in place of $t$, $\C$ and
$\D$. 
\end{lemma}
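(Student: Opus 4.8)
The plan is to transfer each of conditions (1)--(3) of Theorem~\ref{T:maingen} along the $\F$-conjugation $\gamma$, treating the three cases in turn. A common preliminary: applying Lemma~\ref{L:CCconjugate}(b) with $X=\gen{t}$ and $\phi=\gamma$ shows $\C^\gamma\cong\C$ with $\C^\gamma\in\CC(\F)$ and $t^\gamma\in\I(\C^\gamma)$, so for any component $\hat\D$ we eventually produce, the triple $(t^\gamma,\C^\gamma,\hat\D)$ is of the required shape. Condition (1) is then immediate: if $\D=\C$ is a component of $\F$, then since $\gen{t,T}\geq T$ and $T$ is the Sylow group of $\D$, Lemma~\ref{L:ConjugateComponents}(b) shows $\hat\D:=\C^\gamma$ is again a component of $\F$.

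The content lies in cases (2) and (3). Here $T$ is a \emph{proper} subgroup of the Sylow group of the relevant component(s) --- of $DD^t$ in case (2), of $D$ in case (3), where $D$ is the Sylow group of $\D$ --- so $\gamma$ is a priori not defined on enough of $S$ to conjugate $\D$; and since $\D$ (resp.\ $\D\D^t$) is only subnormal in $\F$, neither Lemma~\ref{L:ConjugateComponents}(b) nor Lemma~\ref{L:NEP2} applies to $\gamma$ directly. To get around this I would use the generation statement $\F=\langle F^*(\F)S,\,N_\F(S_0)\rangle$ of \cite[1.3.2]{AschbacherFSCT}, where $S_0$ is the Sylow group of $F^*(\F)$: by that statement $\gamma$ is a composite of restrictions of morphisms each lying in $F^*(\F)S$ or in $N_\F(S_0)$, and applying any one such factor again produces a triple of the required form (again by Lemma~\ref{L:CCconjugate}(b)), so an induction on the number of factors reduces us to the case where $\gamma$ is itself such a morphism. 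One then arranges that $\gamma$ extends to an $\F$-morphism $\bar\gamma$ defined on $\gen{t}S_0$ with $S_0^{\bar\gamma}=S_0$ and $\bar\gamma|_{S_0}\in\Aut_\F(S_0)$: for $N_\F(S_0)$ this holds by definition of the normalizer system, and for $F^*(\F)S$ one uses the construction of product systems in \cite{Henke2013} to split $\gamma$ into $S$-conjugations --- which normalize the strongly closed subgroup $S_0$ --- and $F^*(\F)$-morphisms, whose domains then lie in $S_0$, and argues accordingly (this last point is the fiddly part, see below). Since $F^*(\F)\norm\F$, the automorphism $\bar\gamma|_{S_0}$ of $S_0$ induces an automorphism of $F^*(\F)$, and every such automorphism permutes the components of $F^*(\F)$, which are precisely the components of $\F$.

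With $\bar\gamma$ in hand the remaining work is routine. Now $\bar\gamma$ is defined on $D$ (and on $DD^t\gen{t}$, which is $t$-invariant since $t$ permutes $\D$ and $\D^t$); it carries $\D$ to a component $\hat\D:=\D^{\bar\gamma}$ and $\D^t$ to $\hat\D^{t^\gamma}$ (using $t^{\bar\gamma}=t^\gamma$); and --- by the characterization of $\D\gen{t}$ and $\D\D^t\gen{t}$ as the unique saturated subsystems over $D\gen{t}$ and $DD^t\gen{t}$ with $O^2$ equal to $\D$ and $\D\D^t$ respectively --- it restricts to an isomorphism of fusion systems $\D\D^t\gen{t}\to\hat\D\hat\D^{t^\gamma}\gen{t^\gamma}$, resp.\ $\D\gen{t}\to\hat\D\gen{t^\gamma}$, sending $\gen{t}$ to $\gen{t^\gamma}$. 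The remaining clauses of condition (2), resp.\ (3), now transfer: $\hat\D\cong\D$ and $\hat\D^{t^\gamma}\neq\hat\D$, resp.\ $t^\gamma\notin\hat\D$, transfer along the isomorphism; $t^\gamma\in(\hat\D\hat\D^{t^\gamma}\gen{t^\gamma})^f$, resp.\ $t^\gamma\in(\hat\D\gen{t^\gamma})^f$, since isomorphisms of fusion systems preserve full normalization; and $\C^\gamma=C_{\hat\D\hat\D^{t^\gamma}}(t^\gamma)$, resp.\ $\C^\gamma=C_{\hat\D}(t^\gamma)$, follows from Lemma~\ref{L:NEP2} applied inside $E(\F)\gen{t}$ with $\D\D^t$, resp.\ $\D$, in the role of the normal subsystem and $\gen{t}$ in the role of $P$, noting that $\bar\gamma$ there restricts to a morphism on $N_{DD^t}(\gen{t})\gen{t}=\gen{t,T}$ and that $\C=C_{\D\D^t}(t)$, resp.\ $\C=C_\D(t)$.

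The main obstacle is precisely the reduction in the second paragraph: because $\gamma$ is given only on $\gen{T,t}$ and the components appearing in cases (2) and (3) are merely subnormal in $\F$, one has to do genuine work --- via the generation statement \cite[1.3.2]{AschbacherFSCT} and the construction of product systems in \cite{Henke2013} --- to replace $\gamma$ by a morphism that genuinely acts on $S_0$ and hence on the relevant subnormal subsystems. Once that is accomplished, the transfer of the three conditions is a matter of unwinding the definitions of the product and centralizer subsystems together with Lemma~\ref{L:NEP2}.
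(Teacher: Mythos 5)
Your overall route is the paper's: condition (1) via Lemma~\ref{L:ConjugateComponents}, and for (2) and (3) a reduction via the generation statement $\F=\gen{E(\F)S,N_\F(S_0)}$ of \cite[1.3.2]{AschbacherFSCT} followed by a transfer of the conditions using the uniqueness of product systems and Lemma~\ref{L:NEP2}. The gap is in your key reduction step: you claim that in both factors one can arrange an extension $\bar\gamma$ of $\gamma$ to $\gen{t}S_0$ with $S_0^{\bar\gamma}=S_0$ and $\bar\gamma|_{S_0}\in\Aut_\F(S_0)$. For the $N_\F(S_0)$-factor this holds by definition of the normalizer system, but for the $E(\F)S$-factor it is false: a morphism of $E(\F)$ itself, from $T$ to another subgroup of $S_0$, does not in general extend to an automorphism of $S_0$ (the extension axiom only gives an extension to $N_\gamma$). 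Your proposed remedy --- splitting such a $\gamma$ into $S$-conjugations and ``$F^*(\F)$-morphisms whose domains lie in $S_0$'' --- cannot produce the required factorization either, because the domain $\gen{T,t}$ contains $t$, which need not lie in $S_0$ (in case (3) one has $t\notin D$). The correct factorization, as in the proof of Lemma~\ref{L:NEP2}, is into a morphism of $\F_S(S)\subseteq N_\F(S_0)$ and a morphism of the \emph{product system} $E(\F)\gen{t}$, and the latter is not the restriction of any automorphism of $S_0$.

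The paper's treatment of that residual case is not to extend $\gamma$ at all. A morphism of $E(\F)\gen{t}$ defined on $\gen{T,t}$ leaves each component $\D$ invariant (since $S_0$ normalizes every component and $t^\gamma\in S_0t$, the action of $t^\gamma$ on components agrees with that of $t$), so one simply takes $\hat\D=\D$; and $\gamma$ is already defined on exactly the subgroup $\gen{T,t}=N_{DD^t}(\gen{t})\gen{t}$ (resp.\ $N_D(\gen{t})\gen{t}$) needed to apply Lemma~\ref{L:NEP2} \emph{inside} $E(\F)\gen{t}$ with $\E=\D\D^t$ (resp.\ $\E=\D$) and $P=\gen{t}$, which yields $t^\gamma\in(\D\D^{t^\gamma}\gen{t^\gamma})^f$ and $\C^\gamma=C_{\D\D^{t^\gamma}}(t^\gamma)$ directly. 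A related slip at the end of your argument: Lemma~\ref{L:NEP2} requires the morphism to lie in an ambient system in which $\D$ (or $\D\D^t$) is normal, i.e.\ in $E(\F)\gen{t}$, whereas your $\bar\gamma$ is only an $\F$-morphism. In the $N_\F(S_0)$-case one should instead argue, as the paper does, that the extension $\alpha$ of $\gamma$ to $S_0\gen{t}$ induces an isomorphism $E(\F)\gen{t}\to E(\F)\gen{t^\gamma}$ carrying $\D\gen{t}$ to $\hat\D\gen{t^\gamma}$, and hence $C_\D(t)$ to $C_{\hat\D}(t^\gamma)$ by the uniqueness of normal subsystems of $p$-power index over a given subgroup.
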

\begin{proof}
By Lemma~\ref{L:ConjugateComponents} the claim is clear if (1) holds, so
suppose (2) or (3) holds for some component $\D$ of $\F$. Write $S_0\leq S$ for
the Sylow subgroup of $E(\F)$ and $D$ for the Sylow subgroup of $\D$.  By
\cite[1.3.2]{AschbacherFSCT}, we have $\F=\gen{E(\F)S,N_\F(S_0)}$. So it is
sufficient to show the assertion when $\gamma$ is a morphism in $E(\F)S$ or in
$N_\F(S_0)$. However, if $\gamma$ is a morphism in $E(\F)S$, then by
Lemma~\ref{L:ProductFactorize} applied with
$(\gamma,E(\F),S_0,\gen{t},\gen{T,t})$ in place of $(\phi,\E,T,P,X)$, we can
write $\gamma$ as the composition of a morphism in $E(\F)\gen{t}$ and a
morphism in $\F_S(S)\leq N_\F(S_0)$. Thus, it is enough to show the claim if
$\gamma$ is a morphism in $E(\F)\gen{t}$ or in $N_\F(S_0)$. Notice that our
assumption implies $\C\leq E(\F)$ and thus $\gen{T,t}\leq S_0\gen{t}$.

Suppose that (2) holds. Assume first that $\gamma$ is a morphism in
$E(\F)\gen{t}$. Then $S_0\gen{t}=S_0\gen{t^\gamma}$. As $S_0$ normalizes every
component of $\F$, the action of $t^\gamma$ on the components of $\F$
coincides with the one of $t$. So $\D^t=\D^{t^\gamma}$. In particular,
$\D\neq\D^{t^\gamma}$ and $\D\D^t=\D\D^{t^\gamma}$. Note that (2) implies in
particular $T=C_{DD^t}(t)$, so $\gamma$ is defined on $\<C_{DD^t}(t),t\>$.
Thus, Lemma~\ref{L:NEP2} applied with $E(\F)\gen{t}$ and $\D\D^t$ in place of
$\F$ and $\E$ gives that $t^\gamma$ is fully normalized in
$\D\D^t\gen{t^\gamma}=\D\D^{t^\gamma}\gen{t^\gamma}$ and
$\C=C_{\D\D^t}(t^\gamma)=C_{\D\D^{t^\gamma}}(t^\gamma)$. So (2) holds with
$t^\gamma$ and $\C^\gamma$ in place of $t$ and $\C$, i.e. the assertion is
true for $\hat{\D}=\D$.

Assume now that $\gamma$ is a morphism in $N_\F(S_0)$. Then $\gamma$ extends to
$\alpha\in\Hom_\F(\gen{S_0,t},S)$. So by Lemma~\ref{L:ConjugateComponents}(b),
$\hat{\D}:=\D^\alpha$ is a component of $\F$. Clearly,
$\hat{\D}\cong\D\cong\C\cong\C^\gamma$. Notice that
$\hat{\D}^{t^\gamma}=(\D^\alpha)^{t^\alpha}=(\D^t)^\alpha$. In particular,
$\hat{\D}^{t^\gamma}\neq \hat{\D}$ and $\C^\gamma=\C^\alpha\leq
(\D\D^t)^\alpha=\hat{\D}\hat{\D}^{t^\gamma}$. Moreover, $\alpha$ induces an
isomorphism from $E(\F)\gen{t}$ to $E(\F)\gen{t^\gamma}$ which takes $t$ to
$t^\gamma$ and $\D\D^t$ to $\hat{\D}\hat{\D}^{t^\gamma}$. Thus, $\alpha$
induces an isomorphism from $\D\D^t\gen{t}$ to
$\hat{\D}\hat{\D}^{t^\gamma}\gen{t^\gamma}$. This implies that (2) holds with
$t^\gamma$, $\C^\gamma$ and $\hat{\D}$ in place of $t$, $\C$ and $\D$.

Suppose now that (3) holds and assume again first that $\gamma$ is a morphism
in $E(\F)\gen{t}$. As observed above, $\D$ is normal in $E(\F)\gen{t}$. As
$t\not\in D$, it follows that $t^\gamma\not\in D$. Note moreover that
$S_0\gen{t}=S_0\gen{t^\gamma}$. In particular, as $S_0\gen{t}$ normalizes $\D$,
we have $\D^{t^\gamma}=\D$. As $\C=C_\D(t)$ by assumption, we have $C_D(t)=T$
and thus $\gamma\in\Hom_{E(\F)\gen{t}}(\gen{C_D(t),t},S_0\gen{t})$. Hence, by
Lemma~\ref{L:NEP2} applied with $E(\F)\gen{t}$ and $\D$ in place of $\F$ and
$\E$, we get that $\gen{t^\gamma}\in(\D\gen{t^\gamma})^f$ and
$\C^\gamma=C_\D(t)^\gamma=C_\D(t^\gamma)$. So the assertion holds in this case
for $\hat{\D}=\D$.

Assume now that $\gamma$ is a morphism in $N_\F(S_0)$ and choose a morphism
$\alpha\in\Hom_\F(S_0\gen{t},S)$ which extends $\gamma$. Then
$\hat{\D}:=\D^\alpha$ is a component of $\F$ over $\hat{D}:=D^\alpha$, and
$\alpha$ induces an isomorphism from $E(\F)\gen{t}$ to $E(\F)\gen{t^\gamma}$
which takes $\D$ to $\hat{\D}$ and $t$ to $t^\gamma$. Hence,
$t^\gamma\not\in\hat{D}$ and $\hat{\D}^{t^\gamma}=\hat{\D}$. Moreover, $\alpha$
induces also an isomorphism from $\D\gen{t}=(\D\gen{t})_{E(\F)\gen{t}}$ to
$\hat{\D}\gen{t^\gamma}=(\hat{\D}\gen{t^\gamma})_{E(\F)\gen{t^\gamma}}$. As $t$
is fully $\D\gen{t}$-normalized, it follows that $t^\gamma=t^\alpha$ is fully
$\hat{\D}\gen{t^\gamma}$-normalized. Moreover, $\alpha$ induces an isomorphism
from $C_{\D\gen{t}}(t)$ to $C_{\hat{\D}\gen{t^\gamma}}(t^\gamma)$. Observe also
that $C_D(t)^\alpha=C_{D^\alpha}(t^\alpha)=C_{\hat{D}}(t^\gamma)$. So $\alpha$
takes the unique normal subsystem of $C_{\D\gen{t}}(t)$ over $C_D(t)$ of
$2$-power index to the unique normal subsystem of
$C_{\hat{\D}\gen{t^\gamma}}(t^\gamma)$ over $C_{\hat{D}}(t^\gamma)$ of
$2$-power index. In other words, we have
$C_\D(t)^\alpha=C_{\hat{\D}}(t^\gamma)$ and thus
$\C^\gamma=\C^\alpha=C_\D(t)^\alpha=C_{\hat{\D}}(t^\gamma)$. So (2) holds with
$t^\gamma$, $\C^\gamma$ and $\hat{\D}$ in place of $t$, $\C$, and $\D$. 
\end{proof}

\begin{proof}[Proof of Theorem~\ref{T:maingen}]
Let $\F$ be a counterexample having a minimal number of morphisms. Fix $\C \in
\CC(\F)$ and $t\in\I(\C)$ such that $\C \cong \F_{\Sol}(q)$ and none of the
conclusions (1), (2), or (3) hold for any choice of $\D$. Let $T$ be the
Sylow of $\C$, and write $Z(T) = \gen{z}$. Set $\H = C_\C(z)$.

We may and do assume that $\gen{z}$ is fully $\F$-centralized and that
$\gen{t}$ is fully $C_\F(z)$-centralized. This follows in the standard way by
choosing $\beta \in \AA(z)$, choosing $\gamma \in
\AA_{C_\F(z^\beta)}(t^\beta)$, setting $\phi = \beta\gamma$, and replacing $z$
by $z^\phi$, $t$ by $t^{\phi}$, and
$\C$ by $\C^{\phi}$.  In this process, note that Lemma~\ref{L:CCconjugate}(b)
shows that we still have $t^{\phi} \in \I(\C^\phi)$. Also by
Lemma~\ref{L:ConjugateDDt} applied with $\phi^{-1}$ in the role of $\gamma$,
if there exists $\hat{\D}$ such that one of the conclusions (1)--(3) holds with
$(t^{\phi},\C^{\phi},\hat{\D})$ in place of $(t,\C,\D)$, then one of the
conclusions (1)--(3) holds with respect to $t,\C$ and some suitable $\D$.

Fix $\alpha \in \AA(t)$. Then
\begin{eqnarray}
\label{E:shift}
\text{$z^\alpha \in C_\F(t^\alpha)^f$ and the map $\alpha\colon C_{C_\F(z)}(t)
\to C_{C_\F(t^\alpha)}(z^\alpha)$ is an isomorphism}
\end{eqnarray}
by \cite[2.2]{AschbacherGeneration}. Moreover, Lemma~\ref{L:CCconjugate}(a) yields
\begin{eqnarray}\label{E:talpha} 
\text{$\C^\alpha$ is a component of $C_\F(t^\alpha)$.} 
\end{eqnarray}
If there exists a component $\D$ of $\F$ such that one of the conclusions
(1)--(3) holds with $(t^\alpha,\C^\alpha)$ in place of $(t,\C)$, then it
follows from  Lemma~\ref{L:ConjugateDDt} that for some (possibly different)
choice of $\D$, one of the conclusions (1)--(3) holds. As this would contradict
our assumption, it follows that

\vspace{-1cm}
\begin{eqnarray}\label{E:shift2}
\begin{gathered}
\parbox[t]{0.80\linewidth}{
\begin{center}
there does not exist a component $\D$ of $\F$ such that one of the conclusions
(1)--(3) holds with $(t^\alpha,\C^\alpha)$ in place of $(t,\C)$.
\end{center}
}
\end{gathered}
\end{eqnarray} 

\vspace{-0.30cm}
Since for any component $\D$, neither conclusion (1) nor (2) holds with
$(t^\alpha,\C^\alpha)$ in place of $(t,\C)$, it follows from
\cite[1.3]{AschbacherWT} and the fact that the Benson-Solomon systems have no
proper quasisimple coverings \cite[Theorem~4.2]{HenkeLynd2018} that there is a
unique $t^\alpha$-invariant component $\D$ of $\F$ containing $\C^\alpha$ such
that $\D \neq \C^\alpha$ and $\C^\alpha \in
\Comp(C_{\D\gen{t^\alpha}}(t^\alpha))$. In particular,
$t^\alpha\in\I_{\D\gen{t^\alpha}}(\C^\alpha)$.  All members of
$\CC(\D\gen{t^\alpha})$ are known by Lemma~\ref{L:CCM}. Moreover, notice that
$\D$ is the unique component of $\D\gen{t^\alpha}$. So if $\D\gen{t^\alpha}$ is
not a counterexample, then conclusion (3) holds with $(t^\alpha,\C^\alpha)$ in
place of $(t,\C)$ contradicting \eqref{E:shift2}. Hence
$\D\gen{t^\alpha}$ is a counterexample, and so $\F = \D\gen{t^\alpha}$ by
minimality of $\F$. This implies that 
\begin{eqnarray}
\label{E:F*Fqs}
F^*(\F) = O^2(\F) = \D \text{ is quasisimple}.
\end{eqnarray}
It is possible at this point that $\F = \D$. 

We first prove that
\begin{eqnarray}
\label{E:step1}
\text{$\H$ is a component of $C_{C_{\F}(z)}(t)$.}
\end{eqnarray}
Recall that $\gen{z^\alpha}$ is fully $C_{\F}(t^\alpha)$-normalized by
\eqref{E:shift} and that $\C^\alpha$ is subnormal in $C_\F(t^\alpha)$ by
\eqref{E:talpha}. Fix a subnormal series $\C^\alpha = \F_0 \norm \cdots \norm
\F_n = C_\F(t^\alpha)$ for $\C^\alpha$. Then $\gen{z^\alpha}$ is fully
$\F_i$-normalized for each $i$ by Lemma~\ref{L:Subnormalfn}. Also, $\H^\alpha =
C_{\C^\alpha}(z^\alpha) \norm C_{\F_1}(z^\alpha) \norm \cdots \norm
C_{C_{\F}(t^\alpha)}(z^\alpha)$ is a subnormal series for $\H^\alpha$ in
$C_{C_\F(t^\alpha)}(z^\alpha)$ by application of
\cite[8.23.2]{AschbacherGeneralized} and induction on $n$.  Hence, $\H$ is a
component of $C_{C_\F(z)}(t)$ by the isomorphism in \eqref{E:shift}. 

We next apply Walter's Theorem in $C_\F(z)$. Recall that we took $t$ to be
fully $C_\F(z)$-centralized. By \eqref{E:step1}, $\H \in \Chev[\larg]$ is a
component of $C_{C_\F(z)}(t)$, so $t \in \I_{C_\F(z)}(\H)$. All members of
$\CC(C_\F(z))$ are known by Lemma~\ref{L:invcentknown}. Thus, the hypotheses of
Walter's Theorem (Theorem~\ref{T:WT}) are satisfied with $C_\F(z)$, $\H$, and
$t$ in the roles of $\F$, $\L$, and $t$. Let $\M \in \Comp(C_\F(z))$ be as
given by Theorem~\ref{T:WT} (in the role of $\D$).  Since $\H$ is not the
$2$-fusion system of $SL_2(9)$, we are not in Theorem~\ref{T:WT}(4). Consider
the case that (1) or (3) of Theorem~\ref{T:WT} holds. Then $\H = O^2(\H) \leq
O^2(\M\gen{t}) = \M$, so $z$ is an element of the Sylow of $\M$.  Since $\M
\leq C_\F(z)$, it follows that $\gen{z}$ is strongly $\M$-closed.  Thus, $z \in
Z(\M)$ in this case by \cite[Corollary~I.4.7(a)]{AschbacherKessarOliver2011}.
As $\F_{\Sol}(q)$ has a trivial center, this shows that conclusion (3) of
Walter's Theorem does not hold. Hence, in any case, Theorem~\ref{T:WT}(1) or
(2) holds.  Further, as $\Spin_7(q)$ has no proper quasisimple $2$-coverings
\cite[Tables~6.1.2,6.1.3]{GLS3}, neither does $\H$
\cite[Corollary~6.4]{BCGLO2007}. So if (2) holds, then $\H \cong \M$.
Therefore, 
\begin{eqnarray}
\label{E:step2}
\text{$\M \in \Chev[\larg]$, $z \in \I(\M)$, and $\H$ isomorphic to a subsystem of $\M$.}
\end{eqnarray}

We next apply Walter's Theorem with $\F$, $\M$, and $z$ in the roles of of
$\F$, $\L$, and $t$. Recall that we took $z$ to be fully $\F$-centralized. So
by \eqref{E:step2} and assumption on $\CC(\F)$, the hypotheses of Walter's
Theorem apply. By \eqref{E:F*Fqs}, $\D$ is the unique component of $\F$, and
$\C=O^2(\C)\leq O^2(\F)=\D$.  By \eqref{E:step2}, $\H$ is isomorphic
to a subsystem of $\M$, so $\M$ is not the $2$-fusion system of $SL_2(9)$.
Thus, Theorem~\ref{T:WT}(1), (2), or (3) holds.  In particular, either $\D \in
\Chev[\larg]$ or $\D$ is isomorphic to a Benson-Solomon system. 

Assume the former holds, namely $\D \in \Chev[\larg]$. All members of
$\Chev[\larg]$ are tamely realized by some member of $\Chev^*(p)$ for some odd
prime $p$ by \cite{BrotoMollerOliver2019}, so we may fix $D \in \Chev^*(p)$
tamely realizing $\D$. By Theorem~\ref{T:reduct}, we may further fix a finite
group $G$ with Sylow $2$-subgroup $S$ such that $F^*(G) = D$ and $\F \cong
\F_S(G)$. As $\C^\alpha$ is a component of $C_\F(t^\alpha)$, we obtain from
Lemmas~\ref{L:knowncent} and \ref{L:ComponentsFSGroups} the contradiction that
$\C$ is not exotic.

Therefore, $\D$ is isomorphic to a Benson-Solomon system.  Assume first that
$t^\alpha \in \foc(\F)$. Then $\F = \D$. In particular, $Z(S)$ is the only
fully normalized subgroup of $S$ of order $2$ by
Lemma~\ref{L:involutionsconjugate}. Hence, $Z(S)=\gen{t^\alpha}$ as
$\gen{t^\alpha}$ is fully $\F$-normalized. So $C_{\F}(t^\alpha)$ is the
$2$-fusion system of $\Spin_7(q')$ for some odd prime power $q'$, which
contradicts \eqref{E:talpha}.  Hence, $t^\alpha \notin \foc(\F)$ and so
$t^\alpha\notin\D$.  Applying Proposition~\ref{P:fieldconjugate}, we see that
$\D \cong \F_{\Sol}(q^2)$ and $C_\D(t^\alpha)=\C$. Therefore, (3) holds after
all with $(t^\alpha,\C^\alpha)$ in place of $(t,\C)$, a contradiction to
\eqref{E:shift2}.
\end{proof}

\bibliographystyle{amsalpha}{ }
\bibliography{/home/justin/work/math/research/mybib.bib}

\newcommand{\etalchar}[1]{$^{#1}$}
\def\cprime{$'$}
\providecommand{\bysame}{\leavevmode\hbox to3em{\hrulefill}\thinspace}
\providecommand{\MR}{\relax\ifhmode\unskip\space\fi MR }
\providecommand{\MRhref}[2]{%
  \href{http://www.ams.org/mathscinet-getitem?mr=#1}{#2}
}
\providecommand{\href}[2]{#2}
\begin{thebibliography}{BCG{\etalchar{+}}07}

\bibitem[AC10]{AschbacherChermak2010}
Michael Aschbacher and Andrew Chermak, \emph{A group-theoretic approach to a
  family of 2-local finite groups constructed by {L}evi and {O}liver}, Ann. of
  Math. (2) \textbf{171} (2010), no.~2, 881--978.

\bibitem[AKO11]{AschbacherKessarOliver2011}
Michael Aschbacher, Radha Kessar, and Bob Oliver, \emph{Fusion systems in
  algebra and topology}, London Mathematical Society Lecture Note Series, vol.
  391, Cambridge University Press, Cambridge, 2011. \MR{2848834}

\bibitem[AO16]{AschbacherOliver2016}
Michael Aschbacher and Bob Oliver, \emph{Fusion systems}, Bull. Amer. Math.
  Soc. (N.S.) \textbf{53} (2016), no.~4, 555--615. \MR{3544261}

\bibitem[AOV12]{AOV2012}
Kasper K.~S. Andersen, Bob Oliver, and Joana Ventura, \emph{Reduced, tame and
  exotic fusion systems}, Proc. Lond. Math. Soc. (3) \textbf{105} (2012),
  no.~1, 87--152. \MR{2948790}

\bibitem[AOV17]{AOV2017}
\bysame, \emph{Reduced fusion systems over 2-groups of small order}, J. Algebra
  \textbf{489} (2017), 310--372. \MR{3686981}

\bibitem[Asc77a]{Aschbacher1977I}
Michael Aschbacher, \emph{A characterization of {C}hevalley groups over fields
  of odd order}, Ann. of Math. (2) \textbf{106} (1977), no.~2, 353--398.
  \MR{0498828 (58 \#16865a)}

\bibitem[Asc77b]{Aschbacher1977II}
\bysame, \emph{A characterization of {C}hevalley groups over fields of odd
  order. {II}}, Ann. of Math. (2) \textbf{106} (1977), no.~3, 399--468.
  \MR{0498829 (58 \#16865b)}

\bibitem[Asc08]{AschbacherNormal}
\bysame, \emph{Normal subsystems of fusion systems}, Proc. Lond. Math. Soc. (3)
  \textbf{97} (2008), no.~1, 239--271. \MR{2434097 (2009e:20044)}

\bibitem[Asc10]{AschbacherGeneration}
\bysame, \emph{Generation of fusion systems of characteristic 2-type}, Invent.
  Math. \textbf{180} (2010), no.~2, 225--299.

\bibitem[Asc11]{AschbacherGeneralized}
\bysame, \emph{The generalized {F}itting subsystem of a fusion system}, Mem.
  Amer. Math. Soc. \textbf{209} (2011), no.~986, vi+110. \MR{2752788}

\bibitem[Asc17]{AschbacherQFP}
\bysame, \emph{Quaternion fusion packets}, preprint (2017), x+432pp.

\bibitem[Asc19]{AschbacherFSCT}
\bysame, \emph{On fusion systems of component type}, Mem. Amer. Math. Soc.
  \textbf{257} (2019), no.~1236, v+182. \MR{3898993}

\bibitem[Asc20]{AschbacherWT}
\bysame, \emph{Walter's theorem for fusion systems}, Proc. Lond. Math. Soc.
  (2020), 1--47, to appear.

\bibitem[BCG{\etalchar{+}}07]{BCGLO2007}
C.~Broto, N.~Castellana, J.~Grodal, R.~Levi, and B.~Oliver, \emph{Extensions of
  {$p$}-local finite groups}, Trans. Amer. Math. Soc. \textbf{359} (2007),
  no.~8, 3791--3858 (electronic).

\bibitem[Ben98]{Benson1998c}
David~J. Benson, \emph{Cohomology of sporadic groups, finite loop spaces, and
  the {D}ickson invariants}, Geometry and cohomology in group theory ({D}urham,
  1994), London Math. Soc. Lecture Note Ser., vol. 252, Cambridge Univ. Press,
  Cambridge, 1998, pp.~10--23. \MR{1709949 (2001i:55017)}

\bibitem[BLO03]{BrotoLeviOliver2003}
Carles Broto, Ran Levi, and Bob Oliver, \emph{The homotopy theory of fusion
  systems}, J. Amer. Math. Soc. \textbf{16} (2003), no.~4, 779--856
  (electronic).

\bibitem[BMO12]{BrotoMollerOliver2012}
Carles Broto, Jesper~M. M{\o}ller, and Bob Oliver, \emph{Equivalences between
  fusion systems of finite groups of {L}ie type}, J. Amer. Math. Soc.
  \textbf{25} (2012), no.~1, 1--20. \MR{2833477}

\bibitem[BMO19]{BrotoMollerOliver2019}
Carles Broto, Jesper M{\o}ller, and Bob Oliver, \emph{Automorphisms of fusion
  systems of finite simple groups of {L}ie type}, Mem. Amer. Math. Soc.
  \textbf{262} (2019), no.~1267, iii+117. \MR{4044462}

\bibitem[Che13]{Chermak2013}
Andrew Chermak, \emph{Fusion systems and localities}, Acta Math. \textbf{211}
  (2013), no.~1, 47--139. \MR{3118305}

\bibitem[COS08]{ChermakOliverShpectorov2008}
Andrew Chermak, Bob Oliver, and Sergey Shpectorov, \emph{The linking systems of
  the {S}olomon 2-local finite groups are simply connected}, Proc. Lond. Math.
  Soc. (3) \textbf{97} (2008), no.~1, 209--238. \MR{2434096 (2009g:55018)}

\bibitem[Cra11]{Craven2011}
David~A. Craven, \emph{Normal subsystems of fusion systems}, J. Lond. Math.
  Soc. (2) \textbf{84} (2011), no.~1, 137--158. \MR{2819694}

\bibitem[GL16]{GlaubermanLynd2016}
George Glauberman and Justin Lynd, \emph{Control of fixed points and existence
  and uniqueness of centric linking systems}, Invent. Math. \textbf{206}
  (2016), no.~2, 441--484. \MR{3570297}

\bibitem[GLS98]{GLS3}
Daniel Gorenstein, Richard Lyons, and Ronald Solomon, \emph{The classification
  of the finite simple groups. {N}umber 3. {P}art {I}. {C}hapter {A}},
  Mathematical Surveys and Monographs, vol.~40, American Mathematical Society,
  Providence, RI, 1998, Almost simple $K$-groups. \MR{1490581 (98j:20011)}

\bibitem[Gui08]{GuidosBook}
\emph{Guido's book of conjectures}, Monographies de L'Enseignement
  Math\'ematique [Monographs of L'Enseignement Math\'ematique], vol.~40,
  L'Enseignement Math\'ematique, Geneva, 2008, A gift to Guido Mislin on the
  occasion of his retirement from ETHZ June 2006, Collected by Indira
  Chatterji. \MR{2499538}

\bibitem[Hen13]{Henke2013}
Ellen Henke, \emph{Products in fusion systems}, J. Algebra \textbf{376} (2013),
  300--319. \MR{3003728}

\bibitem[Hen19]{Henke2019}
\bysame, \emph{Subcentric linking systems}, Trans. Amer. Math. Soc.
  \textbf{371} (2019), no.~5, 3325--3373. \MR{3896114}

\bibitem[HL18]{HenkeLynd2018}
Ellen Henke and Justin Lynd, \emph{Extensions of the {B}enson-{S}olomon fusion
  systems}, Geometric and Topological Aspects of the Representation Theory of
  Finite Groups (J.~F. Carlson, S.~B. Iyengar, and J.~Pevtsova, eds.), Springer
  Proceedings in Mathematics \& Statistics, vol. 242, Springer International
  Publishing, 2018, pp.~1--20.

\bibitem[HS15]{HenkeSemeraro2015}
E.~Henke and J.~Semeraro, \emph{Centralizers of normal subgroups and the
  {$Z^*$}-theorem}, J. Algebra \textbf{439} (2015), 511--514. \MR{3373382}

\bibitem[KS04]{KurzweilStellmacher2004}
Hans Kurzweil and Bernd Stellmacher, \emph{The theory of finite groups},
  Universitext, Springer-Verlag, New York, 2004, An introduction, Translated
  from the 1998 German original. \MR{2014408 (2004h:20001)}

\bibitem[LO02]{LeviOliver2002}
Ran Levi and Bob Oliver, \emph{Construction of 2-local finite groups of a type
  studied by {S}olomon and {B}enson}, Geom. Topol. \textbf{6} (2002), 917--990
  (electronic).

\bibitem[LO05]{LeviOliver2005}
\bysame, \emph{Correction to: ``{C}onstruction of 2-local finite groups of a
  type studied by {S}olomon and {B}enson'' [{G}eom. {T}opol. {\bf 6} (2002),
  917--990 (electronic); mr1943386]}, Geom. Topol. \textbf{9} (2005),
  2395--2415 (electronic).

\bibitem[LS19]{LyndSemeraro}
Justin Lynd and Jason Semararo, \emph{Weights in a {B}enson-{S}olomon block},
  arXiv:1712.02826v3 [math.GR] (2019), 33pp.

\bibitem[Lyn15]{Lynd2015}
Justin Lynd, \emph{A characterization of the 2-fusion system of {$L_4(q)$}}, J.
  Algebra \textbf{428} (2015), 315--356. \MR{3314296}

\bibitem[Oli13]{Oliver2013}
Bob Oliver, \emph{Existence and uniqueness of linking systems: {C}hermak's
  proof via obstruction theory}, Acta Math. \textbf{211} (2013), no.~1,
  141--175. \MR{3118306}

\bibitem[Oli16]{OliverReductions}
\bysame, \emph{Reductions to simple fusion systems}, Bull. Lond. Math. Soc.
  \textbf{48} (2016), no.~6, 923--934.

\bibitem[Oli21]{OliverReductionsCor}
\bysame, \emph{Correction to: Reductions to simple fusion systems},
  arXiv:2102.00794 [math.GR] (2021), 5pp.

\bibitem[Pui00]{Puig2000}
Lluis Puig, \emph{The hyperfocal subalgebra of a block}, Invent. Math.
  \textbf{141} (2000), no.~2, 365--397.

\bibitem[Sem15]{Semeraro2015}
Jason Semeraro, \emph{Centralizers of subsystems of fusion systems}, J. Group
  Theory \textbf{18} (2015), no.~3, 393--405. \MR{3341522}

\bibitem[Sol74]{Solomon1974}
Ronald Solomon, \emph{Finite groups with {S}ylow {$2$}-subgroups of type
  {$.3$}}, J. Algebra \textbf{28} (1974), 182--198.

\end{thebibliography}
\end{document}